\setlist[enumerate]{label=(\roman*)}
\newtheorem{theorem}{Theorem}[section]
\newtheorem{lemma}[theorem]{Lemma}
\newtheorem{corollary}[theorem]{Corollary}
\theoremstyle{definition}
\newtheorem{definition}[theorem]{Definition}
\newtheorem{example}[theorem]{Example}
\theoremstyle{remark}
\newtheorem{remark}[theorem]{Remark}
\numberwithin{equation}{section}
\newcommand{\R}{\mathbb{R}}	
\newcommand{\N}{\mathbb{N}}	
\newcommand{\Q}{\mathbb{Q}}	
\newcommand{\sB}{\mathscr{B}}	
\newcommand{\sE}{\mathscr{E}}	
\newcommand{\cC}{\mathcal{C}}	
\renewcommand{\O}{\Omega}	
\renewcommand{\o}{\omega}	
\newcommand{\sA}{\mathscr{A}}	
\newcommand{\PV}{\mathbb{P}}	
\newcommand{\EV}{\mathbb{E}}	
\newcommand{\sF}{\mathscr{F}}	
\newcommand{\sG}{\mathscr{G}}	
\newcommand{\T}{\Theta}		
\newcommand{\D}{\Delta}		
\newcommand{\sq}{\square}       
\newcommand{\z}{\zeta}		
\renewcommand{\a}{\alpha}	
\newcommand{\g}{\gamma}		
\newcommand{\G}{\Gamma}		
\renewcommand{\t}{\tau}		
\newcommand{\h}{H}		
\newcommand{\sD}{\mathscr{D}}	
\newcommand{\cG}{\mathcal{G}}	
\newcommand{\cV}{\mathcal{V}}	
\newcommand{\cE}{\mathcal{E}}	
\newcommand{\cR}{\rho}		
\newcommand{\bs}{\backslash}	
\newcommand{\comp}{\complement}	
\renewcommand{\b}{\beta}
\renewcommand{\l}{\lambda}
\newcommand{\e}{\varepsilon}	
\newcommand{\p}{\pi}		
\newcommand{\tB}{\widetilde{B}}
\newcommand{\tM}{\widetilde{M}}
\newcommand{\tX}{\widetilde{X}{}}
\newcommand{\tE}{\widetilde{E}}
\newcommand{\tsF}{\widetilde{\sF}}
\newcommand{\bsF}{\overline{\sF}}
\newcommand{\bbsF}{\skew{0.0}\overline{\bsF}{}}
\newcommand{\hW}{\hat{W}}
\newcommand{\hQ}{\hat{Q}}
\newcommand{\hL}{\hat{L}}
\newcommand{\hT}{\hat{\T}}
\newcommand{\hG}{\hat{\G}}
\newcommand{\hg}{\hat{\g}}
\newcommand{\pP}{{}^{+}\!{P}}
\newcommand{\pQ}{{}^{+}\!{Q}}
\newcommand{\nP}{{}^{0}\!{P}}
\newcommand{\nQ}{{}^{0}\!{Q}}
\newcommand{\1}{\mathbbm{1}}		
\newcommand{\id}{\operatorname{id}} 	
\newcommand{\abs}[1]{\left\lvert #1 \right\rvert}  
\newcommand{\bigabs}[1]{\big\lvert #1 \big\rvert}  
\newcommand\restr[2]{{		  	
  \left.\kern-\nulldelimiterspace 
  #1 				  
  \vphantom{\big|}		  
  \right|_{#2}    		  
  }}
\newcommand{\sgn}{\operatorname{sgn}}
\newcommand{\ran}{\operatorname{ran}}
\newcommand{\processX}{$X = \big( \O, \sG, (\sG_t)_{t \geq 0}, (X_t)_{t \geq 0}, (\T_t)_{t \geq 0}, (\PV_x)_{x \in E} \big)$}
\newcommand{\KPS}{Kostrykin, Potthoff and Schrader}
\newcommand{\IM}{It\^{o}\textendash{}McKean}
\newcommand{\FW}{Feller\textendash{}Wentzell}
\newcommand{\sT}{\mathscr{T}}	
\newcommand{\cB}{\mathcal{B}}	
\newcommand{\textdef}{\textit}
\begin{document}

\title[Brownian Motions on Star Graphs]{Brownian Motions on Star Graphs with Non-Local Boundary Conditions}

\author[Florian Werner]{Florian Werner}
\address{Institut f\"ur Mathematik, Universit\"at Mannheim, 68131 Mannheim, Germany}
\email{fwerner@math.uni-mannheim.de}


\subjclass[2000]{60J65, 60J45, 60H99, 58J65, 35K05, 05C99}

\date{\today}


\keywords{Brownian motion, non-local \FW\ boundary condition, Walsh process, metric graph, half line, Markov process, Feller process}

\newcommand{\sqed}{\relax}

\begin{abstract}
  Brownian motions on star graphs in the sense of \IM,
that is, Walsh processes admitting a generalized boundary behavior including stickiness and jumps and having an angular distribution with finite support, 
are examined.
Their generators are identified as Laplace operators on the graph subject to non-local \FW\ boundary conditions. 
A pathwise description is achieved for every admissible boundary condition: 
For finite jump measures, a construction of \KPS\ in the continuous setting is expanded via a technique of successive killings and revivals;
for infinite jump measures, the pathwise solution of \IM\ for the half line is analyzed and extended to the star graph.
These processes can then be used as main building blocks for Brownian motions on general metric graphs with non-local boundary conditions.
\end{abstract}

\maketitle

\section{Introduction}

The goal of the present paper is the pathwise construction of all Brownian motions on a star graph~$\cG$,
that is, to construct a Feller process such that its generator $A =  \frac{1}{2} \D$ satisfies the non-local \FW\ boundary condition 
  \begin{align*}
   \forall f \in \cC_0^2(\cG): \quad p_1 f(0) - \sum_{e \in \cE} p^{e}_2 f_e'(0) + \frac{p_3}{2} f''(0) - \int_{\cG \bs \{0\}} \big( f(g) - f(0) \big) \, p_4(dg) = 0
  \end{align*}
for given constants $p_1 \geq 0$, $p^{e}_2 \geq 0$ for each edge $e \in \cE$, $p_3 \geq 0$ and a measure $p_4$ on the punctured star graph $\cG \bs \{0\}$, with
  \begin{align*}
    p_1 + \sum_{e \in \cE} p^{e}_2 + p_3 + \int_{\cG \bs \{0\}} \big( 1 - e^{-x} \big) \, p_4 \big( d(e,x) \big) = 1.
  \end{align*}   
We now illustrate the underlying definitions,
for rigorous definitions the reader may consult section~\ref{sec:Definitions}:

A metric graph $\cG$ is a mathematical description of a set of locally one-dimensional structures, 
edges $e \in \cE$, which are ``glued together'' at vertices $v \in \cV$ by the graph's combinatorial structure,
and every edge $e \in \cE$ is isomorphic to a finite interval or  half line of length $\cR_e \in (0, +\infty]$.
In the case of a star graph, $\cG$ consists only of one vertex (which will be named $0$) and a set of edges with each one being isomorphic to $[0, +\infty)$,
that is, the star graph is then represented by the set
 \begin{align*}
  \cG = \{0\} \cup \bigcup_{e \in \cE} \big( \{e\} \times [0, +\infty) \big),
 \end{align*}
where the finite endpoint $(e,0)$ of each edge $e \in \cE$ is identified with the vertex $0$.
The canonical metric on $\cG$ is then defined by the length of the shortest possible path connecting two points on $\cG$:
inside the edges, it coincides with the Euclidean metric, while on differing edges, it is the sum of the points' distances to~$0$.
We will only consider star graphs with finite sets of edges.

A Brownian motion on a star graph $\cG$ (more generally, on any metric graph $\cG$) is defined to be a right continuous, strong Markov process on~$\cG$
which behaves on every edge like the standard one-dimensional Brownian motion, more accurately:
If a Brownian motion $X$ on the graph~$\cG$ is started inside some edge $\{e\} \times (0, +\infty)$, then the process
$X$, stopped at leaving its initial edge, must be equivalent to the one-dimensional Brownian motion, stopped when leaving the interval $(0, +\infty)$. 
In this sense, Brownian motions on star graphs are both a generalization and a restriction of classical Walsh processes: 
They may feature other boundary behavior at $0$ than just skew effects, such as stickiness or jumps. On the other hand, the skew measure may only
assume finitely many values, due to the finiteness of the set of edges.

The context of star graphs generalizes the class of Brownian motions on half lines, which has been studied extensively in the past:
Presumably, it started with first path considerations by Kac~\cite{Kac51} and Feller~\cite{Feller54} 
and Feller's and Wentzell's analytic examinations of semigroups in \cite{Feller52}, \cite{Wentzell56} and \cite{Wentzell59}.
Dynkin~\cite{Dynkin56} and Hunt~\cite{Hunt56} provided the tools for a rigorous probabilistic study,
and  L\'evy's~\cite{Levy65} and Trotter's~\cite{Trotter58} studies on the fine structure of the paths of the Brownian motions and their local times
made it possible for It\^{o} and McKean to give
the complete, pathwise description of all Brownian motions on~$\R_+$ in~\cite{ItoMcKean63};
 for a more detailed historical overview, we would like to refer the reader to~\cite[Section 2]{ItoMcKean63} and to~\cite{Peskir15}.
 
Star graphs serve as the main building blocks in the study and construction of general metric graphs.
Recently, there is a growing interest in metric graphs, networks and quantum graphs, and stochastic processes thereon.
They arise in many areas of physics, chemistry and engineering applications, for an elaborate survey the reader may consult~\cite{Kuchment02}
and Kuchment's introductory article~\cite{Kuchment04}.
A collection of recent developments is found in the proceedings~\cite{AoGaiA08} and Mugnolo's monograph~\cite{Mugnolo14}.
The research of continuous processes on graph-like structures seems to be started by Baxter and Chacon in~\cite{BaxterChacon84},
who introduced the notion of diffusions on graphs and transferred some classical one-dimensional results to this setting.
Since then, a wide variety of results and techniques evolved: 
Freidlin and Wentzell investigated an averaging principle for processes on graphs in~\cite{FreidlinWentzell93},
which was further developed by Barret and von~Renesse with the help of Dirichlet methods in~\cite{BarretvRenesse2014}.
Processes on special tree structures have been examined by Dean and Jansons in~\cite{DeanJansons93} via excursion theory and by Krebs in~\cite{Krebs95} via Dirichlet forms.
With the help of graphs, Walsh~\cite{Walsh78} and Eisenbaum and Kaspi~\cite{Eisenbaum96} studied and extended classical one-dimensional results like local time properties.
Particular Brownian motions on graphs have been constructed and studied
 by Barlow, Pitman and Yor in~\cite{BarlowPitmanYor89} via semigroup considerations,
 by Enriquez and Kifer in~\cite{EnriquezKifer01} as weak limits of Markov chains,
 and by Georgakopoulos and Kolesko in~\cite{Georgakopoulos14} as weak limits of graph approximations.
In~\cite{Lejay03}, Lejay develops simulation methods for diffusions on graphs, which can also be applied in the Brownian context.
Further results for continuous Brownian motions on star graphs have been researched by Najnudel in~\cite{Najnudel07} and Papanicolaou et al.\ in~\cite{Papanicolaou12}.
Fitzsimmons and Kuter conducted potential theoretic investigations in the star graph setting
in~\cite{Jehring09} and~\cite{FitzsimmonsKuterWalsh}, and extended their findings to general metric graphs in \cite{FitzsimmonsKuterGraph}.

Kostrykin, Potthoff and Schrader achieved the classification and pathwise construction of 
all Brownian motions on a metric graph which are continuous (up to their lifetime) by giving a complete description 
of all continuous Brownian motions on star graphs in \cite{KPS_Walsh12A}, \cite{KPS_Walsh12B}, and then gluing them together in~\cite{KPS12}.
Their works mark the starting point of this article, in which we weaken the condition of continuity to right continuity,
which allows non-local effects to take place at the boundary.
By extending the findings and the construction approaches of the above-mentioned works by Kostrykin, Potthoff and Schrader,
and of \IM's extensive analysis of the half-line case in \cite{ItoMcKean63},
we will obtain the classification and a complete pathwise construction for all right continuous Brownian motions on any star graph.

\subsection{Classification of Brownian Motions}
We give a short overview over the possible behavior any Brownian motion may feature on a star graph (more generally, on a metric graph).
By its very definition, the behavior of the process  is already fixed inside the edges,
where it must run like the standard one-dimensional Brownian motion. Therefore, the ``non-Brownian'' effects can only take place 
at the vertices of the graph and still must respect (strongly) Markovian ``characteristics''. Thus, it is feasible to classify a Brownian motion by its local behavior, 
which is reflected in its generator:

As mentioned above, the classical case of a ``metric graph'' with only one vertex and one edge---that is the half line $\R_+$---is completely understood (see~\cite{ItoMcKean63}).
Here, the generator $A$ of a Brownian motion 
 is a contraction of $\frac{1}{2} \, \D$, with $\D$ being the Laplacian on $\R_+$. 
Its domain is then uniquely characterized by a set of
constants $p_1 \geq 0$, $p_2 \geq 0$, $p_3 \geq 0$ and a measure~$p_4$ on~$(0, \infty)$, normalized by
  \begin{align*}
    p_1 + p_2 + p_3 + \int_{(0,\infty)} \big( 1 \wedge x \big) \, p_4 (dx) = 1,
  \end{align*}
 which constitute the following non-local \FW\ boundary condition: 
   \begin{align} 
    \sD(A) = \Big\{ & f \in \cC_0^2(\R_+): \nonumber  \\
                    & p_1 \, f(0) - p_2 \, f'(0+) + \frac{p_3}{2} f''(0+) - \int_{(0,\infty)} \big( f(x) - f(0) \big) \, p_4(dx) = 0 \Big\}.  \label{eq:intro:generator domain halfline}
  \end{align}
  
This result is easily extended to the case of a general metric graph $\cG$. Just like in the case of the half line, the generator of a Brownian motion reads $A = \frac{1}{2} \D$,
with $\D$ now being the Laplacian on $\cG$.
For every vertex $v \in \cV$ there exist constants
 $p^v_1 \geq 0$, $p^{v,e}_2 \geq 0$ for each $e \in \cE(v)$, $p^v_3 \geq 0$ and a measure $p^v_4$ on $\cG \bs \{v\}$ with
  \begin{align*} 
    p^v_1 + \sum_{e \in \cE(v)} p^{v,e}_2 + p^v_3 + \int \big( 1 - e^{-d(v,g)} \big) \, p^v_4 \big( dg \big) = 1,
  \end{align*}
 such that the domain of $A$ satisfies
  \begin{align}
   \sD(A) & \subseteq 
       \Big\{ f \in \cC^2_0(\cG) : \forall v \in \cV: \nonumber \\ 
   & \qquad           p^v_1 \, f(v) - \sum_{e \in \cE(v)} p^{v,e}_2 \, f_e'(v) + \frac{p^v_3}{2} f''(v) - \int \big( f(g) - f(v) \big) \, p^v_4(dg) = 0 \Big\},  \label{eq:intro:generator domain}
  \end{align}   
where $\cE(v)$ is the set of edges incident with a vertex $v$, and $f_e'(v)$ is the directional derivative of $f$ at $v$ along the edge $e$.\footnote{For a star graph,
the set of vertices is just $\cV = \{0\}$, and $\cE(0) = \cE$ in this case.}

These results can be derived through various techniques: Classical proofs such as in~\cite{Wentzell56} and~\cite{Feller57} 
are based on the analysis of the underlying semigroup, 
which then were extended giving special attention on non-local boundaries in~\cite{Mandl68} and~\cite{Langer71}.
Other approaches are possible by analytic examinations of the resolvent in~\cite{Rogers83} or of the Dirichlet form such as in~\cite{Kant09} and~\cite{Fukushima14},
or by probabilistic methods via Dynkin's formulas like in~\cite{Knight81} and~\cite{ItoMcKean63}, 
or by the excursion theory of~\cite{Ito72}.
As our goal is a pathwise construction, we will be more interested in a method which obtains the generator via a probabilistic method rather than by analytic means: 
Dynkin's formula gives access to the generator directly through the local exit behavior of the process. It states that, under certain conditions, the generator $A$
of a strong Markov process $X$ on a state space~$E$ can be computed by
  \begin{align}\label{eq:Dynkins formula (generator)}
   A f(x) 
   & = \lim_{n \rightarrow \infty} \frac{\EV_v \big( f\big(X(\t_{\e_n})\big) \big) - f(x)}{\EV_x(\t_{\e_n})}, \quad f \in \sD(A), \ x \in E,
  \end{align}
with $(\e_n, n \in \N)$ being a sequence of positive numbers converging to $0$ and $\t_{\e_n}$ being the first exit time of $X$ from the closed ball $\overline{\cB_x(\e_n)}$.

 \begin{figure}[tb] 
   \centering
   \includegraphics[width=0.9\textwidth,keepaspectratio]{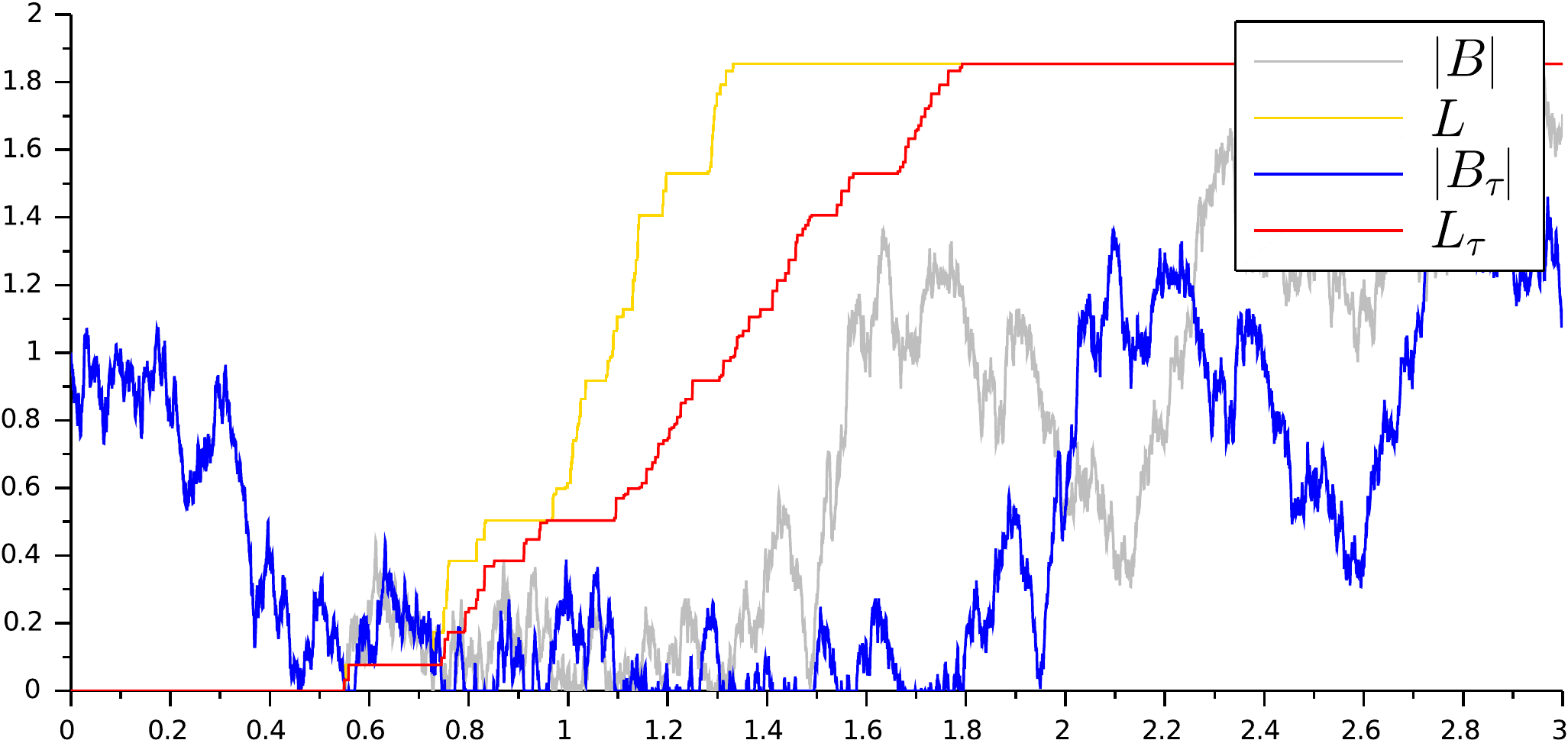}
   \caption[The sticky Brownian motion on $\R_+$]
           {The sticky Brownian motion $\big( |B_{\t(t)}|, t \geq 0 \big)$ on $\R_+$ with its local time $(L_{\t(t)}, t \geq 0)$.} 
            \label{fig:intro:BB HL continuous}
 \end{figure}

Surprisingly, the components of the ``generator data'' given in equation~\eqref{eq:intro:generator domain}
 \begin{align} \label{eq:intro:generator data}
  \big( p^v_1, (p^{v,e}_2)_{e \in \cE(v)}, p^v_3, p^v_4 \big)_{v \in \cV}
 \end{align}
have, for the most part, easy probabilistic interpretations.
 We briefly explain their effects for Brownian motions on the half line $\R_+$, where their set \eqref{eq:intro:generator data}
 of defining boundary weights reduces to $(p_1, p_2, p_3, p_4)$ of equation~\eqref{eq:intro:generator domain halfline}:
 If $B = (B_t, t \geq 0)$ is the Brownian motion on $\R$, then the reflecting Brownian motion $\abs{B} = {(\abs{B_t}, t \geq 0)}$ is 
 a Brownian motion on $\R_+$ which is characterized by its boundary set $(p_1, p_2, p_3, p_4) = (0,1,0,0)$. 
 If instead we consider the ``absorbed'' process $(B_{t \wedge \h_0}, t \geq 0)$ which results from stopping $B$ at the time $\h_0 := \inf \{ B_t = 0 \}$ of $B$ hitting $0$
 for the first time,
 it turns out that this is a Brownian motion on $\R_+$ with $(p_1, p_2, p_3, p_4) = (0,0,1,0)$.
 On the other hand, the boundary set $(p_1, p_2, p_3, p_4) = (1,0,0,0)$ is implemented by the ``Dirichlet'' process $B^D$,
  \begin{align*}
   B^D_t :=
    \begin{cases}
     B_t, & t < \h_0, \\
     \D,  & t \geq \h_0,
    \end{cases}
  \end{align*}
 constructed by killing $B$ at $\h_0$
 (this is not a Brownian motion in the sense of our definition, as Markov processes will always be assumed to be normal in this work).\footnote{We are using
   the conventional symbol $\D$ for both the cemetery point of a Markov process and the Laplace operator.
   Due to the different contexts, there should be no danger of confusion.}
 
 Thus, $p_1$, $p_2$, $p_3$ can be interpreted as the ``weights'' governing the \textdef{killing}, \textdef{reflection} and \textdef{stickiness} at the origin. 
 These effects are especially illuminated when examining the following ``mixed'' cases, as surveyed in~\cite{KPS10}:
 The ``quasi absorbed case'' $(p_1, p_2, p_3, p_4) = ( {\neq}0, 0, {\neq}0, 0)$ can be realized by stopping the Brownian motion~$B$ at the origin for an exponentially distributed random time, independent of $B$, and then killing it.
 The ``elastic case'' $(p_1, p_2, p_3, p_4) = ( {\neq}0, {\neq}0, 0, 0)$ is obtained by killing the reflecting Brownian motion $\abs{B}$ 
  when its local time at the origin exceeds some exponentially distributed random time, independent of $\abs{B}$.
 Finally, the ``sticky case'' $(p_1, p_2, p_3, p_4) = (0, {\neq}0, {\neq}0, 0)$ is achieved by ``slowing down'' the reflecting Brownian motion~$\abs{B}$ at the origin:
  With $(L_t, t \geq 0)$ being its local time at the origin, define the function $\t^{-1} \colon t \mapsto t + \frac{p_3}{p_2} \, L_t$.
  Then the ``sticky'' boundary condition is realized by the time changed Brownian motion $\big( |B_{\t(t)}|, t \geq 0 \big)$, see figure~\ref{fig:intro:BB HL continuous}.
 The complete ``local'' case $(p_1, p_2, p_3, p_4) = ( {\neq}0, {\neq}0, {\neq}0, 0)$ is a mixture of the sticky and the elastic case: It is achieved by killing the sticky Brownian motion
  $\big( |B_{\t(t)}|, t \geq 0 \big)$ once its local time $(L_{\t(t)}, t \geq 0)$ at the origin exceeds some exponentially distributed, independent random time.
 
 The measure $p_4$ now introduces jumps of the resulting process from the origin to points other than the absorbing cemetery point $\D$.
 If $p_4$ is finite, then this \textdef{jump measure} can be implemented just like the jumps of a compound Poisson process: 
 Starting with the Brownian motion realizing the local boundary condition $(p_1, p_2, p_3, 0)$, we restart this process---if it has not been killed 
 already---whenever its local time at the origin
 exceeds some independent, exponentially distributed random time with rate proportional to~$p_4((0,\infty))$, at some point chosen independently by the probability measure 
 $\frac{p_4}{p_4((0,\infty))}$, see figure~\ref{fig:intro:BB HL complete}. 
 In the case of an infinite measure~$p_4$, the description of the complete process is not as easy: As the finite case already suggests,
 the resulting process will be a Brownian motion which implements the local boundary conditions and 
 jumps out of the origin like a subordinator with L\'evy measure~$p_4$, run on the time axis of the local time.
 A detailed construction of such paths will be given later.
 
 \begin{figure}[tb] 
   \centering
   \includegraphics[width=0.95\textwidth,keepaspectratio]{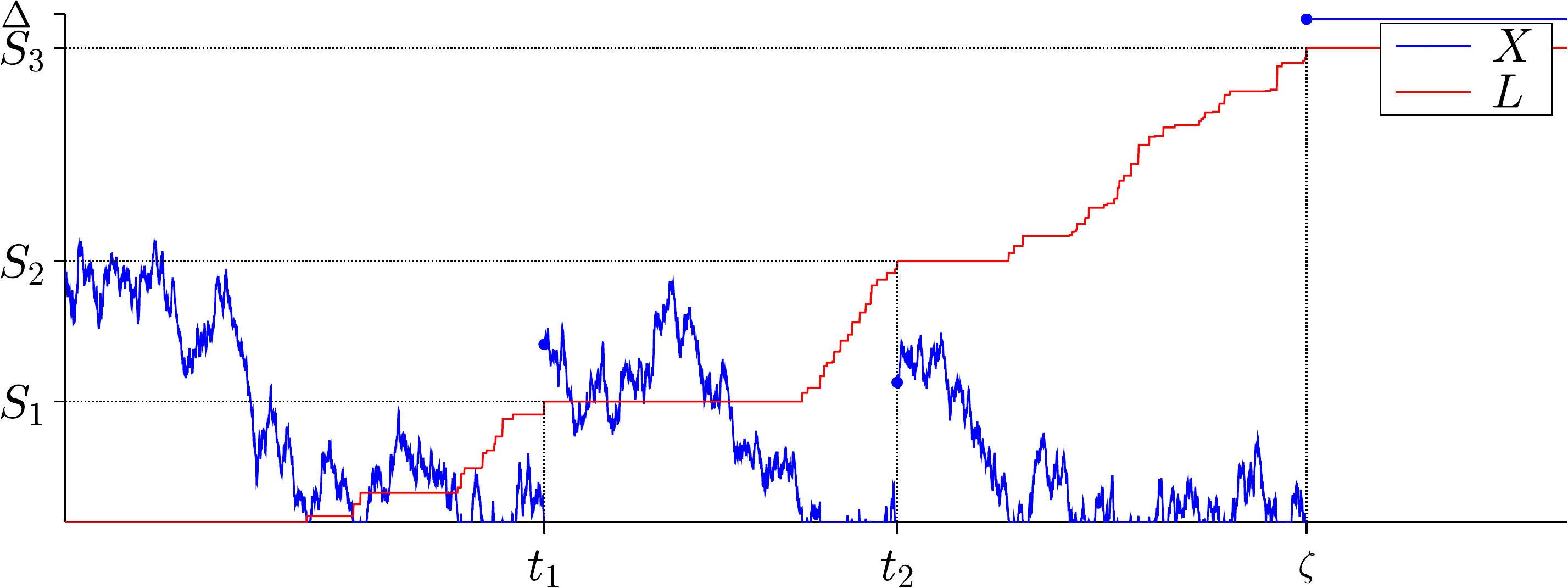}
   \caption[Implementation of jumps for Brownian motions on $\R_+$]
           {Implementation of jumps for Brownian motions on~$\R_+$: 
            Starting with the sticky Brownian motion,
            restart the process whenever its local time exceeds some level $S_n$ at a point chosen by $p_4 + p_1 \, \e_{\D}$,
            resulting a Brownian motion $X$ with boundary weights $(p_1, p_2, p_3, p_4)$.} \label{fig:intro:BB HL complete}
 \end{figure}
 
 These results can be transferred directly to the case of a metric graph $\cG$, where the boundary weights
  $\big( p^v_1, (p^{v,e}_2)_{e \in \cE(v)}, p^v_3, p^v_4 \big)$ govern the local behavior at a vertex $v \in \cV$.
 The only additional effect which arises here is that the process can usually leave the vertex $v$ on more than one edge. 
 Thus, the reflection weight $p^v_2$ is split up into partial weights $p^{v,e}_2$, $e \in \cE(v)$.
 For any excursion which exits $v$ continuously, the 
 starting edge of this excursion is then chosen independently by the distribution $\big( p^{v,e}_2 / p^v_2, e \in \cE(v) \big)$,
 with $p^v_2 := \sum_{e \in \cE(v)} p^{v,e}_2$.
 
 Accepting these rather illustrative descriptions for the moment, it is clear that in absence of the jumping measure $p_4$, the Brownian motion may be realized by a process which
 is continuous up to its lifetime. On the other hand, the case $p_4 \neq 0$ can only be achieved by a discontinuous process.
 
\subsection{Construction Approach}
As already mentioned, the boundary conditions on the edges can be implemented via path transformations of an easy prototype process like the reflecting Brownian motion
or a suitable Walsh process:
The killing parameter is introduced by killing with respect to the pseudo inverse of the local time, which turns out to be a terminal time,
or equivalently, by killing with respect to a multiplicative functional.
Stickiness can be implemented by the time change relative to the local time, which is an additive functional.
These transformations are classical and well understood, and a complete construction of this type was already obtained in \cite{KPS_Walsh12A} and \cite{KPS_Walsh12B}.
However, the implementation of jumps seems to be a non-standard problem, which has not been considered in our context yet.

For finite jump measures, we will use the technique of ``killing and reviving'' a (strong) Markov process, which proceeds as follows:
We construct the concatenation of a sequence of Markov processes $(X^n, n \in \N)$ to form a new Markov process that behaves like 
$X^1$ until this process dies, afterwards is ``revived'' as $X^2$ at some point chosen by a probability kernel which takes 
``Markovian information of $X^1$ until its death'' into account, then behaves like $X^2$ until it dies, and so on. 
Having this general concept of concatenation at our disposal, we now take independent copies of one basis process $X^0$
which dies ``conveniently'', and revive them with appropriate kernels in order to introduce the required jumps.
This technique will be shortly introduced in subsection~\ref{subsec:C_CO:identical iterations}, and then applied to the Brownian 
construction in subsection~\ref{subsec:KPS extension}.

The pathwise solution for infinite jump measures will pose a completely different challenge. 
In this case, just as in the context of a general L\'evy process, the resulting process needs to feature infinitely many ``small'' jumps in arbitrarily small time intervals,
so the jumps will not be arrangeable in time and the process cannot be constructed by the successive concatenation of a countable product of independent subprocesses. 
Here, we will employ a local, ``bare hand'' construction, utilizing the ingenious ideas of It\^{o} and McKean 
which we explain at the beginning of section~\ref{sec:G_IM} in order to put the reader in the position to understand our generalization to the star graph.
The proof of the (strong) Markov property of the resulting process will be highly non-trivial,
and we will only succeed by utilizing Galmarino's results \cite{Galmarino63} on the characterization of stopped $\sigma$-algebras.

\subsection{Applications and Upcoming Work}
As we give a complete description of the pathwise construction for every possible Brownian motion on a star graph, our results can be directly applied
to problems which are centered around the paths of these processes, such as studies on the fine structure or simulation techniques.
On the other hand, we will use Brownian motions on star graphs as prototype components to obtain the construction of Brownian motions on general metric graphs 
in an upcoming work.

\section{Definitions and Fundamental Properties}\label{sec:Definitions}

Before we begin with our constructions, we give a concise introduction to Markov processes, star graphs and Brownian motions on star graphs.
In this section, we summarize the underlying main definitions and collect some of the results.

\subsection{Markov Processes}\label{subsec:Definitions Markov}

We understand a Markov process $X$ on a Radon space $E$ (equipped with a $\sigma$-algebra~$\sE$)  to be defined   
in the canonical sense of the standard works of Dynkin~\cite{Dynkin65}, Blumenthal--Getoor~\cite{BlumenthalGetoor69} and Sharpe~\cite{Sharpe88},
that is, as a sextuple 
 \begin{align*}
  X = \big( \Omega, \sG, (\sG_t, t \geq 0), (X_t, t \geq 0), (\Theta_t, t \geq 0), (\PV_x, x \in E) \big)
 \end{align*}
with the following properties:
$(X_t, t \geq 0)$ is a right continuous, $E$-valued stochastic process on the measurable space $(\Omega, \sG)$,
adapted to the filtration $(\sG_t, t \geq 0)$, and equipped with shift operators $(\Theta_t, t \geq 0)$ on $\Omega$.
$(\PV_x, x \in E)$ is a family of probability measures satisfying $X_0 = x$ $\PV_x$-a.s.\ for all $x \in E$ (normality of the process),
such that for all $t \geq 0$, $B \in \sE$, $x \mapsto \PV_x(X_t \in B)$ is measurable
and the Markov property holds:\footnote{For any $\sigma$-algebra $\sE$, we define $b\sE$, $p\sE$ to be
  the sets of all $\sE$-measurable functions which are bounded, non-negative respectively, 
  as well as $bp\sE := b\sE \cap p\sE$.}\textsuperscript{,}\footnote{For convenience,
  we omit the qualifier ``a.s.''~in equations which contain conditional expectations.}
 \begin{align}\label{eq:Markov property (simple)}
  \forall x \in E, s, t \geq 0, f \in b\sE: \quad \EV_x \big( f(X_{s+t}) \,\big|\, \sG_s \big) = \EV_{X_s} \big( f(X_t) \big).
 \end{align}
 
Every Markov process $X$ has an associated semigroup $(T_t, t \geq 0)$ and resolvent $(U_\a, \a > 0)$, defined for all $f \in p\sE \cup b\sE$ by
 \begin{align*}
  T_t f(x) := \EV_x \big( f(X_t) \big), \quad U_\a f(x) := \EV_x \Big( \int_0^\infty  e^{-\alpha t} \, f(X_t) \, dt \Big), \quad x \in E.
 \end{align*}
 
If $(X_t, t \geq 0)$ is right continuous, the Markov property~\eqref{eq:Markov property (simple)} is equivalent to its Laplace-transformed version,
that is, for all $\a > 0$, $s \geq 0$, $f \in b\cC(E)$: 
 \begin{align}\label{eq:Markov property (resolvent)} 
  \EV_x \Big( \int_0^\infty e^{-\a t} \, f(X_{s+t}) \, dt \,\Big|\, \sG_s \Big) = U_\a f(X_s).
 \end{align}
 
A Markov process $X$ is said to be strongly Markovian with respect to a filtration $(\sG_t, t \geq 0)$, if for any $(\sG_t, t \geq 0)$-stopping time $\t$,
 \begin{align}\label{eq:strong Markov property (simple)}
  \forall x \in E, s, t \geq 0, f \in b\sE: \quad \EV_x \big( f(X_{t+\t}) \,\big|\, \sG_{\t+} \big) = \EV_{X_s} \big( f(X_\t) \big),
 \end{align}
which then can be lifted to  the universal completion  $\sF$ of $\sigma(X_s, s \geq 0)$ by using the monotone class theorem:
 \begin{align}\label{eq:strong Markov property (strong)}
    \forall x \in E, Y \in b\sF: \quad \EV_x \big( Y \circ \Theta_\tau \, \1_{\{\tau < \infty\}} \,\big|\, \sG_{\tau+} \big) = \EV_{X_\tau} \big( Y \big) \, \1_{\{\tau < \infty\}}.
 \end{align}
 
Given a strong Markov process $X$, its resolvent can be localized at any stopping time $\tau$ with the help of Dynkin's formula \cite[Section 5.1]{Dynkin65}
 \begin{align}\label{eq:Dynkins formula (resolvent)}
   U_\alpha f(x) = \EV_x \Big( \int_0^\tau  e^{-\alpha t} \, f(X_t) \, dt \Big) + \EV_x \big( e^{-\alpha \tau} \, U_\alpha f(X_\tau) \, \1_{\{\tau < \infty\}} \big).
 \end{align}
%

A Markov process $X$ is a Feller process, if its semigroup is $\cC_0$-Feller,\footnote{For a locally compact space~$E$ with countable base,
   $\cC_0(E)$ is the set of all continuous functions which vanish at infinity. The space of all continuous and bounded functions on~$E$ is denoted by~$b\cC(E)$.} that is, if 
 \begin{enumerate}
  \item $T_t \cC_0(E) \subseteq \cC_0(E)$ for all $t \geq 0$, and   \label{itm:Feller (semigroup)}
  \item $\lim_{t \downarrow 0} T_t f(x) = f(x)$ for all $f \in \cC_0(E)$, $x \in E$. \label{itm:Feller (normality)}
 \end{enumerate}
Here, \ref{itm:Feller (normality)} is already implied by the assumed right continuity and normality of any Markov process.
Furthermore, it is well-known (cf.~\cite[Appendix~B]{KPS12}) that \ref{itm:Feller (semigroup)}
can be equivalently replaced by the corresponding condition of the resolvent, that is,
 \begin{align}\label{eq:Feller (resolvent)}
   U_\a \cC_0(E) \subseteq \cC_0(E) \quad \text{for all $\a > 0$}.
 \end{align}  

Every Feller process $X$ is uniquely characterized by its weak $\cC_0$-generator
 \begin{align}
   A \colon \sD(A) \rightarrow \cC_0, \quad A f(x) := \lim_{t \downarrow 0} \frac{T_t f(x) - f(x)}{t},
 \end{align}
with its domain $\sD(A)$ being to set of all $f \in \cC_0$ for which the right-hand limit exists and constitutes a function in $\cC_0$. 

Whenever it is convenient and possible, we will treat a Markov process in the context of right processes (which necessitates the switch to the usual hypotheses)
to ensure that transformations like killing, time change or revival produce a strong Markov process again (cf.~\cite[Chapter~II]{Sharpe88}).

For the most part, however, we will work in the basic setting as described above. 
Then, Galmarino's theorem \cite{Galmarino63} (see also \cite[Chapter~IV, 99--101]{DellacherieMeyerA}, \cite[Theorem~3.2.13]{Knight81})
gives the following characterization of the stopped $\sigma$-algebra $\sF^0_\tau$ 
of the canonical filtration $\sF^0_t = \sigma(X_s, s \leq t)$, $t \geq 0$, of a right continuous stochastic process $(X_t, t \geq 0)$ on $\O$
for an $(\sF^0_t, t \geq 0)$-stopping time $\tau$:
 \begin{align}\label{eq:galmarino}
  \sF^0_\tau = \sigma(X_{t \wedge \tau}, t \geq 0),
 \end{align}
in case there exist stopping operators $(\a_t, t \geq 0)$ on $\O$, satisfying $X_s \circ \a_t = X_{s \wedge t}$ and $\a_s \circ \a_t = \a_{s \wedge t}$ for all $s, t \geq 0$.

At times, we need some basic properties of L\'evy processes. Furthermore, we will make use of centering and translation operators $\G$, $(\g_x, x \in E)$ on $\O$, satisfying
 \begin{align*}
  \forall t \geq 0, x, y \in E: \quad X_t \circ \G = X_t - X_0, \quad X_t \circ \g_x = X_t + x, \quad \g_x \circ \g_y = \g_{x+y}.
 \end{align*}
Then, similar to shift operators $(\T_t, t \geq 0)$, utilizing the spatial homogeneity of a L\'evy process $X$, we have for all $x, y \in E$, $F \in b\sF$:
 \begin{align}\label{eq:centering translation for Levy}
  \EV_x(F \circ \G) = \EV_0(F), \quad \EV_x (F \circ \g_y) = \EV_{x+y}(F).
 \end{align}
Just as in the case of shift operators, there exist natural centering and translation operators in the path-space setting, namely
 \begin{align}\label{eq:natural centering and translation}
   \T_t(\o) := \o(t + \,\cdot\,), \quad \G(\o) := \o - \o(0), \quad \g_x(\o) := \o + x, ~ x \in E.
 \end{align}

\subsection{Star Graphs}

A \textdef{star graph} is a metric graph with only one vertex $0$ and a finite set of (external) edges $\cE$, that is, it is represented by
 \begin{align*}
  \cG = \{0\} \cup \bigcup_{e \in \cE} \big( \{e\} \times [0, +\infty) \big),
 \end{align*}
with the endpoint $(e,0)$ of each edge $e \in \cE$ being identified with the vertex $0$. 
The notion of shortest distances induced by the Euclidean metric on the edges establishes a metric on $\cG$.
Inside $\cG \bs \{0\}$, the topology of the edges equals the Euclidean one-dimensional topology of intervals,
as the open $\e$-balls read
 \begin{align*}
  \forall e \in \cE, x > 0, \e < x: \quad \cB_\e\big( (e,x) \big) = \{e\} \times (x-\e, x+\e),
 \end{align*}
while on the star vertex, the edges are glued together:
 \begin{align*}
  \forall \e > 0:  \quad \cB_\e(0) = \bigcup_{e \in \cE} \{e\} \times [0, \e).
 \end{align*}
In particular, $\cG$ is a Polish space.

Every real-valued function $f$ on a star graph $\cG$ is represented by a collection of functions $(f_e, e \in \cE)$ for $f_e \colon [0, +\infty) \rightarrow \R$,
with ${f_e(x) = f \big( (e,x) \big)}$, $x \in [0, +\infty)$. As the endpoints of the edges are identified, the values at $0$ must coincide:
 \begin{align*}
  \forall e \in \cE: \quad f_e(0) = f \big( (e,0) \big) = f(0).
 \end{align*}

In every small neighborhood of a non-vertex point $g \in \cG \bs \{0\}$, a real valued function~$f$ on~$\cG$ can locally be interpreted as a function on some interval of~$\R$. Thus,
the differentiability of~$f_e$ at~$x$ induces the notion of differentiability of~$f$ at~$g = (e,x) \in \cG \bs \{0\}$. The concept of differentiation at the vertex is as follows:

\begin{definition}
 Let $f \colon \cG \rightarrow \R$ be a function on $\cG$, and $e \in \cE$.
 Then the \textdef{directional derivative of $f$ at $0$ along $e$} is defined by
  \begin{align*}
   f_e'(0) :=  \lim_{x \rightarrow 0, x > 0} f'(e, x).
  \end{align*}
 whenever the right-hand side exists.
\end{definition}

\begin{definition} \label{def:G_MG:C2 functions}
 Let $\cC^{0,2}_0(\cG)$ be the subspace of all functions $f$ in $\cC_0(\cG)$, which are twice continuously differentiable on $\cG \bs \{0\}$, such that 
 for every 
 $e \in \cE$,
 the limit
  \begin{align*}
   f_e''(0) := \lim_{x \rightarrow 0, x > 0} f_e''(x)
  \end{align*}
 exists, and $f_e''$ vanishes at infinity.
 Let $\cC^2_0(\cG)$ be the subset of those functions~$f$ in~$\cC^{0,2}_0(\cG)$, for which $f''$ extends from $\cG \bs \{0\}$ to a function in~$\cC_0(\cG)$.
\end{definition}


We will mainly be concerned with the following operator on $\cC^2_0(\cG)$:

\begin{definition}
 The \textdef{Laplacian} $\D$ on $\cG$ is defined by 
  \begin{align*}
   \D \colon \cC^2_0(\cG) \rightarrow \cC_0(\cG), ~ f \mapsto \D(f) := f''.
  \end{align*}
\end{definition}

\subsection{Brownian Motions on Star Graphs} \label{subsec:G_BM:def}

Extending the definition of~\cite{KPS12} and \cite[Chapter~6]{Knight81} to the discontinuous setting of \cite{ItoMcKean63}, we 
define a Brownian motion on a star graph $\cG$ (more generally, on a metric graph~$\cG$) to be a right continuous, strong Markov process on $\cG$ 
which behaves on every edge like the standard one-dimensional Brownian motion. That is, the local coordinate of such a process, if stopped once it leaves its starting edge,
needs to be equivalent to the Brownian motion on $\R$, stopped when leaving the corresponding interval of the process' initial edge:

\begin{definition} \label{def:G_BM:BM}
 Let \processX\ be a right continuous, strong Markov process on a metric graph $\cG$. 
 $X$ is a \textdef{Brownian motion on~$\cG$}, if
 for all $g = (e,x) \in \cG$, the random time 
  \begin{align*}
   H_X := \inf \big\{ t \geq 0: X_t \notin e^0 \big\}, \quad 
   \text{with } e^0 = \{e\} \times ( 0, \cR_e ),
  \end{align*}
 is a $(\sG_t, t \geq 0)$-stopping time,
 and for all $n \in \N$, $f_1, \ldots, f_n \in b\sB(\cG)$, $t_1, \ldots, t_n \in \R_+$,
  \begin{align*}
   \EV_{(e,x)} \big( f_1(X_{t_1 \wedge H_X}) \cdots f_n(X_{t_n \wedge H_X}) \big)
   = \EV^B_{x} \big( f_1(e, B_{t_1 \wedge H_B}) \cdots f_n(e, B_{t_n \wedge H_B}) \big)
  \end{align*}
 holds, with $B$ being the Brownian motion on $\R$ and $H_B := \inf \big\{ t \geq 0: B_t \notin ( 0, \cR_e ) \big\}$. 
\end{definition}

The technical requirement of the first hitting time $H_X$ of the closed set $\comp e^0$
being a stopping time is always satisfied if we are working in the context of usual hypotheses (cf.~\cite[Sections~10, A.5]{Sharpe88}).
It can also be achieved if we ensure the continuity of the process $X$ until $H_X$ (see~\cite[Theorem~49.5]{Bauer96}),
that is, continuity while the process runs inside any edge. While the latter condition is not implied by the above definition,
it is a desirable property which may be implemented by constructing a Brownian motion on a metric graph
with continuous excursions of a ``standard'' one-dimensional Brownian motion, as done in sections~\ref{sec:KPS extension} and \ref{sec:G_IM}.


We give the main result on the classification of Brownian motions on star graphs:

\begin{theorem}  \label{theo:G_BM:Fellers theorem on star graph}
 Let $X$ be a Brownian motion on star graph $\cG$ with star point $0$.  Then $X$ is a Feller process with generator $A = \frac{1}{2} \D$,
 and there exist constants
 $p_1 \geq 0$, $p^{e}_2 \geq 0$ for each $e \in \cE$, $p_3 \geq 0$ and a measure $p_4$ on $\cG \bs \{0\}$ with
  \begin{align*}
    p_1 + \sum_{e \in \cE} p^{e}_2 + p_3 + \int_{\cG \bs \{0\}} \big( 1 - e^{-x} \big) \, p_4 \big( d(e,x) \big) = 1,
  \end{align*}
 and
  \begin{align}\label{eq:G_BM:infinite jumpmeasure}
    p_4 \big( \cG \bs \{0\} \big) = +\infty, \quad \text{if} \quad  \sum_{e \in \cE} p^{e}_2 + p_3 = 0,
  \end{align}
 such that the domain of $A$ reads
  \begin{align*}
   \sD(A)  = 
       \Big\{ & f \in \cC^2_0(\cG): \\ 
              &       p_1 f(0) - \sum_{e \in \cE} p^{e}_2 f_e'(0) + \frac{p_3}{2} f''(0) - \int_{\cG \bs \{0\}} \big( f(g) - f(0) \big) \, p_4(dg) = 0 \Big\}. 
  \end{align*}
 Furthermore, $X$ is uniquely characterized by this set of normalized constants.
\end{theorem}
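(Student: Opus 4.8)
### Proof Proposal

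The plan is to reduce Theorem~\ref{theo:G_BM:Fellers theorem on star graph} to the one-dimensional half-line case of \IM\ via a localization/stopping argument, using Dynkin's formula to extract the boundary condition at the vertex~$0$. First I would establish that the resolvent $U_\a$ of $X$ maps $\cC_0(\cG)$ into itself: for the local coordinate inside a fixed edge $e^0$, the defining property of a Brownian motion (Definition~\ref{def:G_BM:BM}) identifies $X$ stopped at $H_X$ with the standard one-dimensional Brownian motion stopped at leaving $(0,\infty)$, so the stopped resolvent is explicit and continuous; then Dynkin's formula~\eqref{eq:Dynkins formula (resolvent)} applied at $\t = H_X \wedge \t_{\cB_0(\e)}$ patches these local pieces together with the behavior near~$0$. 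The strong Markov property plus right continuity gives the quasi-left-continuity one needs to control $X_{H_X}$. Once $U_\a\cC_0(\cG)\subseteq\cC_0(\cG)$ is in hand, \eqref{eq:Feller (resolvent)} shows $X$ is Feller, hence has a well-defined weak generator~$A$, and since inside each edge $X$ runs like Brownian motion on~$\R$, any $f\in\sD(A)$ must satisfy $Af = \tfrac12 f''$ on $\cG\bs\{0\}$; continuity of $Af$ forces $f\in\cC^2_0(\cG)$ and $A = \tfrac12\D$.

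The core step is to characterize $\sD(A)$ at the vertex. Here I would argue exactly as in the half-line treatment of \IM\ and in \KPS, using Dynkin's formula~\eqref{eq:Dynkins formula (generator)} with exit times $\t_{\e_n}$ of small balls $\overline{\cB_0(\e_n)}$ around~$0$. For $f\in\sD(A)$, write $\EV_0(f(X_{\t_\e})) - f(0)$ by conditioning on whether the excursion from~$0$ that realizes the exit is a continuous excursion into some edge (contributing the $p_2^e f_e'(0)$ and $p_3 f''(0)$ and $p_1 f(0)$ terms, via the known local-time/speed-measure analysis for reflecting Brownian motion on a half line, applied edge by edge) or a jump excursion governed by the Lévy-type exit kernel (contributing the integral $\int (f(g)-f(0))\,p_4(dg)$). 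The finiteness of $\cE$ is what makes the ``angular'' part of each continuous excursion a finite convex combination over edges, which is why the reflection weight splits as $\sum_e p_2^e$. Dividing by $\EV_0(\t_\e)$ and letting $\e\downarrow0$ yields a linear functional on $\sD(A)$ that must vanish; its coefficients define the constants $p_1,(p_2^e),p_3$ and the measure~$p_4$, and the normalization follows by testing against a fixed function, e.g. one agreeing with $e^{-x}$ near~$0$ on each edge, exactly as in~\eqref{eq:intro:generator domain halfline}. Conversely, to see that $\sD(A)$ is \emph{all} of the indicated set and that the constants are uniquely determined, I would invoke the fact that $A$ generates a Feller semigroup: $A$ is dissipative and $\sD(A)$ is a core, so if the candidate operator $\tfrac12\D$ restricted to the boundary-condition space is also the generator of \emph{some} Feller process (which is exactly what the construction in the later sections supplies, or alternatively what an abstract Hille--Yosida argument on the star graph gives once one checks the range condition), then the two generators, both extensions of $\tfrac12\D$ on compactly supported functions vanishing near~$0$ and agreeing on the boundary functional, must coincide; uniqueness of the constants is immediate since two distinct normalized data sets give distinct domains.

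The condition~\eqref{eq:G_BM:infinite jumpmeasure} --- that $p_4$ must be infinite when $\sum_e p_2^e + p_3 = 0$ --- is a separate, more delicate point and I expect it to be the main obstacle. If $\sum_e p_2^e + p_3 = 0$, then $0$ is not a ``slowly reflecting'' or sticky boundary, so a continuous excursion can never bring the process back out of~$0$ continuously; the only way $X$ can leave~$0$ (and it must leave, by normality and right continuity together with the fact that $0$ is not a trap unless $p_1>0$ with everything else zero, the degenerate absorbed-then-killed case) is by a jump. If moreover $p_4$ were finite, then after reaching~$0$ the process would have to wait a strictly positive (exponential) holding time before jumping, which would make $0$ a holding point and contribute a nonzero stickiness-type term $p_3$, a contradiction; and if $p_1$ absorbs all the mass the process is killed on hitting~$0$, which is excluded because a Brownian motion in our sense is normal and must actually be defined as reaching~$0$ (one checks $0$ is regular for itself from each edge, so $X$ does hit~$0$ with positive probability and cannot simply be killed there unless $p_1 = 1$, again forcing the other weights to vanish only in that single degenerate instance which~\eqref{eq:G_BM:infinite jumpmeasure} is phrased to avoid). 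Making this rigorous requires a careful analysis of the holding time at~$0$ via the zero-set of the local coordinate and an exclusion of the ``instantaneous finitely-many-jumps'' scenario; I would handle it by showing that $\EV_0(\t_\e)/\e \to 0$ is incompatible with a finite jump kernel and zero reflection/stickiness, essentially because a finite jump rate forces $\EV_0(\t_\e)$ to stay bounded below while the continuous exit mechanisms that would otherwise produce $\EV_0(\t_\e)\asymp\e$ or $\asymp\e^2$ are switched off.
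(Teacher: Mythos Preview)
Your approach is essentially the one the paper indicates: the paper does not give a self-contained proof of this theorem but states that it ``can be proved similarly to the half-line setting with the help of Dynkin's formula for the generator~\eqref{eq:Dynkins formula (generator)}'', citing \cite[Lemma~6.2]{Knight81}, \cite[Section~8]{ItoMcKean63}, \cite[Lemma~3.2]{KPS12}, and \cite[Section~20.3]{Werner16} for the details. Your plan --- localize to edges to get the Feller property and $A=\tfrac12\D$ inside, then apply Dynkin's exit formula at $0$ to extract the boundary functional --- is exactly that route.

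The one place you diverge is the sufficiency direction ($\sD(A)\supseteq$ the boundary-condition set). You propose Hille--Yosida or an appeal to the later constructions; the paper instead isolates this as Lemma~\ref{lem:G_BM:totality on star graph} and gives a short, elementary argument: any $f$ in the candidate domain solving $\tfrac12\D f=\a f$ must have the form $f(e,x)=c\,e^{-\sqrt{2\a}x}$ on every edge (by the ODE plus $\cC_0$ and continuity at $0$), and the boundary condition then forces $c=0$. This is cleaner than invoking Hille--Yosida, and it avoids any circularity with the constructions in Sections~\ref{sec:KPS extension}--\ref{sec:G_IM}, which themselves quote Lemma~\ref{lem:G_BM:totality on star graph} to upgrade $\subseteq$ to equality. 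Your heuristic for~\eqref{eq:G_BM:infinite jumpmeasure} is along the right lines, and the paper likewise defers the rigorous version to the cited references rather than spelling it out.
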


The above theorem can be proved similarly to the half-line setting 
with the help of Dynkin's formula for the generator~\eqref{eq:Dynkins formula (generator)}
(cf.~\cite[Lemma~6.2]{Knight81}, \cite[Section~8]{ItoMcKean63}, and \cite[Lemma 3.2]{KPS12}), 
giving special attention to the graph topology. A proof for general metric graphs can be found in \cite[Section~20.3]{Werner16}.
Notice that we use the equivalent normalization $1 - e^{-x}$, instead of the classical $1 \wedge x$, for the jump measure~$p_4$, 
which will turn out to be more suitable in our context.

In general, the approach via Dynkin's formula only gives necessity of the boundary condition. For later use,
we prove the following result which assures sufficiency:
\begin{lemma} \label{lem:G_BM:totality on star graph}
 Let $X$ be a Brownian motion on a star graph $\cG$ with star point $0$, and let 
 $p_1 \geq 0$, $p^{e}_2 \geq 0$ for each $e \in \cE$, $p_3 \geq 0$ and a measure $p_4$ on $\cG \bs \{0\}$ be given with
  \begin{align*}
    p_1 + \sum_{e \in \cE} p^{e}_2 + p_3 + \int_{\cG \bs \{0\}} \big( 1 - e^{-x} \big) \, p_4 \big( d(e,x) \big) > 0,
  \end{align*}
 such that the generator of $X$ is $A = \frac{1}{2} \D$ and its domain satisfies $\sD(A) \subseteq \sD$, with
  \begin{align*}
   \sD  := 
       \Big\{ & f \in \cC^2_0(\cG) : \\ 
   &           p_1 f(0) - \sum_{e \in \cE} p^{e}_2 f_e'(0) + \frac{p_3}{2} f''(0) - \int_{\cG \bs \{0\}} \big( f(g) - f(0) \big) \, p_4(dg) = 0 \Big\}.
  \end{align*}
 Then $\sD(A) = \sD$.
\end{lemma}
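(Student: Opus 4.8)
The plan is to exploit the known correspondence between Brownian motions on $\cG$ and their generators established in Theorem~\ref{theo:G_BM:Fellers theorem on star graph}, together with the fact that a Feller generator is a closed operator whose domain cannot be properly extended inside the domain of a dissipative extension. Concretely, $X$ is a Feller process with generator $A = \frac{1}{2}\D$, and by Theorem~\ref{theo:G_BM:Fellers theorem on star graph} there is a \emph{canonical} tuple $(q_1, (q^e_2)_{e\in\cE}, q_3, q_4)$, normalized so that $q_1 + \sum_e q^e_2 + q_3 + \int (1-e^{-x})\,q_4 = 1$, such that $\sD(A)$ equals the set $\sD'$ cut out by the boundary functional associated with that tuple. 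The hypothesis says $\sD(A) \subseteq \sD$, where $\sD$ is the kernel of the functional associated with the given tuple $(p_1, (p^e_2)_e, p_3, p_4)$ (which we only know to be nontrivial, not normalized). So the heart of the matter is: if the kernel of one \FW-type functional is contained in the kernel of another, and the first one is actually \emph{realized} as a generator domain, then the two kernels coincide.

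First I would show that $\sD$ is a proper subspace of $\cC^2_0(\cG)$ of codimension one, i.e. that the linear functional
\begin{align*}
 \Phi(f) := p_1 f(0) - \sum_{e\in\cE} p^e_2 f_e'(0) + \frac{p_3}{2} f''(0) - \int_{\cG\bs\{0\}} \big(f(g)-f(0)\big)\,p_4(dg)
\end{align*}
is nonzero on $\cC^2_0(\cG)$. This uses the nontriviality assumption on the weights: one exhibits explicit test functions in $\cC^2_0(\cG)$ — e.g. functions that are $1$ at $0$, functions with prescribed nonzero one-sided derivative $f_e'(0)$ along a single edge, functions with prescribed $f''(0)$, and functions supported away from $0$ — to see that at least one of the four terms can be made nonzero while the integral $\int(1-e^{-x})\,p_4(dg) < \infty$ guarantees $\Phi$ is well-defined and continuous on $\cC^2_0(\cG)$ (with respect to the natural norm $\|f\|_\infty + \|f''\|_\infty$; integrability of $f(g)-f(0)$ against $p_4$ follows from $|f(g)-f(0)| \leq C(1\wedge x) \leq C'(1-e^{-x})$ for $f \in \cC^2_0(\cG)$, using boundedness of $f$ and of $f'$ near $0$). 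Likewise the canonical functional $\Phi'$ of $A$ is a nonzero continuous functional with $\ker\Phi' = \sD(A)$. Then $\sD = \ker\Phi$ and $\sD(A) = \ker\Phi'$ are both closed hyperplanes in $\cC^2_0(\cG)$.

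Next, from $\ker\Phi' = \sD(A) \subseteq \sD = \ker\Phi$ and the fact that both are hyperplanes (codimension exactly one), a standard linear-algebra fact gives $\Phi = c\,\Phi'$ for some scalar $c \neq 0$, hence $\ker\Phi = \ker\Phi'$, i.e. $\sD = \sD(A)$, which is the claim. The one subtlety is making sure $\sD(A)$ really has codimension one and is not, say, all of $\cC^2_0(\cG)$; but that is exactly the content of $\ker\Phi' = \sD(A)$ with $\Phi' \neq 0$, which in turn comes from Theorem~\ref{theo:G_BM:Fellers theorem on star graph} together with the constraint \eqref{eq:G_BM:infinite jumpmeasure} ruling out the degenerate normalization. (Alternatively, if one prefers not to invoke hyperplane arithmetic: the inclusion $\sD(A) \subseteq \sD \subsetneq \cC^2_0(\cG)$ shows $\sD$ is a dissipative extension-domain squeezed between $\sD(A)$ and the maximal operator; since $A$ is already the generator of a Feller — hence a fortiori a $C_0$-contraction — semigroup, it is maximal dissipative, so no proper extension of $\sD(A)$ inside $\cC^2_0(\cG)$ can still give a dissipative operator unless it coincides with $\sD(A)$, provided one checks $\tfrac12\D$ on $\sD$ is dissipative — which follows from the positivity of $p_1,p^e_2,p_3,p_4$ via the classical boundary-term computation.)

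The main obstacle I anticipate is the codimension/hyperplane bookkeeping at the vertex: one must be careful that $\Phi$ is genuinely nonzero and continuous given that $p_4$ may be infinite (so the integral term is only conditionally controlled, via the $1-e^{-x}$ normalization and the $\cC^2_0$ regularity of $f$ near $0$), and that the space $\cC^2_0(\cG)$ with the appropriate graph norm is the right ambient space in which "$\sD(A)$ is a closed hyperplane" makes sense. Once $\Phi$ and $\Phi'$ are both known to be nonzero continuous functionals with one contained in the other's kernel, the conclusion is immediate. I would present the argument via the dissipativity/maximality route if the paper has already developed that machinery for $\tfrac12\D$, and via the explicit hyperplane argument otherwise, since the latter only needs Theorem~\ref{theo:G_BM:Fellers theorem on star graph} and elementary linear algebra.
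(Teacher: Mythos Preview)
Your hyperplane argument is internally correct, but in the paper's logical structure it is circular: the text positions Lemma~\ref{lem:G_BM:totality on star graph} as precisely the sufficiency half of Theorem~\ref{theo:G_BM:Fellers theorem on star graph} (the Dynkin-formula approach only yields the inclusion $\sD(A)\subseteq\ker\Phi'$). So invoking the full equality $\sD(A)=\ker\Phi'$ from Theorem~\ref{theo:G_BM:Fellers theorem on star graph} to prove the lemma assumes what is to be shown.

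The paper's proof avoids this by a direct eigenvalue computation. By the standard Dynkin criterion (cited as \cite[Lemma~1.1, Theorem~1.2]{Dynkin65}), since $A$ is a generator and $\tfrac12\D$ on $\sD$ extends $A$, it suffices to show that $\tfrac12\D f=\a f$ with $f\in\sD$ forces $f=0$. On each edge the ODE gives $f(e,x)=c^e_1 e^{-\sqrt{2\a}x}+c^e_2 e^{\sqrt{2\a}x}$; the condition $f\in\cC_0(\cG)$ kills $c^e_2$, continuity at $0$ forces all $c^e_1$ to a common $c$, and plugging $f(e,x)=c\,e^{-\sqrt{2\a}x}$ into the boundary condition yields
\[
 c\,\Big(p_1+\sqrt{2\a}\sum_e p^e_2+\a\,p_3+\int(1-e^{-\sqrt{2\a}x})\,p_4(d(e,x))\Big)=0,
\]
where the bracket is strictly positive by the nontriviality hypothesis, so $c=0$. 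This is a handful of lines and needs no functional-analytic machinery beyond the Dynkin criterion.

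Your Approach~2 (maximal dissipativity / Lumer--Phillips) is not circular and is essentially the abstract shell of the paper's argument: the step ``check that $\tfrac12\D$ on $\sD$ has no positive eigenvalue'' that you defer is exactly the computation above. So your second route is correct, but once unwound it \emph{is} the paper's proof; the hyperplane route, while slicker, leans on a statement the lemma is meant to establish.
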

\begin{proof}
 For $\a > 0$, consider $\frac{1}{2} \D = \a f$ with $f \in \sD$. It suffices to show that $f \equiv 0$ is the only possible solution
 (see, e.g., \cite[Lemma~1.1, Theorem~1.2]{Dynkin65}).
 
 The function $f$ solves the differential equation $\frac{1}{2} \D = \a f$ on every edge, so it must be of the form
  \begin{align*}
    f(e,x) = c^e_1 \, e^{-\sqrt{2 \a} x} + c^e_2 \, e^{\sqrt{2 \a} x}, \quad e \in \cE, ~ x \geq 0,
  \end{align*}
 for some $c^e_1, c^e_2 \in \R$, for each $e \in \cE$.
 Since $f$ needs to vanish at infinity (because $f \in \cC_0(\cG)$), it is $c^e_2 = 0$ for all $e \in \cE$.
 But then, in order to be continuous at the star vertex,
 all the $c^e_1$ need to coincide. Therefore, setting $c^e_1 = c$ for all $e \in \cE$ results in
  \begin{align*}
    f(e,x) = c \, e^{-\sqrt{2 \a} x}, \quad e \in \cE, ~ x \geq 0.
  \end{align*} 
 As $f \in \sD(A) \subseteq \sD$, the boundary condition for $f$ now yields 
  \begin{align*}
   c \, \Big( p_1 + \sqrt{2 \a} \, \sum_{e \in \cE} p^e_2 + \a \, p_3 + \int_{\cG \bs \{0\}} \big( 1 - e^{-\sqrt{2 \a} x} \big) \, p_4 \big( d(e,x) \big)  \Big)
   = 0,
  \end{align*}
 which is only possible for $c = 0$, because all of the summands in the parentheses are non-negative, but must add up to a positive number.
 
 Thus $\frac{1}{2} \D f = \a f$, $f \in \sD$, is only solved by $f = 0$, completing the proof.\sqed
\end{proof}

We are ready for the pathwise constructions. For convenience,
we have collected basic information on some prototype Brownian motions on the half line, the Walsh process, and easy, but non-standard results on Brownian
local time in Appendix~\ref{app:Preliminaries}.

\section{Construction for Finite Jump Measures}\label{sec:KPS extension}

 The upcoming, extensive construction in section~\ref{sec:G_IM} is only necessary for jump measures~$p_4$ which admit $p_4 \big( \cG \bs \{0\} \big) = +\infty$.
 If~$p_4$ is a finite measure on $\cG \bs \{0\}$, there is a much simpler way to construct a Brownian motion~$X$ on the star graph~$\cG$ with generator domain
  \begin{align*}
    \sD(A) =  \Big\{   & f \in \cC^2_0(\cG) : \\ 
                       & p_1 f(0) - \sum_{e \in \cE} p^{e}_2 f_e'(0) + \frac{p_3}{2} f''(0) - \int_{\cG \bs \{0\}} \big( f(g) - f(0) \big) \, p_4(dg) = 0 \Big\},
  \end{align*}
 which we briefly cover now. To this end, we prepare a technique for introducing isolated jumps:

\subsection{The Technique of Successive Revivals}\label{subsec:C_CO:identical iterations}

We will shortly review the technique of concatenation:
Given a sequence of right processes $X^n$ on~$\O^n$ with lifetimes~$\z^n$ on disjoint state spaces~$E^n$, $n \in \N$, we can glue them together to
a process~$X$ on~$\bigcup_n E^n$ by setting on $\Omega := \prod_{n \in \N} \Omega^n$, 
for~$\omega := (\omega^n, n \in \N) \in \Omega$, $t \geq 0$, 
 \begin{align*}
  X_t(\omega) :=
    \begin{cases}
      X^1_t(\omega^1),				 & t < \zeta^{1}(\omega^1), \\
      X^2_{t - \zeta^{1}(\omega^1)}(\omega^2),		 & \zeta^{1}(\omega^1) \leq t < \zeta^{1}(\omega^1) + \zeta^{2}(\omega^2), \\
      X^3_{t - \left(\zeta^{1}(\omega^1) + \zeta^{2}(\omega^2)\right)}(\omega^3),		 & \zeta^{1}(\omega^1) + \zeta^{2}(\omega^2) \leq t < \zeta^{1}(\omega^1) + \cdots + \zeta^{3}(\omega^3), \\
      ~ \vdots					 & ~ \vdots \\
      \Delta,  & t \geq \sum_{n \in \N}\zeta^{n}(\omega^n).
    \end{cases}
 \end{align*}
The revival point after each death is chosen by ``memoryless'' kernels~$K^n$ which take the ``information'' of $X^n$ until its death into account,
so-called \textdef{transfer kernels}~$K^n$ from~$X^n$ to~$(X^{n+1}, E^{n+1})$, which are introduced in~\cite[Section~14]{Sharpe88}.
We will not define them here, because deterministic transfer kernels of the form 
 \begin{align*}
  K^n(x, dy) = q(dy), \quad x \in E^n,
 \end{align*}
for a probability measure $q$ on $E^{n+1}$ will turn out to be sufficient for our applications. Given such kernels $K^n$, $n \in \N$, the distribution of the
concatenated process~$X$ is given by the initial measures
 \begin{align*}
   & \PV_x(d\omega^1, \ldots, d\omega^{n-1}, d\omega^n, d\omega^{n+1}, \ldots) \\
   & := \delta_{[\Delta^1]}(d\omega^1) \cdots \delta_{[\Delta^{n-1}]}(d\omega^{n-1}) \,
      \PV^n_x(d\omega^n) \, K^n(\omega^n, dx^{n+1}) \,
      \PV^{n+1}_{x^{n+1}}(d\omega^{n+1}) \cdots
 \end{align*}
for any $x \in E^n$, $n \in \N$.

In~\cite[Section~2.3]{WernerConcat} we extended the results of~\cite[Section~14]{Sharpe88} on the concatenation of right processes and achieved
the following result:
\begin{theorem} \label{theo:concatenation countable} 
 Let $(X^n, n \in \N)$ be a sequence of right processes on disjoint spaces $(E^n, n \in \N)$, such that the topological union
 $E := \bigcup_{n \in \N} E^n$ is a Radon space, and let a transfer kernel $K^n$ from $X^n$ to $(X^{n+1}, E^{n+1})$ be given for each $n \in \N$.
 Then the concatenation $X$ of the processes $(X^n, n \in \N)$ via the transfer kernels $(K^n, n \in \N)$
 is a right process on $E$. With $R^n := \inf \{ t \geq 0: X_t \in E^{n+1} \}$, for all $n \in \N$, $x \in \bigcup_{j=1}^n E^j$, $f \in b\sE^{n+1}$,
  \begin{align*}
   \EV_x \big( f(X_{R^n}) \, \1_{\{R^n < \infty \}} \, \big| \, \sF_{R^n-} \big) = K^n f \circ \pi^n \, \1_{\{R^n < \infty \}}. 
  \end{align*}
\end{theorem}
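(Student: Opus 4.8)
The plan is to realize $X$ as the increasing limit of finite concatenations and to carry the right-process structure across that limit, with the single-revival theorem of Sharpe \cite[Section~14]{Sharpe88} — in the form extended in the earlier parts of \cite{WernerConcat} — as the basic tool. Throughout I write $\pi^n \colon \O = \prod_{m \in \N} \O^m \to \O^n$ for the coordinate projection and $E^{(n)} := \bigcup_{j=1}^n E^j$; the latter, a finite topological union of Radon spaces, is again Radon.

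First I would treat two processes: Sharpe's theorem gives that the concatenation $Y^{(2)}$ of $X^1$ and $X^2$ along $K^1$ is a right process on $E^{(2)}$, together with the revival identity $\EV_x\big(f(Y^{(2)}_{R^1})\,\1_{\{R^1<\infty\}}\,\big|\,\sF_{R^1-}\big) = K^1 f \circ \pi^1\,\1_{\{R^1<\infty\}}$ for bounded measurable $f$ on $E^2$. Then I would induct on $n$: assuming the $n$-fold concatenation $Y^{(n)}$ of $X^1,\dots,X^n$ along $K^1,\dots,K^{n-1}$ is already a right process on $E^{(n)}$ satisfying the revival identities at $R^1,\dots,R^{n-1}$, I append $X^{n+1}$. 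The point requiring care here is that $K^n$ is a transfer kernel \emph{from} $X^n$, whereas Sharpe's theorem wants one \emph{from} $Y^{(n)}$; but on the event that $Y^{(n)}$ is in its terminal ($X^n$-)phase the information of $Y^{(n)}$ up to its death is recovered from $\pi^n$, so $K^n \circ \pi^n$ is a transfer kernel from $Y^{(n)}$ to $(X^{n+1}, E^{n+1})$, and Sharpe's theorem applied once more shows that $Y^{(n+1)}$, the concatenation of $Y^{(n)}$ and $X^{n+1}$ along $K^n\circ\pi^n$, is a right process on $E^{(n+1)}$ with the revival identity at $R^n$ as stated. A routine identification $\big(\prod_{j=1}^n\O^j\big)\times\O^{n+1} \cong \prod_{j=1}^{n+1}\O^j$ then confirms that $Y^{(n+1)}$ carries exactly the law given by the displayed initial measures, so the notation is consistent.

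Finally I would pass to the limit. On the stochastic interval $[0,R^n)$ the path of the infinite concatenation $X$ agrees with that of $Y^{(n)}$, and since $\sup_n R^n = \sum_n \zeta^n =: \zeta$ is the lifetime of $X$ and each $Y^{(n)}$ is right-continuous, $X$ is right-continuous on $[0,\zeta)$ and equals $\Delta$ on $[\zeta,\infty)$, hence right-continuous everywhere — provided revivals do not accumulate strictly before $\zeta$, which is automatic once the subprocesses have almost surely positive lifetimes, the case relevant here. The strong Markov property and the remaining right-process hypotheses (Borel measurability of the transition function and right-continuity of $\a$-excessive functions along the paths) I would then inherit from the $Y^{(n)}$: a finite stopping time $\tau$ falls, on each event $\{R^{n-1}\le\tau<R^n\}$, into a phase governed by the strong Markov process $Y^{(n)}$, and the revival identities splice the phases together; equivalently, $U^X_\a$ decomposes as a sum over phases of the contributions of the $U^{X^n}_\a$ composed with the kernels $K^n$, from which the regularity of $\a$-excessive functions follows using the right-continuity already established. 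The revival identity at $R^n$ was delivered by the inductive step and is untouched by the later revivals, which finishes the proof.

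The step I expect to be the main obstacle is this passage to the limit: ensuring the concatenated path is genuinely right-continuous near and at the lifetime $\zeta$ (no accumulation of revivals before $\zeta$, correct passage to the cemetery), and checking that the strong Markov property together with the right-process hypotheses — which are not local in time — survive the countable iteration. The finite step itself reduces to Sharpe's single-revival theorem, once the lifting of $K^n$ from $X^n$ to $Y^{(n)}$ has been arranged.
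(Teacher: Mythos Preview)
The paper does not actually prove this theorem: it is imported verbatim from the companion article \cite[Section~2.3]{WernerConcat}, where the author ``extended the results of \cite[Section~14]{Sharpe88} on the concatenation of right processes'' to the countable case. So there is no proof in the present paper to compare against.

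That said, your plan matches the route the paper points to: start from Sharpe's two-process revival theorem, iterate to finite concatenations $Y^{(n)}$ by lifting $K^n$ from $X^n$ to $Y^{(n)}$ via $\pi^n$, and then pass to the countable limit. Your own diagnosis of the delicate point is accurate: the finite step is routine once the transfer-kernel lifting is arranged, and the genuine work sits in the limit passage --- right continuity at and near $\zeta = \sum_n \zeta^n$, and verifying that the right-process hypotheses (in particular right continuity of $\alpha$-excessive functions along the paths, which is a global-in-time statement) survive the iteration. These are exactly the issues that a full write-up in \cite{WernerConcat} has to address, and your sketch of how to handle them (phase-by-phase decomposition of $U^X_\alpha$ and of stopping times) is the right shape.
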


Now, let a single right process~$X^0$ on~$E$ be given, together with a transfer kernel~$K^0$ from~$X^0$ to~$(X^0, E)$.
Define the sequences
 \begin{align}\label{eq:pasting processes}
  X^n := \{n\} \times X^{0}, \quad K^n := \delta_{n+1} \otimes K^{0}, \quad n \in \N.
 \end{align}
Then the $X^n$, $n \in \N$, are right processes on disjoint spaces $E^n := \{n\} \times E$, 
$K^n$ are transfer kernels from $X^n$ to $(X^{n+1}, E^{n+1})$, 
and the concatenation $X$ is a right process on~$\tE := \bigcup_n E^n = \N \times E$.
Now discard the first coordinate by applying the projection $\pi \colon \N \times E \rightarrow E$ to~$X$.
By checking the consistency condition for state-space transformations (see~\cite[Section~13]{Sharpe88}, \cite[Section~3]{WernerConcat}),
we see that $X^q := \pi(X)$ is a right process on~$E$. It is the \textdef{instant revival process}
constructed of independent copies of $X^0$, which are revived after each death by the \textdef{revival distribution} $q$.

\subsection{Construction of Brownian motions on Star Graphs (Finite Measure)}\label{subsec:KPS extension}

 Let $p_1 \geq 0$, $p^e_2 \geq 0$ for each $e \in \cE$, $p_3 \geq 0$, and $p_4$ be a finite measure on $\cG \bs \{0\}$, normalized by
  \begin{align*}
    p_1 + \sum_{e \in \cE} p^{e}_2 + p_3 + \int_{\cG \bs \{0\}} \big( 1 - e^{-x} \big) \, p_4 \big( d(e,x) \big) = 1, \quad \text{with } p_2 := \sum_{e \in \cE} p^{e}_2.
  \end{align*}

 If $p_2 > 0$, following the construction of \cite{KPS_Walsh12A} and \cite{KPS_Walsh12B}, start with the Walsh process~$W$ on~$\cG$ with reflection weights $(q^e_2 = p^e_2/p_2, e \in \cE)$
 and local time $(L_t, t \geq 0)$ at the vertex~$0$.
 Then implement the stickiness parameter~$p_3$ by ``slowing down''~$W$ at the vertex via the canonical approach of time change, as given in~\cite[Section 2]{KPS_Walsh12B}:
 For $\gamma := p_3/p_2$, introduce the new time scale~$\tau$ by defining its inverse by
  \begin{align*}
   \tau^{-1} \colon \R_+ \rightarrow \R_+, \quad t \mapsto t + \gamma \, L_t,
  \end{align*}
 and consider the sticky Walsh process
  \begin{align*}
   W^s_t := W_{\tau(t)}, \quad t \geq 0,
  \end{align*}
 with its new local time $L^s_t = L_{\tau(t)}$, $t \geq 0$, as seen in \cite[Equation (2.22)]{KPS_Walsh12B}.
 Next, following \cite[Section 3]{KPS_Walsh12B}, introduce an exponentially distributed random variable~$S$ with rate $\beta := \frac{p_1 + p_4( \cG \bs \{0\})}{p_2}$,
 independent of $W^s$,
 and kill $W^s$ when its local time exceeds~$S$, that is, at the random time
  \begin{align*}
   \zeta_{\beta, \gamma} := \inf \{ t \geq 0: L^s_t > S \},
  \end{align*}
 to obtain the process
  \begin{align*}
   W^g_t :=
    \begin{cases}
     W^s_t, & t < \zeta_{\beta, \gamma}, \\
     \D, & t \geq \zeta_{\beta, \gamma}.
    \end{cases}
  \end{align*}
 Then \cite[Theorem 3.7]{KPS_Walsh12B} shows that $W^g$ is a Brownian motion on $\cG$ with generator 
  \begin{align*}
    \sD(A^g) =  \Big\{ & f \in \cC^2_0(\cG) : \\ 
                       & \big( p_1 + p_4( \cG \bs \{0\} ) \big) f(0) - \sum_{e \in \cE} p^{e}_2 f_e'(0) + \frac{p_3}{2} f''(0) = 0 \Big\}.
  \end{align*}
  
 Now adjoin an absorbing, isolated point $\sq$ to the state space $\cG$ and let $X^q$ be the 
 instant revival process resulting from $W^g$, as explained in subsection~\ref{subsec:C_CO:identical iterations}, where the original process~$W^g$ is revived, whenever it dies, 
 with the transfer kernel
  \begin{align*}
   \forall g \in \cG: \quad K^0(g, \,\cdot\,) := q, \quad \text{with} \quad q := \frac{p_1 \, \e_\sq + p_4}{p_1 + p_4(\cG \bs \{0\})}.
  \end{align*}
 The revival procedure transforms the generator in the expected way, that is, the killing parameter is resolved in the jump (revival) measure
 (a proof is found in Appendix~\ref{app:killing and revival}):
 \begin{lemma} \label{lem:G_IM:Feller resolvent of revived X}
 Let $X^\bullet$ be a Brownian motion on the star graph $\cG$ with generator
  \begin{align*}
   \sD(A^\bullet)  = 
       \Big\{ & f \in \cC^2_0(\cG): \\ 
              &       p_1 f(0) - \sum_{e \in \cE} p^{e}_2 f_e'(0) + \frac{p_3}{2} f''(0) - \int_{\cG \bs \{0\}} \big( f(g) - f(0) \big) \, p_4(dg) = 0 \Big\}.
  \end{align*}
 and lifetime~$\zeta$. 
 Let $q$ be a probability measure on $\cG$, and 
 $X^q$ be the identical copies process, 
 resulting from successive revivals (see subsection~\ref{subsec:C_CO:identical iterations}) of~$X^0 := X^\bullet$  with the revival kernel~$K^0 = q$.
 If $\varphi_\a := \EV_{\,\cdot} \big( e^{-\a \z} \big) \in \cC_0(\cG)$ and $1 - \varphi_\a \in \sD(A^\bullet)$ for all $\a > 0$,
 then $X^q$ is a Brownian motion on $\cG$ with generator
  \begin{align*}
   \sD(A^q) = 
       \Big\{ &  f \in \cC^2_0(\cG) : \\ 
              & - \sum_{e \in \cE} p^{e}_2 f_e'(0) + \frac{p_3}{2} f''(0) - \int_{\cG \bs \{0\}} \big( f(g) - f(0) \big) \, (p_4 + p_1 \, q)(dg) = 0 \Big\}. 
  \end{align*}
\end{lemma}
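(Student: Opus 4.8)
The plan is to deduce $\sD(A^q)$ from a resolvent identity tying the revived process $X^q$ to $X^\bullet$. That $X^q$ is a right process is already furnished by the construction of subsection~\ref{subsec:C_CO:identical iterations} (Theorem~\ref{theo:concatenation countable} together with the state-space transformation discarding the iteration index; the variant in which $q$ also charges an adjoined absorbing point $\sq$, needed in subsection~\ref{subsec:KPS extension}, is handled verbatim). Since a Brownian motion on $\cG$ runs like one-dimensional Brownian motion inside each edge, it can only die at the star point $0$; hence $X^q$ coincides with $X^\bullet$ up to its first death, so when $X^q$ is started at $(e,x)$ with $x>0$ the exit time of the open edge $e^0$ is the same as for $X^\bullet$ and the stopped law is the prescribed one-dimensional Brownian law. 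Thus $X^q$ is a Brownian motion on $\cG$, and Theorem~\ref{theo:G_BM:Fellers theorem on star graph} already gives that $X^q$ is Feller with $A^q = \frac12\D$; it remains to identify $\sD(A^q) = U^q_\a\big(\cC_0(\cG)\big)$ for a fixed $\a>0$, $U^q_\a$ being the resolvent of $X^q$.

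First I would record that $\varphi_\a := \EV_{\,\cdot}\big(e^{-\a\z}\big)$ lies in $\cC^2_0(\cG)$ with $\frac12\D\varphi_\a = \a\varphi_\a$: indeed $1-\varphi_\a = \a\,U^\bullet_\a 1$ (the resolvent $U^\bullet_\a$ of $X^\bullet$ applied to the constant $1$), so the standard identity $A^\bullet U^\bullet_\a 1 = \a\,U^\bullet_\a 1 - 1$ and the hypothesis $1-\varphi_\a\in\sD(A^\bullet)$ give $A^\bullet(1-\varphi_\a) = -\a\varphi_\a$, and since $A^\bullet=\frac12\D$ on its domain and $\varphi_\a\in\cC_0(\cG)$, the claim follows. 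Next I would establish, from the regenerative structure of the revival --- each time $X^\bullet$ dies it restarts afresh at an independent $q$-distributed point --- the resolvent identity
\begin{align*}
 U^q_\a f = U^\bullet_\a f + \Big(\int U^q_\a f\,dq\Big)\,\varphi_\a , \qquad f\in\cC_0(\cG) ,
\end{align*}
$f$ extended by $0$ at the cemetery. Applying $\int(\cdot)\,dq$ and solving the resulting scalar equation (using $\int\varphi_\a\,dq<1$), one finds the scalar factor equals $\int g\,dq$ with $g := U^q_\a f$, so $U^\bullet_\a f = g - \big(\int g\,dq\big)\varphi_\a$.

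Now I would read off the generator. Since $U^\bullet_\a f\in\sD(A^\bullet)\subseteq\cC^2_0(\cG)$ and $\varphi_\a\in\cC^2_0(\cG)$, one gets $g\in\cC^2_0(\cG)$, and $\frac12\D g = \big(\a U^\bullet_\a f - f\big) + \big(\int g\,dq\big)\a\varphi_\a = \a g - f = A^q g$, confirming $A^q=\frac12\D$. For the boundary value, let $\Lambda$ be the linear functional $h\mapsto p_1 h(0) - \sum_{e\in\cE}p^e_2\,h_e'(0) + \frac{p_3}{2}h''(0) - \int_{\cG\bs\{0\}}\big(h(\g)-h(0)\big)p_4(d\g)$ on $\cC^2_0(\cG)$ (extended to the constant $1$, for which $\Lambda(1)=p_1$). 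By hypothesis $\Lambda$ vanishes on $\sD(A^\bullet)$, hence $\Lambda(\varphi_\a)=\Lambda(1)-\Lambda(1-\varphi_\a)=p_1$; applying $\Lambda$ to $U^\bullet_\a f = g - \big(\int g\,dq\big)\varphi_\a$ yields $0 = \Lambda(g) - \big(\int g\,dq\big)p_1$, i.e.\ $\Lambda(g) = p_1\int g\,dq$. Since $\int g\,dq - g(0) = \int_{\cG\bs\{0\}}\big(g(\g)-g(0)\big)q(d\g)$, this rearranges to
\begin{align*}
 -\sum_{e\in\cE}p^e_2\,g_e'(0) + \frac{p_3}{2}g''(0) - \int_{\cG\bs\{0\}}\big(g(\g)-g(0)\big)\,(p_4+p_1 q)(d\g) = 0 ,
\end{align*}
so $\sD(A^q)\subseteq\sD$. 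The reverse inclusion then follows from Lemma~\ref{lem:G_BM:totality on star graph} applied to the Brownian motion $X^q$ with data $\big(0,(p^e_2)_{e\in\cE},p_3,p_4+p_1q\big)$.

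The step I expect to be the real obstacle is the rigorous derivation of the resolvent identity: it must be extracted from the countable (generally infinite) concatenation of independent copies of $X^\bullet$ with i.i.d.\ revival points, which is where the regenerative structure, the decay $\varphi_\a\in\cC_0(\cG)$, and summability of the successive lifetimes genuinely enter, with Theorem~\ref{theo:concatenation countable} doing the bookkeeping. By contrast, the algebraic reduction --- the cancellation that converts the killing weight $p_1$ into the jump weight $p_1 q$ --- is short once one observes that it rests solely on $\Lambda$ vanishing on $\sD(A^\bullet)$, precisely the content of the hypothesis $1-\varphi_\a\in\sD(A^\bullet)$.
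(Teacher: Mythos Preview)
Your proposal is correct and follows essentially the same route as the paper: derive the resolvent identity $U^q_\a f = U^\bullet_\a f + \big(\int U^q_\a f\,dq\big)\varphi_\a$ from the revival structure (the paper obtains it by Dynkin's formula at the first revival time $R^1$ via Theorem~\ref{theo:concatenation countable}), then use that $U^\bullet_\a f$ and $1-\varphi_\a$ satisfy the $X^\bullet$ boundary condition to deduce the new boundary condition for $U^q_\a f$, and close with Lemma~\ref{lem:G_BM:totality on star graph}. Your packaging of the boundary condition as a linear functional $\Lambda$ with $\Lambda(\varphi_\a)=\Lambda(1)-\Lambda(1-\varphi_\a)=p_1$ is a tidy way to present exactly the algebra the paper carries out term by term; one small caveat is that your justification of $\tfrac12\Delta\varphi_\a=\a\varphi_\a$ via ``$A^\bullet U^\bullet_\a 1=\a U^\bullet_\a 1-1$'' is not quite legitimate since $1\notin\cC_0(\cG)$, but this step is redundant anyway (you already know $A^q=\tfrac12\Delta$ from Theorem~\ref{theo:G_BM:Fellers theorem on star graph}), and the identity is immediate from $\varphi_\a(e,x)=e^{-\sqrt{2\a}x}\varphi_\a(0)$.
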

Here, the requirements on the functions $\varphi_\a$, $\a > 0$, are fulfilled, as seen in \cite[Lemma~1.12]{KPS_Walsh12A} and \cite[Corollary~3.5]{KPS_Walsh12B},
 for $g = (e,x) \in \cG$, we have:
  \begin{align*}
   \varphi_\a(g) 
   = \EV_g \big( e^{-\a \h_0} \big) \, \EV_0 \big( e^{-\a \z} \big) 
   = e^{-\sqrt{2\a}x} \, \frac{p_1 + p_4( \cG \bs \{0\})}{\big(p_1 + p_4( \cG \bs \{0\})\big) + \sqrt{2\a} \, p_2 + \a \, p_3}.
  \end{align*}
 Thus, $X^q$ is a Brownian motion on $\cG \cup \{\sq\}$ with generator
  \begin{align*}
   & \sD(A^q) \subseteq  \ \Big\{  f \in \cC^2_0(\cG) : \\ 
   & \quad                      - \sum_{e \in \cE} p^{e}_2 f_e'(0) + \frac{p_3}{2} f''(0) 
                                - \int_{(\cG \bs \{0\}) \cup \{\sq\}} \big( f(g) - f(0) \big) \, \big( p_1 \, \e_\sq + p_4 \big)(dg) = 0 \Big\}.
  \end{align*}
  
 Finally, map the absorbing set $\{\sq\}$ to $\D$ by considering
 \begin{align} \label{eq:G_GC:killing mapping}
  \psi \colon \cG \rightarrow \cG \bs F, ~ x \mapsto \psi(x) :=
    \begin{cases}
     x,  & x \in \cG \bs F, \\
     \D, & x \in F,
    \end{cases}
 \end{align}
for the isolated set $F := \{\sq\}$.
It is easy to verify the requirements of~\cite[Theorem~(13.5)]{Sharpe88} in order to show that $\psi(X^q)$ is a right (and thus, strongly Markovian) process.
In general, by mapping an isolated set $F$ to the cemetery point~$\D$, all jumps to~$F$ are transformed to immediate killings,
so~$p_4(F)$ is transformed to an additional killing weight~$p_1$ (see Appendix~\ref{app:killing and revival} for the proof): 
\begin{lemma} \label{lem:G_GC:killing on absorbing set, generator data}
 Let $X$ be a Brownian motion on $\cG$ with generator 
  \begin{align*}
   \sD(A^X)
   \subseteq \Big\{ & f \in \cC^2_0(\cG): \\
       &  p_1 f(0) - \sum_{e \in \cE} p^{e}_2 f_e'(0) + \frac{p_3}{2} f''(0) 
         - \int_{\cG \bs \{0\}} \big( f(g) - f(0) \big) \, p_4(dg) = 0 \Big\},
  \end{align*}
 and $F \subsetneq \cG$ be an isolated, absorbing set for~$X$.
 Let $Y := \psi(X)$ be the process on~$\cG \bs F$ resulting from killing~$X$ on~$F$, 
 with $\psi$ as given in equation~\eqref{eq:G_GC:killing mapping}.
 Then the domain of the generator of~$Y$ satisfies
  \begin{align*}
   & \sD(A^Y) 
   \subseteq  \Big\{  f \in \cC^2_0(\cG \bs F):\\
        &  \big( p_1 + p_4(F) \big) f(0) - \sum_{e \in \cE} p^{e}_2 f_e'(0) + \frac{p_3}{2} f''(0) 
         - \int_{\cG \bs (F \cup \{0\})} \big( f(g) - f(0) \big) \, p_4(dg) = 0 \Big\}.
  \end{align*} 
\end{lemma}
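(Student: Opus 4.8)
The plan is to realise $Y=\psi(X)$ as the process $X$ killed at the first entry time of $F$ and then transport the boundary condition along the resolvent. Being isolated, $F$ is in particular open and closed; since the star point $0$ has no isolated neighbourhood in $\cG$ (every ball $\cB_\e(0)$ contains pieces of all edges), we have $0\notin F$, and $\sigma_F:=\inf\{t\ge 0:X_t\in F\}$ is a stopping time. Because $F$ is absorbing, $t\mapsto\psi(X_t)$ equals $X_t$ for $t<\sigma_F$ and $\D$ for $t\ge\sigma_F$, so $Y$ is exactly $X$ killed at $\sigma_F$, and $X_{\sigma_F}\in F$ on $\{\sigma_F<\infty\}$.

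I would first pass from the resolvent $U_\a$ of $X$ to the resolvent $V_\a$ of $Y$. For $f\in\cC_0(\cG\bs F)$ let $\hat f\in\cC_0(\cG)$ denote the extension of $f$ by $0$ on $F$; this is well defined and vanishes at infinity precisely because $F$ is clopen. Then $V_\a f(x)=\EV_x\big(\int_0^{\sigma_F}e^{-\a t}\hat f(X_t)\,dt\big)$ for $x\in\cG\bs F$, and Dynkin's formula~\eqref{eq:Dynkins formula (resolvent)} for $X$ at $\sigma_F$ gives
 \begin{align*}
  U_\a\hat f(x)=V_\a f(x)+\EV_x\big(e^{-\a\sigma_F}\,U_\a\hat f(X_{\sigma_F})\,\1_{\{\sigma_F<\infty\}}\big).
 \end{align*}
The last term vanishes, because $X$ started at any point of the absorbing set $F$ stays in $F$, where $\hat f\equiv 0$, so $U_\a\hat f$ vanishes on $F$ and $X_{\sigma_F}\in F$. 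Hence $V_\a f=(U_\a\hat f)|_{\cG\bs F}$. From this I read off that $Y$ is a Feller process — its resolvent maps $\cC_0(\cG\bs F)$ into itself since $U_\a$ is Feller, and strong continuity at $0$ follows from right continuity and normality — so that $\sD(A^Y)=V_\a\,\cC_0(\cG\bs F)=\{(U_\a\hat g)|_{\cG\bs F}:g\in\cC_0(\cG\bs F)\}$ for every $\a>0$. I expect this to be the only genuinely delicate step: one has to make the killing on $F$ precise enough that the boundary term in Dynkin's formula really drops out and that the zero-extension lands in $\cC_0(\cG)$; everything after it is bookkeeping with the jump measure.

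Finally, take $f\in\sD(A^Y)$ and write $f=h|_{\cG\bs F}$ with $h:=U_\a\hat g$. Since $X$ is Feller, $h\in\ran U_\a=\sD(A^X)$, and by hypothesis $h\in\cC^2_0(\cG)$ and
 \begin{align*}
  p_1 h(0)-\sum_{e\in\cE}p^e_2 h_e'(0)+\frac{p_3}{2}h''(0)-\int_{\cG\bs\{0\}}\big(h(g)-h(0)\big)\,p_4(dg)=0.
 \end{align*}
Because $F$ is clopen with $0\notin F$, the functions $h$ and $f$ agree on $\cG\bs F$, in particular near $0$; hence $f(0)=h(0)$, $f_e'(0)=h_e'(0)$, $f''(0)=h''(0)$, and $f\in\cC^2_0(\cG\bs F)$. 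Moreover $h\equiv 0$ on $F$, so splitting $\cG\bs\{0\}=(\cG\bs(F\cup\{0\}))\cup F$ and using this gives $\int_F(h(g)-h(0))\,p_4(dg)=-h(0)\,p_4(F)$, whence the boundary condition for $h$ turns into
 \begin{align*}
  \big(p_1+p_4(F)\big)f(0)-\sum_{e\in\cE}p^e_2 f_e'(0)+\frac{p_3}{2}f''(0)-\int_{\cG\bs(F\cup\{0\})}\big(f(g)-f(0)\big)\,p_4(dg)=0.
 \end{align*}
This is exactly the asserted boundary condition, so $\sD(A^Y)$ is contained in the claimed set. The one real idea in this step is that, after the killing, the contribution $\int_F h(0)\,p_4(dg)=h(0)\,p_4(F)$ of the old boundary integral is no longer compensated by $\int_F h(g)\,p_4(dg)$ and so re-emerges as the extra killing weight $p_4(F)$.
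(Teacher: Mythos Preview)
Your proof is correct, and the final boundary computation is verbatim the paper's. The difference is in how you reach the key intermediate step, namely that the zero-extension $\hat f=f\circ\psi$ of any $f\in\sD(A^Y)$ lies in $\sD(A^X)$.

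The paper argues directly at the level of the generator: it computes the limit $\lim_{t\downarrow 0}t^{-1}\big(\EV_g(f\circ\psi(X_t))-f\circ\psi(g)\big)$ pointwise, obtaining $A^Y f(g)$ for $g\in\cG\bs F$ (since $\psi(X_t)=Y_t$ there) and $0$ for $g\in F$ (since $X$ stays in $F$ and $f(\D)=0$), and concludes $f\circ\psi\in\sD(A^X)$ with $A^X(f\circ\psi)=A^Yf\cdot\1_{\comp F}$. You instead establish the resolvent identity $V_\a f=(U_\a\hat f)|_{\cG\bs F}$ via Dynkin's formula and then use $\sD(A^X)=\ran U_\a$. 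Your route carries a little more overhead (you need to verify the Feller property of $Y$ along the way), but in return it makes the resolvent of $Y$ explicit and avoids any discussion of pointwise vs.\ uniform convergence in the generator limit; the paper's route is more direct but leaves the analytic structure of $Y$ implicit. Either way, the substance — that $\hat f$ vanishes on $F$ so that $\int_F(h(g)-h(0))\,p_4(dg)=-h(0)p_4(F)$ emerges as the extra killing weight — is identical.
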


 Thus, the generator of the mapped Brownian motion $X := \psi(X^q)$ on $\cG$ satisfies
    \begin{align*}
    \sD(A) \subseteq  \Big\{ & f \in \cC^2_0(\cG) : \\ 
                              & p_1 f(0) - \sum_{e \in \cE} p^{e}_2 f_e'(0) + \frac{p_3}{2} f''(0) 
                                - \int_{\cG \bs \{0\}} \big( f(g) - f(0) \big) \, p_4(dg) = 0 \Big\}.
  \end{align*}
 However, $X$ is a Brownian motion on a star graph, so Lemma~\ref{lem:G_BM:totality on star graph} asserts that $\sD(A)$ indeed equals the right-hand set. 
 
 If $p_2 = 0$ and $p_3 > 0$ (the case $p_2 = 0$ and $p_3 = 0$ is impossible if $p_4$ is finite, as seen in~\eqref{eq:G_BM:infinite jumpmeasure}), 
 the resulting process is simpler. 
 In this case, the construction follows exactly the same lines as above,
 except that instead of considering a standard Walsh process $W$, we start with
 a Walsh process $W^a$ absorbed at the vertex $0$, which is then killed when $W^a$ has stopped at $0$ for an independent, exponentially distributed time with rate
 $\beta := \frac{p_1 + p_4( \cG \bs \{0\} )}{p_3} $. As seen in \cite[Subsection 1.4]{KPS_Walsh12A}, the domain of the resulting Brownian motion $W^e$ reads
  \begin{align*}
   \sD(A^e) & =  \Big\{ f \in \cC^2_0(\cG) :  \beta f(0) +  \frac{1}{2} f''(0) = 0 \Big\} \\
            & =  \Big\{ f \in \cC^2_0(\cG) :  \big( p_1 + p_4( \cG \bs \{0\} ) \big) f(0) +  \frac{p_3}{2} f''(0) = 0 \Big\}.
  \end{align*}
 The remaining construction for the implementation of the jumps then proceeds as above.

\section{Construction for Infinite Jump Measures}\label{sec:G_IM}

The above-described method of successive revivals only implements finite jump measures~$p_4$. In the infinite case, we need a completely different approach.

\subsection{It\^o\textendash{}McKean's Construction} \label{subsec:B_HL:I-M}

In \cite{ItoMcKean63}, It\^o and McKean obtained  a complete pathwise construction of a Brownian motion on $\R_+$ for any given set $(p_1, p_2, p_3, p_4)$
of boundary conditions.
Especially, they managed to implement jumps for an infinite measure $p_4$. In this case,
the Brownian motion, when started at or hitting the origin, needs to perform infinitely many small jumps in some arbitrarily small time interval. Thus, just like when considering 
excursions of the one-dimensional Brownian motion from any point, it is not possible to enumerate them in temporal order to construct the complete process
via successive independent copies of killed Brownian motions, as described in section~\ref{sec:KPS extension}.
However, It\^o and McKean managed to give an ingenious transformation formula for the paths of a reflecting Brownian motion
with the help of a subordinator with L\'evy measure $p_4$ in order to implement the correct excursions from the origin, which will 
be explained in the following.

%
    
Without being able to verify whether the following chain of arguments really led them to their solution 
(a little bit on the history of their findings can be looked up in~\cite[Section 4.7]{McKeanSelecta}), we try to motivate their approach:
As jumps are only possible if the process is at the origin, they appear on the timeline of the local time at the origin. Furthermore, there is at most one jump
at a time, and jumps need to be independent, in the sense that they need to exhibit a Markovian character, as any Brownian motion on $\R_+$ is strongly Markovian.
By the renewal characterization for point processes (cf.~\cite[Theorem~3.1]{Ito72}), it is therefore natural to expect that the jumps are guided by a Poisson point process
(or equivalently, by a subordinator) with L\'evy measure $p_4$, on the timeline of the local time.
Thus, starting with a reflecting Brownian motion $\abs{B}$, we try to superpose $\abs{B}$ with a subordinator $P$:
The na\"ive approach of considering the process $t \mapsto \abs{B_t} + P(L_t)$ fails, as shown in figure \ref{fig:B_HL:IM naive},
because, after each jump, the process must behave like a standard Brownian motion---in contrast to a reflecting one---until the next hit of the origin. 

\begin{figure}[tb] 
   \includegraphics[width=0.9\textwidth,keepaspectratio]{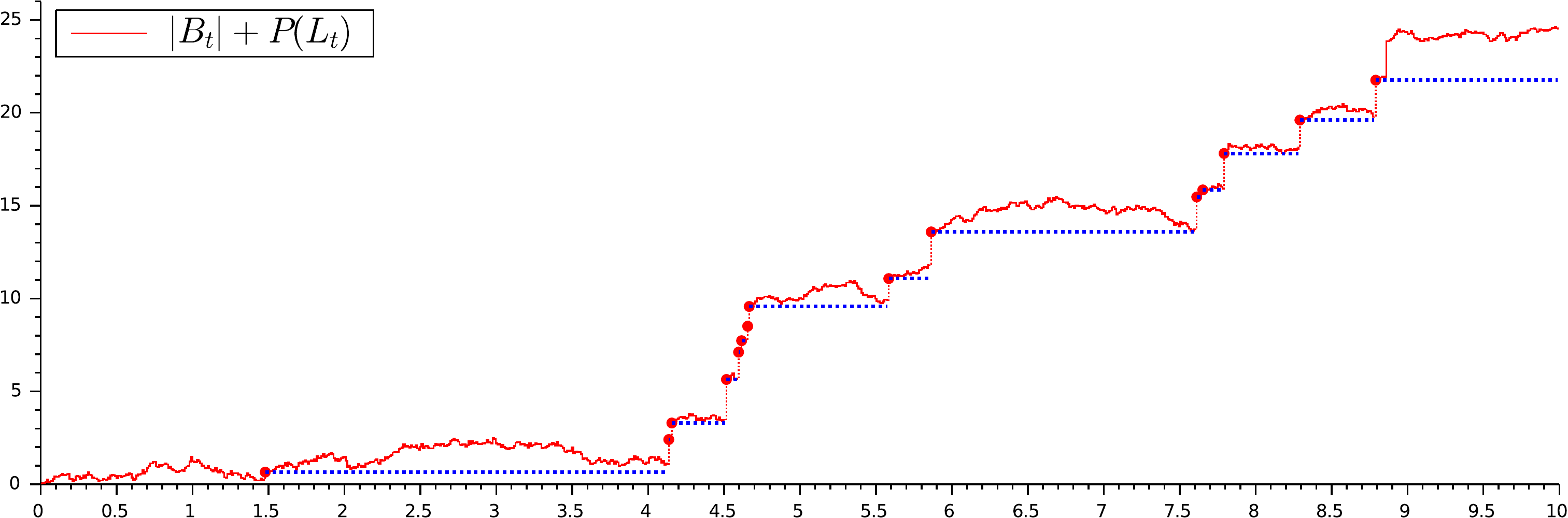}
   \includegraphics[width=0.9\textwidth,keepaspectratio]{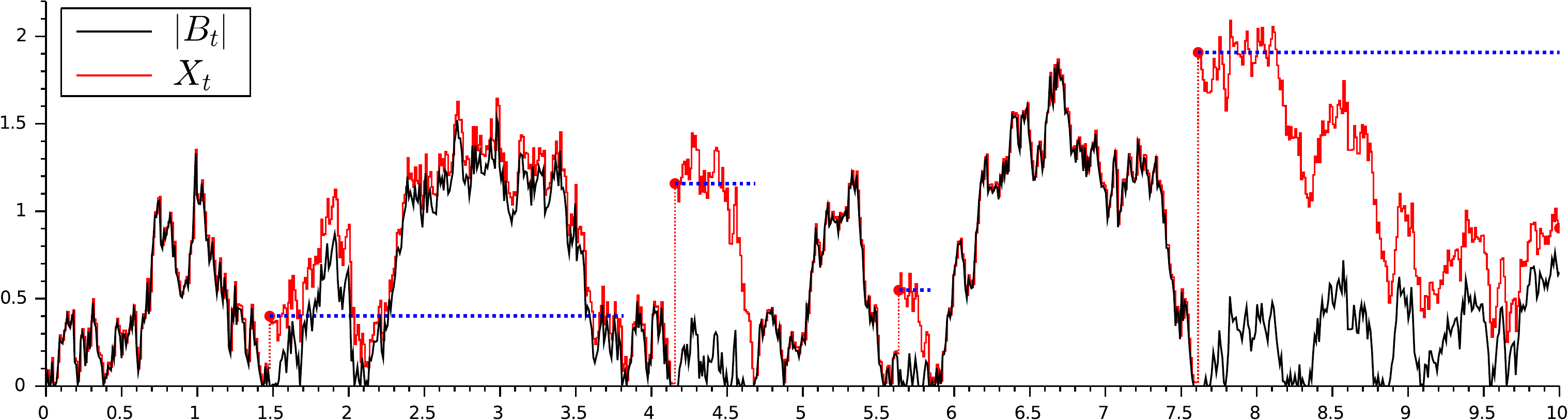}
   \caption[Construction approach for Brownian motions on $\R_+$]
           {Construction approach for Brownian motions on $\R_+$: The idea of $t \mapsto \abs{B_t} + P(L_t)$, illustrated in the above graph, fails, as 
            the process must switch to a standard Brownian motion after each jump until the next hit of zero, as shown in the graph of~$X$ below.
            The blue lines mark the starting heights of the ``jump excursions'',
            as well as (in the graph for $X$) the time elapsed until the switchover back to the reflecting Brownian motion.} \label{fig:B_HL:IM naive}
\end{figure}

Therefore, the goal is to find a way to toggle between reflecting Brownian motion and standard Brownian motion on the level of paths.
As seen in L\'evy's characterization of the local time (cf.~Theorem~\ref{theo:B_LT:levys local time}), for a reflecting Brownian motion~$\abs{B}$ with local time~$L$ at the origin,
the process $\abs{B} - L$ behaves like a standard Brownian motion. So the main idea is to toggle the paths $\abs{B}$ and $\abs{B} - L$,
more accurately: Start with~$\abs{B}$ until the first jump is introduced by~$P$, say of height $h > 0$, then switch to the ``jump excursion'' $h + \abs{B} - L$
until this part hits the origin again, then toggle back to~$\abs{B}$, and so on. As $\abs{B}$ is non-negative and~$L$ only grows when~$\abs{B}$ is at the origin, the partial process
$h + \abs{B} - L$ hits zero exactly when $L$ is increased by~$h$. Following this thought, the prototype of the process should be of the form 
 $t \mapsto \abs{B_t} - L_t + F(L_t)$ for some random function~$F$ which is the identity while the reflecting Brownian motion needs to be in place,
 jumps by~$h$ whenever a jump excursion with jump height $h > 0$ needs to be started, and then is constant for $h$ units of time.
 Such a function is obtained by the choice $F = P \circ P^{-1}$, with $P^{-1}$ being the right continuous pseudo-inverse of the subordinator $P$:
  \begin{align*}
   P^{-1} \colon [0, \infty) \rightarrow [0, \infty], \quad P^{-1}(t) := \inf \{ s \geq 0: P(s) > t \}.
  \end{align*}
 Pseudo-inverses and functions of the form $P \circ P^{-1}$ are examined in detail in Appendix~\ref{app:G_IM:remarks}.
 For now, we recommend the graphs of figure \ref{fig:B_HL:IM graphs} to the reader: The upper right hand graph contains the jumps of the Poisson point process (in black)
 and its associated subordinator $P$ with an additional deterministic drift (in red),
 the lower left hand graph shows the resulting process $P \circ P^{-1}$ which exactly features the properties stated above,
 that is, being a diagonal, interrupted by upper isosceles triangles.
 In summary, \IM's solution is the process
  \begin{align*}
   X_t = \abs{B_t} - L_t + P \big( P^{-1}(L_t) \big), \quad t \geq 0,
  \end{align*}
 which is shown in the lower right hand graph of figure \ref{fig:B_HL:IM graphs}. Proving that this process is a (strong) Markov process and indeed introduces the correct 
 jump measure $p_4$ is not an easy task and is done in \cite[Sections 13--15]{ItoMcKean63}. We will take up this challenge 
 in subsections~\ref{subsec:G_IM:definitions}--\ref{subsec:G_IM:strong Markov of X}
 when we extend \IM's construction to the star graph. Afterwards, the missing killing and stickiness parameters $p_1$ and $p_3$ can be introduced by
 the standard procedure of ``slowing down'' the process $X$ by time changing it with respect to its local time at the origin, and then kill it
 once its new local time exceeds some independent, exponentially distributed random time, see  \cite[Sections 10, 15]{ItoMcKean63} or subsection~\ref{subsec:G_IM:general BB on star graph}.
 
\begin{figure}[tb] 
   \hspace*{-2em}
   \includegraphics[height=4.5cm,keepaspectratio]{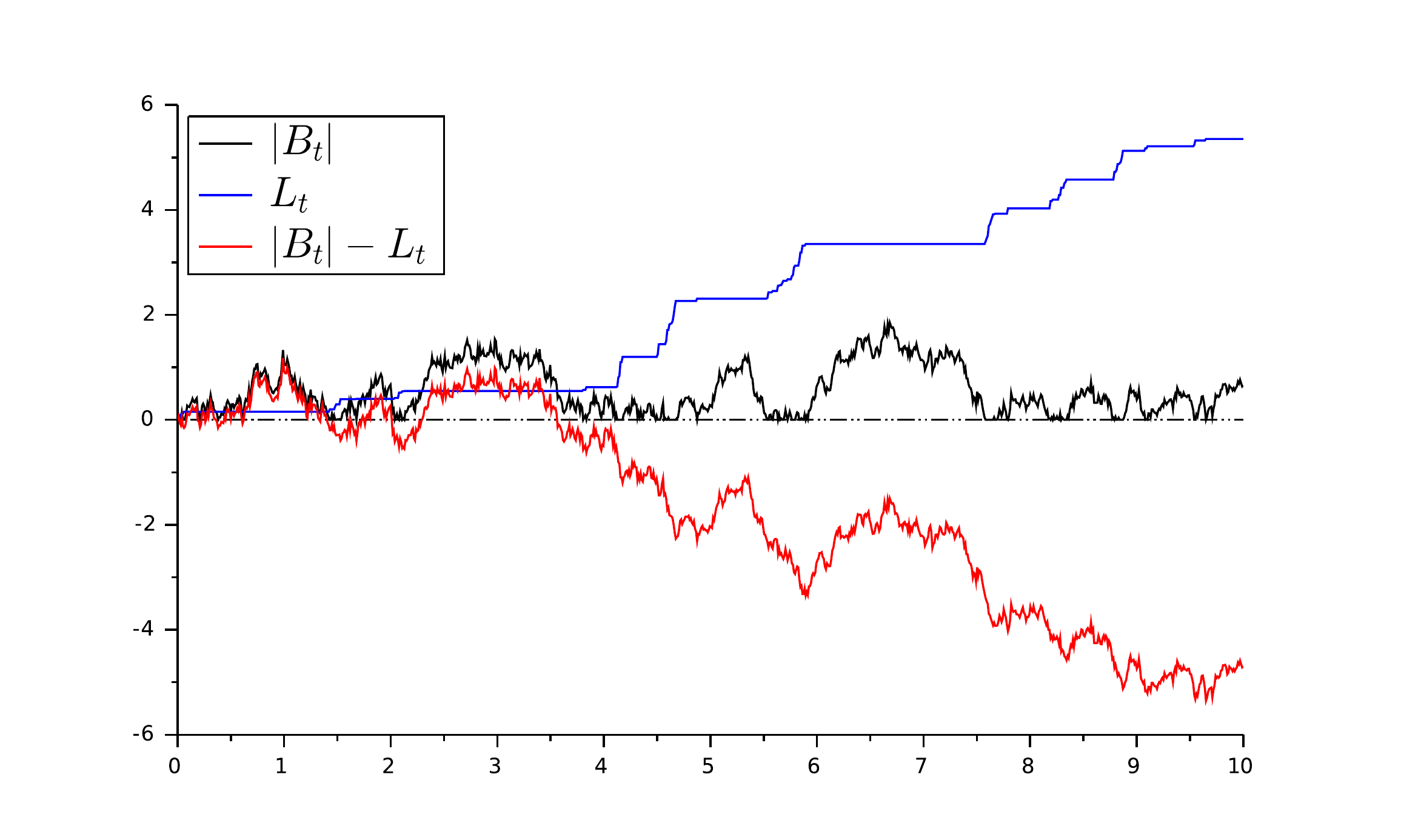}
   \includegraphics[height=4.5cm,keepaspectratio]{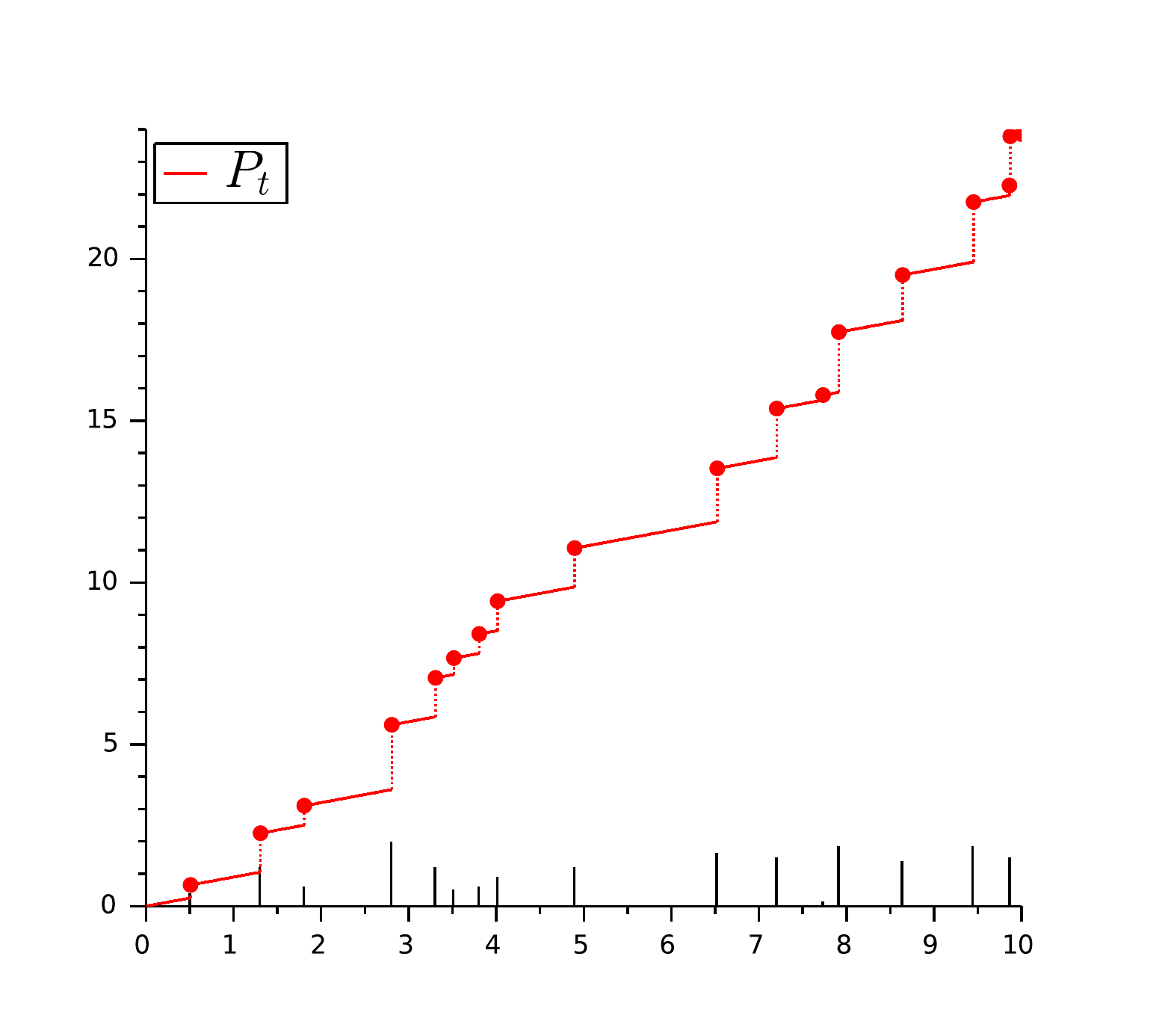} \\
   \hspace*{-2em}
   \hspace*{0.37cm}
   \includegraphics[height=4.5cm,keepaspectratio]{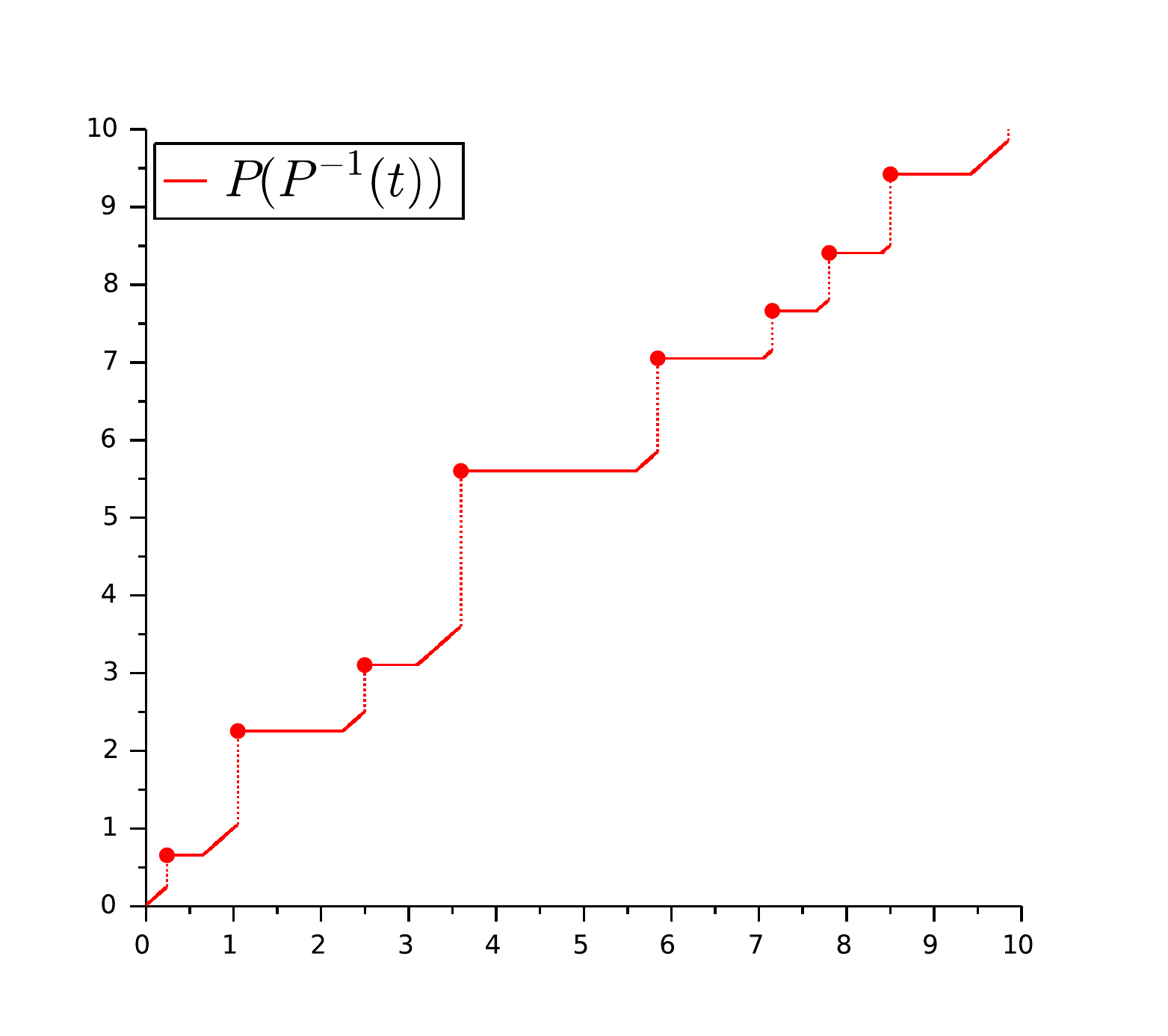}
   \includegraphics[height=4.5cm,keepaspectratio]{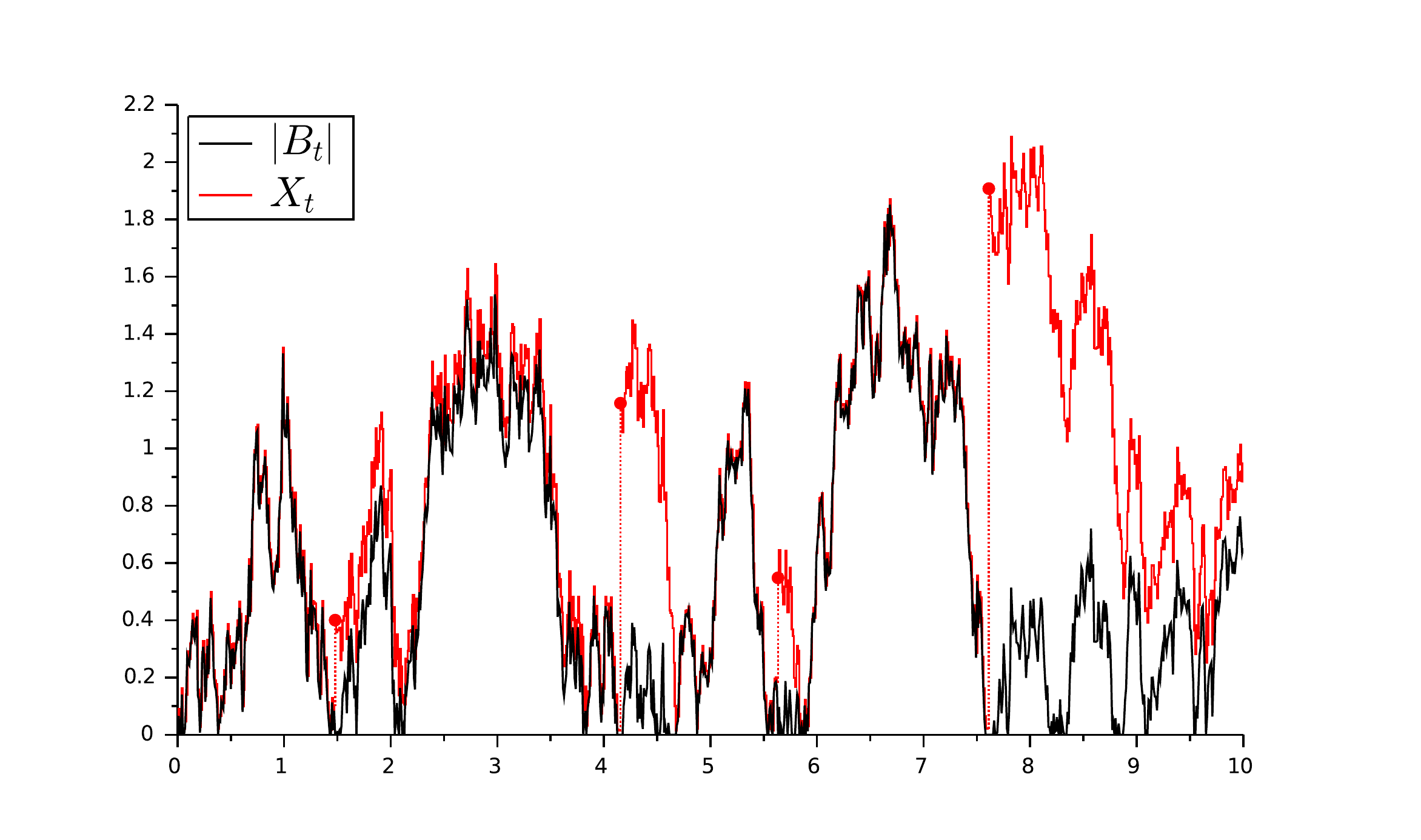}
   \hspace*{-1em}
   \caption{\IM's construction of Brownian motions on $\R_+$} \label{fig:B_HL:IM graphs}
\end{figure}

We end the treatment of the half-line case by noting that, of course, there are other ways to analyze and construct Brownian motions on $\R_+$.
A natural approach is via It\^{o} excursion theory, see, e.g., \cite{Rogers89}, \cite[Section~VI.57]{RogersWilliams2}, or \cite{Blumenthal92}.

\subsection{Construction}

We are going to construct all Brownian motions on a star graph by extending the \IM's approach of \cite{ItoMcKean63} for the half-line case, 
which was just explained above. 

In all that follows, let $\cG$ be a fixed star graph with star vertex $0$ and set of edges~$\cE$. 
For keeping notations readable in the following construction, we will assume that $\cE = \{ e_1, \ldots, e_n \}$ holds with $n = \abs{\cE}$.
As usual, we consider the representation 
  \begin{align*}
   \cG = \{0\} \cup \bigcup_{i=1}^n \big( \{e_i\} \times [0, \infty) \big)
  \end{align*}
of the graph $\cG$, with all initial points $(e,0)$, $e \in \cE$, being identified with the vertex~$0$.

Furthermore, with regard to the assertions of Feller's Theorem~\ref{theo:G_BM:Fellers theorem on star graph}, we assume that 
we are given a fixed set of boundary weights
 \begin{align*}
  p_1 \geq 0, ~ p^e_2 \geq 0 \text{ for each } e \in \cE, ~ p_3 \geq 0, ~ p_4 \text{ measure on $\cG \bs \{0\}$}, 
 \end{align*}
satisfying $\int_{\cG \bs \{0\}} \big( 1 - e^{-x} \big) \, p_4\big(d(e,x)\big) < +\infty$ and 
 \begin{align*}
  p_4 \big( \cG \bs \{0\} \big) = +\infty, \quad \text{if} \quad  p_3 = 0  \text{ and } p^e_2 = 0\text{ for all $e \in \cE$}.
 \end{align*}
We define the total and partial reflection weights
 \begin{align*}
  p_2 := \sum_{e \in \cE} p^e_2, \quad
  \text{ for } e \in \cE: ~
  q^e_2 := \begin{cases}
            p^e_2 / p_2, & \text{if } p_2 \neq 0, \\
            1/n, & \text{if } p_2 = 0,
           \end{cases}
 \end{align*}
and decompose the jump measure on the separate edges by introducing for each edge~$e \in \cE$ a measure $p^e_4$ on $(0, +\infty)$ by
 \begin{align*}
  p^e_4 (A) := p_4 \big( \{e\} \times A \big), \quad A \in \sB \big( (0, +\infty) \big).
 \end{align*}
Then the measures $p^e_4$, $e \in \cE$, also satisfy
 \begin{align*}
  \int_{(0, \infty)} \big( 1 - e^{-x} \big) \, p^e_4(dx) < +\infty,
 \end{align*}
and $p^e_4 \big( (0, \infty) \big) = +\infty$ holds for some $e \in \cE$, if $p_3 = 0$ and $p^e_2 = 0$ for all $e \in \cE$.

\begin{remark}
 The reader may notice that we do not require the normalization of the parameters $\big( p_1, (p_2^e)_{e \in \cE}, p_3, p_4 \big)$.
\end{remark}

In the following, we will always assume that $p_2 > 0$ or that $p_4$ is infinite.

%

\subsection{Definitions} \label{subsec:G_IM:definitions}
  The main ingredients for our construction will be a Walsh process $W$ on $\cG$ and a family of subordinators $(Q^e, e \in \cE)$,
  which are used to control the jumps to the respective edges. We are introducing them on an appropriate, common space now:

    Let $\hW = \big( \O^W, \sF^W, (\sF^W_t)_{t \geq 0}, (\hW_t)_{t \geq 0}, (\hT^W_t)_{t \geq 0}, (\PV^W_{(e,x)})_{(e,x) \in \cG} \big)$ 
    be a Walsh process\footnote{A collection of standard results on Walsh processes can be found in Appendix~\ref{app:Walsh}.}
    on~$\cG$ with edge weights $q^e_2 = p^e_2 / p_2$, $e \in \cE$,\footnote{If $p_2 = 0$ (this requires $p_4 = +\infty$),
    then consider a Walsh process with arbitrary weight distribution, for instance use $q^e_2 = 1/n$ for all $e \in \cE$.
    Any choice leads to the correct boundary condition, as will be seen in subsection~\ref{subsec:G_IM:resolvent and generator}.}
    and $(\hL_t, t \geq 0)$ be the local time of $\hW$ at $0$.
    We have for all $s,t \geq 0$:
      \begin{align*}
       \hW_s \circ \hT^W_t = \hW_{s+t}, \quad \hL_s \circ \hT^W_t = \hL_{s+t} - \hL_t.
      \end{align*}

    For each $e \in \cE$, let 
    $\hQ^e = \big( \O^{Q,e}, \sF^{Q,e}, (\sF^{Q,e}_t)_{t \geq 0}, (\hQ^e_t)_{t \geq 0}, (\hT^{Q,e}_t)_{t \geq 0}, (\PV^{Q,e}_{q})_{q \in \R} \big)$
    be a subordinator with L\'evy measure $p^e_4$ and drift $0$ realized as canonical coordinate process on the space $\O^{Q,e}$ of all c\`adl\`ag functions.
    We then have natural translation and centering operators $(\hg^{Q,e}_q, q \in \R)$ and $\hG^{Q,e}$
    at our disposal, see~\eqref{eq:natural centering and translation}.
    
    Let $\hQ := (\hQ^{e_1}, \ldots, \hQ^{e_n})$ be the Cartesian product of the processes ${(\hQ^e, e \in \sE)}$,
    that is, 
     \begin{align*}
      \hQ = \big( \O^{Q}, \sF^{Q}, (\sF^{Q}_t)_{t \geq 0}, (\hQ_t)_{t \geq 0}, (\hT^{Q}_t)_{t \geq 0}, (\PV^{Q}_{(q^1, \ldots, q^n)})_{(q^1, \ldots, q^n) \in \R^n} \big)
     \end{align*}
    with sample space $\O^Q := \prod_{e \in \sE} \O^{Q, e}$, $\sigma$-algebra $\sF^{Q} := \bigotimes_{e \in \sE} \sF^{Q,e}$,
    the process being defined by $\hQ_t := (\hQ^{e_1}_t, \ldots, \hQ^{e_n}_t)$ for any $t \geq 0$, equipped with its natural filtration $(\sF^{Q}_t, t \geq 0)$,
    shift operators $\hT^{Q}_t := \hT^{Q,{e_1}}_t \times \cdots \times \hT^{Q,{e_n}}_t$, $t \geq 0$, 
    translation operators $\hg^Q_{(q^1, \ldots, q^n)} := \hg^{Q,{e_1}}_{q^1} \times \cdots \times \hg^{Q,{e_n}}_{q^n}$, $q^1, \ldots, q^n \in \R$,
    centering operator $\hG^Q := \hG^{Q,1} \times \cdots \times \hG^{Q,n}$,
    as well as initial measures $\PV^Q_{(q^1, \ldots, q^n)} := \PV^{Q, {e_1}}_{q^1} \otimes \cdots \otimes \PV^{Q, {e_n}}_{q^n}$ for all~$q^1, \ldots, q^n \in \R$.
    
    By construction, the processes $\hQ^{e_1}, \ldots, \hQ^{e_n}$ are independent, so 
    the set $N$ of simultaneous jumps of $\hQ^{e_1}, \ldots, \hQ^{e_n}$ is a measurable null set. As the natural shift, translation and centering operators
    do not change or introduce new discontinuities, they map $\O^Q \bs N$ into itself. Therefore, we are able to restrict the process $\hQ$ together with all its operators
    to $\O^Q \bs N$, naming this new sample space $\O^Q$ again. Thus, at most one of the processes $\hQ^1, \ldots, \hQ^n$ has a jump at any given time $t > 0$. 
 
     Now combine the Walsh process and the subordinators independently in one space by defining the product space $\O := \O^W \times \O^Q$
     with $\sigma$-algebra $\sF := \sF^W \otimes \sF^Q$
       and product measures
      \begin{align*}
       \PV_{(e,x), (q^1, \ldots, q^n)} := \PV^W_{(e,x)} \otimes \PV^Q_{(q^1, \ldots, q^n)}, \quad (e,x) \in \cG, (q^1, \ldots, q^n) \in \R^n.
      \end{align*}
     As we will typically want to start the subordinators at the origin, we set 
      \begin{align*}
       \PV_g := \PV_{g, (0, \ldots, 0)}, \quad g \in \cG. 
      \end{align*}
     With the canonical projections $\p^W \colon \O^W \times \O^Q \rightarrow \O^W$,
     $\p^Q \colon \O^W \times \O^Q \rightarrow \O^Q$,
     and $\p^{Q,e} \colon \O^{Q,1} \times \cdots \times \O^{Q,n} \rightarrow \O^{Q,e}$, $e \in \cE$, 
       we set for any $t \geq 0$, $q \in \R^n$: 
        \begin{align*}
         & W_t := \hW_t \circ \p^W, \quad L_t := \hL_t \circ \p^W, \quad \T^W_t := (\hT^W_t \circ \p^W) \times \p^Q, & \\
         & Q_t := \hQ_t \circ \p^Q, \quad Q^e_t := \hQ_t \circ \p^{Q,e} \circ \p^Q, ~ e \in \cE, \\
         & \T^Q_t := \p^W \times (\hT^Q_t \circ \pi^Q), \quad \g^Q_q := \p^W \times (\hg^Q_q \circ \pi^Q), \quad \G^Q := \p^W \times (\hG^Q \circ \pi^Q).  &  
        \end{align*}
 Define the processes $(P_t, t \geq 0)$ and $(P^e_t, t \geq 0)$, $e \in \cE$, by 
   \begin{align*}
     P_e(t) & := p_2 \, t + Q^e(t) + \smashoperator[r]{ \sum\limits_{f \in \cE, f \neq e}} Q^f(t-) , \quad e \in \cE, \\
     P(t) := P_0(t)  & := p_2 \, t + \sum_{e \in \cE} Q^e(t),
   \end{align*}       
   where, as usual, we set $Q^e(t-) := \lim_{s \upuparrows t} Q^e(s)$ for $t > 0$, and $Q^e(0-) := Q^e(0)$.
 Furthermore, for any vector $\eta = (\eta^e, e \in \cE)$ of real numbers with $\eta^e \leq 0$ for all except at most one $e \in \cE$,
       we construct a function $E( \eta ) \colon \cG \rightarrow \cE$ by setting 
        \begin{align*}
         E( \eta ) (l,x)
         := \begin{cases}
             e, & \exists e \in \cE: \eta^e > 0, \\
             l, & \forall e \in \cE: \eta^e \leq 0.
            \end{cases}
        \end{align*}
 For all $e \in \cE$, define the processes $(\eta^e_t, t \geq 0)$ by 
	\begin{align*} 
	  \eta^e_t := \big( P_e P^{-1} - \id \big) (L_t), \quad t \geq 0. 
	\end{align*}
 Finally, we define the stochastic process $(X_t, t \geq 0)$ on $\cG$, by setting
	\begin{align*}
	  X_t := \big( E(\eta^e_t, e \in \cE) \circ W_t, \eta_t + \abs{W_t} \big), \quad t \geq 0.
	\end{align*}
For later use, we also set
 \begin{align*}
   \varrho_t := P^{-1}(L_t) = \inf \{ s \geq 0: P_s > L_t \}, \quad t \geq 0.
 \end{align*}

\subsection{Remarks on the Definition} \label{subsec:G_IM:remarks}
The process $(X_t, t \geq 0)$  will turn out to be a Brownian motion on $\cG$ which realizes the reflection parameters $(p^e_2, e \in \cE$)
 and the jump measure $p_4$ in the boundary condition of the generator.
 It is a generalization of \IM's construction on the half line, which was explained in subsection~\ref{subsec:B_HL:I-M}.
 
Indeed, the local coordinate of $X_t$ is, by definition, just \IM's ``basic'' Brownian motion on the half line, namely $P P^{-1}(L_t) - L_t + \abs{W_t}$.
However, we need to adjust their construction by a process which controls the edges of the Brownian motion on the star graph:
 We cannot use the edge process of $(W_t, t \geq 0)$, as this would change the edge whenever $\abs{W_t}$ is at $0$,
 even if the translated excursion $P P^{-1}(L_t) - L_t + \abs{W_t}$ is not finished yet, see figure~\ref{fig:G_IM:complete processes}.

\begin{figure}[tb] 
   \centering
   \includegraphics[keepaspectratio,width=0.98\textwidth]{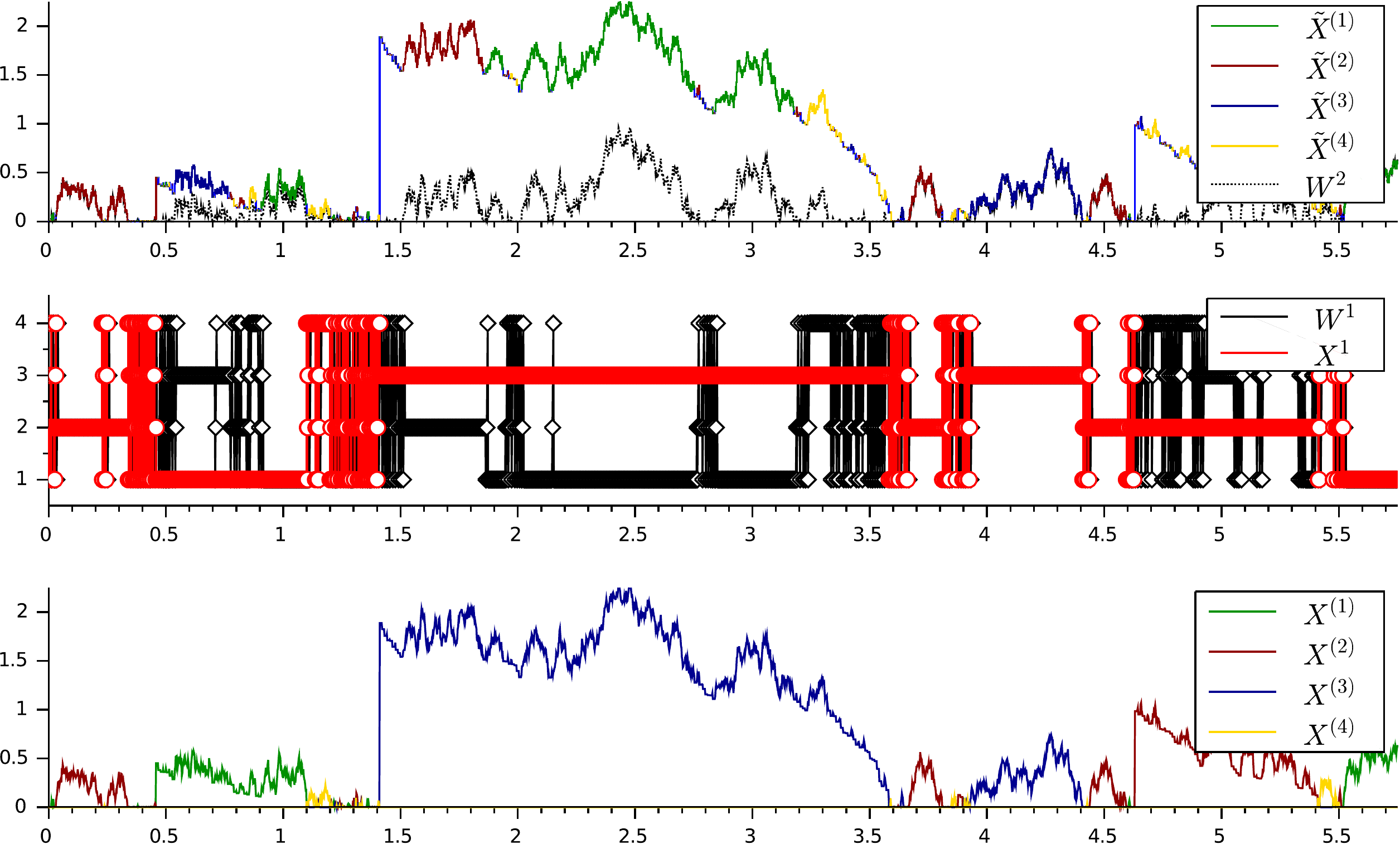}
   \caption[Construction approach for Brownian motions on a star graph]
           {Construction approach for Brownian motions on a star graph: 
            The incorrect process $\tX_t = \big( W^1_t, P P^{-1}(L_t) - L_t + \abs{W_t} \big)$, pictured in the first graph,
            already implements the desired radial process,
            however switches edges during jump excursions whenever the original process $W$ hits the vertex $0$.
            Thus, the edge process~$W^1$ must be transformed to~$X^1$ in order to ``hold'' the current edge during jump excursions.
            $X^{(i)}$, $\tX^{(i)}$ represent the process parts of $X$, $\tX$ on the corresponding edges $e_i \in \cE$, $i \in \{1,2,3,4\}$. } \label{fig:G_IM:complete processes}
\end{figure}  
 
 Therefore, we need to ``overwrite'' the edge process of $(W_t, t \geq 0)$ to being constant on some edge $e \in \cE$,
 as long as there is a ``jump excursion'' on this edge. There does not seem to be a straight-forward way to define such an ``overwriting process''.
 Our solution is the introduction of the auxiliary processes $P_e$, $e \in \cE$, which are modifications of the process $P$, namely, being 
 right continuous at the jump times of their own edge $e$, and left continuous at jump times of the other edges. 
 Therefore, on jump excursions on their own edge, $P_e P^{-1}$ will have ``upper triangles'' (which is equivalent to $\eta^e_t > 0$) just as $P P^{-1}$,
 but ``lower triangles'' (which is equivalent to $\eta^e_t < 0$) on jump excursions to other edges, see figure~\ref{fig:G_IM:eta processes}
 and Remark~\ref{rem:G_IM:behavior process X}.
 Thus, it is possible to derive the current edge of a jump excursion 
 from the paths of $t \mapsto P_e P^{-1}(L_t)$, $e \in \cE$, or equivalently from the excursion processes $(\eta^e_t, t \geq 0)$, $e \in \cE$.
 
\begin{figure}[tb] 
   \includegraphics[width=\textwidth,keepaspectratio]{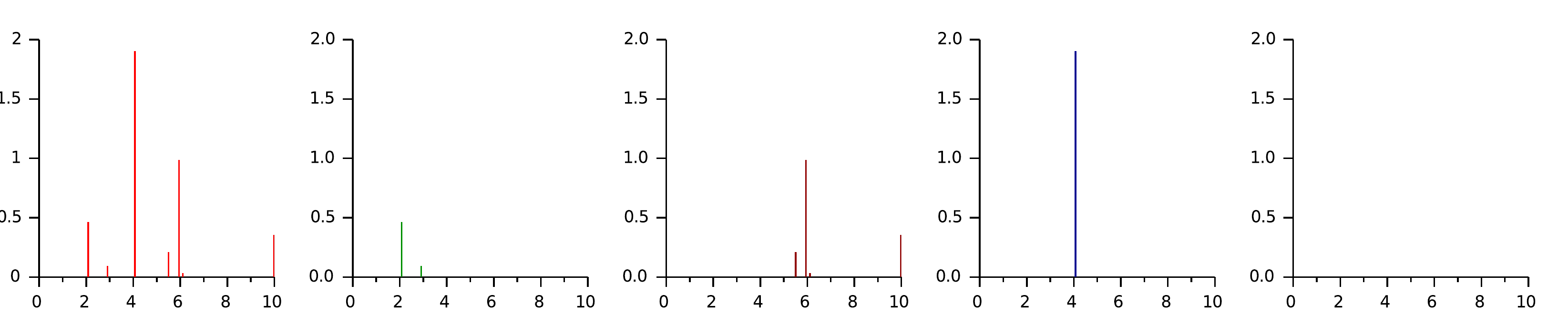}
   
   \includegraphics[width=\textwidth,keepaspectratio]{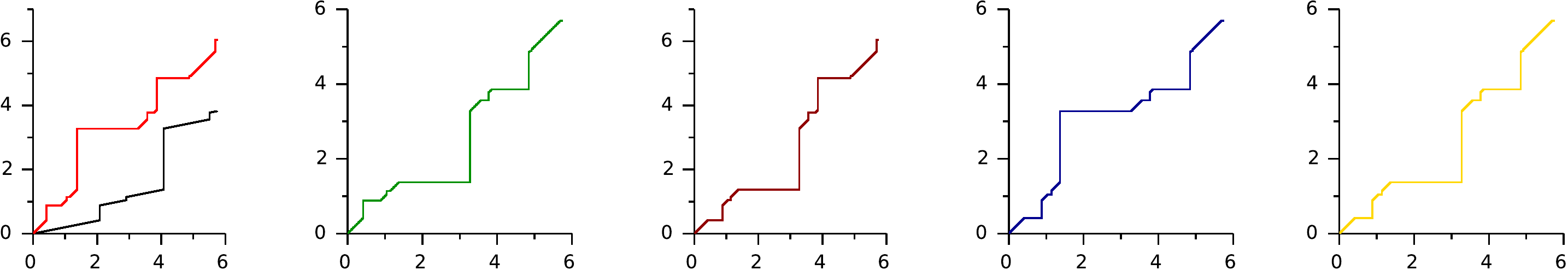}
   \caption[Illustration of the extension of \IM's approach to the star graph]
           {Illustration of the extension of \IM's approach to the star graph: 
            The left-hand graphs show the jumps of the complete subordinator $Q$ and the resulting processes $P$ (black) and
            $P P^{-1}$ (red) used in \IM's construction in the half-line case.
            The other graphs show (in the first row) the decomposition into the subordinator parts $Q^e$ of the corresponding edges $e \in \cE$ 
            (with color assignment like in figure~\ref{fig:G_IM:complete processes}),
            as well as (in the second row) the resulting processes $P_e P^{-1}$,
            which feature the needed ``upper triangles'' for ``own jumps'' and ``lower triangles'' for ``other jumps''.} \label{fig:G_IM:eta processes}
\end{figure} 
 
The process $E(\eta^e_t, e \in \cE)$ thus chooses which (if any) of the jump excursion times $\eta^e_t, e \in \cE$, is currently greater than zero
(that is, which ``triangle'' is the ``upper triangle''),
and holds the motion on this edge $e \in \cE$ for the remaining length $\eta^e_t > 0$ of this excursion; 
during this time \IM's Brownian motion $\eta_t + \abs{W_t}$ on the local coordinate behaves like a standard Brownian motion.
On the other hand, if all jump excursion times are zero, then $E(\eta^e_t, e \in \cE)$ just uses the original edge of the Walsh process $(W_t, t \geq 0)$ and $\eta_t = 0$ holds true,
so both coordinates of $X_t$ coincide with both original coordinates of~$W_t$. 
This means that, as long as there is no jump excursion, $X_t$ is just $W_t$. 
We will make these explanations rigorous in~Appendix~\ref{app:G_IM:remarks}, and only list the main results here:

\begin{remark} \label{rem:G_IM:behavior process X}
The function $P$ is strictly increasing, as $p_2 > 0$ or $p^e_4 \big( (0, \infty) \big) = +\infty$ for at least one $e \in \cE$.
Thus, $P P^{-1}$ has a level of constancy at some time $P(t-)$ of length $h$,
if and only if $P^{-1}$ has, so by \ref{itm:G_IM:props pseudoinverse:jumps-const} of Lemma~\ref{lem:G_IM:props pseudoinverse},
if and only if there exists a jump of $P$ at time $t \in J$ of height $h$.
Therefore, we can decompose $\R_+$ into 
 \begin{align*}
  \sT := \{ t \geq 0: P P^{-1}(t) = t \} \quad \text{and} \quad \sT^\comp = \bigcup_{n \in \N} [l_n^-, l_n^+).
 \end{align*}
Here, the interval $(l_n^-, l_n^+)$ corresponds to a jump of $P$ at time $t_n$ of height~$l_n$ via
 \begin{align*}
  l_n^- = P(t_n-), \quad l_n^+ - l_n^- = l_n, \quad n \in \N.
 \end{align*}
Then, by definition of $P$, it is
 \begin{align*}
  l_n^+ = l_n^- + l_n =  P(t_n-) + \big( P(t_n) - P(t_n-) \big) = P(t_n), \quad n \in \N,
 \end{align*}
and by~\ref{itm:G_IM:props pseudoinverse:P-1 P} of Lemma~\ref{lem:G_IM:props pseudoinverse}, we also have
 \begin{align*}
  P^{-1}(l_n^-) = P^{-1} \big( P(t_n-) \big) = t_n, \quad n \in \N.
 \end{align*}
This gives for each $n \in \N$,
 \begin{align} \label{eq:G_IM:behavior process P P-1}
  \forall t \in \big[l_n^-, l_n^+\big) = \big[P(t_n-), P(t_n)\big): \quad P P^{-1}(t) = P P^{-1}(l_n^-) = P(t_n).
 \end{align}
For every $P_e P^{-1}$, $e \in \cE$, the same decomposition holds true, that is, we have $P_e P^{-1}(t) = P_e P^{-1}_e (t) = t$ for all $t \in \sT$.
However, observe that by Lemma~\ref{lem:G_IM:behavior process Pe},
 \begin{align} \label{eq:G_IM:behavior process Pe P-1}
  \forall t \in \big[l_n^-, l_n^+\big): 
  \quad P_e P^{-1}(t) = P_e(t_n) =
    \begin{cases}
      P(t_n),  & t_n \in J_e, \\
      P(t_n-), & t_n \notin J_e.
    \end{cases}
 \end{align}
\end{remark}

In total, we get the complete path behavior of $X$:
 \begin{equation} \label{eq:G_IM:behavior process X}
 \begin{aligned}
  X_t =
   \begin{cases}
    W_t, & L_t \in \sT, \\
    \big( e, l_n^+ - L_t + \abs{W_t} \big) , & L_t \in  [l_n^-, l_n^+) \text{ with $t_n \in J_e$}.
   \end{cases}
 \end{aligned}  
 \end{equation}
 
\begin{theorem} \label{theo:G_IM:continuity process X}
  The process $(X_t, t \geq 0)$ is right continuous, 
  and it is continuous on any excursion away from $0$, that is:
  For any $t \geq 0$ with $X_t \neq 0$, 
  $(X_t, t \geq 0)$ is continuous on~$[t, t_0]$, with $t_0 := \inf \{ s \geq t: X_s = 0 \}$.
\end{theorem}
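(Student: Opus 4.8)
The plan is to read everything off the explicit path description~\eqref{eq:G_IM:behavior process X} and to treat the radial coordinate $R_t := \abs{X_t} = P P^{-1}(L_t) - L_t + \abs{W_t}$ separately from the edge coordinate $E_t$ (so that $X_t = (E_t, R_t)$ whenever $R_t > 0$, while $X_t = 0$ precisely when $R_t = 0$, and $d(g,0) = \abs{g}$ for every $g \in \cG$). I would prove, in this order: \emph{(i)} $t \mapsto R_t$ is right continuous; \emph{(ii)} for every $t$ with $R_t > 0$, the map $R$ is continuous on $[t, t_0]$ and $E$ is constant on $[t, t_0)$, where $t_0 := \inf\{s \ge t : R_s = 0\}$, read as $+\infty$ (i.e.\ the half line $[t,\infty)$) when this set is empty. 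The theorem then follows at once: at a time $t$ with $X_t = 0$ one has $d(X_s, X_t) = R_s \to 0$ as $s \downarrow t$ by~(i); at a time $t$ with $X_t \ne 0$ we have $t_0 > t$ by~(i), and by~(ii) the edge is constant and $R$ is continuous on $[t, t_0)$, so that $X_s$ and $X_t$ lie on the common edge $E_t$ and $d(X_s, X_t) = \abs{R_s - R_t} \to 0$ there, again by~(i); and~(ii) is precisely the asserted continuity on excursions away from $0$.

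For~(i), recall that $W$, hence $\abs{W}$, is continuous (Walsh process; Appendix~\ref{app:Walsh}), that $L$ is continuous and non-decreasing, and that $P$ is c\`adl\`ag and, by Remark~\ref{rem:G_IM:behavior process X}, strictly increasing, so its right-continuous pseudo-inverse $P^{-1}$ is non-decreasing and right continuous and $P P^{-1}$ is right continuous (if $s_k \downarrow s$ then $P^{-1}(s_k) \downarrow P^{-1}(s)$, and $P$ is right continuous at $P^{-1}(s)$). Hence, if $s_k \downarrow t$, then $L_{s_k} \downarrow L_t$ by continuity and monotonicity of $L$, whence $P P^{-1}(L_{s_k}) \to P P^{-1}(L_t)$; combined with the continuity of $L$ and $\abs{W}$ this yields $R_{s_k} \to R_t$.

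For~(ii), I would pull the decomposition $\R_+ = \sT \,\sqcup\, \bigsqcup_n [l_n^-, l_n^+)$ of Remark~\ref{rem:G_IM:behavior process X} back by $L$: for each $n$ set $\sigma_n := \inf\{u : L_u \ge l_n^-\}$ and $\tau_n := \inf\{u : L_u \ge l_n^+\}$, so that, by continuity and monotonicity of $L$, $\{u : L_u \in [l_n^-, l_n^+)\} = [\sigma_n, \tau_n)$, and on this interval~\eqref{eq:G_IM:behavior process X} reads $X_u = (e_n, l_n^+ - L_u + \abs{W_u})$, where $e_n \in \cE$ is the edge carrying the $n$-th jump ($t_n \in J_{e_n}$). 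Fix $t$ with $R_t > 0$. If $L_t \in [l_n^-, l_n^+)$ for the (unique) index $n$, then $t \in [\sigma_n, \tau_n)$; on $[t, \tau_n)$ one has $R_u = l_n^+ - L_u + \abs{W_u} > 0$ (since $L_u < l_n^+$) and $E_u \equiv e_n$, whereas if $\tau_n < \infty$, continuity of $L$ gives $L_{\tau_n} = l_n^+ \in \sT$ (Remark~\ref{rem:G_IM:behavior process X}, \eqref{eq:G_IM:behavior process P P-1}) and $W_{\tau_n} = 0$ — because $L$ reaches the level $l_n^+$ for the first time at $\tau_n$ but is constant on every component of $\{u : W_u \ne 0\}$ — so that $R_{\tau_n} = 0$. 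Consequently $t_0 = \tau_n$, and $R$, being equal to $l_n^+ - L_u + \abs{W_u}$ on $[t, \tau_n)$, is continuous there, tends to $0 = R_{\tau_n}$ as $u \uparrow \tau_n$, and is right continuous at $\tau_n$ by~(i); so $R$ is continuous on $[t, \tau_n]$ and $E \equiv e_n$ on $[t, t_0)$ (when $\tau_n = \infty$ the same formula shows $R$ stays positive and continuous on $[t, \infty) = [t, t_0)$). If instead $L_t \in \sT$, then $R_t > 0$ forces $\abs{W_t} > 0$, so $t$ lies in an open component $(a,b)$ of $\{u : W_u \ne 0\}$, on which $L$ is frozen at the value $L_t \in \sT$; hence $X_u = W_u$ for all $u \in [a,b]$ by~\eqref{eq:G_IM:behavior process X}, so $R_u = \abs{W_u} > 0$ on $(a,b)$ and $R_a = R_b = 0$, giving $t_0 = b$ and $X = W$ on $[t,b] = [t,t_0]$, which is continuous because $W$ is.

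The main obstacle is this fine-structure bookkeeping in~(ii): correctly recognising that an excursion of $X$ away from $0$ is \emph{either} a genuine $W$-excursion sitting at a level of $L$ that lies in $\sT$, \emph{or} is confined to a single gap $[l_n^-, l_n^+)$ of $P P^{-1}$ and to the single edge $e_n$ — and in particular that $W$ must be at the vertex at the gap-exit time $\tau_n$. This rests on the fact that the local time $L$ does not increase on any component of $\{u : W_u \ne 0\}$ (standard for the Walsh process; cf.\ Appendix~\ref{app:Walsh}) together with the explicit identities~\eqref{eq:G_IM:behavior process X}--\eqref{eq:G_IM:behavior process P P-1} of Remark~\ref{rem:G_IM:behavior process X}; once these are granted, every continuity claim reduces to the continuity of $W$ and $L$ and the right continuity of $P P^{-1}$.
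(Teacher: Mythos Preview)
Your proof is correct and follows essentially the same approach as the paper: both arguments read the path behavior off the explicit description~\eqref{eq:G_IM:behavior process X}, split into the two cases $L_t \in \sT$ (a genuine $W$-excursion) versus $L_t \in [l_n^-, l_n^+)$ (a jump excursion on edge~$e_n$), and use that $L$ only grows when $W$ is at the vertex together with the continuity of $W$, $L$ and the right continuity of $P P^{-1}$. Your version is more detailed and organized --- separating the radial and edge coordinates explicitly and spelling out why $W_{\tau_n} = 0$ --- but the underlying ideas are identical.
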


\subsection{Shift and Translation Operators for \texorpdfstring{$X$}{X}} \label{subsec:G_IM:shift operators}

Define the operators 
 \begin{align*}
  \g^P_x & :=  \g^Q_{x/n, \ldots, x/n}, \quad x \in \R.
 \end{align*}
Then $(\g^P_x, x \in \R)$ and $\G^Q$ are translation and centering operators for all processes~$P_e$, $e \in \cE \cup \{0\}$, because
for $e \in \cE$ (for $e = 0$, namely $P_e = P$, the calculation is completely analogous), we obtain by shifting the underlying processes $Q^e$, $e \in \cE$,
 \begin{equation} \label{eq:G_IM:shifts on Pe 0}
 \begin{aligned}
  P_e(t) \circ \g^P_x
  & = p_2 \, t + Q^e(t) \circ \g^Q_{x/n, \ldots, x/n} + \sum_{f \neq e} Q^e(t-) \circ \g^Q_{x/n, \ldots, x/n} \\
  &  = p_2 \, t + Q^e(t) + \frac{x}{n} + \sum_{f \neq e} \big( Q^f(t-) + \frac{x}{n} \big) \\
  &  = P_e(t) + x, \\
 \end{aligned}  
 \end{equation} 
and analogously, by using the definition $Q^f(0-) = Q^f(0)$,
 \begin{equation} \label{eq:G_IM:shifts on Pe}
 \begin{aligned}  
  P_e(t) \circ \G^Q 
  & = P_e(t) - P_e(0).
 \end{aligned}  
 \end{equation}

Define the operators $\T^X_t$, $t \geq 0$, by
 \begin{align*}
   \T^X_t := \T^W_t \otimes (\g^P_{-L_t} \circ \T^Q_{\varrho_t}),
 \end{align*}
that is, for all $\o = (\o^W, \o^Q) \in \O$, 
 \begin{align*}
   \T^X_t(\o) = \big(  \hT^W_t(\o^W), \g^P_{-L_t(\o)} \big( \hT^Q_{\varrho_t(\o)}(\o^Q) \big)  \big).
 \end{align*}
In order not to complicate the notation even more, we will also write $\T^W$, $\T^Q$ for the lifts of these shift operators from $\O^W$, $\O^Q$ to $\O$:
For all $\o = (\o^W, \o^Q) \in \O$, the formulas $\T^W_t(\o) = \T^W_t(\o^W)$, $\T^Q_t(\o) = \T^Q_t(\o^Q)$ will be used implicitly in this section.
 
\begin{lemma} \label{lem:G_IM:shift operators for X}
 $(\T^X_t, t \geq 0)$ is a family of shift operators for $X$.
\end{lemma}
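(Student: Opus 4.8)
The plan is to verify the shift-operator identity $X_s \circ \T^X_t = X_{s+t}$ directly from the explicit path description~\eqref{eq:G_IM:behavior process X} by tracking how each constituent of $X$ transforms under $\T^X_t = \T^W_t \otimes (\g^P_{-L_t} \circ \T^Q_{\varrho_t})$. First I would record the effect of $\T^X_t$ on the building blocks: from the Walsh-process identities we have $W_s \circ \T^X_t = \hW_s \circ \hT^W_t = W_{s+t}$ and $L_s \circ \T^X_t = \hL_s \circ \hT^W_t = L_{s+t} - L_t$. For the subordinator side, using~\eqref{eq:G_IM:shifts on Pe 0} together with $Q^e_s \circ \T^Q_r = \hQ^e_{s+r} - \hQ^e_r$ (and the fact that the composition $\g^P_{-L_t}\circ\T^Q_{\varrho_t}$ first time-shifts by $\varrho_t$ then translates every coordinate down by $L_t/n$), I would show
 \begin{align*}
  P_e(s) \circ \T^X_t = P_e(s + \varrho_t) - L_t, \qquad e \in \cE \cup \{0\},
 \end{align*}
and in particular $P(s)\circ\T^X_t = P(s+\varrho_t) - L_t$. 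The key point here is that $P(\varrho_t) = P P^{-1}(L_t) \geq L_t$ while $P(\varrho_t -) \leq L_t$, by definition of the pseudo-inverse and Remark~\ref{rem:G_IM:behavior process X}.

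Next I would push these through the pseudo-inverse. From $P(s)\circ\T^X_t = P(s+\varrho_t) - L_t$ one gets, for the shifted local-time argument $L_s \circ \T^X_t = L_{s+t} - L_t$,
 \begin{align*}
  P^{-1}\big(L_s \circ \T^X_t\big) \circ \T^X_t
   = \inf\{ r \geq 0 : P(r + \varrho_t) - L_t > L_{s+t} - L_t \}
   = P^{-1}(L_{s+t}) - \varrho_t = \varrho_{s+t} - \varrho_t,
 \end{align*}
using monotonicity of $P$ and $\varrho_t = P^{-1}(L_t)$; some care is needed at the endpoints because $P^{-1}$ is the right-continuous pseudo-inverse, but the strict monotonicity of $P$ from Remark~\ref{rem:G_IM:behavior process X} and the properties of $P\circ P^{-1}$ collected in Appendix~\ref{app:G_IM:remarks} (Lemma~\ref{lem:G_IM:props pseudoinverse}) make this routine. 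Consequently $\varrho_s \circ \T^X_t = \varrho_{s+t} - \varrho_t$, and then
 \begin{align*}
  \big(P_e P^{-1}\big)(L_s) \circ \T^X_t
   = P_e\big(\varrho_{s+t} - \varrho_t + \varrho_t\big) - L_t
   = \big(P_e P^{-1}\big)(L_{s+t}) - L_t,
 \end{align*}
so that $\eta^e_s \circ \T^X_t = (P_e P^{-1} - \id)(L_s)\circ\T^X_t = \big(P_e P^{-1}(L_{s+t}) - L_t\big) - \big(L_{s+t} - L_t\big) = \eta^e_{s+t}$, for every $e \in \cE$; likewise $\eta_s \circ \T^X_t = \eta_{s+t}$.

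Having $\eta^e_s\circ\T^X_t = \eta^e_{s+t}$ for all $e$ and $W_s\circ\T^X_t = W_{s+t}$, the definition of $X$ gives immediately
 \begin{align*}
  X_s \circ \T^X_t
   = \big( E(\eta^e_s, e\in\cE)\circ W_s, \, \eta_s + \abs{W_s} \big)\circ\T^X_t
   = \big( E(\eta^e_{s+t}, e\in\cE)\circ W_{s+t}, \, \eta_{s+t} + \abs{W_{s+t}} \big)
   = X_{s+t},
 \end{align*}
since $E(\,\cdot\,)$ depends only on the signs/values of its arguments, which are preserved. I would also note the trivial semigroup property $\T^X_0 = \id$ and, if required by the definition of "family of shift operators," check $\T^X_s \circ \T^X_t = \T^X_{s+t}$, which follows from the same computations applied twice together with $\varrho_{s+t} = \varrho_s\circ\T^X_t + \varrho_t$ and $L_{s+t} = L_s\circ\T^X_t + L_t$, plus the cocycle identities $\hT^W_s\circ\hT^W_t = \hT^W_{s+t}$, $\hT^Q_s\circ\hT^Q_t = \hT^Q_{s+t}$ and the translation relation $\g^P_x\circ\g^P_y = \g^P_{x+y}$ from~\eqref{eq:natural centering and translation}. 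The main obstacle is the bookkeeping at the pseudo-inverse step: correctly handling the interplay of the left limit $P(\varrho_t-) \leq L_t \leq P(\varrho_t)$ with the time-shift by $\varrho_t$, so that the infimum defining $P^{-1}$ after shifting really equals $\varrho_{s+t} - \varrho_t$ rather than something off by the length of a flat stretch; strict monotonicity of $P$ (Remark~\ref{rem:G_IM:behavior process X}) is exactly what rescues this, and everything else is a careful but mechanical substitution.
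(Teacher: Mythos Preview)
Your approach is essentially identical to the paper's: both compute the action of $\T^X_t$ on the building blocks $W_s$, $L_s$, $P_e(s)$ and $P^{-1}(L_s)$ to obtain $\eta^e_s\circ\T^X_t=\eta^e_{s+t}$, and then verify the semigroup identity $\T^X_s\circ\T^X_t=\T^X_{s+t}$ via the same cocycle/commutation relations. One small slip: the natural shift satisfies $Q^e_s\circ\T^Q_r = Q^e_{s+r}$, not $Q^e_{s+r}-Q^e_r$; the subtraction only enters through the translation $\g^P_{-L_t}$, but this does not change the structure of your argument.
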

The proof is rather technical and can be found in Appendix~\ref{app:G_IM:shift operators}.

\subsection{Suitable Filtration for \texorpdfstring{$X$}{X}}\label{subsec:G_IM:filtration}
For the following results, we define for any collection $\sA_1$ of sets and any set $A_2$
the usual ``Cartesian product'' of families of sets
 \begin{align*}
  \sA_1 \times A_2 := \{ A_1 \times A_2 : A_1 \in \sA_1 \},
 \end{align*}
and analogously the set $A_2 \times \sA_1 = \{ A_2 \times A_1 : A_1 \in \sA_1 \}$.

In order to describe for any $t \geq 0$ the mapping
  \begin{align*} 
    X_t = \big( E(P_e(\varrho_t) - L_t, e \in \cE) \circ W_t , P(\varrho_t) - L_t + \abs{W_t} \big),
  \end{align*}
the ``information'' of $\sF^W_t$ and ``$\sF^Q_{\varrho_t}$'' is needed.
First of all, we must clarify what we mean by the latter $\sigma$-algebra, because $\varrho_t = P^{-1}(L_t)$ is certainly not an $(\sF^Q_t, t \geq 0)$-stopping time.
Following the general definition of a stopped $\sigma$-algebra $\sG_\t$, namely 
 \begin{align*}
    \sG_\t = \big\{ A \in \sG_\infty \, \big| \, \forall s \geq 0: ~ A \cap \{ \t \leq s\} \in \sG_s \big\},
 \end{align*}
we set for each $t \geq 0$
 \begin{align*}
   \sF_t := \big\{ A \in \sF^W_t \otimes \sF^Q_\infty \, \big| \, \forall s \geq 0: ~ A \cap \{ \varrho_t \leq s\} \in \sF^W_t \otimes \sF^Q_s \big\}.
 \end{align*}
It turns out that $\sF_t$ is just the stopped $\sigma$-algebra $\tsF^t_{\varrho_t}$
for the random time $\varrho_t$ and the filtration $(\tsF^t_s, s \geq 0)$ given by
 \begin{align*}
  \tsF^t_s := \sF^W_t \otimes \sF^Q_s, \quad s \geq 0.
 \end{align*}

For this definition to fit in the general theory of stopping times and in order to employ the basic results on usual stopped $\sigma$-algebras,
we show:

\begin{lemma} \label{lem:G_IM:rho is tF stopping time}
  For every $t \geq 0$, $\varrho_t$ is an $(\tsF^t_s, s \geq 0)$-stopping time, that is,
   \begin{align*}
    \forall s \geq 0: \quad \{ \varrho_t \leq s \} \in \sF^W_t \otimes \sF^Q_s.
   \end{align*}
\end{lemma}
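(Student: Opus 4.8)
The goal is to show $\{\varrho_t \le s\} \in \sF^W_t \otimes \sF^Q_s$ for every fixed $t, s \ge 0$, where $\varrho_t = P^{-1}(L_t) = \inf\{u \ge 0 : P_u > L_t\}$. The natural approach is to unwind the pseudo-inverse: since $P$ is right continuous and strictly increasing (Remark~\ref{rem:G_IM:behavior process X}), the event $\{\varrho_t \le s\}$ should be expressible in terms of $P$ evaluated at times $\le s$ together with the value $L_t$. First I would record the elementary equivalence for the right continuous pseudo-inverse of a right continuous nondecreasing function: $\varrho_t \le s$ iff for every $u > s$ we have $P_u > L_t$, and by right continuity this can be rewritten as $\varrho_t \le s \iff \forall \e \in \Q_{>0}: P_{s+\e} > L_t$, or more carefully, using that $P$ is strictly increasing and right continuous, $\{\varrho_t \le s\} = \bigcap_{\e \in \Q_{>0}} \{ P_{s+\e} > L_t \}$. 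Actually one must be a touch careful at the boundary — whether $\varrho_t \le s$ allows $P_s = L_t$ — but in all cases the event is a countable Boolean combination of events of the form $\{P_{s+\e} > L_t\}$ and $\{P_s \ge L_t\}$ with $\e \in \Q_{>0}$, which is harmless for measurability.

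Next I would check that each building-block event lies in $\sF^W_t \otimes \sF^Q_s$. The random variable $L_t = \hL_t \circ \pi^W$ is $\sF^W_t$-measurable since the local time of the Walsh process is adapted to $(\sF^W_r)_{r \ge 0}$; lifted to $\O$ it is $\sF^W_t \otimes \{\emptyset, \O^Q\}$-measurable, hence $\sF^W_t \otimes \sF^Q_s$-measurable. For the other side, $P_r = p_2 r + \sum_{e \in \cE} Q^e_r$ where each $Q^e_r = \hQ^e_r \circ \pi^{Q,e} \circ \pi^Q$ is $\sF^Q_r$-measurable; so for any $r \le s$ (in particular $r = s$ or $r = s + \e$ — wait, $s + \e > s$), $P_r$ is $\sF^Q_r$-measurable. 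The subtlety is that $P_{s+\e}$ is measurable with respect to $\sF^Q_{s+\e}$, which is \emph{not} contained in $\sF^Q_s$. This is exactly where the pseudo-inverse identity must be set up so that only times $\le s$ appear. The correct statement is $\{\varrho_t \le s\} = \{ P_s \ge L_t \}$ when $P$ is strictly increasing and right continuous: indeed $\varrho_t = \inf\{u : P_u > L_t\} \le s$ iff $\sup_{u < s}$-type conditions, but with strict monotonicity, $P_u > L_t$ for some $u \le s + $ anything iff $P_s \ge L_t$ — because if $P_s \ge L_t$ then for every $\e > 0$, $P_{s+\e} > P_s \ge L_t$, so $\varrho_t \le s$; conversely if $\varrho_t \le s$ then $P_{s + \e} > L_t$ for all $\e > 0$, and letting $\e \downarrow 0$ by right continuity gives $P_s \ge L_t$.

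So the cleaner route is: prove the pointwise identity $\{\varrho_t \le s\} = \{P_s \ge L_t\}$ directly from strict monotonicity and right continuity of $P$ (citing Remark~\ref{rem:G_IM:behavior process X} for strict monotonicity and Lemma~\ref{lem:G_IM:props pseudoinverse} for the pseudo-inverse properties), and then observe $\{P_s \ge L_t\} \in \sF^Q_s \otimes \sF^W_t = \sF^W_t \otimes \sF^Q_s$ since $P_s$ is $\sF^Q_s$-measurable and $L_t$ is $\sF^W_t$-measurable, and $\{g \ge h\}$ for measurable $g, h$ on a product space with $g$ measurable w.r.t. one factor and $h$ w.r.t. the other is a measurable rectangle-generated set. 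I would phrase this last step as: the map $\o \mapsto (P_s(\o), L_t(\o))$ is $(\sF^W_t \otimes \sF^Q_s)$-measurable into $\R^2$, and $\{(a,b) : a \ge b\}$ is Borel, so the preimage is in $\sF^W_t \otimes \sF^Q_s$.

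\textbf{Main obstacle.} The only real pitfall is the boundary behavior of the pseudo-inverse — getting the identity $\{\varrho_t \le s\} = \{P_s \ge L_t\}$ exactly right (as opposed to $\{P_s > L_t\}$ or some version with a strict/non-strict mismatch), and making sure the argument genuinely uses only the value $P_s$ (i.e.\ the process up to time $s$) and not $P$ at times strictly beyond $s$. This is entirely a matter of carefully applying right continuity together with strict monotonicity; there is no deep probabilistic content, and the measurability bookkeeping afterwards is routine given the construction of $P$ and $L$ from the respective coordinate processes.
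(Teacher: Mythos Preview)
Your proposal is correct and takes essentially the same approach as the paper: both reduce to the identity $\{\varrho_t \le s\} = \{P_s \ge L_t\}$ (equivalently $\{\varrho_t > s\} = \{L_t > P(s)\}$, which is exactly item~\ref{itm:G_IM:props pseudoinverse:inversion} of Lemma~\ref{lem:G_IM:props pseudoinverse}), and then conclude measurability from $P_s \in \sF^Q_s$ and $L_t \in \sF^W_t$. The paper writes out the explicit countable union $\bigcup_{q \in \Q_+} \big(\{L_t > q\} \cap \{q > P(s)\}\big)$ whereas you invoke joint measurability of $(P_s, L_t)$ and pull back a Borel set, but these are interchangeable.
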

\begin{proof}
 By using \ref{itm:G_IM:props pseudoinverse:inversion} of Lemma~\ref{lem:G_IM:props pseudoinverse},
 we see that for any $s, t \geq 0$,
  \begin{align*}
   \{ \varrho_t > s \} 
   & = \{ P^{-1}(L_t) > s \} 
     = \{ L_t > P(s) \}  \\
   & =  \bigcup_{q \in \Q_+} \big( \{ L_t > q \} \cap \{ q > P(s) \} \big)  ~ \in \sF^W_t \otimes \sF^Q_s. \qedhere
   \end{align*}
\end{proof}


Thus, 
$\sF_{t_1} = \tsF^t_{\varrho_{t_1}} \subseteq \tsF^t_{\varrho_{t_2}}  = \sF_{t_2}$ for all $0 \leq t_1 \leq t_2$, 
by a well-known result on stopped $\sigma$-algebras (see, e.g., \cite[Theorem 1.3.5]{ChungWalsh05}), that is, $(\sF_t, t \geq 0)$ is a filtration.

\begin{theorem} \label{lem:G_IM:X is adapted to filtration F}
 $(X_t, t \geq 0)$ is adapted to the filtration $(\sF_t, t \geq 0)$.
\end{theorem}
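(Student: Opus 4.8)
The plan is to fix $t \geq 0$ and show that $X_t$ is $\sF_t$-measurable by decomposing the map $X_t$ into measurable pieces. Recall that from the path description we have
$$
X_t = \big( E(P_e(\varrho_t) - L_t, e \in \cE) \circ W_t, \; P(\varrho_t) - L_t + \abs{W_t} \big),
$$
with $\varrho_t = P^{-1}(L_t)$. First I would record the ingredients that are already available: $L_t$ and $\abs{W_t}$ and $W_t$ are $\sF^W_t$-measurable, hence $(\sF^W_t \otimes \sF^Q_\infty)$-measurable; $\varrho_t$ is an $(\tsF^t_s, s\geq 0)$-stopping time by Lemma~\ref{lem:G_IM:rho is tF stopping time}, so $\sF_t = \tsF^t_{\varrho_t}$ is a genuine stopped $\sigma$-algebra to which one can apply the standard toolbox (optional stopping of right-continuous adapted processes, measurability of a process evaluated at a stopping time, etc.).

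The heart of the argument is to show that the two "subordinator-driven" quantities $P(\varrho_t)$ and, for each $e \in \cE$, $P_e(\varrho_t)$, are $\sF_t$-measurable. The process $s \mapsto P(s) = p_2 s + \sum_e Q^e(s)$ is right continuous and adapted to $(\sF^Q_s, s\geq 0)$, hence adapted to $(\tsF^t_s, s\geq 0)$; since $\varrho_t$ is an $(\tsF^t_s)$-stopping time, the value $P(\varrho_t)$ is $\tsF^t_{\varrho_t} = \sF_t$-measurable by the classical fact that a right-continuous adapted process stopped at a stopping time is measurable with respect to the stopped $\sigma$-algebra. The same reasoning applies verbatim to each $P_e$, which is likewise right continuous (indeed càdlàg, being a sum of $Q^e(s)$ and of the left-continuous modifications $Q^f(s-)$, plus the drift) and $(\sF^Q_s)$-adapted. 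Consequently $\eta^e_t = P_e(\varrho_t) - L_t$ and $\eta_t = P(\varrho_t) - L_t$ are $\sF_t$-measurable for every $e \in \cE$.

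It remains to assemble $X_t$ from these pieces. The radial coordinate $\eta_t + \abs{W_t}$ is a sum of $\sF_t$-measurable functions, hence $\sF_t$-measurable. For the edge coordinate, note that the map $E$ is a fixed, measurable function of the vector $(\eta^e, e \in \cE)$ and of the point $W_t$: on the set where some $\eta^e > 0$ it returns that edge $e$, and otherwise it returns the edge component of $W_t$ (with the convention at the vertex $0$). Because $\cE$ is finite, $E(\eta^e_t, e\in\cE)\circ W_t$ can be written as a finite combination of the $\sF_t$-measurable events $\{\eta^e_t > 0\}$ together with the $\sF^W_t$-measurable edge component of $W_t$; hence it is $\sF_t$-measurable, and so is $X_t$ as an element of the Polish space $\cG$. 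This being true for every $t\geq 0$ gives adaptedness.

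The main obstacle is purely the bookkeeping around the non-standard $\sigma$-algebra $\sF_t$: one must be careful that $\varrho_t$ is a stopping time for the right filtration (which is exactly Lemma~\ref{lem:G_IM:rho is tF stopping time}), that the processes $P$ and $P_e$ are evaluated at $\varrho_t$ in a way covered by the stopped-$\sigma$-algebra machinery despite $P_e$ involving left limits of the "foreign" subordinators, and that the hybrid product structure $\sF^W_t \otimes \sF^Q_s$ interacts correctly with stopping at $\varrho_t$ (since $\varrho_t$ itself depends on $L_t$, which lives in the $W$-factor). Once these points are settled, the rest is routine.
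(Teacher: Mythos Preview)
Your overall approach matches the paper's: reduce to showing $P(\varrho_t)$ and $P_e(\varrho_t)$ are $\sF_t$-measurable via the stopped-$\sigma$-algebra machinery, then assemble $X_t$ from these together with $W_t$, $L_t$, $\abs{W_t}$. The assembly of the edge coordinate via the finite partition $\{\eta^e_t>0\}$ is exactly what is needed.

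There is, however, a genuine slip in the key step. The process $P_e$ is \emph{not} right continuous (and not c\`adl\`ag): by its very definition, $P_e(s)=p_2 s+Q^e(s)+\sum_{f\neq e}Q^f(s-)$ is left continuous at every jump time of any $Q^f$ with $f\neq e$ and has a jump to the right there (cf.\ Lemma~\ref{lem:G_IM:behavior process Pe}). So the sentence ``the same reasoning applies verbatim to each $P_e$, which is likewise right continuous'' fails as written. The paper handles exactly this point by treating the pieces of $P_e(\varrho_t)$ separately: one first shows that $Q^e\big((\varrho_t-u)\vee 0\big)$ is $\sF_t$-measurable for every $u\geq 0$ (this is the right-continuous case you invoke), and then obtains the left limits via
\[
Q^f(\varrho_t-) \;=\; \lim_{n\to\infty} Q^f\big((\varrho_t-\tfrac1n)\vee 0\big),
\]
which is therefore $\sF_t$-measurable as a pointwise limit. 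Summing gives $P_e(\varrho_t)\in\sF_t$. Alternatively, you could argue that each $s\mapsto Q^f(s-)$ is left continuous and adapted, hence progressive, so $Q^f(\varrho_t-)$ is $\tsF^t_{\varrho_t}=\sF_t$-measurable; but you should state this rather than the false right-continuity claim. With this correction your proof is complete and coincides with the paper's.
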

\begin{proof}
 It is immediate from Lemma~\ref{lem:G_IM:rho is tF stopping time} that $(\varrho_t, t \geq 0)$ is adapted to $(\sF_t, t \geq 0)$, thus especially yielding
  \begin{align} \label{eq:G_IM:F_W subset of F}
   \forall t \geq 0: \quad \sF^W_t \times \O^Q \subseteq \sF_t.
  \end{align}
 An easy calculation then shows that for every $e \in \cE$, $u \geq 0$, the process
   \begin{align} \label{eq:G_IM:Q rho- is adapted to F}
    \text{$\big(Q^e \big( (\varrho_t - u) \vee 0 \big), t \geq 0 \big)$ is adapted to $(\sF_t, t \geq 0)$.}
   \end{align}
 
 Thus, for all $e \in \cE \cup \{0\}$, the process $\big( P_e(\varrho_t), t \geq 0 \big)$ is $(\sF_t, t \geq 0)$-adapted:
   This is evident for $P = P_0$ by setting $u = 0$ in~\eqref{eq:G_IM:Q rho- is adapted to F}, as for $t \geq 0$,
    \begin{align*}
      P(\varrho_t) = p_2 \, \varrho_t + \sum_{e \in \cE} Q^e(\varrho_t).
    \end{align*}
  For $e \in \sE$, $t \geq 0$,
    \begin{align*}
      Q^e(\varrho_t -) = \lim_{s \upuparrows \varrho_t} Q^e(s) = \lim_{n \rightarrow \infty} Q^e \big( (\varrho_t - \frac{1}{n}) \vee 0 \big)
    \end{align*}
  is $\sF_t$-measurable, and so
    \begin{align}\label{eq:Pe is adapted to F}
      P_e(\varrho_t) & = p_2 \, \varrho_t + Q^e(\varrho_t) + \sum_{f \neq e} Q^f(\varrho_t -) \quad \text{is $\sF_t$-measurable}.
    \end{align}
    
  This yields the result, as $W_t$ is $\sF_t$-measurable by \eqref{eq:G_IM:F_W subset of F}.\sqed
\end{proof}

\subsection{Strong Markov Property of \texorpdfstring{$(W,Q)$}{(W,Q)}} \label{subsec:G_IM:strong Markov of (W,Q)} 

In the construction of $X$, the process $P$ and, thus, the process $Q$ appear shifted by the random time $\varrho_t$.
In order to use Markov arguments when analyzing the process $X$ in the next subsections, 
we will need to transfer the strong Markov property of $\hQ$ to the part $Q$ of the combined process $(W, Q)$ and
then understand how the shifts $\T^W_t$ of $W$ and $\T^Q_{\varrho_t}$ of $Q$ act on this combined process.

The main idea is that, as we only shift the process $Q$ by $\varrho_t$, we only need to consider this part of the combined process.
Therefore, we introduce the filtration $(\bsF^Q_s, s \geq 0)$ by
 \begin{align*} 
   \bsF^Q_s := \sF^W_\infty \otimes \sF^Q_s, \quad s \geq 0.
  \end{align*}
It is immediate that
 \begin{align*} 
   \bsF^Q_\infty = \sF^W_\infty \otimes \sF^Q_\infty.
 \end{align*}
 
Surely, this filtration is large enough for the time shift $\varrho_t$, Lemma~\ref{lem:G_IM:rho is tF stopping time} gives:
 
\begin{lemma} \label{lem:G_IM:rho is barFQ stopping time}
 For all $t \geq 0$, $\varrho_t$ is an $(\bsF^Q_s, s \geq 0)$-stopping time.
\end{lemma}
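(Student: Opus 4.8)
The plan is to obtain this immediately from Lemma~\ref{lem:G_IM:rho is tF stopping time} by a monotonicity observation on product $\sigma$-algebras. First I would note that, for each fixed $t \geq 0$, the filtration $(\tsF^t_s, s \geq 0)$ appearing in Lemma~\ref{lem:G_IM:rho is tF stopping time} is a sub-filtration of $(\bsF^Q_s, s \geq 0)$: since the canonical filtration $(\sF^W_s, s \geq 0)$ of $W$ is increasing we have $\sF^W_t \subseteq \sF^W_\infty$, and tensoring with the fixed factor $\sF^Q_s$ preserves inclusions, so $\tsF^t_s = \sF^W_t \otimes \sF^Q_s \subseteq \sF^W_\infty \otimes \sF^Q_s = \bsF^Q_s$ for every $s \geq 0$.

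Given this, Lemma~\ref{lem:G_IM:rho is tF stopping time} asserts $\{ \varrho_t \leq s \} \in \sF^W_t \otimes \sF^Q_s$ for all $s \geq 0$, and the inclusion above upgrades this to $\{ \varrho_t \leq s \} \in \bsF^Q_s$ for all $s \geq 0$, which is exactly the statement that $\varrho_t$ is an $(\bsF^Q_s, s \geq 0)$-stopping time. That $(\bsF^Q_s, s \geq 0)$ is itself a filtration is clear, as $(\sF^Q_s, s \geq 0)$ is increasing and tensoring with the fixed $\sigma$-algebra $\sF^W_\infty$ is monotone.

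There is no genuine obstacle here: the content is entirely carried by the proof of Lemma~\ref{lem:G_IM:rho is tF stopping time}, where the decomposition $\{ \varrho_t > s \} = \{ L_t > P(s) \} = \bigcup_{q \in \Q_+} \big( \{ L_t > q \} \cap \{ q > P(s) \} \big)$ already places the event in the smaller product $\sigma$-algebra $\sF^W_t \otimes \sF^Q_s$, and enlarging the first factor to $\sF^W_\infty$ can only help. I would record the lemma separately only because $(\bsF^Q_s, s \geq 0)$, rather than $(\tsF^t_s, s \geq 0)$, is the filtration compatible with the shift $\T^Q_{\varrho_t}$ used in the subsequent strong Markov analysis of the pair $(W,Q)$.
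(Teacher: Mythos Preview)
Your proposal is correct and follows exactly the paper's approach: the paper states the lemma with the remark that ``Lemma~\ref{lem:G_IM:rho is tF stopping time} gives'' the result, i.e., it is an immediate consequence of the inclusion $\tsF^t_s = \sF^W_t \otimes \sF^Q_s \subseteq \sF^W_\infty \otimes \sF^Q_s = \bsF^Q_s$.
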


%

The next two results show that this new filtration is, of course, larger than the actual filtrations needed,
which will be helpful for proving Markov properties later. They follow directly from the definitions of the involved $\sigma$-algebras:

\begin{lemma} \label{lem:G_IM:FW subset barFQ_tau}
 For any $(\bsF^Q_s, s \geq 0)$-stopping time $\t$, $\sF^W_\infty \times \O^Q \subseteq \bsF^Q_{\t}$.
\end{lemma}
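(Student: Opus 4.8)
The statement is that for any $(\bsF^Q_s, s \geq 0)$-stopping time $\t$, we have $\sF^W_\infty \times \O^Q \subseteq \bsF^Q_{\t}$. Recall that $\bsF^Q_s = \sF^W_\infty \otimes \sF^Q_s$ and that, by the general definition of a stopped $\sigma$-algebra recalled earlier in this subsection,
\begin{align*}
 \bsF^Q_\t = \big\{ A \in \bsF^Q_\infty \,\big|\, \forall s \geq 0: A \cap \{\t \leq s\} \in \bsF^Q_s \big\}.
\end{align*}
So the plan is simply to take an arbitrary set of the form $A = A^W \times \O^Q$ with $A^W \in \sF^W_\infty$ and verify the two defining conditions: first that $A \in \bsF^Q_\infty$, and second that $A \cap \{\t \leq s\} \in \bsF^Q_s$ for every $s \geq 0$.

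The first condition is immediate: since $A^W \in \sF^W_\infty$ and $\O^Q \in \sF^Q_\infty$, the product $A^W \times \O^Q$ lies in $\sF^W_\infty \otimes \sF^Q_\infty = \bsF^Q_\infty$. For the second condition, fix $s \geq 0$. Because $\t$ is an $(\bsF^Q_s, s\geq 0)$-stopping time, $\{\t \leq s\} \in \bsF^Q_s = \sF^W_\infty \otimes \sF^Q_s$. Also $A = A^W \times \O^Q \in \sF^W_\infty \otimes \sF^Q_s$, since $\O^Q \in \sF^Q_s$. Hence $A \cap \{\t \leq s\}$ is the intersection of two members of the $\sigma$-algebra $\sF^W_\infty \otimes \sF^Q_s = \bsF^Q_s$, and is therefore itself in $\bsF^Q_s$. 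Since $s$ was arbitrary, $A \in \bsF^Q_\t$, which is exactly the desired inclusion.

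There is essentially no obstacle here — the lemma is a pure bookkeeping statement about product $\sigma$-algebras and stopped $\sigma$-algebras, and the only thing to be careful about is to note in each step that $\O^Q$ belongs to every $\sF^Q_s$, so that a ``cylinder'' set built over the $W$-coordinate is measurable with respect to $\bsF^Q_s$ for every $s$. One might phrase the whole argument in a single sentence, but I would write out the two membership checks explicitly, since the same pattern recurs in the companion Lemma and it is worth having it on record.
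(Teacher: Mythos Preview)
Your proof is correct and is precisely the direct verification from the definitions that the paper has in mind; the paper itself does not spell out a proof but simply remarks that the lemma ``follow[s] directly from the definitions of the involved $\sigma$-algebras.'' Your write-up makes this explicit and there is nothing to add.
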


\begin{lemma} \label{lem:G_IM:F subset barFQ_rho}
 For all $t \geq 0$, $\sF_t \subseteq \bsF^Q_{\varrho_t}$.
\end{lemma}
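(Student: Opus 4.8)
The plan is to unwind the definition of the stopped $\sigma$-algebra $\sF_t$ and check membership in $\bsF^Q_{\varrho_t}$ directly, using Lemma~\ref{lem:G_IM:rho is barFQ stopping time} to make sense of the latter. Recall that by definition $\sF_t = \{ A \in \sF^W_t \otimes \sF^Q_\infty : \forall s \geq 0, \ A \cap \{\varrho_t \leq s\} \in \sF^W_t \otimes \sF^Q_s \}$, whereas, since $\varrho_t$ is an $(\bsF^Q_s, s \geq 0)$-stopping time by Lemma~\ref{lem:G_IM:rho is barFQ stopping time}, the general definition of a stopped $\sigma$-algebra gives $\bsF^Q_{\varrho_t} = \{ A \in \bsF^Q_\infty : \forall s \geq 0, \ A \cap \{\varrho_t \leq s\} \in \bsF^Q_s \}$, with $\bsF^Q_s = \sF^W_\infty \otimes \sF^Q_s$ and $\bsF^Q_\infty = \sF^W_\infty \otimes \sF^Q_\infty$.

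So let $A \in \sF_t$. First, $A \in \sF^W_t \otimes \sF^Q_\infty \subseteq \sF^W_\infty \otimes \sF^Q_\infty = \bsF^Q_\infty$, since $\sF^W_t \subseteq \sF^W_\infty$. Second, for every $s \geq 0$ we have by the defining property of $\sF_t$ that $A \cap \{\varrho_t \leq s\} \in \sF^W_t \otimes \sF^Q_s \subseteq \sF^W_\infty \otimes \sF^Q_s = \bsF^Q_s$, again because $\sF^W_t \subseteq \sF^W_\infty$. These two facts together are precisely the membership conditions for $A \in \bsF^Q_{\varrho_t}$, so $\sF_t \subseteq \bsF^Q_{\varrho_t}$, as claimed.

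There is essentially no obstacle here: the inclusion follows immediately from the inclusion of index $\sigma$-algebras $\sF^W_t \subseteq \sF^W_\infty$ and the fact that both $\sF_t$ and $\bsF^Q_{\varrho_t}$ are described as stopped $\sigma$-algebras for the same random time $\varrho_t$, only with respect to the filtrations $(\sF^W_t \otimes \sF^Q_s)_{s \geq 0}$ and $(\sF^W_\infty \otimes \sF^Q_s)_{s \geq 0}$ respectively. The only point that needs the earlier work is that $\bsF^Q_{\varrho_t}$ is a legitimate stopped $\sigma$-algebra, i.e.\ that $\varrho_t$ is a stopping time for $(\bsF^Q_s, s \geq 0)$, which is exactly Lemma~\ref{lem:G_IM:rho is barFQ stopping time}.
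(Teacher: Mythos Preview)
Your proof is correct and is precisely the argument the paper has in mind: it states that this lemma (together with the preceding one) ``follow[s] directly from the definitions of the involved $\sigma$-algebras'' and gives no further proof. Your unwinding of the two stopped-$\sigma$-algebra definitions and the observation $\sF^W_t \otimes \sF^Q_s \subseteq \sF^W_\infty \otimes \sF^Q_s$ is exactly that direct verification.
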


We are now able to transfer the Markov property and the strong Markov property from $\hQ$ to $Q$.
Then, due to their independence, are able to shift the processes~$W$ and~$Q$ by different time scales.
Observe that the following results are not really representing the canonical Markov properties which were defined and discussed in~subsection~\ref{subsec:Definitions Markov},
so we need to reiterate some standard proofs. Details can be found in Appendix~\ref{app:G_IM:strong Markov of (W,Q)}.

\begin{theorem} \label{theo:G_IM:Markov of (W,Q)}
 For all $g \in \cG$, $q \in \R^n$, $Y \in b \sF^0_\infty = b \sigma(W_r, Q_s, r,s \geq 0)$, $t \geq 0$,
  \begin{align*} 
    \EV_{g,q} \big( Y \circ \T^W_t \otimes \T^Q_{\varrho_t} \, \big| \, \sF_t \big) = \EV_{W_t, Q_{\varrho_t}} (Y).
  \end{align*}
\end{theorem}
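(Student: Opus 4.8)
The plan is to prove the combined Markov property of $(W,Q)$ under the split time shift $\T^W_t \otimes \T^Q_{\varrho_t}$ by first establishing the strong Markov property of $Q$ alone (relative to the enlarged filtration $(\bsF^Q_s,s\geq 0)$ and the stopping time $\varrho_t$), then recombining with the independent Walsh process $W$. First I would transfer the Markov and strong Markov property from the canonical subordinator product $\hQ$ to its lift $Q = \hQ\circ\p^Q$ on the product space $\O = \O^W\times\O^Q$. Since $\p^Q$ is a measurable projection, $Q$ inherits a Markov property with respect to $(\sF^Q_s, s\geq 0)$ under each $\PV^Q$, and by Fubini/independence this lifts to the enlarged filtration $(\bsF^Q_s,s\geq 0)$: for $F\in b\sF^Q_\infty$ and $t\geq 0$,
\begin{align*}
 \EV_{g,q}\big( F\circ\T^Q_s \,\big|\, \bsF^Q_s\big) = \EV_{W_\cdot,\, Q_s}(F) \quad\text{(the $W$-dependence being trivial on $\sF^Q$)}.
\end{align*}
Because $\hQ$, being a L\'evy process realized as a c\`adl\`ag coordinate process, is automatically strongly Markovian (a standard fact for L\'evy processes, usable as in \cite[Chapter~II]{Sharpe88}), the same lifting argument upgrades this to the strong Markov property at any $(\bsF^Q_s,s\geq 0)$-stopping time $\t$; in particular by Lemma~\ref{lem:G_IM:rho is barFQ stopping time} we may take $\t=\varrho_t$.

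Next I would combine the two shifts. The key structural point is that $\T^W_t$ acts only on the $\O^W$-coordinate and $\T^Q_{\varrho_t}$ only on the $\O^Q$-coordinate, and that $\varrho_t = P^{-1}(L_t)$ is a functional of the $W$-coordinate alone (through $L_t$), hence $\bsF^Q_{\varrho_t}$-measurable and, by Lemma~\ref{lem:G_IM:FW subset barFQ_tau}, $\sF^W_\infty\times\O^Q\subseteq\bsF^Q_{\varrho_t}$. So I would first condition on $\bsF^Q_{\varrho_t}$: for $Y$ of product form $Y = Y^W\cdot Y^Q$ with $Y^W\in b\sigma(W_r, r\geq 0)$, $Y^Q\in b\sigma(Q_s, s\geq 0)$, the factor $Y^W\circ\T^W_t$ is $\sF^W_\infty\times\O^Q \subseteq \bsF^Q_{\varrho_t}$-measurable, so it can be pulled out, and the strong Markov property of $Q$ at $\varrho_t$ handles $Y^Q\circ\T^Q_{\varrho_t}$:
\begin{align*}
 \EV_{g,q}\big( (Y^W\circ\T^W_t)(Y^Q\circ\T^Q_{\varrho_t}) \,\big|\, \bsF^Q_{\varrho_t}\big)
 = (Y^W\circ\T^W_t)\cdot \EV_{\,\cdot\,,\,Q_{\varrho_t}}(Y^Q).
\end{align*}
I then take a further conditional expectation over $\sF_t$, using Lemma~\ref{lem:G_IM:F subset barFQ_rho} ($\sF_t\subseteq\bsF^Q_{\varrho_t}$) and the tower property. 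On the remaining factor $Y^W\circ\T^W_t$ I apply the ordinary Markov property of the Walsh process $W$ with respect to $(\sF^W_t,t\geq 0)$ — lifted to $(\sF_t,t\geq 0)$ via \eqref{eq:G_IM:F_W subset of F} — to replace it by $\EV_{W_t}(Y^W)$. Since $Q_{\varrho_t}$ is already $\sF_t$-measurable (it enters the definition of $X_t$, cf.\ the adaptedness proof of Theorem~\ref{lem:G_IM:X is adapted to filtration F} via \eqref{eq:Pe is adapted to F}), the product $\EV_{W_t}(Y^W)\cdot\EV_{\,\cdot\,,\,Q_{\varrho_t}}(Y^Q)$ is exactly $\EV_{W_t,\,Q_{\varrho_t}}(Y)$ for product $Y$, and a monotone class argument extends the identity to all $Y\in b\sF^0_\infty = b\sigma(W_r,Q_s,r,s\geq 0)$.

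The main obstacle I anticipate is the delicate bookkeeping around the two different filtrations and the fact that $\varrho_t$ is not an $(\sF^Q_s)$-stopping time on its own: one must be careful that the "strong Markov property of $Q$ at $\varrho_t$" is genuinely available, which is precisely why the auxiliary filtration $(\bsF^Q_s,s\geq 0)$ is introduced — it is large enough to make $\varrho_t$ a stopping time (Lemma~\ref{lem:G_IM:rho is barFQ stopping time}) while still supporting the independence-based lifting of $\hQ$'s strong Markov property. A secondary subtlety is justifying that conditioning first on the large $\sigma$-algebra $\bsF^Q_{\varrho_t}$ and then projecting down to $\sF_t$ is legitimate and produces the claimed right-hand side; this rests on $\sF_t\subseteq\bsF^Q_{\varrho_t}$ and on $Q_{\varrho_t}$ being $\sF_t$-measurable, both of which are available from the preceding lemmas. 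Once the product-form case is settled, the extension by a monotone class / functional form of the monotone class theorem is routine.
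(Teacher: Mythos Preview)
Your proposal is correct and follows essentially the same route as the paper: lift the (strong) Markov property of $\hQ$ to $Q$ on the product space with respect to the enlarged filtration $(\bsF^Q_s, s\geq 0)$, apply it at the stopping time $\varrho_t$, combine with the ordinary Markov property of $W$, and extend from product functionals to all of $b\sF^0_\infty$ by a monotone class argument. The paper (Lemma~\ref{lem:G_IM:Markov of (W,Q)} in Appendix~\ref{app:G_IM:strong Markov of (W,Q)}) conditions first on $\sF^W_t\otimes\sF^Q_\infty$ to handle the $W$-shift and then on $\bsF^Q_{\varrho_t}$ to handle the $Q$-shift, whereas you reverse that order; both orderings are valid and use exactly the same ingredients (Lemmas~\ref{lem:G_IM:rho is barFQ stopping time}, \ref{lem:G_IM:FW subset barFQ_tau}, \ref{lem:G_IM:F subset barFQ_rho}, and the $\sF_t$-measurability of $W_t$ and $Q_{\varrho_t}$).

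One factual slip to fix: you assert that ``$\varrho_t = P^{-1}(L_t)$ is a functional of the $W$-coordinate alone''. This is false, since $P$ is built from the subordinators $Q^e$ and hence $\varrho_t$ depends on both coordinates. Fortunately your argument never actually uses this claim; what you need, and what you correctly cite, is that $\varrho_t$ is a $(\bsF^Q_s)$-stopping time and that $\sF^W_\infty\times\O^Q\subseteq\bsF^Q_{\varrho_t}$, the latter holding for any such stopping time by Lemma~\ref{lem:G_IM:FW subset barFQ_tau}. Just delete the incorrect aside.
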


For later use, we also need to introduce another coarser filtration:
 \begin{align*}
  \bbsF^Q_s := \O^W \times \sF^Q_s, \quad  s \geq 0.
 \end{align*}
Then we have the natural equivalent of 
Theorem~\ref{theo:G_IM:Markov of (W,Q)}:

\begin{theorem} \label{theo:G_IM:Q strongly Markovian barbarFQ}
 For all $g \in \cG$, $q \in \R^n$, $Y \in b \sF^0_\infty = b \sigma(W_r, Q_s, r,s \geq 0)$ and every stopping time $\t$ over $(\bbsF^Q_s, s \geq 0)$,
  \begin{align*} 
    \EV_{g,q} \big( Y \circ \id^W \otimes \T^Q_\t \, \big| \, \bbsF^Q_t \big) = \EV_{q, Q_\t} (Y).
  \end{align*}
\end{theorem}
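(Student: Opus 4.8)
The statement is the strong Markov property for the $Q$-part of the combined process, but now read against the \emph{coarse} filtration $\bbsF^Q_s = \O^W \times \sF^Q_s$, with the identity acting on the $W$-coordinate. My plan is to reduce this to the corresponding strong Markov property of the stand-alone subordinator-tuple $\hQ$, which is a right process (in fact L\'evy, hence strongly Markovian). The key observation is that $\bbsF^Q_s$ is, up to the trivial factor $\O^W$, just the natural filtration of $\hQ$ pulled back via $\pi^Q$, and that shifting only the $Q$-coordinate by an $(\bbsF^Q_s)$-stopping time $\t$ is the same as shifting $\hQ$ by the $(\sF^Q_s)$-stopping time $\t \circ \pi^Q$ (or rather: $\t$ factors through $\pi^Q$, since it is $\bbsF^Q_\infty = \O^W \times \sF^Q_\infty$-measurable and $\{\t \le s\} \in \O^W \times \sF^Q_s$).

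First I would record that any $(\bbsF^Q_s)$-stopping time $\t$ and any $Y \in b\sigma(W_r, Q_s : r,s \ge 0)$ can be written, via the projections, in terms of $\O^W$ and $\hQ$; in particular $\t = \hat\t \circ \pi^Q$ for some $(\sF^Q_s)$-stopping time $\hat\t$ on $\O^Q$, and by a monotone class argument it suffices to treat $Y$ of product form $Y = Y^W \cdot (Y^Q \circ \pi^Q)$ with $Y^W \in b\sF^W_\infty$, $Y^Q \in b\sF^Q_\infty$. Then $Y \circ (\id^W \otimes \T^Q_\t)$ splits as $Y^W \cdot \big( (Y^Q \circ \hT^Q_{\hat\t}) \circ \pi^Q \big)$, the $W$-factor is already $\bbsF^Q_t$-measurable (it lives on the $\O^W$ slot), and $\EV_{g,q} = \PV^W_g \otimes \PV^Q_q$ is a product measure, so conditioning on $\bbsF^Q_t = \O^W \times \sF^Q_t$ pulls the $Y^W$-factor out and reduces everything to $\EV^Q_q\big( (Y^Q \circ \hT^Q_{\hat\t}) \,\big|\, \sF^Q_t \big)$. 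The strong Markov property of $\hQ$ (applicable because $\hQ$ is a L\'evy process realised canonically, so strongly Markovian in the sense of subsection~\ref{subsec:Definitions Markov}, and $\hat\t$ is an $(\sF^Q_s)$-stopping time) gives $\EV^Q_{\hQ_{\hat\t}}(Y^Q)$ on $\{\hat\t < \infty\}$; pulling back through $\pi^Q$ and recombining with the $W$-factor yields $\EV_{q, Q_\t}(Y)$, where the base point in the $W$-slot is irrelevant since $Y^W$ has been integrated out — which is exactly why the right-hand side reads $\EV_{q, Q_\t}$ and not $\EV_{W_t, Q_\t}$.

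The one genuine subtlety — and the step I expect to cost the most care — is the measurability/stopping-time bookkeeping on the product space: checking that an $(\bbsF^Q_s)$-stopping time really does factor as $\hat\t \circ \pi^Q$ with $\hat\t$ an $(\sF^Q_s)$-stopping time, and that the canonical shift operators $\hT^Q$ on $\O^Q$ lift correctly so that $\T^Q_\t = (\hT^Q_{\hat\t}) \circ \pi^Q$ in the sense needed (including the behaviour on $\{\t = \infty\}$, where both sides should be handled by the usual indicator convention of \eqref{eq:strong Markov property (strong)}, consistently with Theorem~\ref{theo:G_IM:Markov of (W,Q)}). This is entirely parallel to the argument behind Theorem~\ref{theo:G_IM:Markov of (W,Q)} and Lemma~\ref{lem:G_IM:FW subset barFQ_tau}, only now with the roles of the two coordinates essentially swapped (it is the $W$-coordinate that is carried along untouched), so I would present it by invoking those lemmas and the independence/product structure rather than re-deriving the monotone-class machinery from scratch. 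A full proof, as the paper indicates, is deferred to Appendix~\ref{app:G_IM:strong Markov of (W,Q)}.
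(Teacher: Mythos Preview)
Your overall strategy is sound and close to what the paper (implicitly) intends; the paper itself only calls this theorem ``the natural equivalent'' of Theorem~\ref{theo:G_IM:Markov of (W,Q)} and does not spell out a separate proof. Your observation that an $(\bbsF^Q_s)$-stopping time factors as $\t = \hat\t \circ \pi^Q$ with $\hat\t$ an $(\sF^Q_s)$-stopping time is correct and in fact gives a cleaner route here than the paper's approach for the larger filtration $\bsF^Q_s = \sF^W_\infty \otimes \sF^Q_s$ (where stopping times need \emph{not} factor through $\pi^Q$, forcing one to reiterate the Feller-to-strong-Markov argument on the product space, cf.\ Lemma~\ref{lem:G_IM:Q strongly Markovian barFQ}).

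There is, however, a genuine confusion in your measurability bookkeeping. You write that ``the $W$-factor is already $\bbsF^Q_t$-measurable (it lives on the $\O^W$ slot)''. This is false. By the paper's convention $\bbsF^Q_s = \O^W \times \sF^Q_s = \{\O^W \times B : B \in \sF^Q_s\}$, so a $\bbsF^Q_s$-measurable function depends on $\omega^Q$ \emph{only}; any non-constant $Y^W(\omega^W)$ is \emph{not} $\bbsF^Q_s$-measurable. What is true---and what actually drives the computation---is that $Y^W$ is \emph{independent} of $\bbsF^Q_\t$ under the product measure $\PV_{g,q} = \PV^W_g \otimes \PV^Q_q$, so conditioning replaces it by the constant $\EV^W_g(Y^W)$. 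You do later say ``$Y^W$ has been integrated out'', but that directly contradicts the measurability claim: a measurable factor pulls out intact, not as its expectation. Consequently, your remark that ``the base point in the $W$-slot is irrelevant'' is also off: the constant $\EV^W_g(Y^W)$ genuinely depends on $g$, and the right-hand side (which the paper writes with an apparent typo $q$ for $g$) should read $\EV_{g, Q_\t}(Y) = \EV^W_g(Y^W)\cdot \EV^Q_{Q_\t}(Y^Q)$, in exact parallel to $\EV_{W_\t, q}(Y)$ in Theorem~\ref{theo:G_IM:W strongly Markovian barbarFW}.
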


Of course, the roles of $W$ and $Q$ can be interchanged in 
Theorem~\ref{theo:G_IM:Q strongly Markovian barbarFQ}, giving us for the filtration
 \begin{align*}
  \bbsF^W_t := \sF^W_t \times \O^Q, \quad  t \geq 0:
 \end{align*}

\begin{theorem} \label{theo:G_IM:W strongly Markovian barbarFW}
 For all $g \in \cG$, $q \in \R^n$, $Y \in b \sF^0_\infty = b \sigma(W_r, Q_s, r,s \geq 0)$ and every stopping time $\t$ over $(\bbsF^W_t, t \geq 0)$, 
  \begin{align*} 
    \EV_{g,q} \big( Y \circ \T^W_\t \otimes \id^Q  \, \big| \, \bbsF^W_t \big) = \EV_{W_\t, q} (Y).
  \end{align*}
\end{theorem}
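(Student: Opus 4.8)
The plan is to reduce this statement to Theorem~\ref{theo:G_IM:Q strongly Markovian barbarFQ} by a symmetry argument, exploiting the fact that the combined space $\O = \O^W \times \O^Q$ carries the product measures $\PV_{g,q} = \PV^W_g \otimes \PV^Q_q$, so the two factors play interchangeable roles. First I would fix $g \in \cG$, $q \in \R^n$, a stopping time $\t$ over the filtration $(\bbsF^W_t, t \geq 0) = (\sF^W_t \times \O^Q, t \geq 0)$, and $Y \in b\sF^0_\infty$. Since $\bbsF^W_t = \sF^W_t \times \O^Q$ does not involve the $\O^Q$-coordinate, $\t$ depends only on $\o^W$; more precisely, there is a bona fide $(\sF^W_t, t \geq 0)$-stopping time $\hat\t$ on $\O^W$ with $\t = \hat\t \circ \p^W$. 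This is the exact mirror of the situation in Theorem~\ref{theo:G_IM:Q strongly Markovian barbarFQ}, where $\t$ over $(\bbsF^Q_s, s \geq 0) = \O^W \times \sF^Q_s$ pushes down to a stopping time on $\O^Q$.

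Next I would invoke the strong Markov property of the Walsh process $\hW$ on $\O^W$ (available from Appendix~\ref{app:Walsh}, analogously to how the strong Markov property of $\hQ$ was used in the proof of Theorem~\ref{theo:G_IM:Q strongly Markovian barbarFQ}): for $\hat\t$ an $(\sF^W_s, s \geq 0)$-stopping time and $Z \in b\sF^W_\infty$,
\begin{align*}
 \EV^W_g \big( Z \circ \hT^W_{\hat\t} \, \1_{\{\hat\t < \infty\}} \,\big|\, \sF^W_{\hat\t} \big) = \EV^W_{\hW_{\hat\t}}(Z) \, \1_{\{\hat\t < \infty\}}.
\end{align*}
The task is then to lift this identity to the product space. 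The key point is that $\T^W_\t \otimes \id^Q$ acts on $\O = \O^W \times \O^Q$ as $\hT^W_{\hat\t}$ on the first coordinate and the identity on the second, and that $\bbsF^W_t$ only tests the first coordinate. I would write a generic $Y \in b\sF^0_\infty$ as a monotone-class limit of products $Y = Y^W \cdot Y^Q$ with $Y^W \in b\sigma(W_r, r \geq 0)$, $Y^Q \in b\sigma(Q_s, s \geq 0)$; for such a product, $Y \circ (\T^W_\t \otimes \id^Q) = (Y^W \circ \hT^W_{\hat\t} \circ \p^W) \cdot (Y^Q \circ \p^Q)$. Conditioning on $\bbsF^W_t$ under the product measure $\PV_{g,q}$, the $Y^Q$-factor is integrated out against $\PV^Q_q$ to a constant (it is independent of $\bbsF^W_t$), while the $Y^W$-factor is handled by the strong Markov property of $\hW$ displayed above, yielding $\EV^W_{\hW_{\hat\t}}(Y^W) \cdot \EV^Q_q(Y^Q) = \EV_{W_\t, q}(Y^W \cdot Y^Q)$ on $\{\t < \infty\}$ — exactly the claim for product $Y$. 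A monotone-class / functional-class argument then extends it to all $Y \in b\sF^0_\infty$.

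The main obstacle, as in the proof of Theorem~\ref{theo:G_IM:Q strongly Markovian barbarFQ}, is purely bookkeeping: verifying that $\t$ over $(\bbsF^W_t, t\geq 0)$ genuinely descends to an $(\sF^W_t, t\geq 0)$-stopping time on $\O^W$, and keeping the lifts $\p^W$, $\p^Q$ and the convention $\T^W_t(\o) = \T^W_t(\o^W)$ consistent so that $\EV_{W_\t, q}$ is interpreted correctly (with the subordinators started afresh at $q$ on the right-hand side, matching the left-hand side where $\id^Q$ leaves $Q$ untouched). I expect no conceptual difficulty beyond what was already settled for $(W,Q)$ in the preceding subsection; accordingly I would phrase the argument as ``the roles of $W$ and $Q$ are symmetric in the product construction, so repeating the proof of Theorem~\ref{theo:G_IM:Q strongly Markovian barbarFQ} verbatim with $W$ and $Q$ interchanged gives the result,'' and relegate the remaining routine verifications to Appendix~\ref{app:G_IM:strong Markov of (W,Q)}.
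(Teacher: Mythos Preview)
Your proposal is correct and matches the paper's approach exactly: the paper simply states that ``the roles of $W$ and $Q$ can be interchanged in Theorem~\ref{theo:G_IM:Q strongly Markovian barbarFQ}'' and leaves it at that, so your symmetry argument, together with the bookkeeping on descending $\t$ to an $(\sF^W_t, t \geq 0)$-stopping time and the monotone-class extension, is precisely what is intended.
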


\subsection{Strong Markov Property at \texorpdfstring{$\h_0$}{H0}} \label{subsec:G_IM:strong Markov at H0}

Let $\h_0 := \inf \{t \geq 0: X_t = 0 \}$ be the first entry time of $X$ in the vertex $0$.

We are going to show a strong Markov property at $\h_0$ next, which will be essential for the proof of both the Markov property and the strong Markov property of~$X$.
The Markovian behavior of $X$ at $\h_0$ should appear quite natural, because $X$ is just the underlying Walsh process $W$ until $\h_0 = \h^W_0$
(with $\h^W_0$ being the first entry time of~$W$ in~$0$), $W$ is strongly Markovian,
and the additional, independent parts of the subordinators only come into play after $\h_0$.

\begin{lemma} \label{lem:G_IM:X = W for t <= H0}
 It holds $X_t = W_t$ for all $t \leq \h_0$ and $\h_0 = \h^W_0$, $\PV_g$-a.s.\ for all $g \in \cG$.
\end{lemma}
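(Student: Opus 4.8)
The plan is to show both assertions simultaneously by a direct path analysis, relying on the explicit path description~\eqref{eq:G_IM:behavior process X} of $X$. First I would recall that, by construction, $L_0 = 0$ $\PV_g$-a.s., and for the Walsh process $W$ started at $g \in \cG$ we have $L_t = 0$ for all $t \leq \h^W_0$, since the local time at $0$ only starts to grow once $W$ reaches the vertex. Consequently $\varrho_t = P^{-1}(L_t) = P^{-1}(0) = 0$ for all $t \leq \h^W_0$ (here one uses that $P$ is strictly increasing, Remark~\ref{rem:G_IM:behavior process X}, so $P^{-1}(0) = 0$), and hence $\eta^e_t = (P_e P^{-1} - \id)(L_t) = (P_e P^{-1} - \id)(0) = 0$ for every $e \in \cE$ and all $t \leq \h^W_0$. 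In particular $0 = L_t \in \sT$ for $t \leq \h^W_0$, so the first case of~\eqref{eq:G_IM:behavior process X} applies and $X_t = W_t$ for all $t \leq \h^W_0$.

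Next I would use this identity to compare the two hitting times. Since $X_t = W_t$ for $t \leq \h^W_0$, the process $X$ agrees with $W$ up to and including time $\h^W_0$, and at that time $W_{\h^W_0} = 0$, so $X_{\h^W_0} = 0$. This shows $\h_0 \leq \h^W_0$. For the reverse inequality, I would argue that $X$ cannot reach $0$ before $W$ does: on $[0, \h^W_0)$ we have just shown $X_t = W_t \neq 0$ (by definition of $\h^W_0$ as the first entry time of $W$ into $0$, together with right continuity so that the infimum is attained or approached only at $\h^W_0$). Hence $\inf\{t \geq 0 : X_t = 0\} \geq \h^W_0$, giving $\h_0 \geq \h^W_0$. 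Combining, $\h_0 = \h^W_0$ $\PV_g$-a.s., and then the first claim $X_t = W_t$ for all $t \leq \h_0$ is exactly what was established in the previous paragraph.

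The one point requiring a little care — and the place I expect the only genuine subtlety — is handling the case $\h^W_0 = 0$, i.e.\ when $W$ (and hence $X$) is started at the vertex $0$ itself: there one must check that the reasoning $L_t = 0$ for $t \leq \h^W_0$ degenerates gracefully (the statement becomes vacuous on $(0,\h^W_0)$ and reduces to $X_0 = W_0 = 0$), which is immediate. A second minor technical point is the measurability/well-definedness of $P^{-1}(0) = 0$, which follows from strict monotonicity of $P$ as recorded in Remark~\ref{rem:G_IM:behavior process X}; I would simply cite that. No deep estimate is needed — the whole argument is a consequence of the fact that the subordinators enter the construction only through $\varrho_t = P^{-1}(L_t)$, which vanishes as long as the local time does, i.e.\ until the Walsh process first reaches the vertex. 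I would state the two displays ($\varrho_t = 0$ and $\eta^e_t = 0$ for $t \leq \h^W_0$) as the key intermediate facts and conclude.
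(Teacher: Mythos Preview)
Your proof is correct and follows essentially the same route as the paper: both argue that $L_t = 0$ for $t \leq \h^W_0$, deduce $P^{-1}(L_t) = 0$ and hence $\eta^e_t = 0$ (using that $P$ is strictly increasing and $P_e(0)=0$ $\PV_g$-a.s.), conclude $X_t = W_t$ on $[0,\h^W_0]$, and then compare the two hitting times exactly as you do. The paper omits your remark on the degenerate case $\h^W_0 = 0$, but otherwise the arguments are virtually identical.
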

\begin{proof}
  Let $g \in \cG$. Every identity in this proof will be meant $\PV_g = \PV_{(g,0)}$-a.s.\,.

  Because $(L_t, t \geq 0)$ only grows at $\{t \geq 0: W_t = 0\}$ and is continuous, we have $L_t = 0$ for all $t \leq \h^W_0$.
  The fact that $P$ starts at $0$ and is strictly increasing implies that 
  $P^{-1}(0) = 0$, so we get 
       \begin{align*} 
         \forall e \in \cE \cup \{0\}, t \leq \h^W_0: ~ P_e P^{-1}(L_t) = P_e(0) = 0.
       \end{align*}
  By checking the definition of $X$, it is immediate that 
       \begin{align*}
         \forall t \leq \h^W_0: \quad  X_t = W_t.
       \end{align*}
  As $X_t = W_t \neq 0$ for all $t < \h^W_0$ and $X_{\h^W_0} = W_{\h^W_0} = 0$, this proves $\h_0 = \h^W_0$.\sqed
\end{proof}

\begin{corollary} \label{cor:G_IM:dist of X = W for t <= H0}
 The processes $(X_{t \wedge \h_0}, t \geq 0)$ and $(W_{t \wedge \h^W_0}, t \geq 0)$ 
 have the same finite-dimensional distributions with respect to $\PV_{g}$ for all $g \in \cG$.
\end{corollary}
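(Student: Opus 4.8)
The plan is to deduce the statement directly from Lemma~\ref{lem:G_IM:X = W for t <= H0}, which is essentially a restatement of that lemma at the level of finite-dimensional marginals. For a fixed $g \in \cG$, that lemma furnishes a single $\PV_g$-full event $\O_g$ on which simultaneously $X_t = W_t$ for all $t \leq \h_0$ and $\h_0 = \h^W_0$. The key observation is that for any $s \geq 0$ one has $s \wedge \h_0 \leq \h_0$, so on $\O_g$ the value $X_{s \wedge \h_0}$ is governed by the first alternative and equals $W_{s \wedge \h_0}$, while the identity $\h_0 = \h^W_0$ turns this into $W_{s \wedge \h^W_0}$. Hence $X_{s \wedge \h_0} = W_{s \wedge \h^W_0}$ for every $s \geq 0$, $\PV_g$-almost surely.

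From this I would conclude as follows. Fix $k \in \N$ and times $t_1, \ldots, t_k \geq 0$. Applying the previous step at $s = t_1, \ldots, t_k$ on the common full-measure event $\O_g$, the random vectors
\[
  \big( X_{t_1 \wedge \h_0}, \ldots, X_{t_k \wedge \h_0} \big)
  \quad \text{and} \quad
  \big( W_{t_1 \wedge \h^W_0}, \ldots, W_{t_k \wedge \h^W_0} \big)
\]
agree $\PV_g$-almost surely, and therefore have the same law under $\PV_g$. Since $g$, $k$, and $t_1, \ldots, t_k$ are arbitrary, the stopped processes $(X_{t \wedge \h_0}, t \geq 0)$ and $(W_{t \wedge \h^W_0}, t \geq 0)$ share all finite-dimensional distributions with respect to every $\PV_g$, $g \in \cG$, which is the claim.

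I do not expect any genuine obstacle here; the corollary is immediate once Lemma~\ref{lem:G_IM:X = W for t <= H0} is available. The only point deserving a moment's care is that the identity $X_t = W_t$ must hold for all $t \leq \h_0$ on one and the same full-measure event, rather than merely for each fixed $t$ separately, so that it may be evaluated at the random times $t_i \wedge \h_0$ — but this is precisely the (uniform-in-$t$) form in which Lemma~\ref{lem:G_IM:X = W for t <= H0} has been stated, so nothing further is needed.
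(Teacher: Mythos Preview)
Your argument is correct and is precisely the intended one: the paper states this corollary without proof because it is immediate from Lemma~\ref{lem:G_IM:X = W for t <= H0}, exactly along the lines you describe. Your remark about needing the uniform-in-$t$ form of that lemma is well taken and is indeed how the lemma is proved.
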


Before we continue with our developments towards the strong Markov property, we remark the following relation for later use:
\begin{lemma} \label{lem:G_IM:laplace of H0 for X}
 For all $\a > 0$, $t \geq 0$, $\o \in \O$,
  \begin{align*}
   \EV_{X_t(\o),0} \big( e^{-\a \h_0} \big)
   = \EV_{W_t(\o),0} \big( e^{-\a L^{-1}(\eta_t(\o))} \big).
  \end{align*}
\end{lemma}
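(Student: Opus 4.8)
The overall plan is to fix $\o \in \O$, $t \geq 0$, $\a > 0$ and to evaluate the two sides separately, reducing each to a Laplace transform of a functional of the Walsh process $W$ started at the point $W_t(\o)$. Abbreviate $h := \eta_t(\o) = (P P^{-1} - \id)(L_t(\o)) \geq 0$ and let $r := \abs{W_t(\o)}$ be the local coordinate of $W_t(\o) \in \cG$; by the very definition of $X$, the local coordinate of the point $X_t(\o)$ is $\eta_t(\o) + \abs{W_t(\o)} = h + r$. For the left-hand side I would invoke Lemma~\ref{lem:G_IM:X = W for t <= H0} (equivalently Corollary~\ref{cor:G_IM:dist of X = W for t <= H0}): started from any $g \in \cG$ with the subordinators at the origin, $X$ coincides with $W$ up to $\h_0 = \h^W_0$, whence $\EV_{g,0}(e^{-\a \h_0}) = \EV^W_g(e^{-\a \h^W_0})$. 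Since $\h^W_0$ depends only on the radial part of $W$, which is a reflecting Brownian motion, the classical Laplace transform of the first hitting time of the origin (collected in Appendix~\ref{app:Preliminaries}) gives $\EV_{X_t(\o),0}(e^{-\a \h_0}) = e^{-\srl (h+r)}$.

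For the right-hand side, note that under $\PV_{W_t(\o),0} = \PV^W_{W_t(\o)} \otimes \PV^Q_{0}$ both $L$ and $L^{-1}$ are functionals of $W$ alone, so it suffices to compute $\EV^W_{W_t(\o)}(e^{-\a L^{-1}(h)})$. Here I would use that the local time $L$ at $0$ is continuous and increases only on $\{W = 0\}$, so it vanishes on $[0, \h^W_0]$; hence $L^{-1}(h)$ equals $\h^W_0$ plus the time needed, after $\h^W_0$, for $L$ to grow by a further $h$. By the strong Markov property of $W$ at $\h^W_0$ (where $W$ sits at the regular point $0$) together with the additivity of the local time, this increment is independent of the past and distributed as $L^{-1}(h)$ under $\PV^W_0$, so the Laplace transform factorises as $\EV^W_{W_t(\o)}(e^{-\a L^{-1}(h)}) = \EV^W_{W_t(\o)}(e^{-\a \h^W_0}) \cdot \EV^W_0(e^{-\a L^{-1}(h)}) = e^{-\srl r} \cdot e^{-\srl h}$, the last factor being the Laplace exponent of the inverse local time subordinator of reflecting Brownian motion. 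Comparing with the previous display finishes the proof.

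The only step that is not pure bookkeeping is the identity $\EV^W_0(e^{-\a L^{-1}(h)}) = e^{-\srl h}$, i.e.\ the fact that the inverse local time subordinator of reflecting Brownian motion has the same Laplace exponent $\srl$ as the Brownian first-passage subordinator; this is exactly where L\'evy's theorem (Theorem~\ref{theo:B_LT:levys local time}) enters --- it lets one replace $L^{-1}(h)$ by the first passage time to level $h$ of an auxiliary Brownian motion, whose Laplace transform is $e^{-\srl h}$. If one prefers to avoid explicit exponents, both sides reduce, via the two factorisations above, to the classical reflecting-Brownian-motion identity $\EV^W_{(e,h)}(e^{-\a \h^W_0}) = \EV^W_0(e^{-\a L^{-1}(h)})$, which can again be read off from Appendix~\ref{app:Preliminaries}. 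The degenerate instances $h = 0$ or $r = 0$ only need a remark: for $h = 0$ one has $L^{-1}(0) = \h^W_0$ under $\PV^W_{W_t(\o)}$ when $r > 0$ and $L^{-1}(0) = 0$ when $r = 0$, both matching $e^{-\srl r}$, and the regularity of the vertex $0$ for the Walsh process is used precisely here.
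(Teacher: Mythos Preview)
Your proof is correct and follows essentially the same route as the paper's: both reduce the left-hand side to $e^{-\srl(h+r)}$ via Lemma~\ref{lem:G_IM:X = W for t <= H0} and the Brownian passage-time formula, and both identify the right-hand side with the same quantity through the Laplace transform of the inverse local time. The only cosmetic difference is that the paper cites Lemma~\ref{lem:B_LT:laplace of local time inverse} directly for $\EV^W_{W_t(\o)}(e^{-\a L^{-1}(h)}) = e^{-\srl(r+h)}$, whereas you reprove that lemma inline via the strong Markov factorisation at $\h^W_0$; your treatment of the degenerate cases $h=0$ and $r=0$ is likewise a slight expansion of what the paper compresses into the single observation $L^{-1}(0)=\h^W_0$.
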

\begin{proof}
 If $\eta_t(\o) \neq 0$, then there is exactly one $e \in \cE$ with $\eta^e_t(\o) > 0$ by Lemma~\ref{lem:G_IM:properties eta},
 so the definition of $X$ and Lemma~\ref{lem:G_IM:X = W for t <= H0} yield
   \begin{align*}
    \EV_{X_t(\o),0} \big( e^{-\a \h_0} \big)
    = \EV_{(e, \eta_t(\o) + \abs{W_t(\o)}),0} \big( e^{-\a \h^W_0} \big).
  \end{align*} 
 The first hitting time and the local time of the Walsh process~$W$ at the vertex correspond to
 the respective entities of the underlying (reflecting) Brownian motion~$B$ at the origin (see Theorem~\ref{theo:G_WP:WP path properties}),
 so \eqref{eq:BM passage time} and Lemma~\ref{lem:B_LT:laplace of local time inverse} give
   \begin{align*}
    \EV_{(e, \eta_t(\o) + \abs{W_t(\o)}),0} \big( e^{-\a \h^W_0} \big)
    & = \EV^B_{\eta_t(\o) + \abs{W_t(\o)}} \big( e^{-\a \h^B_0} \big) \\
    & = e^{-\sqrt{2\a} \, (\eta_t(\o) + \abs{W_t(\o)})} \\
    & = \EV^B_{\abs{W_t(\o)}} \big( e^{-\a L^{-1}(\eta_t(\o))} \big) \\
    & = \EV_{W_t(\o),0} \big( e^{-\a L^{-1}(\eta_t(\o))} \big).
  \end{align*}  
  
 If $\eta_t(\o) = 0$, then
  \begin{align*}
    \EV_{X_t(\o),0} \big( e^{-\a \h_0} \big)
    = \EV_{W_t(\o),0} \big( e^{-\a \h^W_0} \big),
  \end{align*}
 which completes the proof, as $L^{-1}(0) = \h^W_0$.\sqed
\end{proof}

We prepare the strong Markov property of~$X$ at~$\h_0$ with the following result:

\begin{lemma} \label{lem:G_IM:shift of X by H0}
  For all $g \in \cG$, $t \geq 0$, $f \in b\sB(\cG)$, $k \in \N$, $f_1, \ldots, f_k \in b\sB(\cG)$ and $0 \leq t_1 < \cdots < t_k$, the following holds true with
  $J := f_1(X_{t_1 \wedge \h_0}) \, \cdots \, f_k(X_{t_k \wedge \h_0})$:
  \begin{align*}
    \EV_{g} \big( f(X_{t + \h_0}) \cdot J \big) = \EV_{g} \big( \EV_{X_{\h_0}} \big( f(X_t) \big) \cdot J \big).
  \end{align*} 
\end{lemma}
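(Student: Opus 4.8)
The plan is to reduce the claimed identity to the strong Markov property of the Walsh process $W$ at its first hitting time $\h^W_0$ of the vertex, combined with the fact that $X$ coincides with $W$ up to $\h_0 = \h^W_0$ (Lemma~\ref{lem:G_IM:X = W for t <= H0}), and the already-established shift/Markov machinery for the combined process $(W,Q)$. The key observation is that, by Lemma~\ref{lem:G_IM:X = W for t <= H0}, both the random variable $J = f_1(X_{t_1 \wedge \h_0}) \cdots f_k(X_{t_k \wedge \h_0}) = f_1(W_{t_1 \wedge \h^W_0}) \cdots f_k(W_{t_k \wedge \h^W_0})$ and the random time $\h_0 = \h^W_0$ depend only on the $W$-coordinate and are measurable with respect to $\sF^W_{\h^W_0}$ (more precisely $\h_0$ is an $(\sF^W_t)$-stopping time, and $J$ is $\sF^W_{\h_0}$-measurable). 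Thus $J$ should pull out of a conditional expectation given $\sF^W_{\h_0} \times \O^Q$, and what remains is to show $\EV_g\big( f(X_{t+\h_0}) \mid \sF^W_{\h_0}\times\O^Q\big) = \EV_{X_{\h_0}}\big(f(X_t)\big) = \EV_{W_{\h_0}}\big(f(X_t)\big)$, using $X_{\h_0} = W_{\h_0}$.

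\textbf{Key steps.} First I would write $X_{t+\h_0}$ in terms of the shift operators: using Lemma~\ref{lem:G_IM:shift operators for X}, $X_{t+\h_0} = X_t \circ \T^X_{\h_0}$. Since $\h_0 = \h^W_0$ is an $(\sF^W_t)$-stopping time and $L_{\h_0} = 0$, $\varrho_{\h_0} = P^{-1}(0) = 0$ (as noted in the proof of Lemma~\ref{lem:G_IM:X = W for t <= H0}), so $\T^X_{\h_0} = \T^W_{\h_0} \otimes (\g^P_{0} \circ \T^Q_{0}) = \T^W_{\h_0} \otimes \id^Q$. Therefore $X_{t+\h_0} = X_t \circ (\T^W_{\h_0} \otimes \id^Q)$, i.e. the subordinator coordinate is untouched and only the Walsh part is shifted. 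Second, I would invoke the strong Markov property of $W$ at $\h^W_0$ in the combined space — this is exactly the content of Theorem~\ref{theo:G_IM:W strongly Markovian barbarFW} applied with the stopping time $\t = \h_0$ over the filtration $(\bbsF^W_t, t\ge 0) = (\sF^W_t \times \O^Q, t\ge 0)$, with $Y = $ the $b\sF^0_\infty$-functional representing $X_t$ (which is a measurable function of $(W_r, Q_s)_{r,s\ge 0}$ by the definition of $X$ and adaptedness results of subsection~\ref{subsec:G_IM:filtration}). This gives
\begin{align*}
 \EV_{g}\big( f(X_t)\circ(\T^W_{\h_0}\otimes\id^Q) \,\big|\, \bbsF^W_{\h_0}\big) = \EV_{W_{\h_0}, Q_0}\big(f(X_t)\big) = \EV_{W_{\h_0}}\big(f(X_t)\big),
\end{align*}
the last equality because $Q_0 = 0$ $\PV_g$-a.s. and $\PV_g = \PV_{g,0}$ starts the subordinators at the origin. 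Third, since $J$ is $\bbsF^W_{\h_0}$-measurable (it depends only on $W_{t_i\wedge\h_0}$, and $\h_0$ is an $(\sF^W_t)$-stopping time, so by Galmarino-type reasoning $J \in \sF^W_{\h_0}\subseteq \bbsF^W_{\h_0}$), I would multiply through by $J$, take $\EV_g$, and use the tower property to get $\EV_g\big(f(X_{t+\h_0}) J\big) = \EV_g\big(\EV_{W_{\h_0}}(f(X_t)) J\big)$. Finally, replacing $W_{\h_0}$ by $X_{\h_0}$ via Lemma~\ref{lem:G_IM:X = W for t <= H0} yields the statement.

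\textbf{Main obstacle.} The delicate point is the careful bookkeeping with the various filtrations: one must check that $X_t$, viewed as a function on $\O = \O^W\times\O^Q$, really is a $b\sF^0_\infty = b\sigma(W_r, Q_s : r,s\ge 0)$-functional so that Theorem~\ref{theo:G_IM:W strongly Markovian barbarFW} applies, and that $\h_0$ is genuinely an $(\bbsF^W_t)$-stopping time and $J$ genuinely $\bbsF^W_{\h_0}$-measurable — this rests on $\h_0 = \h^W_0$ depending only on $W$ (via right-continuity of $X$ and Lemma~\ref{lem:G_IM:X = W for t <= H0}). Also, one must verify the simplification $\T^X_{\h_0} = \T^W_{\h_0}\otimes\id^Q$ rigorously, i.e. that $\varrho_{\h_0} = 0$ and $L_{\h_0} = 0$ a.s., which follows since $L$ only grows on $\{W = 0\}$ and $P^{-1}(0) = 0$ by strict increase of $P$ (Remark~\ref{rem:G_IM:behavior process X}). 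Once these measurability and stopping-time facts are in place, the identity is a direct application of the strong Markov property of $(W,Q)$ already established, so I do not expect genuine difficulty beyond this careful matching of $\sigma$-algebras.
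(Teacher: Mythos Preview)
Your proposal is correct and follows essentially the same route as the paper: both arguments reduce to the identity $X_{t+\h_0} = X_t \circ (\T^W_{\h^W_0} \otimes \id^Q)$ via $L_{\h_0}=0$ and $\varrho_{\h_0}=0$, then apply Theorem~\ref{theo:G_IM:W strongly Markovian barbarFW} at the $(\bbsF^W_t)$-stopping time $\h^W_0$, using that $J$ is $\bbsF^W_{\h^W_0}$-measurable through Lemma~\ref{lem:G_IM:X = W for t <= H0}. The only cosmetic difference is that the paper derives the shift identity directly from the definition of $X$ and the additive-functional property of $L$, whereas you pass through the operator $\T^X_{\h_0}$; the content is identical.
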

\begin{proof}
 Consider the process $X$ shifted by $\h_0$, that is
  \begin{align*} 
    X_{t + \h_0} = \big( e(L_{t + \h_0}) \circ W_{t + \h_0} , P P^{-1}(L_{t + \h_0}) - L_{t + \h_0} + \abs{W_{t + \h_0}} \big).
  \end{align*}
 As $\h_0 = \h^W_0$ and $L_{\h_0} = L_{\h^W_0} = 0$, we have
  \begin{align*}
    L_{t + \h_0} = L_{t + \h_0} - L_{\h_0} = L_t \circ \T^W_{\h^W_0},
  \end{align*}
 and therefore
  \begin{align*}
    X_{t + \h_0} = X_t \circ \big( \T^W_{\h^W_0} \times \id^Q \big),
  \end{align*}
 so shifting by $\h_0$ does not shift $Q$. 
 Lemma~\ref{lem:G_IM:X = W for t <= H0} then gives
  \begin{align*}
   & \EV_{g} \big( f(X_{t + \h_0}) \, f_1(X_{t_1 \wedge \h_0}) \, \cdots \, f_k(X_{t_k \wedge \h_0}) \big) \\
   & = \EV_{g} \big( f(X_{t + \h_0}) \, f_1(W_{t_1 \wedge \h^W_0}) \, \cdots \, f_k(W_{t_k \wedge \h^W_0}) \big) \\
   & = \EV_{g} \big( f_1(W_{t_1 \wedge \h^W_0}) \, \cdots \, f_k(W_{t_k \wedge \h^W_0}) \, \EV_{g} \big( f(X_t) \circ (\T^W_{\h^W_0} \otimes \id^Q) \, \big| \, \bbsF^W_{\h^W_0} \big) \big),
  \end{align*}
 with $\bbsF^W_t = \sF^W_t \times \O^Q$, $t \geq 0$, as defined at the end of subsection \ref{subsec:G_IM:strong Markov of (W,Q)}.
 Using the strong Markov property of Theorem~\ref{theo:G_IM:W strongly Markovian barbarFW} of $W$ with respect to $(\bbsF^W_t, t \geq 0)$ 
 for the stopping time $\h^W_0$, the inner conditional expectation becomes
  \begin{align*}
   \EV_{g,0} \big( f(X_t) \circ (\T^W_{\h^W_0} \otimes \id^Q) \, \big| \, \bbsF^W_{\h^W_0} \big) 
   = \EV_{W_{\h^W_0}, 0} \big( f(X_t) \big)
   = \EV_{0,0} \big( f(X_t) \big),
  \end{align*}
 which completes the proof by using once again Lemma~\ref{lem:G_IM:X = W for t <= H0}, yielding
  \begin{align*}
   & \EV_{g} \big( f(X_{t + \h_0}) \, f_1(X_{t_1 \wedge \h_0}) \, \cdots \, f_k(X_{t_k \wedge \h_0}) \big) \\
   & = \EV_{g} \big( \EV_{X_{\h_0}} \big( f(X_t) \big) \, f_1(X_{t_1 \wedge \h_0}) \, \cdots \, f_k(X_{t_k \wedge \h_0}) \big). \qedhere
  \end{align*} 
\end{proof}

We are ready for the first main result, namely the strong Markov property at~$H_0$,
which we would like to prove with the help of Galmarino's theorem~\eqref{eq:galmarino}. 
However, there are no stopping operators for~$X$ available on the constructed space~$\O$, as stopping the process at the vertex~$0$
would cause the local time to explode. Therefore, we need to switch to the path space realization
$(Y_t, t \geq 0)$ of $X$. As the process~$X$ is right continuous 
and continuous inside the edges by Theorem~\ref{theo:G_IM:continuity process X}, we are able to construct 
the canonical process $Y_t(\o) := \o(t)$, $\o \in \O^Y$, $t \geq 0$, on the path space 
 \begin{align*}
  \O^Y := \big\{ \o \colon \R_+ \rightarrow \cG \,\big|\, & \text{$\o$ right continuous}
    \wedge \text{$\forall t \geq 0$ with $\o(t) \neq 0$: $\o$ is} \\
    & \text{continuous on $[t, t_0]$, with $t_0 := \inf \{ s \geq t: \o(s) = 0 \}$} 
   \big\}, 
 \end{align*}
equipped with canonical filtration $\sF^Y_t = \sigma(Y_s, s \leq t)$, $t \geq 0$,
and mapping operator 
 \begin{align*}
  \Phi \colon \O \rightarrow \O^Y, \quad \o \mapsto \Phi(\o) \quad \text{with} \quad \Phi(\o)(t) := X_t(\o), ~ t \geq 0.
 \end{align*}
As $X_t = Y_t \circ \Phi$, we have for the first entry time $\h^Y_0$ of $Y$ in $0$: 
 \begin{align*}
   \h^Y_0 \circ \Phi = \inf \{t \geq 0: Y_t \circ \Phi = 0 \} = \h_0.
 \end{align*}
 
The space $\O^Y$ admits the natural shift and stopping operators
 \begin{align*}
  \T_t(\o) := \o( \,\cdot\, + t), \quad \a_t(\o) :=\o( \,\cdot\, \wedge t), \quad t \geq 0, ~ \o \in \O^Y,
 \end{align*}
as both shifted and stopped paths admit the conditions on $\O^Y$.
Furthermore, we have the following:

\begin{lemma} \label{lem:G_IM:H_0Y is stopping time}
  $\h^Y_0$ is a stopping time over $(\sF^Y_t, t \geq 0)$.
\end{lemma}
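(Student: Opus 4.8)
The plan is to establish directly that $\{\h^Y_0 \leq t\} \in \sF^Y_t = \sigma(Y_s, s \leq t)$ for every $t \geq 0$, by writing this event as a \emph{countable} combination of events measurable with respect to the coordinates $(Y_s)_{s \leq t}$. Denote by $r(g) := d(g,0)$ the distance to the star vertex, a continuous function on $\cG$ with $r^{-1}(\{0\}) = \{0\}$. The candidate identity is
\begin{align*}
 \{\h^Y_0 \leq t\} = \Big\{ \inf_{s \in (\Q \cap [0,t)) \cup \{t\}} r(Y_s) = 0 \Big\}.
\end{align*}
Each $r(Y_s)$ with $s \leq t$ is $\sF^Y_t$-measurable and the index set is countable, so the right-hand event lies in $\sF^Y_t$; it thus remains to prove both inclusions. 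I would first record the auxiliary fact that $Y_{\h^Y_0} = 0$ whenever $\h^Y_0 < \infty$: picking $s_n \downarrow \h^Y_0$ with $Y_{s_n} = 0$, right continuity of $Y$ yields $Y_{\h^Y_0} = \lim_n Y_{s_n} = 0$.

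For the inclusion ``$\subseteq$'': if $\h^Y_0 \leq t$ then $\h^Y_0 < \infty$ and $r(Y_{\h^Y_0}) = 0$; if moreover $\h^Y_0 = t$ the term $s = t$ makes the infimum vanish, while if $\h^Y_0 < t$ one approximates $\h^Y_0$ from above by rationals $s \in \Q \cap (\h^Y_0, t)$ and invokes right continuity of $s \mapsto r(Y_s)$ to render $r(Y_s)$ arbitrarily small.

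For the inclusion ``$\supseteq$'': right continuity of $s \mapsto r(Y_s)$ on $[0,t)$ first gives $\inf_{s \in \Q \cap [0,t)} r(Y_s) = \inf_{s \in [0,t)} r(Y_s)$, so the hypothesis becomes $\inf_{s \in [0,t]} r(Y_s) = 0$. Arguing by contradiction, assume $Y_s \neq 0$ for every $s \in [0,t]$, choose $s_n \in [0,t]$ with $r(Y_{s_n}) \to 0$, and pass to a subsequence with $s_n \to s^* \in [0,t]$. If $s_n \geq s^*$ along a further subsequence, right continuity forces $r(Y_{s^*}) = 0$, a contradiction; hence $s_n \uparrow s^*$ with $s_n < s^*$, so in particular $s^* > 0$. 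Fix any $u \in [0, s^*)$. Since $Y_u \neq 0$, the defining property of $\O^Y$ makes $Y$ continuous on $[u, u_0]$, where $u_0 := \inf\{s \geq u : Y_s = 0\}$; if $u_0$ were finite and $\leq t$ then (again by right continuity) $Y_{u_0} = 0$, contradicting $Y_s \neq 0$ on $[0,t]$, so $u_0 > t \geq s^*$ and $Y$ is in fact continuous on $[u, s^*]$. Then $r(Y_{s_n}) \to r(Y_{s^*}) \neq 0$ for $n$ large, contradicting $r(Y_{s_n}) \to 0$. Therefore $Y_s = 0$ for some $s \in [0,t]$, i.e.\ $\h^Y_0 \leq t$.

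The only delicate point — and the reason the defining property of $\O^Y$ must be used in place of mere right continuity — is this last inclusion: since paths in $\O^Y$ are not assumed to admit left limits, a priori $r(Y)$ could decrease to $0$ ``from the left'' without ever vanishing on $[0,t]$, which would destroy the identity. The continuity of the paths along excursions away from $0$ is exactly what excludes this, by forcing $Y$ to be continuous on the whole interval $[u, s^*]$ in the argument above.
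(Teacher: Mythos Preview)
Your argument is correct. The paper's own proof simply cites a textbook result (the first entry time of a closed set by a continuous process is a stopping time for the raw filtration, \cite[Theorem~49.5]{Bauer96}) and remarks that a close reading of that proof shows continuity is only needed up to the entry time. Your proof unpacks exactly this: you write out directly the standard countable-infimum argument and use the defining property of $\O^Y$ (continuity on excursions away from~$0$) precisely where the general textbook proof would invoke full continuity. So the approach is essentially the same, just spelled out rather than cited; the gain is that your version is self-contained and makes transparent the one nontrivial point---ruling out that $r(Y_s)$ accumulates to~$0$ from the left without $Y$ ever hitting~$0$---which is indeed the only place the structure of $\O^Y$ beyond right continuity is needed.
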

\begin{proof}
  By the definition of $\O^Y$, the canonical coordinate process $Y$ is right continuous on $\R_+$ 
  and continuous on $[0, t_0]$, with $t_0 = \inf \{ s \geq 0: Y_s = 0 \} = \h^Y_0$. 
  As~$\{0\}$ is a closed subset of the Polish space $\cG$, a close examination of the proof of~\cite[Theorem~49.5]{Bauer96}
  yields that $\h^Y_0$ is a stopping time over the natural ``raw'' filtration $(\sF^Y_t, t \geq 0)$ (the continuity of the process is only needed up to the first entry time).\sqed
%
%
\end{proof}

Therefore, we are able to apply Galmarino's theorem in the context of $Y$:

\begin{theorem} \label{theo:G_IM:strong Markov of Y}
 $(Y_t, t \geq 0)$ is strongly Markovian with respect to $\big( (\sF^Y_t)_{t \geq 0}, \h^Y_0 \big)$.
\end{theorem}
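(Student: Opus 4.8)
The plan is to deduce the strong Markov property of $(Y_t)$ at $\h^Y_0$ from the ``simple'' shift identity of Lemma~\ref{lem:G_IM:shift of X by H0} by transferring everything to the path space $\O^Y$ and invoking Galmarino's theorem (the passage to $\O^Y$ is forced upon us precisely because, as noted before the statement, there are no stopping operators for $X$ on $\O$). First I would write $\PV^Y_g := \PV_g \circ \Phi^{-1}$ for the law of the $X$-path, which is carried by $\O^Y$ thanks to Theorem~\ref{theo:G_IM:continuity process X}, and record the elementary facts $X_t = Y_t \circ \Phi$, $\h_0 = \h^Y_0 \circ \Phi$, that $\h^Y_0 < \infty$ almost surely and hence $Y_{\h^Y_0} = 0$ almost surely (because $\h_0 = \h^W_0$ and the radial part of the Walsh process is a recurrent reflecting Brownian motion). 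The goal is then the identity $\EV^Y_g\big(F \circ \T_{\h^Y_0} \,\big|\, \sF^Y_{\h^Y_0}\big) = \EV^Y_{Y_{\h^Y_0}}(F) = \EV^Y_0(F)$ for all $g \in \cG$ and all bounded $\sF^Y_\infty$-measurable $F$.

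Next I would use that $\h^Y_0$ is an $(\sF^Y_t)$-stopping time (Lemma~\ref{lem:G_IM:H_0Y is stopping time}) and that $\O^Y$ carries the natural stopping operators $(\a_t)$ with $Y_s \circ \a_t = Y_{s \wedge t}$ and $\a_s \circ \a_t = \a_{s \wedge t}$; Galmarino's theorem~\eqref{eq:galmarino} then yields $\sF^Y_{\h^Y_0} = \sigma\big(Y_{t \wedge \h^Y_0}, t \geq 0\big)$. Since this $\sigma$-algebra is generated by the multiplicative family of products $\prod_{i} f_i(Y_{t_i \wedge \h^Y_0})$, and $\sF^Y_\infty$ is generated by the products $\prod_{j} f^{(j)}(Y_{s_j})$, a twofold functional monotone class argument (using that $Y_{\h^Y_0} = 0$, so that $\EV^Y_{Y_{\h^Y_0}}(F) = \EV^Y_0(F)$ is a constant and no measurability of the map $y \mapsto \PV^Y_y$ is needed) reduces the claim to the finite-dimensional identity $\EV^Y_g\big(\prod_{j} f^{(j)}(Y_{s_j + \h^Y_0}) \prod_{i} f_i(Y_{t_i \wedge \h^Y_0})\big) = \EV^Y_g\big(\EV^Y_{Y_{\h^Y_0}}(\prod_j f^{(j)}(Y_{s_j})) \prod_{i} f_i(Y_{t_i \wedge \h^Y_0})\big)$.

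I would then pull this identity back through $\Phi$ to an identity on $\O$, which is exactly the several-time-points version of Lemma~\ref{lem:G_IM:shift of X by H0}, and prove it by repeating that lemma's argument verbatim: since $\h_0 = \h^W_0$, $L_{\h_0} = 0$ and $\varrho_{\h_0} = P^{-1}(0) = 0$, one has $X_{s + \h_0} = X_s \circ (\T^W_{\h^W_0} \times \id^Q)$, so $\prod_j f^{(j)}(X_{s_j + \h_0}) = Z \circ (\T^W_{\h^W_0} \otimes \id^Q)$ with $Z := \prod_j f^{(j)}(X_{s_j}) \in b\sF^0_\infty$; the test factor $\prod_i f_i(X_{t_i \wedge \h_0}) = \prod_i f_i(W_{t_i \wedge \h^W_0})$ is $\bbsF^W_{\h^W_0}$-measurable by Lemma~\ref{lem:G_IM:X = W for t <= H0}; and the strong Markov property of $W$ with respect to $(\bbsF^W_t)$ at the stopping time $\h^W_0$ (Theorem~\ref{theo:G_IM:W strongly Markovian barbarFW}) turns the conditional expectation of $Z \circ (\T^W_{\h^W_0} \otimes \id^Q)$ into $\EV_{W_{\h^W_0},0}(Z) = \EV_{0,0}(Z) = \EV_{X_{\h_0}}(\prod_j f^{(j)}(X_{s_j}))$, using $W_{\h^W_0} = X_{\h_0} = 0$. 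Finally the identity lifts from $b\sF^Y_\infty$ to the universal completion by the usual monotone class argument.

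The main obstacle I expect is not the Markovian content --- which is essentially already contained in Lemma~\ref{lem:G_IM:shift of X by H0} and only needs to be restated for several time points --- but the careful bookkeeping of the passage between the constructed space $\O$ and the path space $\O^Y$, together with the correct application of Galmarino's theorem: one must be certain that the products $\prod_i f_i(Y_{t_i \wedge \h^Y_0})$ genuinely exhaust $\sF^Y_{\h^Y_0}$, which is precisely why we need $\h^Y_0$ to be an $(\sF^Y_t)$-stopping time and the stopping operators $\a_t$ to exist on $\O^Y$, and that $X$ actually lands in $\O^Y$, i.e.\ has the right-continuity and intra-edge continuity built into the definition of $\O^Y$, which is exactly the content of Theorem~\ref{theo:G_IM:continuity process X}.
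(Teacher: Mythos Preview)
Your proposal is correct and follows essentially the same route as the paper: invoke Galmarino's theorem on $\O^Y$ to reduce $\sF^Y_{\h^Y_0}$ to cylinder sets generated by $Y_{t_i \wedge \h^Y_0}$, then pull back through $\Phi$ and apply Lemma~\ref{lem:G_IM:shift of X by H0}. The only difference is that the paper contents itself with a single future time point $f(Y_{t+\h^Y_0})$ (which already gives the strong Markov property in the form~\eqref{eq:strong Markov property (simple)}, the lift to~\eqref{eq:strong Markov property (strong)} being routine), whereas you work directly with finite products $\prod_j f^{(j)}(Y_{s_j+\h^Y_0})$; your observation that the argument of Lemma~\ref{lem:G_IM:shift of X by H0} goes through verbatim for such products via Theorem~\ref{theo:G_IM:W strongly Markovian barbarFW} is correct.
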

\begin{proof}
  Galmarino's theorem \eqref{eq:galmarino} 
 asserts that $\sF^Y_{\h^Y_0} = \sigma(Y_{t \wedge \h^Y_0}, t \geq 0)$. It is therefore sufficient to show 
 the equality
  \begin{align*}
    \EV_{g,0} \big( f(Y_{t + \h^Y_0}) \cdot J \big) = \EV_{g,0} \big( \EV_{Y_{\h^Y_0}} \big( f(Y_t) \big) \cdot J \big)
  \end{align*}
 for all $g \in \cG$, $t \geq 0$, $f \in b\sB(\cG)$, $k \in \N$, $f_1, \ldots, f_k \in b\sB(\cG)$, $0 \leq t_1 < \cdots < t_k$, with
  \begin{align*}
    J := f_1(Y_{t_1 \wedge \h^Y_0}) \, \cdots \, f_k(Y_{t_k \wedge \h^Y_0}).
  \end{align*}
 But this is immediately proved by Lemma~\ref{lem:G_IM:shift of X by H0}, as
   \begin{align*}
    & \EV^Y_{g,0} \big( f(Y_{t + \h^Y_0}) \, f_1(Y_{t_1 \wedge \h^Y_0}) \, \cdots \, f_k(Y_{t_k \wedge \h^Y_0} \big) \\
    & = \EV_{g,0} \big( f(X_{t + \h_0}) \, f_1(X_{t_1 \wedge \h_0}) \, \cdots \, f_k(X_{t_k \wedge \h_0}) \big) \\
    & = \EV_{g,0} \big( \EV_{X_{\h_0}} \big( f(X_t) \big) \, f_1(X_{t_1 \wedge \h_0}) \, \cdots \, f_k(X_{t_k \wedge \h_0}) \big) \\
    & = \EV^Y_{g,0} \big( \EV_{Y_{\h^Y_0}} \big( f(Y_t) \big) \, f_1(Y_{t_1 \wedge \h^Y_0}) \, \cdots \, f_k(Y_{t_k \wedge \h^Y_0}) \big). \qedhere
  \end{align*}
\end{proof}

\subsection{Markov Property of \texorpdfstring{$X$}{X}}\label{subsec:G_IM:Markov property of X}

Next, we need to prepare the proof of the Markov property of $X$ with respect to ${(\sF_t, t \geq 0)}$ by analyzing the action of the time shift $(\T^X_t, t \geq 0)$,
as defined in subsection~\ref{subsec:G_IM:shift operators}, on all of the underlying components of~ $X$
(recall the definitions of subsection~\ref{subsec:G_IM:definitions}). Let $t \geq 0$ be fixed in this subsection.

For each $e \in \sE \cup \{0\}$, we define the increments of the processes $P_e$ and $Q^e$ shifted by $\varrho_t = P^{-1}(L_t)$ for all times $s \geq 0$ by
 \begin{align*} 
  \pP_e(s) & := P_e(s + \varrho_t) - P(\varrho_t) = P_e(s + P^{-1}(L_t)) - P P^{-1}(L_t), \\
  \pQ^e(s) & := Q^e(s + \varrho_t) - Q^e(\varrho_t), 
 \end{align*}
as well as the centered processes by 
 \begin{align*}
  \nQ^e(s) := Q^e(s) - Q^e(0) = Q \circ \G (s), \\
  \nP(s) := P(s) - P(0) = P \circ \G(s).
 \end{align*}
We notice (recall equation~\eqref{eq:G_IM:shifts on Pe}) that
\begin{align}
 \pP = \nP \circ \T^Q_{\varrho_t} = P \circ \G \circ \T^Q_{\varrho_t},
 \quad
 \pQ^e = \nQ^e \circ \T^Q_{\varrho_t} = Q^e \circ \G \circ \T^Q_{\varrho_t},
\end{align}
and that the processes $\pP_e$, $e \in \cE \cup \{0\}$, and $\nP$ are strictly increasing as the underlying processes are (see Remark~\ref{rem:G_IM:behavior process X}).

A detailed examination of the excursion times $(\eta^e_t, t \geq 0)$, which is done in Appendix~\ref{app:G_IM:shift of excursion times},
yields the following:

\begin{theorem} \label{theo:G_IM:shift for eta}
 For all $\o \in \O$, $s \geq 0$, $e \in \cE \cup \{0\}$,
 \begin{align*}
  \eta^e_{t+s}(\o)
   & = \big( \nP_e \nP^{-1} \big( L_s - \eta_t(\o) \big) - \big( L_s - \eta_t(\o) \big) \big) \circ \T^W_t \otimes \T^Q_{\varrho_t} (\o) \\
   & = \big( P_e P^{-1} \big( L_s - \eta_t(\o) \big) - \big(L_s - \eta_t(\o) \big) \big) \circ \big( \id^W \otimes \G^Q \big) \circ \big( \T^W_t \otimes \T^Q_{\varrho_t} \big) (\o) \\
   & = \eta^e_s \circ \big( \id^W \otimes \big( \g^P_{\eta_t(\o)} \circ \G^Q \big) \big) \circ \big( \T^W_t \otimes \T^Q_{\varrho_t} \big) (\o).
 \end{align*}
\end{theorem}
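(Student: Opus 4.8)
\emph{Proof idea.}
The plan is to reduce the whole statement to a single identity for the ``radial clock'' $P^{-1}$, namely
\begin{align*}
 P^{-1}(L_{t+s})(\o) \;=\; \varrho_t(\o) + \pP^{-1}\big( (L_s\circ\T^W_t)(\o) - \eta_t(\o) \big),
\end{align*}
with the convention that $\pP^{-1}$ of a non-positive number is $0$, and then to push this through $P_e$ via the defining relation $\eta^e_u = P_e P^{-1}(L_u) - L_u$ (recall $\eta_u = \eta^0_u$ and $P_0 = P$). I would first fix $\o$ and record two elementary facts: since $(L_u)_{u\ge 0}$ is the local-time additive functional of $W$, one has $L_{t+s} = L_t + L_s\circ\T^W_t$; abbreviate $\d := (L_s\circ\T^W_t)(\o)\ge 0$. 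And since $\varrho_t = P^{-1}(L_t)$ while $P$ is strictly increasing (Remark~\ref{rem:G_IM:behavior process X}), $P(u)\le L_t$ for $u<\varrho_t$ and $P(\varrho_t) = PP^{-1}(L_t) = L_t + \eta_t$.

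To prove the clock identity I would write a generic time as $u=\varrho_t+w$, $w\ge 0$; the definition of $\pP = \pP_0$ gives $P(\varrho_t+w) = P(\varrho_t)+\pP(w) = L_t+\eta_t+\pP(w)$, with $\pP(0)=0$ and $\pP$ strictly increasing (and unbounded, so that $P^{-1}$ is finite). Hence $P(u)>L_{t+s} = L_t+\d$ iff $\pP(w)>\d-\eta_t$, and combined with $P(u)\le L_t\le L_{t+s}$ for $u<\varrho_t$ this gives, after taking infima, $P^{-1}(L_{t+s}) = \varrho_t+\pP^{-1}(\d-\eta_t)$, the pseudo-inverse of the negative number $\d-\eta_t$ being $0$ when $\d<\eta_t$. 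Substituting into the definition of $\eta^e_{t+s}$ for $e\in\cE\cup\{0\}$ and using $P_e(\varrho_t+w)=P(\varrho_t)+\pP_e(w)$ (the definition of $\pP_e$) yields the master formula
\begin{align*}
 \eta^e_{t+s}(\o) \;=\; P_e\big(\varrho_t+\pP^{-1}(\d-\eta_t)\big) - (L_t+\d) \;=\; \pP_e\pP^{-1}(\d-\eta_t) - (\d-\eta_t).
\end{align*}

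It then remains to convert this into the three asserted forms. By the same cancellation that produces $\pP = \nP\circ\T^Q_{\varrho_t}$ (which follows from equation~\eqref{eq:G_IM:shifts on Pe}), one checks $\pP_e = \nP_e\circ\T^Q_{\varrho_t}$ for every $e\in\cE\cup\{0\}$, whence $\pP^{-1}=\nP^{-1}\circ\T^Q_{\varrho_t}$. Since $\d=(L_s\circ\T^W_t)(\o)$, $L_s$ depends only on the $W$-coordinate, $\nP_e,\nP^{-1}$ only on the $Q$-coordinate, and $\eta_t(\o)$ is a constant, the master formula is exactly the first line of the theorem. The second line follows from $\nP_e=P_e\circ\G^Q$ and $\nP^{-1}=P^{-1}\circ\G^Q$ (equation~\eqref{eq:G_IM:shifts on Pe}), factoring out $\id^W\otimes\G^Q$. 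The third line follows from $P_e\circ\g^P_x = P_e+x$ (equation~\eqref{eq:G_IM:shifts on Pe 0}), which forces $P^{-1}(v)\circ\g^P_x = P^{-1}(v-x)$; applied with $x=\eta_t(\o)$ this turns $P_eP^{-1}(L_s)-L_s$ into $\nP_e\nP^{-1}(L_s-\eta_t(\o))+\eta_t(\o)-L_s = \eta^e_s\circ(\id^W\otimes(\g^P_{\eta_t(\o)}\circ\G^Q))$, and composing with $\T^W_t\otimes\T^Q_{\varrho_t}$ finishes.

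I expect the main obstacle to be the careful bookkeeping of the pseudo-inverse at the random time $\varrho_t$: the operator identities~\eqref{eq:G_IM:shifts on Pe 0}--\eqref{eq:G_IM:shifts on Pe} are stated for deterministic translations, so one has to verify that they, and the commutation of pseudo-inversion with $\T^Q_{\varrho_t}$ and $\G^Q$, survive along $\varrho_t=P^{-1}(L_t)$; and one must be scrupulous with the non-positive-argument convention for $\pP^{-1}$ and $\nP^{-1}$ so that the ``excursion at time $t$ not yet completed'' case $\d\le\eta_t$ is covered by the very same formulas. The remaining steps---tracking which coordinate each process lives on, and that $\eta_t(\o)$ enters as a constant---are routine.
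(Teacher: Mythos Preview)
Your proposal is correct and follows essentially the same route as the paper: the ``clock identity'' $P^{-1}(L_{t+s}) = \varrho_t + \pP^{-1}(L_s\circ\T^W_t - \eta_t)$ is exactly the content of the paper's Lemma~\ref{lem:G_IM:shift for eta I}, and the conversion $\pP_e = \nP_e\circ\T^Q_{\varrho_t}$ together with the operator identities for $\G^Q$ and $\g^P$ is Lemma~\ref{lem:G_IM:shift for eta II} and the theorem's proof proper. Your intermediate ``master formula'' $\eta^e_{t+s} = \pP_e\pP^{-1}(\d-\eta_t) - (\d-\eta_t)$ is a slightly cleaner packaging than the paper's, but the underlying argument is the same.
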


Insertion into the definition of~$X$ gives:

\begin{corollary} \label{cor:G_IM:shift on X by Theta}
 For all $\o \in \O$, $s \geq 0$,
 \begin{align*}
   X_{t+s}(\o) = X_s \circ \big( \id^W \otimes \big( \g^P_{\eta_t(\o)} \circ \G^Q \big) \big) \circ \big( \T^W_t \otimes \T^Q_{\varrho_t} \big) (\o).
 \end{align*}
\end{corollary}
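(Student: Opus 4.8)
The plan is to substitute Theorem~\ref{theo:G_IM:shift for eta} directly into the defining formula for $X$; once this is done the statement reduces to a purely formal manipulation of compositions. Fix $\o \in \O$ and $s \geq 0$, and abbreviate the composite operator appearing on the right-hand side by
\begin{align*}
  \Xi := \big( \id^W \otimes ( \g^P_{\eta_t(\o)} \circ \G^Q ) \big) \circ \big( \T^W_t \otimes \T^Q_{\varrho_t} \big).
\end{align*}
Recall that, by definition, $X_s = \big( E(\eta^e_s, e \in \cE) \circ W_s, \ \eta_s + \abs{W_s} \big)$, where the radial excursion process is $\eta_s = \big( P P^{-1} - \id \big)(L_s) = \eta^0_s$ (cf.\ subsection~\ref{subsec:G_IM:remarks}), so the subscript in $\eta_t(\o)$ above is to be read as $\eta^0_t(\o)$.

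First I would record how $\Xi$ acts on the Walsh part. Since $W_s$ and $L_s$ depend only on the $\O^W$-coordinate, the factor $\id^W \otimes ( \g^P_{\eta_t(\o)} \circ \G^Q )$ leaves them untouched, while the factor $\T^W_t \otimes \T^Q_{\varrho_t}$ acts on the $\O^W$-coordinate as the plain Walsh shift $\hT^W_t$; using $\hW_s \circ \hT^W_t = \hW_{s+t}$ this gives
\begin{align*}
  W_s \circ \Xi = W_{t+s}, \qquad \abs{W_s} \circ \Xi = \abs{W_{t+s}}.
\end{align*}
Next, the third displayed identity of Theorem~\ref{theo:G_IM:shift for eta}, valid for every $e \in \cE \cup \{0\}$, reads precisely
\begin{align*}
  \eta^e_{t+s}(\o) = \eta^e_s \circ \Xi(\o),
\end{align*}
and in particular $\eta_{t+s}(\o) = \eta_s \circ \Xi(\o)$ upon taking $e = 0$.

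Then I would reassemble the two coordinates of $X_{t+s}(\o)$. For the radial coordinate,
\begin{align*}
  \eta_{t+s}(\o) + \abs{W_{t+s}(\o)} = \eta_s(\Xi\o) + \abs{W_s(\Xi\o)},
\end{align*}
which is the radial coordinate of $X_s(\Xi\o)$. For the edge coordinate, the identity $\eta^e_{t+s}(\o) = \eta^e_s(\Xi\o)$ for all $e \in \cE$ shows that the map $E(\eta^e_{t+s}(\o), e \in \cE) \colon \cG \to \cE$ coincides with $E(\eta^e_s(\Xi\o), e \in \cE)$, and this map is evaluated at $W_{t+s}(\o) = W_s(\Xi\o)$; hence the edge coordinate of $X_{t+s}(\o)$ equals $E(\eta^e_s(\Xi\o), e \in \cE)\big(W_s(\Xi\o)\big)$, the edge coordinate of $X_s(\Xi\o)$. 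Combining the two coordinates yields $X_{t+s}(\o) = X_s(\Xi\o) = X_s \circ \Xi(\o)$, which is the claim.

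The only step requiring a little care — and the closest thing to an obstacle here — is the compatibility of the function-valued assignment $E(\,\cdot\,)$ with composition: one must observe that $\big( E(\eta^e_s, e \in \cE) \circ W_s \big) \circ \Xi$ means ``the tuple $(\eta^e_s(\Xi\o))_{e \in \cE}$ determines a map $\cG \to \cE$, which is then evaluated at the point $W_s(\Xi\o)$'', and that this is literally the same object as $E(\eta^e_{t+s}(\o), e \in \cE) \circ W_{t+s}(\o)$. Once this is spelled out, the corollary follows by substitution, with no probabilistic input beyond Theorem~\ref{theo:G_IM:shift for eta}.
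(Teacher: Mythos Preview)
Your proof is correct and follows exactly the route the paper indicates: the paper simply states that the corollary is obtained by ``insertion into the definition of~$X$'', and you have carefully spelled out precisely that insertion, using Theorem~\ref{theo:G_IM:shift for eta} for the excursion times and the trivial action of $\Xi$ on the Walsh component.
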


\begin{corollary} \label{cor:G_IM:shift on eta}
 For all $s \geq 0$ with $L_s \circ \T^W_t < \eta_t$, $e \in \cE \cup \{0\}$,
 \begin{align*}
  \eta^e_{t+s} 
  & = \eta^e_t - L_s \circ \T^W_t
 \end{align*}
 holds $\PV_g$-a.s.\ for every $g \in \cG$.
\end{corollary}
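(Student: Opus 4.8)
The statement is a specialization of Theorem~\ref{theo:G_IM:shift for eta}, and the crux is the behaviour of the shifted pseudo-inverse at a \emph{negative} argument. The plan is to invoke Theorem~\ref{theo:G_IM:shift for eta}, rewritten — via $\pP = \nP\circ\T^Q_{\varrho_t}$, $\pP_e = \nP_e\circ\T^Q_{\varrho_t}$ — in the form
\[
 \eta^e_{t+s}(\o) = \pP_e\big(\pP^{-1}(u_t(\o))\big) - u_t(\o),\qquad u_t(\o) := L_s\circ\T^W_t(\o) - \eta_t(\o),
\]
for every $\o\in\O$ and $e\in\cE\cup\{0\}$ (with $\eta^0 := \eta$). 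Under the hypothesis $L_s\circ\T^W_t < \eta_t$ the number $u_t$ is strictly negative; in particular $\eta_t > 0$, so at time $t$ the process is in the middle of a jump excursion.

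Next I would use that $\pP$ is strictly increasing with $\pP(0)=0$ — which holds since $p_2>0$ or some $p^e_4$ is infinite, cf.\ Remark~\ref{rem:G_IM:behavior process X} — so that its right-continuous pseudo-inverse satisfies $\pP^{-1}(u) = \inf\{r\ge 0 : \pP(r) > u\} = 0$ for every $u<0$, by Lemma~\ref{lem:G_IM:props pseudoinverse}. Hence $\pP^{-1}(u_t) = 0$ and
\[
 \eta^e_{t+s}(\o) = \pP_e(0) - u_t(\o) = \pP_e(0) + \eta_t(\o) - L_s\circ\T^W_t(\o).
\]
It then remains to identify the constant $\pP_e(0)$. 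By its definition $\pP_e(0) = P_e(\varrho_t) - P(\varrho_t)$, while $\eta^e_t = (P_eP^{-1}-\id)(L_t) = P_e(\varrho_t) - L_t$ and $\eta_t = \eta^0_t = P(\varrho_t) - L_t$, using $\varrho_t = P^{-1}(L_t)$. Therefore $\pP_e(0) + \eta_t = \big(P_e(\varrho_t) - P(\varrho_t)\big) + \big(P(\varrho_t) - L_t\big) = \eta^e_t$, and the displayed line collapses to $\eta^e_{t+s} = \eta^e_t - L_s\circ\T^W_t$. The $\PV_g$-a.s.\ qualifier is then inherited from the (a.s.) identity $L_s\circ\T^W_t = L_{t+s} - L_t$ used implicitly throughout.

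The step I expect to be delicate is keeping track of the value of the process $\pP_e$ at the single point $0$: it must come out as $P_e(\varrho_t) - P(\varrho_t)$, which records the already-elapsed length of the ongoing jump excursion, rather than $0$ — and it is exactly this term that turns the ``naive'' answer $\eta_t - L_s\circ\T^W_t$ into the correct $\eta^e_t - L_s\circ\T^W_t$. For $e=0$ and for the edge actually carrying the current excursion this correction vanishes, but for the remaining edges it does not. A clean way to sidestep any ambiguity at time $0$ is a direct geometric argument: since $\eta_t>0$, $L_t$ lies in some jump interval $[l_n^-,l_n^+)$ with $\varrho_t = P^{-1}(L_t) = t_n$, and the hypothesis gives $l_n^- \le L_t \le L_{t+s} = L_t + L_s\circ\T^W_t < L_t + \eta_t = l_n^+$, so $L_{t+s}$ lies in the \emph{same} interval $[l_n^-,l_n^+)$; hence $\varrho_{t+s} = P^{-1}(L_{t+s}) = t_n = \varrho_t$ by Lemma~\ref{lem:G_IM:props pseudoinverse}, and therefore $\eta^e_{t+s} = P_e(\varrho_t) - L_{t+s}$ and $\eta^e_t = P_e(\varrho_t) - L_t$ differ precisely by $L_t - L_{t+s} = -L_s\circ\T^W_t$, which is the claim.
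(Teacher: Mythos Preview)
Your main argument is correct and follows the same route as the paper: invoke Theorem~\ref{theo:G_IM:shift for eta}, observe that the pseudo-inverse of a strictly increasing function starting at $0$ vanishes on negative inputs, and then identify the resulting constant as $P_e(\varrho_t)-P(\varrho_t)=\eta^e_t-\eta_t$. The paper carries this out with $\nP_e$ rather than your $\pP_e$, but the computation is the same, and your formulation with $\pP_e(0)=P_e(\varrho_t)-P(\varrho_t)$ makes the delicate value-at-zero step (which you rightly flag) more transparent.

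Your closing geometric argument is a genuinely different and more elementary route: once $\eta_t>0$ places $L_t$ in some jump interval $[l_n^-,l_n^+)$, the hypothesis $L_s\circ\T^W_t<\eta_t$ keeps $L_{t+s}$ in the \emph{same} interval, whence $\varrho_{t+s}=\varrho_t=t_n$ and the identity $\eta^e_{t+s}-\eta^e_t = (P_e(\varrho_t)-L_{t+s})-(P_e(\varrho_t)-L_t) = -L_s\circ\T^W_t$ drops out directly from the definition. This bypasses Theorem~\ref{theo:G_IM:shift for eta} entirely and is arguably the cleanest proof; the paper does not take this shortcut.
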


We are now able to prove the Laplace-transformed version~\eqref{eq:Markov property (resolvent)} of the Markov property for~$X$.
We start with the decomposition of its resolvent at $\h^X_0$:

\begin{lemma} \label{lem:G_IM:Markov decomposition of X}
 For all $g \in \cG$, $f \in b\sB(\cG)$, $t \geq 0$, 
 \begin{align*}
  & \EV_{g,0} \Big( \int_0^\infty e^{-\alpha s} \, f(X_{t+s}) \, ds \, \big| \, \sF_t \Big) &  \\
  & = \EV_{X_t,0} \Big( \int_0^{\h^X_0} e^{-\alpha s} \, f( X_s )  \, ds \Big)
    + \EV_{X_t,0} \Big( e^{-\a \h^X_0} \, \EV_{X_{\h^X_0},0} \Big( \int_{0}^\infty e^{-\alpha s} \, f( X_s )  \, ds \Big) \Big).
 \end{align*}
\end{lemma}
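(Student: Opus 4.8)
The plan is to split the integral $\int_0^\infty e^{-\alpha s} f(X_{t+s})\,ds$ at the random time $\h^X_0 \circ \T^X_t$, i.e.\ the first time after $t$ that $X$ returns to the vertex $0$. Concretely, writing $\sigma$ for this ``first return to $0$ after time $t$'', one has
\begin{align*}
 \int_0^\infty e^{-\alpha s} f(X_{t+s})\,ds
 = \int_0^{\sigma} e^{-\alpha s} f(X_{t+s})\,ds + e^{-\alpha \sigma}\int_0^\infty e^{-\alpha s} f(X_{\sigma+t+s})\,ds.
\end{align*}
The first step is to recognize, via Corollary~\ref{cor:G_IM:shift on X by Theta}, that for $s \geq 0$ the path $s \mapsto X_{t+s}$ equals $X_s$ composed with the operator $\Xi := (\id^W \otimes (\g^P_{\eta_t}\circ\G^Q))\circ(\T^W_t \otimes \T^Q_{\varrho_t})$. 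Hence both terms on the right are of the form $(\text{functional of }X_\bullet)\circ\Xi$, and $\sigma = \h^X_0\circ\Xi$. Taking $\EV_{g,0}(\,\cdot\mid\sF_t)$ and using the Markov property of $(W,Q)$ (Theorem~\ref{theo:G_IM:Markov of (W,Q)}) together with the translation/centering identity $\EV_{\cdot}(F\circ\G^Q) = \EV_{\cdot,0}(F)$ and $\EV_{\cdot}(F\circ\g^P_{\eta_t}) = \EV_{\cdot + \eta_t}(F)$ of~\eqref{eq:centering translation for Levy}, the conditional expectation of $(G\circ\Xi)$ becomes $\EV_{W_t,\,0}$ applied to $G$ but with the spatial argument of the radial coordinate shifted by $\eta_t$; by the definition of $X$ this is exactly $\EV_{X_t,0}(G)$. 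This handles the first term directly, giving $\EV_{X_t,0}\big(\int_0^{\h^X_0} e^{-\alpha s} f(X_s)\,ds\big)$.

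For the second term one applies the strong Markov property at $\h_0$, namely Theorem~\ref{theo:G_IM:strong Markov of Y} (transported back to $X$ via the path-space map $\Phi$, as in the proof of that theorem), to the ``already shifted'' process. That is, after the first reduction we are computing $\EV_{X_t,0}\big(e^{-\alpha\h^X_0}\int_0^\infty e^{-\alpha s} f(X_{\h^X_0+s})\,ds\big)$, and since $\h^X_0$ is a stopping time over the canonical filtration of $Y$ and $\sF^Y_{\h^Y_0}=\sigma(Y_{t\wedge\h^Y_0},t\geq0)$ by Galmarino's theorem, the strong Markov property lets us replace $\int_0^\infty e^{-\alpha s} f(X_{\h^X_0+s})\,ds$ by its $\EV_{X_{\h^X_0},0}$-expectation conditionally on $\sF^X_{\h^X_0}$; the factor $e^{-\alpha\h^X_0}$ is $\sF^X_{\h^X_0}$-measurable and is pulled out, producing $\EV_{X_t,0}\big(e^{-\alpha\h^X_0}\,\EV_{X_{\h^X_0},0}\big(\int_0^\infty e^{-\alpha s}f(X_s)\,ds\big)\big)$, which is the second asserted term.

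The main obstacle I expect is the bookkeeping in the first reduction: one must check carefully that composing the functional with $\Xi$ really does collapse, under $\EV_{g,0}(\cdot\mid\sF_t)$, to $\EV_{X_t,0}$ of the corresponding functional of $X$ — in particular that the ``extra'' translation $\g^P_{\eta_t}$ of the subordinator coordinate is precisely what converts the $W_t$-started radial Brownian motion into one started at $\eta_t + |W_t|$, i.e.\ at $X_t$, and that $\h^X_0\circ\Xi$ is indeed the first return of the shifted process to $0$. This requires Corollary~\ref{cor:G_IM:shift on X by Theta} and the measurability statements of Theorem~\ref{lem:G_IM:X is adapted to filtration F} and Lemma~\ref{lem:G_IM:rho is tF stopping time}, plus the fact from Corollary~\ref{cor:G_IM:shift on eta} and Lemma~\ref{lem:G_IM:laplace of H0 for X} that the time to return to $0$ depends on the starting data only through $X_t$. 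Everything else — the splitting identity, pulling $e^{-\alpha\sigma}$ out, and the final application of the strong Markov property at $\h_0$ — is routine once these identifications are in place.
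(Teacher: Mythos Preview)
There is a genuine gap in your first reduction. You claim that, via Corollary~\ref{cor:G_IM:shift on X by Theta} and Theorem~\ref{theo:G_IM:Markov of (W,Q)}, the conditional expectation of $G\circ\Xi$ collapses to $\EV_{X_t,0}(G)$ once you process the centering $\G^Q$ and the translation $\g^P_{\eta_t}$. But this identification fails: under the fresh measure $\PV_{W_t(\o),0}$, the translation $\g^P_{\eta_t(\o)}$ shifts \emph{every} $Q^e$ by $\eta_t(\o)/n$, so every $P_e$ starts at $\eta_t(\o)$ and hence every $\eta^e_0=\eta_t(\o)>0$. The edge selector $E(\eta^e_s,e\in\cE)$ is then undefined --- the ``which edge is the current jump on'' information, encoded in $(\eta^e_t)_{e\in\cE}$, is destroyed by the uniform translation. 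Even ignoring the edge, the radial parts of ``$W$ started at $W_t$ with $P$ started at $\eta_t$'' and ``$W$ started at $X_t$ with $P$ started at $0$'' are \emph{not} the same process; they only agree in law up to $\h^X_0$, and that equality is precisely L\'evy's characterization (Lemma~\ref{lem:B_LT:extension to levys char}), which you never invoke.

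The paper avoids this by splitting not at $\h^X_0\circ\T^X_t$ but at the event $\{L_s\circ\T^W_t<\eta_t\}$ versus $\{L_s\circ\T^W_t\geq\eta_t\}$. On the first piece ($I_1$), it uses Corollary~\ref{cor:G_IM:shift on eta} to \emph{freeze} the values $\eta^e_t$ (keeping the edge information as $\sF_t$-measurable constants), applies only the Markov property of $W$ (not of $(W,Q)$), and then uses Lemma~\ref{lem:B_LT:extension to levys char} to convert $\EV_{W_t,0}\big(\int_0^{L^{-1}_-(\eta_t)}e^{-\a s}f(e^*,\eta_t-L_s+|W_s|)\,ds\big)$ into $\EV_{X_t,0}\big(\int_0^{\h^X_0}e^{-\a s}f(X_s)\,ds\big)$. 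On the second piece ($I_2$), where $L_s-\eta_t\geq0$ and the current excursion is over, the full shift of Theorem~\ref{theo:G_IM:shift for eta} and the $(W,Q)$-Markov property do apply; the identification with the second term then comes from the strong Markov property of $W$ at $L^{-1}(\eta_t)$ together with Lemma~\ref{lem:G_IM:laplace of H0 for X}. Your use of Theorem~\ref{theo:G_IM:strong Markov of Y} inside this lemma is unnecessary here --- the paper only uses it afterwards, in Lemma~\ref{lem:G_IM:Markov property of X}, to recombine the two terms via Dynkin's formula.
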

\begin{proof}
 We decompose the integral inside the conditional expectation at the end of the first excursion, that is
  \begin{align*}
  \EV_{g,0} \Big( \int_0^\infty e^{-\alpha s} \, f(X_{t+s}) \, ds \, \big| \, \sF_t \Big) = I_1 + I_2,
  \end{align*}
 with
  \begin{align*}
   I_1 := \EV_{g,0} \Big( \int_0^\infty \1_{\{L_s \circ \T^W_t < \eta_t \}} \, e^{-\alpha s} \, f(X_{t+s}) \, ds \, \big| \, \sF_t \Big) \\
   I_2 := \EV_{g,0} \Big( \int_0^\infty \1_{\{L_s \circ \T^W_t \geq \eta_t \}} \, e^{-\alpha s} \, f(X_{t+s}) \, ds \, \big| \, \sF_t \Big).
 \end{align*}
 
 For the part of the current excursion (if there is one),
 we compute 
   \begin{align*} 
   I_1 & = \EV_{g,0} \Big( \EV_{g,0} \Big(  \int_0^\infty \1_{\{L_s \circ \T^W_t < \eta_t \}} \, e^{-\alpha s} \, 
                        f \big( E(\eta^e_t - L_s \circ \T^W_t , e \in \cE) \circ  (W_s \circ \T^W_t), & \\
   & \hspace{12em}         \eta_t - L_s \circ \T^W_t  + \abs{W_s} \circ \T^W_t \big) \, ds \,
                        \big| \, \sF^W_t \otimes \sF^Q_\infty \Big) \, \big| \, \sF_t \Big), & 
   \end{align*}
   where we used $\sF_t \subseteq \sF^W_t \otimes \sF^Q_\infty$ (by definition of $\sF_t$) and Corollary~\ref{cor:G_IM:shift on eta}
   for the reduction of the shifted excursion times $\eta^e_{t+s}$ to $\eta^e_t$.
   The Markov property of $W$ with respect to $(\sF^W_t, t \geq 0)$ now gives
   \begin{align*}
   I_1 & = \EV_{g,0} \Big( \EV_{W_t(\, \cdot \,), 0} \Big( \int_0^\infty \1_{\{L_s < \eta_t(\, \cdot \,) \}} \, e^{-\alpha s} \, 
                        f \big( E(\eta^e_t(\, \cdot \,) - L_s, e \in \cE) \circ W_s, \\
   & \hspace{19.65em}      \eta_t(\, \cdot \,) - L_s  + \abs{W_s} \big) \, ds \Big) \, \big| \, \sF_t \Big), & 
   \end{align*}
 where the auxiliary arguments ``$(\, \cdot \,)$'' are meant to be variables of the function inside $\EV_{g,0} \big( \cdots \,|\, \sF_t \big)$,\footnote{That is,
    $I_1  = \EV_{g,0} \big( Y \, | \, \sF_t \big)$ with
  \begin{align*}
   Y(\o) := \EV_{W_t(\o), 0} \Big( \int_0^\infty \1_{\{L_s < \eta_t(\o) \}} \, e^{-\alpha s} \, 
                        f \big( E(\eta^e_t(\o) - L_s, e \in \cE) \circ W_s, 
           \eta_t(\o) - L_s  + \abs{W_s} \big) \, ds \Big).
  \end{align*}
  }
 due to the measurability of $\eta^e_t$ (see equation~\eqref{eq:Pe is adapted to F}) with respect to the $\sigma$-algebra $\sF^W_t \otimes \sF^Q_\infty$  
 (cf.~\cite[Exercise~6.12]{Sharpe88}, which is analogously provable for Markov processes and deterministic shifts).
 Adaption of $W$ to $(\sF_t, t \geq 0)$ now trivializes the conditional expectation,
 and the decomposition $\{ \eta_t > 0 \} = {\biguplus_{e \in \cE} \{ \eta^e_t > 0 \}}$ by Lemma~\ref{lem:G_IM:properties eta}
 (as the whole integral vanishes for $\eta_t = 0$)
 together with the relation $\{ L_s < \eta_t(\o) \} = \{ s < L^{-1}_- (\eta_t(\o)) \}$ 
 for the left-continuous pseudo-inverse $L^{-1}_-$ of $L$ (see~Lemma~\ref{lem:G_IM:props pseudoinverse (cont. case)})
 yields
   \begin{align*}
  I_1(\o) & = \sum_{e \in \cE} \1_{\{\eta^e_t(\o) > 0 \}} \,
                        \EV_{W_t(\o), 0} \Big( \int_0^{L_-^{-1}(\eta_t(\o))} e^{-\alpha s} \, 
                        f \big( e, \eta_t(\o) + \abs{W_{s}} - L_s \big)  \, ds \Big). & 
   \end{align*}
 By employing L\'evy's characterization of the local time and the distribution of its inverse, as examined in 
 Lemmas~\ref{lem:B_LT:lc rc inverse} and~\ref{lem:B_LT:extension to levys char},
 applied to the radial part $\abs{W}$ of Walsh Brownian motion (see Theorem~\ref{theo:G_WP:WP path properties}),
 and then using Lemma~\ref{lem:G_IM:X = W for t <= H0} as well as the definition of $X$, we conclude that 
   \begin{equation} \label{eq:G_IM:Markov decomposition of X, part 1}
   \begin{aligned}
   I_1 & = \sum_{e \in \cE} \1_{\{\eta^e_t > 0 \}} \,
                        \EV_{(e, \eta_t + \abs{W_t}), 0} \Big( \int_0^{\h^W_0} e^{-\alpha s} \, f( W_s )  \, ds \Big) & \\
   & = \1_{\{\eta_t > 0 \}} \, \EV_{X_t, 0} \Big( \int_0^{\h^X_0} e^{-\alpha s} \, f( X_s )  \, ds \Big). & 
  \end{aligned}  
  \end{equation}
 In summary, we have shown that---with the knowledge of the process' history---the part of the shifted first excursion (if there is one currently running) equals the first non-shifted excursion, in case the process is 
 restarted at current state of the process.
 
 Turning to the part after the first excursion, we get by the definition of $X_{t+s}$:
  \begin{align*} 
    I_2 
    = \EV_{g,0} \Big( \int_0^\infty \1_{\{L_s \circ \T^W_t \geq \eta_t \}} \, e^{-\alpha s} \, 
                        f \big( E(\eta^e_{t+s}, e \in \cE) \circ W_{t+s}, \eta_{t+s} + \abs{W_{t+s}} \big) \, ds \, \big| \, \sF_t \Big). 
  \end{align*}
  Theorem~\ref{theo:G_IM:shift for eta} reduces the shifted excursion times $\eta^e_{t+s}$ to $\eta^e_t$ with the help of shifts and centerings of 
  the underlying processes, thus yielding
     \begin{align*}
   I_2 = \EV_{g,0} \Big( \Big( \int_0^\infty & \1_{\{L_s \geq \eta_t(\, \cdot \,) \}} \, e^{-\alpha s} \\
                     &   f \big( E \big( P_e P^{-1} (L_s - \eta_t(\, \cdot \,)) - (L_s - \eta_t(\, \cdot \,)), e \in \cE \big) \circ W_{s}, \\
              &  \hspace*{2.4em} P P^{-1} (L_s - \eta_t(\, \cdot \,)) - (L_s - \eta_t(\, \cdot \,))  + \abs{W_{s}} \big)  \, ds \Big)  \\
   & \hspace*{-2.7em} \circ \big( \id^W \otimes \G^Q \big) \circ \big( \T^W_t \otimes \T^Q_{\varrho_t} \big) (\, \cdot \,) \, \big| \, \sF_t \Big), 
   \end{align*}
    where the auxiliary arguments ``$(\, \cdot \,)$'' again represent the variable of the function inside $\EV_{g,0} \big( \cdots \,|\, \sF_t \big)$.
    Employing the Markov property of $(W,Q)$ with respect to~$(\sF_t, t \geq 0)$, as shown in Theorem~\ref{theo:G_IM:Markov of (W,Q)}, gives 
     \begin{align*}  
  I_2(\o) = \EV_{(W_t, Q_{\varrho_t})(\o)} \Big( \Big( \int_0^\infty & \1_{\{L_s \geq \eta_t(\o) \}} \, e^{-\alpha s} \\ 
                      &  f \big( E \big( P_e P^{-1} (L_s - \eta_t(\o)) - (L_s - \eta_t(\o)), e \in \cE \big) \circ W_{s},  \\
         &  \hspace*{2.4em}     P P^{-1} (L_s - \eta_t(\o)) - (L_s - \eta_t(\o)) + \abs{W_{s}} \big)  \, ds \Big)  \\
   & \hspace*{-2.7em} \circ \big(\id^W \otimes \G^Q \big) \Big).  
  \end{align*}
  The centering operator can be processed with the help of \eqref{eq:centering translation for Levy} by translating the starting point $Q_{\varrho_t}(\o)$ to $0$.
  Furthermore, the set $\{L_s \geq \eta_t(\o) \}$ is decomposed into $\{L_s = \eta_t(\o) \}$ and $\{s > L^{-1}( \eta_t(\o) ) \}$ (see Lemma~\ref{lem:G_IM:props pseudoinverse (cont. case)}),
  transforming $I_2(\o)$ into
  \begin{align*}
   &  \EV_{W_t(\o), 0} \Big( \int_0^\infty \1_{\{L_s = \eta_t(\o) \}} \, f(W_s) \, ds \\
   &  \hspace*{3em} + \int_{L^{-1}(\eta_t(\o))}^\infty e^{-\alpha s} \, 
                                f \big( E \big( P_e P^{-1} (L_s - \eta_t(\o)) - (L_s - \eta_t(\o)), e \in \cE \big) \circ W_{s}, & \\
   & \hspace{13.5em}            P P^{-1} (L_s - \eta_t(\o)) - (L_s - \eta_t(\o))  + \abs{W_{s}} \big)  \, ds \Big). & 
   \end{align*}   
   Now, $\{L_s = \eta_t(\o) \}$ is a null set for every $\eta_t(\o) \neq 0$,
   and as $L$ is an additive functional, $L_s \circ \T^W_{L^{-1}(u)} = L_{s + L^{-1}(u)} - L_{L^{-1}(u)} = L_{s + L^{-1}(u)} - u$ 
   holds true, so
   \begin{align*}
  I_2(\o) & = \1_{\{\eta_t(\o) = 0\}} \, \EV_{W_t(\o), 0} \Big( \int_0^\infty \1_{\{L_s = 0 \}} \, f(W_s) \, ds \Big) & \\
   &  \quad  + \EV_{W_t(\o), 0} \Big( e^{-\a L^{-1}(\eta_t(\o))} \, \Big( \int_{0}^\infty e^{-\alpha s} \, 
                        f \big( E \big( P_e P^{-1} (L_s) - L_s , e \in \cE \big) \circ W_{s}, & \\
   & \hspace{13.7em}              P P^{-1} (L_s) - L_s  + \abs{W_{s}} \big)  \, ds \Big) \circ \T^W_{L^{-1}(\eta_t(\o))} \Big). & 
  \end{align*}
  As the local time vanishes until the first hit of the vertex, $\{L_s = 0\} = \{s \leq \h^W_0\}$ holds true,
  and applying the strong Markov property of $W$ with respect to its augmented, right continuous filtration for
  the stopping time $L^{-1} \big( \eta_t(\o) \big)$ (while treating the part of the subordinator $Q$ to be constant, which is possible due to Fubini's theorem),
  with stopping point $W_{L^{-1}(\eta_t(\o))} = 0$, yields
  \begin{align*}
 I_2(\o)  & = \1_{\{\eta_t(\o) = 0\}} \, \EV_{W_t(\o), 0} \Big( \int_0^{\h^W_0}  f(W_s) \, ds \Big)  \\
   &  \quad + \EV_{W_t(\o), 0} \Big( e^{-\a L^{-1}(\eta_t(\o))} \, \EV_{0,0} \Big( \int_{0}^\infty e^{-\alpha s} \, 
                        f( X_s )  \, ds \Big) \Big).
  \end{align*} 
  Lemma~\ref{lem:G_IM:laplace of H0 for X}, the relation $X_{\h^X_0} = 0$ on $\{ \h^X_0 < \infty \}$ (by right continuity of~$X$) and the definition of~$X$ imply
%
  \begin{align*}
   I_2
   & = \1_{\{\eta_t = 0\}} \, \EV_{X_t, 0} \Big( \int_0^{\h^X_0}  f(X_s) \, ds \Big)  \\
   & \quad  + \EV_{X_t, 0} \Big( e^{-\a \h^X_0} \, \EV_{X_{\h^X_0},0} \Big( \int_{0}^\infty e^{-\alpha s} \, f( X_s )  \, ds \Big) \Big),
  \end{align*}
  which together with the result \eqref{eq:G_IM:Markov decomposition of X, part 1} for the first excursion concludes the proof.\sqed
\end{proof}

By combining this lemma with the strong Markov property at $\h_0$, we are now able to deduce the Markov property of $X$.
As we only have access to the strong Markov property at $\h_0$ with respect to canonical filtration $(\sF^Y_t, t \geq 0)$ of the path space realization $Y$ of~$X$
(see the preceding subsection~\ref{subsec:G_IM:strong Markov at H0}), we need to restrict our attention to the canonical filtration of $X$ as well:
 \begin{align*}
  \sF^X_t := \sigma(X_s, s \leq t), \quad t \geq 0.
 \end{align*}
As $X$ is adapted to $(\sF_t, t \geq 0)$ by Lemma~\ref{lem:G_IM:X is adapted to filtration F},
we have $\sF^X_t \subseteq \sF_t$ for all $t \geq 0$. Thus, Lemma~\ref{lem:G_IM:Markov decomposition of X} yields:
\begin{corollary}\label{cor:G_IM:Markov decomposition of X, natural}
  For all $g \in \cG$, $f \in b\sB(\cG)$, $t \geq 0$, 
 \begin{align*}
  & \EV_{g,0} \Big( \int_0^\infty e^{-\alpha s} \, f(X_{t+s}) \, ds \, \big| \, \sF^X_t \Big)  \\
  & = \EV_{X_t,0} \Big( \int_0^{\h^X_0} e^{-\alpha s} \, f( X_s )  \, ds \Big)
    + \EV_{X_t,0} \Big( e^{-\a \h^X_0} \, \EV_{X_{\h^X_0},0} \Big( \int_{0}^\infty e^{-\alpha s} \, f( X_s )  \, ds \Big) \Big).
 \end{align*}
\end{corollary}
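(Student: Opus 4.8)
The plan is to obtain the corollary from Lemma~\ref{lem:G_IM:Markov decomposition of X} by the standard ``condition down'' argument, that is, a single application of the tower property of conditional expectations. The starting observation is that the right-hand side of the asserted identity is a deterministic Borel function of the single random variable $X_t$: setting
\[
  G(x) := \EV_{x,0} \Big( \int_0^{\h^X_0} e^{-\alpha s} \, f(X_s) \, ds \Big)
        + \EV_{x,0} \Big( e^{-\a \h^X_0} \, \EV_{X_{\h^X_0},0} \Big( \int_0^\infty e^{-\alpha s} \, f(X_s) \, ds \Big) \Big), \quad x \in \cG,
\]
Lemma~\ref{lem:G_IM:Markov decomposition of X} reads $\EV_{g,0}\big( \int_0^\infty e^{-\alpha s} f(X_{t+s})\,ds \,\big|\, \sF_t \big) = G(X_t)$, $\PV_{g,0}$-a.s., for every $g \in \cG$.

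Next I would record the two measurability facts that are needed. First, $G$ is Borel on $\cG$: since $x \mapsto \PV_{x,0} = \PV^W_x \otimes \PV^Q_{(0, \ldots, 0)}$ is measurable --- the Walsh kernel $x \mapsto \PV^W_x$ is, and the subordinator factor is fixed --- the map $x \mapsto \EV_{x,0}(\Psi)$ is Borel for every bounded measurable functional $\Psi$ on $\O$; applying this first to the inner functional $\int_0^\infty e^{-\alpha s} f(X_s)\,ds$, and then to the two functionals defining $G$, yields that $G$ is Borel, so that $G(X_t)$ is measurable with respect to $\sigma(X_t) \subseteq \sF^X_t$. Second, since $X$ is adapted to $(\sF_t, t \geq 0)$ by Lemma~\ref{lem:G_IM:X is adapted to filtration F}, we have the inclusion $\sF^X_t = \sigma(X_s, s \leq t) \subseteq \sF_t$.

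Finally I would condition the identity of Lemma~\ref{lem:G_IM:Markov decomposition of X} further down onto $\sF^X_t$: using $\sF^X_t \subseteq \sF_t$ and the tower property,
\[
  \EV_{g,0} \Big( \int_0^\infty e^{-\alpha s} f(X_{t+s}) \, ds \,\big|\, \sF^X_t \Big)
  = \EV_{g,0} \big( G(X_t) \,\big|\, \sF^X_t \big)
  = G(X_t),
\]
the last equality holding because $G(X_t)$ is $\sF^X_t$-measurable; this is exactly the claim. The argument is essentially free of obstacles --- the only point warranting the short verification above is the Borel measurability of $G$, i.e.\ that both summands depend measurably on the initial point, which rests on the joint measurability of the kernels $(\PV_{x,0})_{x \in \cG}$ built into the construction together with the strong Markov property at $\h^X_0$ already established in subsection~\ref{subsec:G_IM:strong Markov at H0}.
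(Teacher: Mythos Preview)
Your proof is correct and follows exactly the paper's approach: the paper simply notes that $\sF^X_t \subseteq \sF_t$ (from Lemma~\ref{lem:G_IM:X is adapted to filtration F}) and then invokes Lemma~\ref{lem:G_IM:Markov decomposition of X}, leaving the tower property and the $\sF^X_t$-measurability of the right-hand side implicit. Your version spells out these two steps and the Borel measurability of $G$ more carefully, which is fine; the reference to the strong Markov property at $\h^X_0$ in your last sentence is not actually needed for the measurability of $G$, but it does no harm.
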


\begin{lemma}  \label{lem:G_IM:Markov property of X}
 For all $g \in \cG$, $f \in b\sB(\cG)$, $t \geq 0$, 
  \begin{align*} 
   \EV_{g} \Big( \int_0^\infty e^{-\alpha s} \, f(X_{t+s}) \, ds \, \big| \, \sF^X_t \Big) 
   & = \EV_{X_t} \Big( \int_0^\infty e^{-\alpha s} \, f(X_s) \, ds \Big). 
  \end{align*}
\end{lemma}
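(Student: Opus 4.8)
The plan is to read the Markov identity off the decomposition at $\h^X_0$ already supplied by Corollary~\ref{cor:G_IM:Markov decomposition of X, natural}, using the strong Markov property of $X$ at $\h_0$ to collapse that decomposition back to a single resolvent. Set $U_\a f(x) := \EV_{x,0}\big(\int_0^\infty e^{-\a s}\,f(X_s)\,ds\big)$ for $x\in\cG$; this is bounded by $\norm{f}_\infty/\a$, and $\EV_g = \EV_{g,0}$ under the convention $\PV_g := \PV_{g,(0,\ldots,0)}$. Since $X_t$ is $\sF^X_t$-measurable, Corollary~\ref{cor:G_IM:Markov decomposition of X, natural} reduces the claim to showing that the function
\[
 x \longmapsto \EV_{x,0}\Big(\int_0^{\h^X_0} e^{-\a s}\,f(X_s)\,ds\Big) + \EV_{x,0}\Big(e^{-\a\h^X_0}\,\EV_{X_{\h^X_0},0}\Big(\int_0^\infty e^{-\a s}\,f(X_s)\,ds\Big)\Big)
\]
coincides with $U_\a f$ on all of $\cG$; evaluating this identity at $x = X_t$ and recalling $\EV_{X_t}\big(\int_0^\infty e^{-\a s}f(X_s)\,ds\big) = U_\a f(X_t)$ then finishes the proof via Corollary~\ref{cor:G_IM:Markov decomposition of X, natural}.

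Next I would upgrade the strong Markov property at $\h_0$ from the path-space statement of Theorem~\ref{theo:G_IM:strong Markov of Y} to the level of $X$. By the standard monotone class argument (the one indicated in subsection~\ref{subsec:Definitions Markov} after~\eqref{eq:strong Markov property (simple)}, using Galmarino's identity $\sF^Y_{\h^Y_0} = \sigma(Y_{s\wedge\h^Y_0}, s\geq 0)$), Theorem~\ref{theo:G_IM:strong Markov of Y} extends to the full strong Markov property of $Y$ at $\h^Y_0$, that is, $\EV^Y_{g,0}\big(G\circ\T_{\h^Y_0}\,\1_{\{\h^Y_0<\infty\}}\,\big|\,\sF^Y_{\h^Y_0}\big) = \EV^Y_{Y_{\h^Y_0}}(G)\,\1_{\{\h^Y_0<\infty\}}$ for every $G\in b\sigma(Y_r, r\geq 0)$. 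Since $X_s = Y_s\circ\Phi$, $\h^X_0 = \h^Y_0\circ\Phi$, $\sF^X_t = \Phi^{-1}\sF^Y_t$, and $\PV^Y_{g,0}$ is the push-forward of $\PV_{g,0}$ under $\Phi$, pushing this identity back along $\Phi$ yields the strong Markov property of $X$ at $\h_0 = \h^X_0$: for all $x\in\cG$ and $G\in b\sigma(X_r, r\geq 0)$,
\[
 \EV_{x,0}\big(G\circ\T_{\h_0}\,\1_{\{\h_0<\infty\}}\,\big|\,\sF^X_{\h_0}\big) = \EV_{X_{\h_0},0}(G)\,\1_{\{\h_0<\infty\}}.
\]

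Then I would apply this with $G := \int_0^\infty e^{-\a s}\,f(X_s)\,ds$, multiply both sides by the $\sF^X_{\h_0}$-measurable factor $e^{-\a\h_0}$, and take $\EV_{x,0}$. Because $e^{-\a\h_0}\,(G\circ\T_{\h_0}) = \int_{\h_0}^\infty e^{-\a u}\,f(X_u)\,du$ (substitution $u = s+\h_0$; the empty integral, hence $0$, on $\{\h_0 = \infty\}$, consistently with $e^{-\a\h_0}=0$ there), this gives
\[
 \EV_{x,0}\Big(\int_{\h_0}^\infty e^{-\a u}\,f(X_u)\,du\Big) = \EV_{x,0}\Big(e^{-\a\h_0}\,\EV_{X_{\h_0},0}\Big(\int_0^\infty e^{-\a s}\,f(X_s)\,ds\Big)\Big),
\]
and adding $\EV_{x,0}\big(\int_0^{\h_0} e^{-\a s}f(X_s)\,ds\big)$ to both sides collapses the left-hand side to $U_\a f(x)$ — exactly the identity demanded in the first paragraph. (This step is nothing but Dynkin's formula~\eqref{eq:Dynkins formula (resolvent)} for $X$ at the stopping time $\h_0$; it is legitimate here because the only Markov input it requires is the strong Markov property at that single time, which was just established.) Substituting $x = X_t$ completes the argument.

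I do not expect a genuine obstacle: the two essential ingredients — the decomposition of the resolvent at $\h_0$ in Corollary~\ref{cor:G_IM:Markov decomposition of X, natural} and the strong Markov property at $\h_0$ in Theorem~\ref{theo:G_IM:strong Markov of Y} — are already in hand, so what remains is assembly. The only points needing care are the push-forward of the strong Markov property from the path model $Y$ back to $X$ along $\Phi$ (routine once one records $\sF^X_t = \Phi^{-1}\sF^Y_t$ and that $\Phi$ intertwines the entry times $\h_0$) and the consistent bookkeeping of $\1_{\{\h_0<\infty\}}$ together with the empty-integral convention on $\{\h_0 = \infty\}$.
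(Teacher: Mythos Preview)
Your proposal is correct and follows essentially the same route as the paper: both start from Corollary~\ref{cor:G_IM:Markov decomposition of X, natural}, use the strong Markov property at $\h_0$ supplied by Theorem~\ref{theo:G_IM:strong Markov of Y}, and collapse the two-term decomposition back to a single resolvent via Dynkin's formula~\eqref{eq:Dynkins formula (resolvent)}. The only cosmetic difference is where the passage through the path-space model $Y$ is placed: the paper transfers the integral identities to $Y$, applies Dynkin's formula there, and transfers back, whereas you pull the strong Markov property itself from $Y$ back to $X$ along $\Phi$ and then argue entirely on the $X$-side; both amount to the same computation.
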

\begin{proof}
  Let $g \in \cG$, $f \in b\sB(\cG)$ and $t \geq 0$.
  Switching to the path-space realization $Y$ of $X$, we set
  for all  $n \in \N$, $A_1, \ldots, A_n \in \sB(\cG)$, $0 \leq t_1 \leq \cdots \leq t_n \leq t$
  \begin{align*}
   A^X & := \{ X_{t_1} \in A_1, \ldots,  X_{t_n} \in A_n \} \in \sF^X_t, \\
   A^Y & :=  \{ Y_{t_1} \in A_1, \ldots,  Y_{t_n} \in A_n \} \in \sF^Y_t.
  \end{align*}%
   Corollary~\ref{cor:G_IM:Markov decomposition of X, natural} yields
   \begin{align*} 
    &  \EV_{g} \Big( \int_0^\infty e^{-\alpha s} \, f(X_{t+s}) \, ds \, ; \, A^X \Big)  \\            
   & = \EV^Y_{g} \Big( \EV^Y_{Y_t} \Big( \int_0^{\h^Y_0} e^{-\alpha s} \, f( Y_s )  \, ds \Big) \\
   & \hspace*{3em}      + \EV^Y_{Y_t} \Big( e^{-\a \h^Y_0} \, \EV^Y_{Y_{\h^Y_0}} \Big( \int_{0}^\infty e^{-\alpha s} \, f( Y_s )  \, ds \Big) \Big)
                       \, ; \, A^Y \Big).
   \end{align*}
    The path-space realization $Y$ satisfies the strong Markov property of  at $\h^Y_0$, as shown in Theorem~\ref{theo:G_IM:strong Markov of Y},
    so Dynkin's formula~\eqref{eq:Dynkins formula (resolvent)} gives
   \begin{align*}
   \EV_{g} \Big( \int_0^\infty e^{-\alpha s} \, f(X_{t+s}) \, ds \, ; \, A^X \Big) 
   & = \EV^Y_{g} \Big( \EV^Y_{Y_t} \Big( \int_0^\infty e^{-\alpha s} \, f( Y_s )  \, ds \Big)
                       \, ; \, A^Y \Big) \\                        
   & = \EV_{g} \Big( \EV_{X_t} \Big( \int_0^\infty e^{-\alpha s} \, f(X_s) \, ds \Big) \,;\, A^X \Big). \qedhere
  \end{align*} 
\end{proof}

\begin{theorem}
 $X = (\O, \sF, (\sF^X_t)_{t \geq 0}, (X_t)_{t \geq 0}, (\T^X_t)_{t \geq 0}, (\PV_g)_{g \in \cG})$ is a Markov process.
\end{theorem}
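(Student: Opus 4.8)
The plan is to collect the pieces established in the preceding subsections and match them against the definition of a Markov process given in subsection~\ref{subsec:Definitions Markov}: a right continuous $\cG$-valued process on $(\O,\sF)$ adapted to a filtration and equipped with shift operators; a family $(\PV_g)_{g\in\cG}$ of probability measures which is normal (i.e.\ $X_0 = g$ $\PV_g$-a.s.); measurability of $g\mapsto\PV_g(X_t\in B)$ for each $t\geq 0$ and $B\in\sB(\cG)$; and the Markov property~\eqref{eq:Markov property (simple)}. Three of these are already in hand: right continuity is Theorem~\ref{theo:G_IM:continuity process X}, adaptedness to $(\sF^X_t,t\geq 0)$ is automatic for the canonical filtration, and $(\T^X_t,t\geq 0)$ being shift operators for $X$ is Lemma~\ref{lem:G_IM:shift operators for X}.

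For normality I would evaluate the construction at $t=0$: since the subordinators are started at the origin under $\PV_g = \PV_{g,(0,\ldots,0)}$ and $L_0 = 0$, one gets $\varrho_0 = P^{-1}(0) = 0$, hence $P_e(0) = 0$ and $\eta^e_0 = (P_e P^{-1} - \id)(0) = 0$ for every $e\in\cE\cup\{0\}$, so that $\eta_0 = 0$ and $X_0 = (E(\eta^e_0, e\in\cE)\circ W_0, \abs{W_0}) = W_0 = g$ $\PV_g$-a.s. For the measurability of $g\mapsto\PV_g(X_t\in B)$ I would use that $\PV_g = \PV^W_g\otimes\PV^Q_{(0,\ldots,0)}$ depends measurably on $g$ (because $\hW$ is a Walsh process, so $g\mapsto\PV^W_g$ is a kernel), while $X_t$ is $\sF$-measurable by Theorem~\ref{lem:G_IM:X is adapted to filtration F}; composition then gives the claim.

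The substantive ingredient is the Markov property~\eqref{eq:Markov property (simple)}, and here I would invoke the equivalence recorded in subsection~\ref{subsec:Definitions Markov}: for a right continuous process, \eqref{eq:Markov property (simple)} relative to $(\sF^X_t,t\geq 0)$ is equivalent to its Laplace-transformed (resolvent) form~\eqref{eq:Markov property (resolvent)}. Since $X$ is right continuous by Theorem~\ref{theo:G_IM:continuity process X}, it therefore suffices to verify the resolvent form for the canonical filtration, and that is exactly Lemma~\ref{lem:G_IM:Markov property of X}, stated there for all bounded Borel $f$, which more than suffices. With all four requirements verified, $X$ is a Markov process. I do not expect a genuine obstacle at this stage: the real work has already been done in Lemma~\ref{lem:G_IM:Markov property of X}, which rests in turn on the strong Markov property at $\h_0$ (Theorem~\ref{theo:G_IM:strong Markov of Y}) and on the Markov property of the pair $(W,Q)$; the only point requiring a line of care is the right-continuity argument underpinning the ``simple $\Leftrightarrow$ resolvent'' equivalence, which is standard and already subsumed in the cited statement.
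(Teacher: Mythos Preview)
Your proposal is correct and follows essentially the same approach as the paper: the paper's proof simply cites Theorem~\ref{theo:G_IM:continuity process X} for right continuity and invokes Lemma~\ref{lem:G_IM:Markov property of X} via the equivalence~\eqref{eq:Markov property (resolvent)} for the Markov property, exactly as you do. Your additional checks (normality, measurability of $g\mapsto\PV_g(X_t\in B)$, shift operators) are routine verifications that the paper leaves implicit.
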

\begin{proof}
 The right continuity of $(X_t, t \geq 0)$ has been shown in Theorem~\ref{theo:G_IM:continuity process X}. 
 In view of~\eqref{eq:Markov property (resolvent)}, the Markov property has been proved in Lemma~\ref{lem:G_IM:Markov property of X}.\sqed
\end{proof}

\subsection{Strong Markov Property of \texorpdfstring{$X$}{X}}  \label{subsec:G_IM:strong Markov of X}

With the Markov property of $X$ and its strong Markov property at the first hitting time of $0$,
we are now able to deduce the Feller property (and thus, the strong Markov property) of $X$:

\begin{theorem} \label{lem:G_IM:Feller}
 $X$ is a Feller process.
\end{theorem}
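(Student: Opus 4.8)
The plan is to establish the Feller property through its resolvent formulation. Since $X$ has already been shown to be a right continuous, normal Markov process, condition~\ref{itm:Feller (normality)} of the definition holds automatically, and by the equivalence recorded in~\eqref{eq:Feller (resolvent)} it suffices to prove that $U_\a \cC_0(\cG) \subseteq \cC_0(\cG)$ for every $\a > 0$. So I would fix $\a > 0$ and $f \in \cC_0(\cG)$ and aim for an explicit enough description of $U_\a f$.

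The decisive step is to decompose the resolvent at the first hitting time $\h_0$ of the vertex, which is exactly Lemma~\ref{lem:G_IM:Markov decomposition of X} specialized to $t = 0$: the $\sigma$-algebra $\sF_0$ is $\PV_g$-trivial, and $X_{\h_0} = 0$ on $\{\h_0 < \infty\}$ by right continuity of $X$, so taking $\EV_g$-expectations gives
 \begin{align*}
   U_\a f(g) = h(g) + \varphi_\a(g) \, U_\a f(0), \qquad g \in \cG,
 \end{align*}
where $h(g) := \EV_g\big(\int_0^{\h_0} e^{-\a s} f(X_s)\,ds\big)$, $\varphi_\a(g) := \EV_g\big(e^{-\a\h_0}\big)$, and $U_\a f(0)$ is merely a real constant bounded by $\norm{f}_\infty/\a$. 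Thus everything reduces to showing $h, \varphi_\a \in \cC_0(\cG)$.

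For this I would invoke Lemma~\ref{lem:G_IM:X = W for t <= H0}: up to time $\h_0 = \h^W_0$ the process $X$ coincides with the Walsh process $W$, so $h$ is the $\a$-resolvent of $W$ killed at $0$ applied to $f$, and $\varphi_\a(g) = \EV^W_g\big(e^{-\a\h^W_0}\big)$. Using the standard Walsh-process facts from Appendix~\ref{app:Walsh} (in particular Theorem~\ref{theo:G_WP:WP path properties}), on each edge $\{e\}\times(0,\infty)$ these objects reduce to a one-dimensional Brownian motion killed at the origin: hence $\varphi_\a\big((e,x)\big) = e^{-\sqrt{2\a}\,x}$, which is continuous on $\cG$ (all edge-endpoint values equal $1$) and vanishes at infinity, so $\varphi_\a \in \cC_0(\cG)$; and $h$ restricted to an edge is a Brownian killed-resolvent, a function in $\cC_0\big([0,\infty)\big)$ vanishing at $0$, so the values $h\big((e,0)\big) = 0$ agree across edges and $h$ glues to a function in $\cC_0(\cG)$. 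Combining, $U_\a f = h + \varphi_\a\, U_\a f(0) \in \cC_0(\cG)$, which is~\eqref{eq:Feller (resolvent)}, and therefore $X$ is a Feller process.

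I do not expect a serious obstacle here: the genuinely hard work — right continuity and continuity inside the edges (Theorem~\ref{theo:G_IM:continuity process X}), the strong Markov property at $\h_0$ via Galmarino's theorem on the path-space realization (Theorem~\ref{theo:G_IM:strong Markov of Y}), and the resolvent decomposition of Lemma~\ref{lem:G_IM:Markov decomposition of X} — has already been carried out. The only point needing a little care is the continuity at the vertex of $h$ and $\varphi_\a$, which hinges on the edgewise values matching there; but starting at $0$ forces $\h_0 = 0$, whence $h$ vanishes and $\varphi_\a$ equals $1$ at $0$ along every edge, so the gluing is automatic.
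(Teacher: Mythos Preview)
Your proposal is correct and follows essentially the same route as the paper: verify the resolvent criterion~\eqref{eq:Feller (resolvent)} by specializing Lemma~\ref{lem:G_IM:Markov decomposition of X} to $t=0$, then use Lemma~\ref{lem:G_IM:X = W for t <= H0} to identify the pre-$\h_0$ part with the killed Walsh resolvent $U^{W,D}_\a f$ and $\varphi_\a$ with $e^{-\sqrt{2\a}x}$, both of which lie in $\cC_0(\cG)$ by Example~\ref{ex:G_WP:dirichlet Walsh BB}. The paper's proof is the same argument in the same order, with the same continuity-at-the-vertex check via $U^{W,D}_\a f(g)\to 0$ as $g\to 0$.
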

\begin{proof}
 We already know that $X$ is a Markov process. 
 We will check property \eqref{eq:Feller (resolvent)}.
  To this end, we decompose once again the resolvent of $X$ at $\h_0$ with Lemma~\ref{lem:G_IM:Markov decomposition of X} for $t = 0$:
    Using $X_{\h_0} = 0$ (by the right continuity of $X$) and Lemma~\ref{lem:G_IM:X = W for t <= H0}, we get
    for $g = (e,x) \in \cG$:
    \begin{align*}
     U_\a f(g) 
     & = \EV_{g} \Big( \int_0^\infty e^{-\a s} \, f(X_s) \, ds \Big) \\
     & = \EV_{g} \Big( \int_0^{\h_0} e^{-\a s} \, f(X_s) \, ds \Big) + \EV_{g} \Big( e^{-\a \h_0} \, \EV_{X_{\h_0}} \Big( \int_0^\infty e^{-\a s} \, f(X_s) \, ds \Big) \Big) \\
     & = \EV^W_g \Big( \int_0^{\h^W_0} e^{-\a s} \, f(W_s) \, ds \Big) + \EV^W_g \big( e^{-\a \h^W_0} \big) \, U_\a f(0) \\
     & = U^{W,D}_\a f(g) + e^{-\sqrt{2\a} x} \, U_\a f(0),
    \end{align*}
    with $\big( U^{W,D}_\a, \a > 0 \big)$ being the resolvent of the Walsh process on $\cG$ killed at $0$. 
    It is now immediate that $(U_\a, \a > 0)$ preserves~$\cC_0(\cG)$, 
    because $\big( U^{W,D}_\a, \a > 0 \big)$ preserves~$\cC_0(\cG)$ by Example~\ref{ex:G_WP:dirichlet Walsh BB}, 
    $(e,x) \mapsto \exp \big( -\sqrt{2\a} x \big)$ is continuous and vanishes at infinity,
    and $\lim_{g \rightarrow 0} U^{W,D}_\a f(g) = 0$ holds true.\sqed
\end{proof}

\subsection{Local Time of \texorpdfstring{$X$}{X} at the Vertex} \label{subsec:G_IM:local time of X}

As $P$ is strictly increasing, the process $P^{-1} L_t$ grows if and only if $P P^{-1} L_t$ grows, that is, if $L_t \in \sT$  (cf.\ the results of subsection \ref{subsec:G_IM:remarks}).
But then $X_t = W_t$ must be at $0$. Furthermore, we showed in equation~\eqref{eq:G_IM:shifts:shift on P-1(Ls)} that $t \mapsto P^{-1} L_t$ is an additive functional for $X$.
Therefore, the following result is to be expected (see also \cite[Section V.3]{BlumenthalGetoor69}):

\begin{theorem}
 The local time $(L^X_t, t \geq 0)$ of $X$ at $0$ is 
   \begin{align*}
     L^X_t = P^{-1} L_t, \quad t \geq 0.
   \end{align*}
\end{theorem}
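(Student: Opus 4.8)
The plan is to recognise $\varrho_t := P^{-1}(L_t)$ as a continuous additive functional of $X$ whose support is the zero set of $X$ at the vertex, and then to appeal to the essential uniqueness of the local time at a point that is regular for itself. First I would verify that $\varrho = (\varrho_t, t \geq 0)$ is a continuous additive functional of $X$: since $P$ is strictly increasing (Remark~\ref{rem:G_IM:behavior process X}), its right continuous pseudo-inverse $P^{-1}$ is continuous, and $(L_t, t \geq 0)$ is continuous, so $\varrho$ is continuous, nondecreasing, not identically zero, with $\varrho_0 = P^{-1}(L_0) = 0$; it is adapted to $(\sF_t, t \geq 0)$ by Lemma~\ref{lem:G_IM:rho is tF stopping time} (hence, in particular, a functional of the path of $X$), and its additivity $\varrho_{t+s} = \varrho_t + \varrho_s \circ \T^X_t$ with respect to the shift operators $(\T^X_t, t \geq 0)$ has already been established in equation~\eqref{eq:G_IM:shifts:shift on P-1(Ls)}.

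Next I would locate the points of increase of $\varrho$. Because $P$ is strictly increasing, $\varrho_t = P^{-1}(L_t)$ grows if and only if $P P^{-1}(L_t) = L_t$ grows, that is, only at times $t$ with $L_t \in \sT$ (Remark~\ref{rem:G_IM:behavior process X}). Hence, if $t \in \supp(d\varrho)$, then every neighbourhood of $t$ contains a time $s$ at which $L$ is increasing and $L_s \in \sT$; such an $s$ satisfies $W_s = 0$, because $L$ increases only on $\{W = 0\}$, and therefore, by the path description~\eqref{eq:G_IM:behavior process X}, $X_s = W_s = 0$. Thus $\supp(d\varrho) \subseteq \overline{\{t \geq 0 : X_t = 0\}}$, so the fine support of the additive functional $\varrho$ is contained in $\{0\}$; since $\varrho \not\equiv 0$, it equals $\{0\}$.

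Now $X$ is a Feller, hence strong Markov, process (Theorem~\ref{lem:G_IM:Feller}), and the standing assumption $p_2 > 0$ or $p_4(\cG \setminus \{0\}) = +\infty$ ensures that $X$ reflects or jumps away from and back to the vertex immediately, so $0$ is regular for itself; the uniqueness of the local time among continuous additive functionals supported on $\{0\}$ (\cite[Section~V.3]{BlumenthalGetoor69}) then yields $L^X_t = c\,\varrho_t$ for some constant $c \in (0, \infty)$. To pin down $c = 1$ I would compute the Laplace transform of the right continuous inverse of $\varrho$: by the inversion property of pseudo-inverses (Lemma~\ref{lem:G_IM:props pseudoinverse}, exactly as in the proof of Lemma~\ref{lem:G_IM:rho is tF stopping time}),
\[
 \varrho^{-1}(s) = \inf\{ t \geq 0 : P^{-1}(L_t) > s \} = \inf\{ t \geq 0 : L_t > P(s) \} = L^{-1}(P(s)),
\]
and since $W$ and $Q$ are independent, conditioning on $Q$ and using $\EV_0\big( e^{-\a L^{-1}(u)} \big) = e^{-\srl\, u}$ for every $u \geq 0$ for the inverse $L^{-1}$ of the local time of $W$ at the vertex (which coincides with that of the underlying reflecting Brownian motion; Theorem~\ref{theo:G_WP:WP path properties}, Lemma~\ref{lem:B_LT:laplace of local time inverse}), together with the \LK\ formula for the subordinators $Q^e$ with drift $0$ and L\'evy measure $p^e_4$, I would obtain
\[
 \EV_0\big( e^{-\a \varrho^{-1}(s)} \big) = \EV_0\big( e^{-\srl\, P(s)} \big) = \exp\Big( - s\Big( \srl\, p_2 + \int_{\cG \setminus \{0\}} \big( 1 - e^{-\srl\, x} \big)\, p_4\big( d(e,x) \big) \Big) \Big),
\]
which is precisely the Laplace exponent prescribed by the normalisation of the vertex local time (equivalently, the normalisation under which the boundary weights $(p^e_2)_{e \in \cE}$ and $p_4$ reappear in the generator in subsection~\ref{subsec:G_IM:resolvent and generator}); this forces $c = 1$.

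I expect the last step --- fixing the multiplicative constant --- to be the main obstacle, since it hinges on matching the computed exponent of $\varrho^{-1}$ against the adopted normalisation of ``the local time of $X$ at the vertex''; a subsidiary technical point is checking that $0$ is genuinely regular for itself for $X$, which is needed for the uniqueness statement to apply. The support computation in the second step is, by contrast, essentially already contained in Remark~\ref{rem:G_IM:behavior process X}.
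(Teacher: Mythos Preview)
Your approach is correct in substance but differs from the paper's. The paper does not give an intrinsic proof: it simply observes that the local time of~$X$ at~$0$ depends only on the radial part $\abs{X_t} = \eta_t + \abs{W_t}$, which is \emph{exactly} It\^o--McKean's half-line process from~\cite[Section~13]{ItoMcKean63}, and then cites their result~\cite[Section~14]{ItoMcKean63} that $(P^{-1}L_t, t \geq 0)$ is its local time at the origin. Your route---verifying directly that $\varrho$ is a continuous additive functional for~$X$ with fine support $\{0\}$ and invoking the Blumenthal--Getoor uniqueness---is more self-contained and avoids the external reference, at the cost of the two loose ends you already identify.

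On those loose ends: the regularity of~$0$ for itself that you flag is indeed needed, and in the paper's framework it is inherited from the half-line result (or, more concretely, from the fact that $\{t : X_t = 0\} = \{t : \abs{W_t} = 0, L_t \in \sT\}$ has~$0$ as an accumulation point a.s.\ under~$\PV_0$ when $p_2 > 0$ or $p_4$ is infinite). The normalisation issue is, as you suspect, resolved by fiat: the paper uses the formula $L^X_t := P^{-1}L_t$ as the chosen version in the subsequent time-change and killing of subsection~\ref{subsec:G_IM:general BB on star graph}, so the constant is fixed precisely so that the resolvent computation in Theorem~\ref{theo:G_IM:resolvent of X} reproduces the given boundary weights. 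Your Laplace-exponent computation is correct and is the right way to make this match explicit.
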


In general, the local time of $X$ at $0$ only depends on the behavior of $X$ at $0$, and therefore only on the behavior of the local coordinate
$(\eta_t + \abs{W_t}, t \geq 0)$ at the origin. This is exactly the Brownian motion on the half line which was constructed by It\^{o} and McKean,
and it was proved in \cite[Section 14]{ItoMcKean63} that $(P^{-1} L_t, t \geq 0)$ is its local time at the origin.
So the above theorem is achieved by carrying over their result to our generalization.

\subsection{General Brownian Motion \texorpdfstring{$X^\bullet$}{X\textbullet} on a Star Graph} \label{subsec:G_IM:general BB on star graph}

Up to this point, we only took care of the reflection parameters $(p^e_2, e \in \cE)$
and the jump distributions $(p^e_4, e \in \cE)$.
We will now implement the stickiness parameter $p_3 \geq 0$ and the killing parameter $p_1 \geq 0$ by
using the standard procedures of time change and killing. 
To this end, we will now consider the Feller process $X$ as right process in the context of the usual hypotheses.

In order to implement stickiness, we define the additive functional $(\t_t, t \geq 0)$ by 
 \begin{align*}
  \t_t := t + p_3 L^X_t, \quad t \geq 0,
 \end{align*}
and consider the time-changed process $\big( X_{\t^{-1}(t)}, t \geq 0 \big)$.
By \cite[Theorem~A.3.11]{ChenFukushima11},
$\big( X_{\t^{-1}(t)}, t \geq 0 \big)$ is a right process with shift operators $\big( \T^X_{\t^{-1}(t)}, t \geq 0 \big)$.
Its local time turns out to be $\big( L^X_{\t^{-1}(t)}, t \geq 0 \big)$, which we will only need (and thus, show) partially:

\begin{lemma} \label{lem:G_IM:additive functional for time subst process}
 $\big( L^X_{\t^{-1}(t)}, t \geq 0 \big)$ is an additive functional for $\big( X_{\t^{-1}(t)}, t \geq 0 \big)$.
\end{lemma}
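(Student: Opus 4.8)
The plan is to verify the additive functional property of $(L^X_{\t^{-1}(t)}, t \geq 0)$ directly from the definition, using that $L^X$ is already an additive functional for $X$, that $\t_t = t + p_3 L^X_t$ is an additive functional, and that $(X_{\t^{-1}(t)}, t \geq 0)$ has shift operators $\T^X_{\t^{-1}(t)}$. Recall that an additive functional $A$ for a process with shift operators $\T_t$ satisfies $A_{s+t} = A_s + A_t \circ \T_s$ (modulo the usual null-set and right-continuity caveats, which hold here since $L^X$ is continuous).

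First I would record the two elementary facts about $\t$ and its inverse. Since $\t^{-1}$ is the right-continuous inverse of the strictly increasing, continuous additive functional $\t$, we have the cocycle relation $\t^{-1}(s+t) = \t^{-1}(s) + \t^{-1}(t) \circ \T^X_{\t^{-1}(s)}$: indeed, $\t$ being additive gives $\t_{u+v} = \t_u + \t_v \circ \T^X_u$, and inverting at level $s+t$ with $u = \t^{-1}(s)$ shows that the remaining time needed to accumulate the extra amount $t$ of $\t$-time, started from $\t^{-1}(s)$, is exactly $\t^{-1}(t)$ evaluated on the shifted path $\T^X_{\t^{-1}(s)}$. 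This is the standard time-change identity and I would cite it as such (it is implicit in \cite[Theorem~A.3.11]{ChenFukushima11}).

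Next I would combine this with the additivity of $L^X$. Writing $\sigma_s := \t^{-1}(s)$ for brevity, compute
\begin{align*}
 L^X_{\t^{-1}(s+t)}
 &= L^X_{\sigma_s + (\sigma_t \circ \T^X_{\sigma_s})} \\
 &= L^X_{\sigma_s} + L^X_{\sigma_t \circ \T^X_{\sigma_s}} \circ \T^X_{\sigma_s} \\
 &= L^X_{\t^{-1}(s)} + \big( L^X_{\t^{-1}(t)} \big) \circ \T^X_{\t^{-1}(s)},
\end{align*}
where the second equality is the additive-functional property of $L^X$ for $X$ with its shift operators $\T^X$, applied with the (random) times $\sigma_s$ and $\sigma_t \circ \T^X_{\sigma_s}$, and the third equality uses the cocycle relation from the previous step together with the fact that $\sigma_t = \t^{-1}(t)$ is a functional of the path, so $\sigma_t \circ \T^X_{\sigma_s}$ is its evaluation on the shifted path. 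Since $\big( X_{\t^{-1}(t)}, t \geq 0 \big)$ carries shift operators $\big( \T^X_{\t^{-1}(t)}, t \geq 0 \big)$, this is precisely the additive-functional identity for $\big( L^X_{\t^{-1}(t)}, t \geq 0 \big)$ relative to that process. Right-continuity and adaptedness are inherited from $L^X$ and from the measurability of $t \mapsto \t^{-1}(t)$, completing the proof.

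The main obstacle is bookkeeping rather than conceptual: one must be careful that the additive-functional relation for $L^X$ may a priori only hold on a set of full measure for each pair of deterministic times, and here we are feeding in random times $\sigma_s$; the clean way around this is to invoke the perfect version of the additive functional (available since $L^X$ is continuous and $X$ is a right process), so that the identity $L^X_{u+v} = L^X_u + L^X_v \circ \T^X_u$ holds identically in $u, v \geq 0$ on a single exceptional-null set, and then specialize $u = \sigma_s$, $v = \sigma_t \circ \T^X_{\sigma_s}$. The only other point needing a word of care is checking that $\t^{-1}$ is genuinely a functional of the $X$-path (so that shifting the path shifts $\t^{-1}$ correctly), which follows because $\t^{-1}(t) = \inf\{u : u + p_3 L^X_u > t\}$ and $L^X$ is such a functional.
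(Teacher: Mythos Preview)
Your proof is correct and follows essentially the same route as the paper: both arguments combine the (perfect) additive-functional identity for $L^X$ with the cocycle relation $\t^{-1}(s+t) = \t^{-1}(s) + \t^{-1}(t)\circ\T^X_{\t^{-1}(s)}$ for the inverse of an additive functional (the paper cites \cite[Proposition~65.8]{Sharpe88} for the latter). The only difference is orientation---the paper starts from $L^X_{\t^{-1}(t)}\circ\T^X_{\t^{-1}(s)}$ and reduces to $L^X_{\t^{-1}(t+s)} - L^X_{\t^{-1}(s)}$, whereas you start from $L^X_{\t^{-1}(s+t)}$ and expand---and your explicit remark about needing a perfect version of $L^X$ to plug in random times is a welcome clarification that the paper leaves implicit.
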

\begin{proof}
 For any $s, t \geq 0$, we compute
  \begin{align*}
   L^X_{\t^{-1}(t)} \circ \T^X_{\t^{-1}(s)}
   & = L^X_{\t^{-1}(t) \circ \T^X_{\t^{-1}(s)} + \t^{-1}(s)} - L^X_{\t^{-1}(s)} \\
   & = L^X_{\t^{-1}(t+s)} - L^X_{\t^{-1}(s)},
  \end{align*}
 where we used that $(L^X_t, t \geq 0)$ is an additive functional 
 (cf.~equation~\eqref{eq:G_IM:shifts:shift on P-1(Ls)})
 for the first identity, and for the second identity employed the relation 
  \begin{align*}
   \t^{-1}(t) \circ \T^X_{\t^{-1}(s)} + \t^{-1}(s) = \t^{-1}(t+s), \quad s,t \geq 0,
  \end{align*}
 which is a general result for the inverse of any additive functional $(\t_t, t \geq 0)$ (see, e.g., \cite[Proposition 65.8]{Sharpe88}
 or the computations in the proof of \cite[Theorem 6.4]{Knight81}).\sqed
\end{proof}

Now kill this new process $\big( X_{\t^{-1}(t)}, t \geq 0 \big)$ once its local time $\big( L^X_{\t^{-1}(t)}, t \geq 0 \big)$ reaches a certain level:
To this end, introduce an exponentially distributed random variable $S$ with mean $1$, independent of~$\sF$, by extending the probability space 
(for a standard construction, see~\cite[Appendix~A]{KPS10} or~\cite[Appendix~A.3]{ChenFukushima11}), and set
  \begin{align*}
    \z := \inf \{ t \geq 0: p_1 \, L^X_{\t^{-1}(t)} > S \}.
  \end{align*}
Establish the definitive process $X^\bullet$ resulting from killing $\big( X_{\t^{-1}(t)}, t \geq 0 \big)$ at $\z$ by
  \begin{align*}
    \forall t \geq 0: \quad
    X^\bullet_t :=
      \begin{cases}
        X_{\t^{-1}(t)}, & t < \z, \\
        \D,             & t \geq \z.
      \end{cases}
  \end{align*} 
In view of Lemma~\ref{lem:G_IM:additive functional for time subst process}, \cite[Theorem~A.3.13]{ChenFukushima11} yields the following:

\begin{theorem} \label{theo:G_IM:X_bullet right process}
 $X^\bullet$ is a right process.
\end{theorem}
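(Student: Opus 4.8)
The plan is to realize $X^\bullet$ as the result of two classical, right-process-preserving path transformations applied to the Feller process $X$ of Theorem~\ref{lem:G_IM:Feller} — a time change by a strictly increasing continuous additive functional, followed by a killing at a continuous additive functional of the transformed process — and then to invoke the corresponding theorems of \cite{ChenFukushima11}. First I would record that, being Feller, $X$ admits a realization as a right process under the usual hypotheses (augment the filtration and pass to the standard modification); this is the switch of viewpoint announced at the start of subsection~\ref{subsec:G_IM:general BB on star graph}. In this setting the local time $(L^X_t, t\geq 0) = (P^{-1}L_t, t\geq 0)$ is a continuous, non-decreasing additive functional of $X$ (the identification $L^X_t = P^{-1}L_t$ together with the additivity noted in~\eqref{eq:G_IM:shifts:shift on P-1(Ls)}).

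Next I would check that $\tau_t = t + p_3 L^X_t$ is a strictly increasing, continuous, finite additive functional: continuity and additivity are inherited from $L^X$, finiteness is immediate, and strict monotonicity holds because the summand $t$ already strictly increases (so this step is also valid, and vacuous, when $p_3 = 0$, where $\tau^{-1} = \id$). Consequently its right-continuous inverse $\tau^{-1}$ is continuous and strictly increasing, and \cite[Theorem~A.3.11]{ChenFukushima11} applies directly: $\big(X_{\tau^{-1}(t)}, t\geq 0\big)$ is a right process with shift operators $\big(\T^X_{\tau^{-1}(t)}, t\geq 0\big)$.

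For the killing step, Lemma~\ref{lem:G_IM:additive functional for time subst process} provides that $\big(L^X_{\tau^{-1}(t)}, t\geq 0\big)$ is an additive functional of the time-changed process; it is moreover continuous and non-decreasing, hence so is $\big(p_1 L^X_{\tau^{-1}(t)}, t\geq 0\big)$. Adjoining to the probability space a random variable $S$ that is exponentially distributed with mean $1$ and independent of $\sF$, in the standard fashion (cf.~\cite[Appendix~A.3]{ChenFukushima11}), the time $\zeta = \inf\{t\geq 0 : p_1 L^X_{\tau^{-1}(t)} > S\}$ is precisely the lifetime produced by killing with the exact multiplicative functional $t\mapsto \1_{\{p_1 L^X_{\tau^{-1}(t)}\, \leq\, S\}}$ — that is, killing at the rate $\mathrm{d}\big(p_1 L^X_{\tau^{-1}(t)}\big)$ measured on the local-time clock. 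Therefore \cite[Theorem~A.3.13]{ChenFukushima11} yields that $X^\bullet$, defined by stopping $X_{\tau^{-1}(\cdot)}$ at $\zeta$ and sending it to $\D$ afterwards, is a right process; when $p_1 = 0$ one has $\zeta = +\infty$ and no killing occurs, consistently.

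The only genuinely delicate points here are bookkeeping: verifying that $\tau$ and $p_1 L^X_{\tau^{-1}}$ meet the exact regularity hypotheses (strictly increasing, continuous, finite; respectively continuous additive) demanded by Theorems~A.3.11 and~A.3.13 of \cite{ChenFukushima11}, and casting the randomized killing via the independent exponential $S$ as killing by an exact multiplicative functional on the enlarged space so that the latter theorem is literally applicable. No new probabilistic input beyond the ingredients of this subsection and Lemma~\ref{lem:G_IM:additive functional for time subst process} is required.
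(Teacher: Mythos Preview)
Your proposal is correct and follows essentially the same route as the paper: the paper likewise invokes \cite[Theorem~A.3.11]{ChenFukushima11} for the time change by the strictly increasing continuous additive functional $\tau$, then uses Lemma~\ref{lem:G_IM:additive functional for time subst process} together with \cite[Theorem~A.3.13]{ChenFukushima11} for the subsequent killing, with the exponential $S$ adjoined as in \cite[Appendix~A.3]{ChenFukushima11}. Your additional remarks on the degenerate cases $p_3=0$ and $p_1=0$ and on casting the randomized killing as an exact multiplicative functional are useful clarifications, but no different argument is involved.
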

  
\subsection{Resolvent and Generator of \texorpdfstring{$X^\bullet$}{X\textbullet}} \label{subsec:G_IM:resolvent and generator}

We will conclude our construction by showing that $X^\bullet$ is indeed the process which implements the correct boundary conditions
into the generator.
Let $(U^\bullet_\a, \a > 0)$ be the resolvent and $A^\bullet$ be the generator of $X^\bullet$.

We first trace the resolvent $U^\bullet$ of $X^\bullet$ back to the components of $X$:
\begin{lemma} \label{lem:G_IM:resolvent of X preparations}
  For $\a > 0$, $f \in b\cC(\cG)$, $g \in \cG$,
  \begin{align*}
     U^\bullet_\a f (g) 
     & = \EV_g \Big( \int_0^\infty e^{-\a t} \, e^{-(p_1 + \a p_3) L^X_t} \, f(X_t) \, d \t(t) \Big).
  \end{align*}
\end{lemma}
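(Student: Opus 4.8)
The plan is to peel off, one at a time, the three operations that build $X^\bullet$ out of $X$ — the killing at $\z$, the time substitution by $\t^{-1}$, and the re-expression of the resulting integral on the time scale of $X$ — and to use throughout the convention $f(\D)=0$.

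\emph{Step 1 (removing the killing).} By the definition of $X^\bullet$ and $f(\D)=0$,
\[
 U^\bullet_\a f(g) = \EV_g\Big( \int_0^\infty e^{-\a t}\, f(X^\bullet_t)\, dt \Big) = \EV_g\Big( \int_0^{\z} e^{-\a t}\, f\big(X_{\t^{-1}(t)}\big)\, dt \Big).
\]
Here $t \mapsto L^X_{\t^{-1}(t)}$ is continuous and nondecreasing: $L^X = P^{-1}L$ is continuous, since $P$ is strictly increasing (Remark~\ref{rem:G_IM:behavior process X}) and $L$ is continuous, while $\t_t = t + p_3 L^X_t$ is a continuous, strictly increasing bijection of $\R_+$ (the drift term forces strict monotonicity, and $\t_t \ge t \to \infty$), so its inverse $\t^{-1}$ is continuous and nondecreasing. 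Consequently $\{\, p_1 L^X_{\t^{-1}(t)} < S \,\} \subseteq \{\, t < \z \,\} \subseteq \{\, p_1 L^X_{\t^{-1}(t)} \le S \,\}$, and since $S$ is exponentially distributed with mean $1$ and independent of $\sF$, while $L^X_{\t^{-1}(t)}$ is $\sF$-measurable, conditioning on $\sF$ (the outer two events having equal conditional probability because $S$ has a density) gives
\[
 \EV_g\big( \1_{\{t < \z\}} \,\big|\, \sF \big) = \PV_g\big( S > p_1 L^X_{\t^{-1}(t)} \,\big|\, \sF \big) = e^{-p_1 L^X_{\t^{-1}(t)}}.
\]
Inserting this via Fubini's theorem (legitimate as $f$ is bounded) yields
\[
 U^\bullet_\a f(g) = \EV_g\Big( \int_0^\infty e^{-\a t}\, e^{-p_1 L^X_{\t^{-1}(t)}}\, f\big(X_{\t^{-1}(t)}\big)\, dt \Big).
\]

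\emph{Step 2 (undoing the time change).} Since $\t$ is a continuous increasing bijection of $\R_+$ with $\t(0)=0$, its Lebesgue--Stieltjes measure pushes forward to Lebesgue measure, i.e.\ $\int_0^\infty \psi\big(\t^{-1}(t)\big)\, dt = \int_0^\infty \psi(s)\, d\t(s)$ for every nonnegative Borel $\psi$. Applying this with $\psi(s) := e^{-\a\t(s)}\, e^{-p_1 L^X_s}\, f(X_s)$, and using $\t^{-1}(\t(s)) = s$ together with $\t(s) = s + p_3 L^X_s$, so that $e^{-\a\t(s)} = e^{-\a s}\, e^{-\a p_3 L^X_s}$, we arrive at
\[
 U^\bullet_\a f(g) = \EV_g\Big( \int_0^\infty e^{-\a s}\, e^{-(p_1 + \a p_3) L^X_s}\, f(X_s)\, d\t(s) \Big),
\]
which is the claim.

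The computation itself is short; the two places that demand care are the treatment of the killing in Step~1 — identifying $\EV_g(\1_{\{t<\z\}}\mid\sF)$, which rests on the continuity and monotonicity of $t\mapsto L^X_{\t^{-1}(t)}$ (so the boundary event $\{p_1 L^X_{\t^{-1}(t)} = S\}$ is $\PV_g(\cdot\mid\sF)$-null) and on $S$ being independent of the full $\sigma$-algebra $\sF$, not just of $\sigma(X_s, s\ge 0)$ — and the validity of the Stieltjes change of variables in Step~2, which uses that $\t$ is a genuine homeomorphism of $\R_+$ (the drift $t$ in $\t_t = t + p_3 L^X_t$ ensuring $\t^{-1}$ is an honest inverse rather than a mere right-continuous pseudo-inverse). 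All remaining measurability and integrability points are routine because $f$ is bounded and continuous.
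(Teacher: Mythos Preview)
Your proof is correct and follows essentially the same approach as the paper's: first integrate out the independent exponential variable $S$ to turn the killing into the multiplicative factor $e^{-p_1 L^X_{\t^{-1}(t)}}$, then apply the Stieltjes change of variables along the bijection $\t$ and expand $\t(s) = s + p_3 L^X_s$. If anything, your treatment of Step~1 is slightly more careful than the paper's, which passes directly from $\1_{\{t<\z\}}$ to $\1_{\{p_1 L^X_{\t^{-1}(t)} < S\}}$ without commenting on the null boundary event.
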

\begin{proof}
  The definition of $X^\bullet$ and the independence of $S$ from everything else yield
   \begin{align*}
     U^\bullet_\a f (g) 
     & = \EV_g \Big( \int_0^\infty e^{-\a t} \, f(X^\bullet_t) \, dt \Big) \\
     & = \EV_g \Big( \int_0^\infty e^{-\a t} \, \1_{ \{p_1 \, L^X_{\t^{-1}(t)} < S\} } \, f(X_{\t^{-1}(t)}) \, dt \Big) \\
     & = \EV_g \Big( \int_0^\infty e^{-\a t} \, \Big( \int_{p_1 \, L^X_{\t^{-1}(t)}}^\infty e^{-s} \, ds \Big) \, f(X_{\t^{-1}(t)}) \, dt \Big) \\
     & = \EV_g \Big( \int_0^\infty e^{-\a t} \, e^{-p_1 L^X_{\t^{-1}(t)}} \, f(X_{\t^{-1}(t)}) \, dt \Big).
  \end{align*}     
  As $(\t_t, t \geq 0)$ is increasing and bijective, the substitution rule for Stieltjes integrals (see, e.g., \cite{FalknerTeschl11}) gives
  \begin{align*}
     U^\bullet_\a f (g)
     & = \EV_g \Big( \int_0^\infty e^{-\a \t(\t^{-1}(t))} \, e^{-p_1 L^X_{\t^{-1}(t)}} \, f(X_{\t^{-1}(t)}) \, dt \Big) \\
     & = \EV_g \Big( \int_0^\infty e^{-\a \t(t)} \, e^{-p_1 L^X_t} \, f(X_t) \, d \t(t) \Big),
  \end{align*}
  and inserting the definition $\t_t = t + p_3 L^X_t$ completes the proof.\sqed
\end{proof}

We are now ready to completely calculate the resolvent of $X^\bullet$.
The form of the resolvent is well-known for the case of the half line, see 
\cite[Section 15]{ItoMcKean63}, or \cite[Theorem 3]{Rogers83} for a different approach via excursion theory.
As we constructed $X^\bullet$ pathwise, we will follow the computational techniques of \cite{ItoMcKean63} 
in order to prove the following theorem:

\begin{theorem} \label{theo:G_IM:resolvent of X}
  For $\a > 0$, $f \in b\cC(\cG)$, $g = (e,x) \in \cG$, 
    \begin{align*}
      U^\bullet_\a f (g)
      & = U^{W,D}_\a f(g) + e^{-\sqrt{2\a} x} \, U^\bullet_\a f(0)
    \end{align*}
  holds, with $\big( U^{W,D}_\a, \a > 0 \big)$ being the resolvent of the Walsh process on $\cG$ killed at~$0$ (as given in Example~\ref{ex:G_WP:dirichlet Walsh BB}), and
    \begin{align*}
      U^\bullet_\a f (0)
      & = \frac { \sum_{e \in \cE} p^e_2 \, 2 \int_0^\infty e^{-\sqrt{2 \a} x} f(e,x) \, dx + p_3 \, f(0) + \int U^{W,D}_\a f(g) \, p_4(dg) }
                { p_1 + \sqrt{2 \a} \, p_2 + \a \, p_3 + \int_0^\infty (1 - e^{-\sqrt{2 \a} l}) \, p^\Sigma_4 (dl) }
    \end{align*}
  holds with $p^\Sigma_4 = \sum_{e \in \cE} p^e_4$.
\end{theorem}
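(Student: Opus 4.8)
The plan is to feed the representation of $U^\bullet_\a$ from Lemma~\ref{lem:G_IM:resolvent of X preparations} through the structure of $X$ at the vertex, passing to the local-time clock of $X$ at $0$, on which $L^{X,-1}$ becomes an explicit subordinator: the denominator in the assertion will appear as the reciprocal of its Laplace transform, and the numerator as the sum of the contributions of the two kinds of excursions of $X$.

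\emph{The first identity.} For $g = (e,x)$ I repeat the argument from the proof of Theorem~\ref{lem:G_IM:Feller}: decompose $U^\bullet_\a f(g)$ at the first hitting time $\h_0$ of $0$ by Dynkin's formula~\eqref{eq:Dynkins formula (resolvent)}, which is available since $X^\bullet$ is a right process (Theorem~\ref{theo:G_IM:X_bullet right process}). Up to $\h_0$ one has $L^X \equiv 0$, so $\t_t = t$ and no killing has taken place; hence $X^\bullet_t = X_t = W_t$ for $t \le \h_0$ and $\h_0 = \h^W_0$ by Lemma~\ref{lem:G_IM:X = W for t <= H0}, so that $\EV_g(\int_0^{\h_0} e^{-\a t} f(X^\bullet_t)\, dt) = U^{W,D}_\a f(g)$. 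Since $X^\bullet_{\h_0} = 0$ and $\EV^W_{(e,x)}(e^{-\a \h^W_0}) = \EV^B_x(e^{-\a \h^B_0}) = e^{-\sqrt{2\a}x}$ by~\eqref{eq:BM passage time} and Theorem~\ref{theo:G_WP:WP path properties}, the boundary term of Dynkin's formula equals $e^{-\sqrt{2\a}x}\, U^\bullet_\a f(0)$, which is the claimed identity.

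\emph{The value at $0$.} By Lemma~\ref{lem:G_IM:resolvent of X preparations} and $d\t(t) = dt + p_3\, dL^X_t$, with $\gamma := p_1 + \a p_3$,
\begin{align*}
 U^\bullet_\a f(0) = \EV_0\Big( \int_0^\infty e^{-\a t} e^{-\gamma L^X_t} f(X_t)\, dt \Big) + p_3\, f(0)\, \EV_0\Big( \int_0^\infty e^{-\a t} e^{-\gamma L^X_t}\, dL^X_t \Big),
\end{align*}
where the second term already uses that $L^X$ increases only on $\{X = 0\}$. By the identification $L^X_t = P^{-1}(L_t)$ of Subsection~\ref{subsec:G_IM:local time of X}, the continuous additive functional $L^X$ has right inverse $L^{X,-1}(s) = L^{-1}(P(s))$ --- the inverse local time $L^{-1}$ of $\abs{W}$ (Theorem~\ref{theo:G_WP:WP path properties}) composed with the subordinator $P(s) = p_2 s + \sum_{e \in \cE} Q^e(s)$ --- hence is itself a subordinator, with, by $\EV(e^{-\a L^{-1}(u)}) = e^{-\sqrt{2\a}u}$ (Lemma~\ref{lem:B_LT:laplace of local time inverse}) and the L\'evy measures $p^e_4$ of the $Q^e$,
\begin{align*}
 \EV_0\big( e^{-\a L^{X,-1}(s)} \big) = \EV\big( e^{-\sqrt{2\a}\, P(s)} \big) = \exp\Big( -s\Big[ \sqrt{2\a}\, p_2 + \int_0^\infty (1 - e^{-\sqrt{2\a}l})\, p^\Sigma_4(dl) \Big] \Big).
\end{align*}
Consequently $\EV_0(\int_0^\infty e^{-\a L^{X,-1}(s)} e^{-\gamma s}\, ds)$ is the reciprocal of $D := p_1 + \sqrt{2\a}\, p_2 + \a\, p_3 + \int_0^\infty (1 - e^{-\sqrt{2\a}l})\, p^\Sigma_4(dl)$, the claimed denominator; changing variables to the local-time clock turns the $dL^X$-term into $p_3 f(0)/D$. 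For the $dt$-term one uses that $X$ spends no Lebesgue time at $0$ (its radial coordinate $\eta_t + \abs{W_t}$ being \IM's non-sticky half-line process) and that $L^X$ is constant on each excursion of $X$ away from $0$, so that integral decomposes over those excursions; carrying out the half-line computation of \cite[Section~15]{ItoMcKean63} on the radial coordinate, with the target edge read off from $E(\eta^e_t, e \in \cE)$, the continuous Walsh excursions (coming from the drift $p_2 s$ in $P$, distributed over the edges with weights $p^e_2 = p_2 q^e_2$) contribute $\tfrac{1}{D} \sum_{e \in \cE} p^e_2\, 2\int_0^\infty e^{-\sqrt{2\a}x} f(e,x)\, dx$, while each jump of $Q^e$ of size $h$ triggers a standard Brownian excursion on edge $e$ from height $h$ up to its first return to $0$, contributing $\tfrac{1}{D} \sum_{e \in \cE} \int_0^\infty \EV^B_h\big( \int_0^{\h^B_0} e^{-\a u} f(e, B_u)\, du \big)\, p^e_4(dh) = \tfrac{1}{D} \int_{\cG \bs \{0\}} U^{W,D}_\a f(g)\, p_4(dg)$, the last equality because the Walsh process on an edge killed at $0$ is ordinary Brownian motion killed at $0$ (Example~\ref{ex:G_WP:dirichlet Walsh BB}). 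Adding the three pieces gives the stated formula for $U^\bullet_\a f(0)$.

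\emph{Main obstacle.} The delicate step is the excursion decomposition of $X$ at the vertex: showing that, after passing to the local-time clock, the exponential weight $e^{-\a L^{X,-1}(s-)} e^{-\gamma s}$ factors out of the sum over excursions, and identifying the excursion ``rates'' as $p^e_2$ times the reflecting-Brownian excursion law (from the $p_2 s$-part of $P$) plus $p^e_4(dh)$ times the law of a standard Brownian excursion from height $h$ on edge $e$ (from the jumps of $Q^e$). This is exactly \IM's half-line argument of \cite[Sections~14--15]{ItoMcKean63}, which is carried out ``by hand'' via Fubini and the strong Markov property of $\abs{W}$ at the passage times $L^{-1}(\,\cdot\,)$ together with the explicit Laplace transforms of Appendix~\ref{app:Preliminaries}; the only genuinely new ingredient is the edge bookkeeping through $E(\eta^e_t, e \in \cE)$, which introduces no analytic difficulty beyond the half-line case since exactly one edge is ``active'' during any excursion.
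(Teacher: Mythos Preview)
Your argument is correct and follows essentially the same route as the paper: Dynkin at $H_0$ for the first identity, then Lemma~\ref{lem:G_IM:resolvent of X preparations} with the split $d\t(t)=dt+p_3\,dL^X_t$ for the value at~$0$, the $dL^X$-piece handled by the subordinator structure of $L^{X,-1}=L^{-1}\!\circ P$, and the $dt$-piece by separating the Walsh-type excursions from the jump excursions.

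The one organizational difference worth recording is how the $dt$-integral is broken up. You phrase it as a single excursion decomposition (invoking \cite[Sections~14--15]{ItoMcKean63} for the bare-hands justification), which yields the two numerator pieces directly. The paper instead writes the $dt$-integral as $u_1+u_2-u_3$: it computes the \emph{full} integral $u_2=\EV_0\int_0^\infty e^{-\a t}e^{-\gamma L^X_t}f(W_t)\,dt$ with the \emph{Walsh} coordinate $W_t$ (not $X_t$) via the joint law of $(W,L)$ from Lemma~\ref{lem:G_WP:joint dist W LT}, then subtracts the jump intervals $u_3$ with $f(W_t)$ and adds them back as $u_1$ with the correct value $f(X_t)=f(e_n,\,l_n^+-L_t+|W_t|)$. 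Each of $u_1,u_2,u_3,u_4$ is reduced to an elementary Laplace-transform identity (via the compensator formula for the Poisson random measure of $P$ and Lemmas~\ref{lem:B_LT:laplace of local time inverse}, \ref{lem:B_LT:extension to levys char}), so the paper's proof is fully self-contained, whereas your sketch outsources the excursion bookkeeping to \cite{ItoMcKean63}. Your framing is conceptually cleaner; the paper's buys a complete, citation-free computation.
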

\begin{proof}
 Let $g = (e,x) \in \cG$. Consider the first hitting time of the vertex $0$ for $X^\bullet$, that is,
   \begin{align*}
    H^{\bullet}_0 := \inf \{ t \geq 0: X^\bullet_t = 0 \}.
  \end{align*}
 We observe that the transformation effects from $X$ to $X^\bullet$ only take effect after the first hitting of $0$, 
 so $X^\bullet_t = X_t = W_t$ for all $t \leq H^{\bullet}_0 = H^X_0 = H^W_0$ (see also Lemma~\ref{lem:G_IM:X = W for t <= H0}).
 In addition, $X^\bullet_{H^{\bullet}_0} = 0$ holds by right continuity of $X^\bullet$.
 The application of Dynkin's formula~\eqref{eq:Dynkins formula (resolvent)} for the decomposition of the right (thus strongly Markovian) 
 process $X^\bullet$ at the stopping time $H^{\bullet}_0$ therefore yields
    \begin{align*}
      U^\bullet_\a f (g)
      & = \EV_g \Big( \int_0^{H^{\bullet}_0} e^{-\a t} \, f(X_t) \, dt \Big) + \EV_g \big( e^{-\a H^{\bullet}_0} \, U^\bullet_\a f(X^\bullet_{H^{\bullet}_0}) \big) \\
      & = \EV^W_g \Big( \int_0^{H^W_0} e^{-\a t} \, f(W_t) \, dt \Big) + \EV^W_g \big( e^{-\a H^W_0} \big) \, U^\bullet_\a f(0).
    \end{align*}
 The Laplace transform of the first hitting time of the vertex reads $\EV^W_g \big( e^{-\a H^W_0} \big) = e^{-\sqrt{2\a} x}$ 
 by \eqref{eq:BM passage time}, as the Walsh process $W$ behaves on any edge like a reflecting Brownian motion (see Theorem~\ref{theo:G_WP:WP path properties}).
 
 It remains to analyze the resolvent at the vertex $0$: Continuing the computations of Lemma~\ref{lem:G_IM:resolvent of X preparations}, we obtain 
 by inserting the definition of $\t_t$ and using that $L^X_t$ only grows at $X_t = 0$, that 
  \begin{align*}
     U^\bullet_\a f (0) 
     & = \EV_0 \Big( \int_0^\infty e^{-\a t} \, e^{-(p_1 + \a p_3) L^X_t} \, f(X_t) \, d \t(t) \Big) \\
     & = \EV_0 \Big( \int_0^\infty e^{-\a t} \, e^{-(p_1 + \a p_3) L^X_t} \, f(X_t) \, d t \Big) \\
     & \quad + p_3 \, f(0) \, \EV_0 \Big( \int_0^\infty e^{-\a t} \, e^{-(p_1 + \a p_3) L^X_t} \, d L^X(t) \Big).
  \end{align*} 
  Decomposing $\R_+$ into $\bigcup_{n \in \N} \big[ L_-^{-1}(l^-_n), L_-^{-1}(l^+_n) \big)$ and its complement, and using 
  that $X_t = \big( e_n, l^+_n + \abs{W_t} - L_t \big)$ holds for $t \in \big[ L_-^{-1}(l^-_n), L_-^{-1}(l^+_n) \big)$, $n \in \N$, 
  and $X_t = W_t$ otherwise (see subsection~\ref{subsec:G_IM:remarks}, especially equation~\eqref{eq:G_IM:behavior process X}) results in
  \begin{align*}
     U^\bullet_\a f (0) 
     & = \sum_{n \in \N} \EV_0 \Big( \int_{L_-^{-1}(l^-_n)}^{L_-^{-1}(l^+_n)} e^{-\a t} \, e^{-(p_1 + \a p_3) P^{-1} L_t} \, f \big( e_n, l^+_n + \abs{W_t} - L_t \big) \, d t \Big) \\
     & \quad + \EV_0 \Big( \int_0^\infty e^{-\a t} \, e^{-(p_1 + \a p_3) P^{-1} L_t} \, f(W_t) \, d t \Big) \\
     & \quad - \sum_{n \in \N} \EV_0 \Big( \int_{L_-^{-1}(l^-_n)}^{L_-^{-1}(l^+_n)} e^{-\a t} \, e^{-(p_1 + \a p_3) P^{-1} L_t} \, f(W_t) \, d t \Big) \\
     & \quad + p_3 \, f(0) \, \EV_0 \Big( \int_0^\infty e^{-\a t} \, e^{-(p_1 + \a p_3) P^{-1} L_t} \, d P^{-1} L_t \Big) \\
     & =: u_1 + u_2 - u_3 + u_4.
  \end{align*}
  We are going to compute these four expressions one after the other:
  
  We start with $u_1$: The functions $l^-_n$, $l^+_n$, $n \in \N$, only depend on $\O^Q$. We begin by computing the (conditional) expectation with respect to the
  space $\O^W$. Fubini's theorem asserts that while integrating on $\O^W$, we can treat $l^-_n$, $l^+_n$, $n \in \N$, as constants 
  (this will not be annotated in the formulas below to keep them reasonably readable), therefore
  \begin{align*}
   u_1 
   & = \sum_{n \in \N} \EV^Q_0 \Big( \EV^W_0 \Big( \int_{0}^{L_-^{-1}(l^+_n) - L_-^{-1}(l^-_n)} \, e^{-\a (t + L_-^{-1}(l^-_n))} \,
                                          e^{-(p_1 + \a p_3) P^{-1}(l^-_n)} \\
   & \hspace{14.7em} f \big( e_n, l^+_n + \bigabs{W_{t + L_-^{-1}(l^-_n)}} - L_{t + L_-^{-1}(l^-_n)} \big) \, dt \Big) \Big).
  \end{align*}
  Using
  $L_-^{-1}(l^+_n) - L_-^{-1}(l^-_n) = L_-^{-1}(l_n) \circ \T^W_{L_-^{-1}(l^-_n)}$ by Lemma~\ref{lem:B_LT:shift of inverse local time},
  the additive functional property $L_{t + L_-^{-1}(l^-_n)} = L_t \circ \T^W_{L_-^{-1}(l^-_n)} - L_{L_-^{-1}(l^-_n)}$,
  with $L_{L_-^{-1}(l^-_n)} = l^-_n$ by continuity of $L$, 
  as well as $l^+_n - l^-_n = l_n$ by Remark~\ref{rem:G_IM:behavior process X},
  then yields for $u_1$
  \begin{align*}
   & \sum_{n \in \N} \EV^Q_0 \Big( \EV^W_0 \Big( e^{-\a L_-^{-1}(l^-_n)} \, e^{-(p_1 + \a p_3) P^{-1}(l^-_n)} \\ 
   & \hspace{4em}                                 \EV^W_0 \Big( \Big( \int_{0}^{L_-^{-1}(l_n)} e^{-\a t} \, f \big(e_n, l_n + \abs{W_t} - L_t \big) \, dt \Big)
                                                                \circ \T^W_{L_-^{-1}(l^-_n)} \, \big| \, \sF^W_{\T^W_{L_-^{-1}(l^-_n)}} \Big) \Big) \Big).
  \end{align*}
  $W$ is strongly Markovian with respect to the stopping time $L_-^{-1}(l^-_n)$, with the stopping point being given by $W_{L_-^{-1}(l^-_n)} = 0$ (as $L$ only grows at $0$),
  so by also using that $L_-^{-1} (l^-_n) = L^{-1}(l^-_n)$ holds a.s.\ by Lemma~\ref{lem:B_LT:lc rc inverse}, it follows that 
  \begin{align*}
   u_1 
    = \sum_{n \in \N} \EV^Q_0 \Big( e^{-(p_1 + \a p_3) P^{-1}(l^-_n)} \, & \EV^W_0 \Big( e^{-\a L^{-1}(l^-_n)} \\
                                 & \quad   \EV^W_0 \Big( \int_{0}^{L^{-1}(l_n)} e^{-\a t} 
                                                     \, f \big(e_n, l_n + \abs{W_t} - L_t \big) \, d t \Big) \Big) \Big).
  \end{align*}
  Now the process $\big( l_n + \abs{W_t} - L_t, t \leq L^{-1}(l_n) \big)$ started at $0$ behaves just like 
  the standard Brownian motion $\big(B_t, t \leq H^B_0 \big)$ started at $l_n$ (cf.~Lemma~\ref{lem:B_LT:extension to levys char}).
  By using Lemma~\ref{lem:B_LT:laplace of local time inverse} for the characteristic function of $L^{-1}$, we thus get
  \begin{align*}
   u_1 
   & = \sum_{n \in \N} \EV^Q_0 \Big( e^{-(p_1 + \a p_3) P^{-1}(l^-_n)} \,  
                                 \EV^W_0 \Big( e^{-\a L^{-1}(l^-_n)} \,
                                          \EV^B_{l_n} \Big( \int_{0}^{H^B_0} e^{-\a t} \, f(e_n, B_t) \, d t \Big) \Big) \Big) \\
   & = \sum_{n \in \N} \EV^Q_0 \Big( e^{-(p_1 + \a p_3) P^{-1}(l^-_n)} \,
                                \EV^W_0 \Big( e^{-\a L^{-1}(l^-_n)} \Big) \,
                                U^{[0,\infty)}_\a \big( f(e_n, \cdot) \big) (l_n) \Big) \\
   & = \sum_{n \in \N} \EV^Q_0 \Big( e^{-\a l^-_n} \, e^{-(p_1 + \a p_3) P^{-1}(l^-_n)} \, U^{W,D}_\a f(e_n, l_n) \Big).
  \end{align*}
  Representing $P$ by its random measure $N$, with jump times $(t_n, n \in \N)$ and jump marks $\big( (e_n, l_n) , n \in \N \big)$ as discussed in 
   Remark~\ref{rem:G_IM:behavior process X}, results in
  \begin{align*}
   u_1
   & = \sum_{n \in \N} \EV^Q_0 \Big( e^{-\a P(t_n-)} \, e^{-(p_1 + \a p_3) t_n} \, U^{W,D}_\a f(e_n, l_n) \Big) \\
   & = \EV^Q_0 \Big( \int e^{-\a P(t-)} \, e^{-(p_1 + \a p_3) t} \, U^{W,D}_\a f(g) \, N(dt \times dg) \Big).
  \end{align*}  
  Computing the expectation of the above stochastic integral with respect to the Poisson random measure $N$
  through its compensator (cf.~\cite[p.~62]{IkedaWatanabe89}), we obtain
  \begin{align*}
   u_1 
   & = \int_0^\infty e^{-t ( \sqrt{2 \a} p_2 + \int_0^\infty (1 - e^{-\sqrt{2 \a} l}) \, p^\Sigma_4 (dl) )} \, e^{-(p_1 + \a p_3) t} dt
       \cdot \int  U^{W,D}_\a f(g) \, p_4(dg) \\
   & = \frac { \int U^{W,D}_\a f(g) \, p_4(dg) }
             { p_1 + \sqrt{2 \a} \, p_2 + \a \, p_3 + \int_0^\infty \big(1 - e^{-\sqrt{2 \a} l} \big) \, p^\Sigma_4 (dl) }. 
  \end{align*}   
  
 The computations for $u_3$ follow the same path, but are easier. By using the same techniques as for $u_1$, we get
  \begin{align*}
   u_3 
   & = \sum_{n \in \N} \EV^Q_0 \Big( e^{-(p_1 + \a p_3) P^{-1}(l^-_n)} \\               
   & \hspace*{5em}            \EV^W_0 \Big( \int_{0}^{L_-^{-1}(l_n) \circ \T^W_{L_-^{-1}(l^-_n)}} e^{-\a (t + L_-^{-1}(l^-_n))}
                                           \, f \big( W_t \circ \T^W_{L_-^{-1}(l^-_n)} \big) \, dt \Big) \Big) \\
   & = \sum_{n \in \N} \EV^Q_0 \Big( e^{-(p_1 + \a p_3) P^{-1}(l^-_n)} \,   
                                 e^{-\a L^{-1}(l^-_n)} \,
                                 \EV^W_0 \Big( \int_{0}^{L^{-1}(l_n)} f(W_t) \, dt \Big) \Big).
  \end{align*}
 Applying Dynkin's formula~\eqref{eq:Dynkins formula (resolvent)} for the decomposition at the stopping time $L^{-1}(l_n)$ 
 (see Lemma~\ref{lem:B_LT:inverse local time is stopping time}) yields
  \begin{align*}
   U^W_\a f(0)
   & = \EV^W_0 \Big( \int_0^{L^{-1}(l_n)} e^{-\a t} \, f(W_t) \, dt \Big) + \EV^W_0 \Big( e^{-\a L^{-1}(l_n)} \, U^W_\a f \big(W_{L^{-1}(l_n)} \big) \Big) \\
   & = \EV^W_0 \Big( \int_0^{L^{-1}(l_n)} e^{-\a t} \, f(W_t) \, dt \Big) + e^{-\sqrt{2 \a} l_n} \, U^W_\a f(0),
  \end{align*} 
 thus resulting in 
  \begin{align*}
   u_3
   & = \sum_{n \in \N} \EV^Q_0 \Big( e^{-(p_1 + \a p_3) P^{-1}(l^-_n)} \,
                                 e^{-\sqrt{2 \a} l^-_n} \,
                                 \big( 1 -  e^{-\sqrt{2 \a} l_n} \big) \, U^W_\a f(0) \Big) \\
   & = \sum_{n \in \N} \EV^Q_0 \Big( e^{-(p_1 + \a p_3) t_n} \,
                                 e^{-\sqrt{2 \a} P(t_n-)} \, 
                                 \big( 1 -  e^{-\sqrt{2 \a} l_n} \big) \Big) \cdot U^W_\a f(0) \\
   & = \frac { \int_0^\infty \big( 1 - e^{-\sqrt{2 \a} l} \big) \, p^\Sigma_4 (dl) }
             { p_1 + \sqrt{2 \a} \, p_2 + \a \, p_3 + \int_0^\infty \big( 1 - e^{-\sqrt{2 \a} l} \big) \, p^\Sigma_4 (dl) }
       \cdot U^W_\a f(0),
  \end{align*} 
 where the last identity follows again from \cite[p.~62]{IkedaWatanabe89} together with
  \begin{align*}
   \int \big( 1 - e^{-\sqrt{2 \a} \pi^2(e,x)} \big) \, p_4 \big( d(e,x) \big)
   & = \int_0^\infty  \big( 1 - e^{-\sqrt{2 \a} x} \big) \, p^\Sigma_4(dx).
  \end{align*}
  
 We are turning to $u_2$ next. 
 Using the independence of $W$ and $Q$, as well as the distribution for $(W_t, L_t)$ (see Lemma~\ref{lem:G_WP:joint dist W LT}), gives
 for $u_2$
  \begin{align*}
   & \EV^Q_0 \Big(  \sum_{e \in \cE} q^e_2 \, \int_0^\infty \! \int_0^\infty \! \int_0^\infty e^{-\a t} \,
                       e^{-(p_1 + \a p_3) P^{-1}(y)} \, f \big( (e,x) \big) \, \frac{2(x+y)}{\sqrt{2\pi t^3}} \, e^{-\frac{(x+y)^2}{2t}} \, dt \, dx \, dy \Big) \\
   & = \sqrt{2\a} \, U^W_\a f(0) \, \EV^Q_0 \Big( \int_0^\infty e^{-\sqrt{2\a} y} \, e^{-(p_1 + \a p_3) P^{-1}(y)} \, dy \Big),
  \end{align*}
 where we used, with $z = x+y > 0$, that
  \begin{align*}
   \int_0^\infty e^{-\a t} \, \frac{z}{\sqrt{2\pi t^3}} \, e^{-\frac{z^2}{2t}} \, dt
   & = \int_0^\infty e^{-\a t} \, \frac{\partial}{\partial z} \big( \frac{1}{\sqrt{2\pi t}} \, e^{-\frac{z^2}{2t}} \big) \, dt \\
   & = \frac{\partial}{\partial z} \frac{1}{\sqrt{2\a}} \, e^{-\sqrt{2\a} z} \\
   & = e^{-\sqrt{2\a} z},
  \end{align*}
 and, by the closed form \eqref{eq:G_WP:resolvent of Walsh BB} of the resolvent of $W$, that
  \begin{align*}
  \sum_{e \in \cE} q^e_2 \, 2 \, \int_0^\infty e^{-\sqrt{2\a} x} \, f \big( (e,x) \big) \, dx 
   & = \sqrt{2\a} \, U^W_\a f(0). 
  \end{align*}
 We compute the remaining expectation separately, for $\l := \sqrt{2 \a}$, $\b := p_1 + \a p_3$: 
  \begin{align*}
   \l \int_0^\infty e^{-\l t} \, e^{-\b P^{-1}(t)} \, dt
   & = - \int_0^\infty e^{-\b P^{-1}(t)} \, d e^{-\l t} \\
   & = 1 - \b \int_0^\infty e^{-\l t} \, e^{-\b P^{-1}(t)} \, d P^{-1}(t) \\
   & = 1 - \b \int_0^\infty e^{-\l P(t)} \, e^{-\b t} \, d t.
  \end{align*}
 As $P(t-) = P(t)$ a.s., we conclude by using the well-known Laplace transform of the subordinator $P$ (see \cite[Remark 21.6]{Sato13}, \cite[Section II.37]{RogersWilliams1})
 that
  \begin{align*}
   u_2
   & = U^W_\a f(0) \, \Big( 1 - (p_1 + \a p_3) \int_0^\infty \EV^Q_0 \big( e^{-\sqrt{2\a} P(y)} \big) \,  e^{-(p_1 + \a p_3) y} \, dy \Big) \\
   & = U^W_\a f(0) \, \Big( 1 - (p_1 + \a p_3) \int_0^\infty e^{-y ( \sqrt{2\a} p_2 + \int_0^\infty (1 - e^{-\sqrt{2\a} l}) \, p^\Sigma_4(dl) )} \, e^{-(p_1 + \a p_3) y} \, dy \Big) \\
   & = U^W_\a f(0) \, \Big( 1 - \frac{p_1 + \a p_3}{p_1 + \sqrt{2\a} p_2 + \a p_3 + \int_0^\infty  \big( 1 - e^{-\sqrt{2\a} l} \big) \, p^\Sigma_4(dl)} \Big) \\
   & = \frac{\sqrt{2\a} \, p_2 + \int_0^\infty \big( 1 - e^{-\sqrt{2\a} l} \big) \, p^\Sigma_4(dl)}{p_1 + \sqrt{2\a} \, p_2 + \a \, p_3 + \int_0^\infty \big( 1 - e^{-\sqrt{2\a} l} \big) \, p^\Sigma_4(dl)} \cdot U^W_\a f(0).
  \end{align*}
  
 It remains to compute $u_4$: 
 If $p_1 + \a p_3 \neq 0$, then
  \begin{align*}
   u_4
   & = - \frac{p_3 \, f(0)}{p_1 + \a p_3} \EV_0 \Big( \int_0^\infty e^{-\a t} \, d e^{-(p_1 + \a p_3) P^{-1} L_t} \Big) \\
   & = - \frac{p_3 \, f(0)}{p_1 + \a p_3} \EV_0 \Big( \lim_{t \rightarrow \infty} e^{-\a t} \, e^{-(p_1 + \a p_3) P^{-1} L_t}
                                               - e^{-\a 0} \, e^{-(p_1 + \a p_3) P^{-1} L_0}  \\
   & \hspace{8em}                              - \int_0^\infty e^{-(p_1 + \a p_3) P^{-1} L_t} \, d e^{-\a t} \Big) \\
   & = \frac{p_3 \, f(0)}{p_1 + \a p_3} \Big( 1 - \a \, \EV_0 \Big( \int_0^\infty e^{-\a t} \, e^{-(p_1 + \a p_3) P^{-1} L_t} \, dt \Big) \Big),
  \end{align*}
 and observing that the last expectation is just $u_2$ with $f \equiv 1$, we get with $U^W_\a 1 = \frac{1}{\a}$:
  \begin{align*}
   u_4
   & = \frac{p_3 \, f(0)}{p_1 + \a p_3} \Big(1 -  \frac{\sqrt{2\a} \, p_2 + \int_0^\infty \big( 1 - e^{-\sqrt{2\a} l} \big) \, p^\Sigma_4(dl)}
                                                    {p_1 + \sqrt{2\a} \, p_2 + \a \, p_3 + \int_0^\infty \big( 1 - e^{-\sqrt{2\a} l} \big) \, p^\Sigma_4(dl)} \Big) \\
   & = \frac{p_3 \, f(0)}{p_1 + \sqrt{2\a} \, p_2 + \a \, p_3 + \int_0^\infty \big( 1 - e^{-\sqrt{2\a} l} \big) \, p^\Sigma_4(dl)}.
  \end{align*}
 If $p_1 + \a p_3 = 0$, that is, if $p_1 = p_3 = 0$, then $u_4 = 0$ holds by its definition, which is in accord with the above formula for $u_4$.
 
 Adding everything up, we get
  \begin{align*}
     U^\bullet_\a f (0) 
     & = \frac{\int U^{W,D}_\a f(g) \, p_4(dg) + \sqrt{2\a} \, p_2 \, U^W_\a f(0) + p_3 \, f(0)}{p_1 + \sqrt{2\a} \, p_2 + \a \, p_3 + \int_0^\infty \big( 1 - e^{-\sqrt{2\a} l} \big) \, p^\Sigma_4(dl)},
  \end{align*} 
 and insertion of the closed form for $U^W_\a f(0)$ (see equation \eqref{eq:G_WP:resolvent of Walsh BB}) completes the proof.\sqed
\end{proof}

It was already shown in Theorem~\ref{theo:G_IM:X_bullet right process} that $X^\bullet$ is a right process.
By checking the resolvent condition~\eqref{eq:Feller (resolvent)}
with the help of the decomposition given in the above Theorem~\ref{theo:G_IM:resolvent of X}
(the resolvent $(U^{W,D}_\a, \a > 0)$ of the killed Walsh process preserves~$\cC_0(\cG)$ by Example~\ref{ex:G_WP:dirichlet Walsh BB}), we obtain the next result:

\begin{corollary}
 $X^\bullet$ is a Feller process.
\end{corollary}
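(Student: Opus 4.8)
The plan is to use the characterization~\eqref{eq:Feller (resolvent)} of the Feller property in terms of the resolvent: since $X^\bullet$ has already been shown to be a right process in Theorem~\ref{theo:G_IM:X_bullet right process}, it is a right continuous, normal Markov process, so condition~\ref{itm:Feller (normality)} holds automatically and it remains only to verify that $U^\bullet_\a \cC_0(\cG) \subseteq \cC_0(\cG)$ for every $\a > 0$. For this I would feed in the explicit formula of Theorem~\ref{theo:G_IM:resolvent of X},
\begin{align*}
 U^\bullet_\a f(g) = U^{W,D}_\a f(g) + e^{-\sqrt{2\a} x} \, U^\bullet_\a f(0), \quad g = (e,x) \in \cG,
\end{align*}
and analyze the two summands separately.

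First, $U^{W,D}_\a f \in \cC_0(\cG)$: this is precisely the statement that the resolvent of the Walsh process killed at the vertex preserves $\cC_0(\cG)$, recorded in Example~\ref{ex:G_WP:dirichlet Walsh BB}. Since the killed Walsh process started at the vertex has lifetime $0$, one additionally has $U^{W,D}_\a f(0) = 0$, hence $\lim_{g \to 0} U^{W,D}_\a f(g) = 0$ by continuity. Second, the map $g = (e,x) \mapsto e^{-\sqrt{2\a} x}$ is bounded and continuous on $\cG$ (its value at the vertex is $1$ on every edge, so it is well defined and continuous there) and vanishes at infinity, while $U^\bullet_\a f(0)$ is a fixed real constant for $f \in b\cC(\cG)$. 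Combining these, $U^\bullet_\a f$ is continuous on $\cG \bs \{0\}$, is continuous at the vertex with value $\lim_{g \to 0} U^\bullet_\a f(g) = 0 + 1 \cdot U^\bullet_\a f(0) = U^\bullet_\a f(0)$, and vanishes at infinity since both summands do; therefore $U^\bullet_\a f \in \cC_0(\cG)$, which is exactly~\eqref{eq:Feller (resolvent)}.

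Together with the right-process property this yields that $X^\bullet$ is a Feller process. There is essentially no new obstacle, since all the computational work already sits in Theorem~\ref{theo:G_IM:resolvent of X}; the single point that requires care — and the reason the argument does not degenerate at the only vertex — is the matching of the two terms of the decomposition at $0$, i.e.\ the observation that the killed-Walsh contribution $U^{W,D}_\a f$ vanishes there, so that the jump of $e^{-\sqrt{2\a}x}$ to the value $1$ at the vertex is harmless. This is the same mechanism already exploited in the proof of Theorem~\ref{lem:G_IM:Feller} for the unkilled, untime-changed process $X$.
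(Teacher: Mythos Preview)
Your proposal is correct and follows exactly the paper's approach: the paper's proof is the brief remark preceding the corollary, which invokes the right-process property from Theorem~\ref{theo:G_IM:X_bullet right process} and verifies the resolvent condition~\eqref{eq:Feller (resolvent)} via the decomposition of Theorem~\ref{theo:G_IM:resolvent of X} together with the fact that $(U^{W,D}_\a, \a > 0)$ preserves $\cC_0(\cG)$ from Example~\ref{ex:G_WP:dirichlet Walsh BB}. Your write-up simply fleshes out these steps, including the check at the vertex, in the same spirit as the proof of Theorem~\ref{lem:G_IM:Feller}.
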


We finish the construction on the star graph by showing that the process $X^\bullet$ implements the desired boundary conditions:

\begin{theorem} \label{theo:G_IM:generator of X}
 $X^\bullet$ is a Brownian motion on $\cG$. Its generator reads $A^\bullet = \frac{1}{2}\D$ with
 \begin{align*}
   \sD(A^\bullet) = 
    \Big\{ & f \in \cC^2_0(\cG) : \\
    & p_1 \, f(0) - \sum_{e \in \cE} p^e_2 \, f_e'(0) + \frac{p_3}{2} f''(0) - \int \big( f(g) - f(0) \big) \, p_4(dg) = 0 \Big\}.
 \end{align*}
\end{theorem}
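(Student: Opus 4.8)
The plan is to establish the two assertions in turn, leaning on the explicit resolvent formula of Theorem~\ref{theo:G_IM:resolvent of X} and on Lemma~\ref{lem:G_BM:totality on star graph}.

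First I would verify that $X^\bullet$ is a Brownian motion on $\cG$ in the sense of Definition~\ref{def:G_BM:BM}. We already know from Theorem~\ref{theo:G_IM:X_bullet right process} and the preceding corollary that $X^\bullet$ is a right-continuous, strongly Markovian Feller process, so it only remains to identify its behaviour before leaving an edge. Since $L^X$ vanishes on $[0,\h^X_0]$ and $\h^X_0 = \h^W_0$, the additive functional $\t_t = t + p_3 L^X_t$ is the identity there and the killing time $\z$ exceeds $\h^X_0$ almost surely; hence $X^\bullet_{t \wedge \h^X_0} = X_{t \wedge \h^X_0} = W_{t \wedge \h^W_0}$ for all $t \geq 0$ by Lemma~\ref{lem:G_IM:X = W for t <= H0}. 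By the path properties of the Walsh process (Theorem~\ref{theo:G_WP:WP path properties}), the local coordinate of $W$ on an edge $e$, run until the first hit of $0$, agrees in law with reflecting Brownian motion up to its first hit of $0$, which coincides with one-dimensional Brownian motion on $\R$ stopped when leaving $(0,\infty)$; transporting this to the edge $e$ yields exactly the finite-dimensional identity required in Definition~\ref{def:G_BM:BM}. The stopping-time property of $H_{X^\bullet}$ follows since $X^\bullet$ is a right process (alternatively from its continuity inside edges, cf.\ the remark after Definition~\ref{def:G_BM:BM}).

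Next I would compute the generator through the resolvent. By Feller theory, $\sD(A^\bullet) = U^\bullet_\a\big(\cC_0(\cG)\big)$ for any $\a > 0$, with $A^\bullet U^\bullet_\a f = \a\, U^\bullet_\a f - f$. Fix $f \in \cC_0(\cG)$ and set $u := U^\bullet_\a f$. Theorem~\ref{theo:G_IM:resolvent of X} gives
\begin{align*}
 u(e,x) = U^{W,D}_\a f(e,x) + e^{-\sqrt{2\a}\,x}\, u(0), \quad (e,x) \in \cG.
\end{align*}
On each edge $U^{W,D}_\a f(e,\cdot)$ is the half-line Dirichlet resolvent applied to $f_e$, hence a bounded $\cC^2$-solution of $\tfrac12 v'' - \a v = -f_e$ with $v(0)=0$, while $x \mapsto e^{-\sqrt{2\a}x}$ solves $\tfrac12 v'' = \a v$; consequently $u$ is twice continuously differentiable on $\cG\bs\{0\}$ with $u'' = 2(\a u - f)$ there, and since $\a u - f \in \cC_0(\cG)$, $u''$ extends to a function in $\cC_0(\cG)$ and $u_e''(0) = 2\big(\a u(0) - f(0)\big)$ is edge-independent. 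Thus $u \in \cC^2_0(\cG)$ and $A^\bullet u = \a u - f = \tfrac12 u''$, which proves $A^\bullet = \tfrac12\D$ on $\sD(A^\bullet)$ and $\sD(A^\bullet) \subseteq \cC^2_0(\cG)$.

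Then I would check that $u$ obeys the claimed boundary condition. Using the explicit half-line Dirichlet Green's function (Example~\ref{ex:G_WP:dirichlet Walsh BB}) one has $(U^{W,D}_\a f)_e'(0) = 2\int_0^\infty e^{-\sqrt{2\a}x} f(e,x)\,dx$, so from the decomposition above $u(0) = U^\bullet_\a f(0)$, $u_e'(0) = (U^{W,D}_\a f)_e'(0) - \sqrt{2\a}\,u(0)$, $u''(0) = 2\a\, u(0) - 2 f(0)$, and $u(g) - u(0) = U^{W,D}_\a f(g) - u(0)\big(1 - e^{-\sqrt{2\a}\,d(0,g)}\big)$. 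Substituting these into the boundary functional
\begin{align*}
 p_1 u(0) - \sum_{e \in \cE} p^e_2\, u_e'(0) + \tfrac{p_3}{2}\, u''(0) - \int \big( u(g) - u(0) \big)\, p_4(dg),
\end{align*}
collecting the $u(0)$-terms and the remaining terms (and using $\int \big(1-e^{-\sqrt{2\a}x}\big)\,p_4\big(d(e,x)\big) = \int_0^\infty \big(1-e^{-\sqrt{2\a}l}\big)\,p^\Sigma_4(dl)$), the functional becomes
\begin{align*}
 & u(0)\Big( p_1 + \sqrt{2\a}\, p_2 + \a\, p_3 + \int_0^\infty \big(1 - e^{-\sqrt{2\a}l}\big)\, p^\Sigma_4(dl) \Big) \\
 & \qquad - \Big( \sum_{e\in\cE} p^e_2 \cdot 2\!\int_0^\infty e^{-\sqrt{2\a}x} f(e,x)\,dx + p_3\, f(0) + \int U^{W,D}_\a f(g)\, p_4(dg) \Big),
\end{align*}
which vanishes exactly by the closed form for $U^\bullet_\a f(0)$ in Theorem~\ref{theo:G_IM:resolvent of X}. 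Hence $\sD(A^\bullet) \subseteq \sD$, where $\sD$ denotes the set on the right-hand side of the asserted generator domain. Finally, since $X^\bullet$ is a Brownian motion on the star graph $\cG$ with generator $\tfrac12\D$, $\sD(A^\bullet) \subseteq \sD$, and the total weight $p_1 + \sum_{e\in\cE} p^e_2 + p_3 + \int \big(1 - e^{-x}\big)\, p_4$ is strictly positive (as $p_2 > 0$ or $p_4$ is infinite), Lemma~\ref{lem:G_BM:totality on star graph} upgrades the inclusion to $\sD(A^\bullet) = \sD$, completing the proof. The substantive work has already been done in Theorem~\ref{theo:G_IM:resolvent of X}; the points needing care here are the $\cC^2_0$-regularity of $u$ and the use of Lemma~\ref{lem:G_BM:totality on star graph} to pass from the inclusion to equality of domains.
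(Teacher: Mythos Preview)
Your proof is correct and follows essentially the same approach as the paper: establish that $X^\bullet$ is a Brownian motion via $X^\bullet_t = X_t = W_t$ for $t \leq \h_0^\bullet$, then show every potential $U^\bullet_\a f$ satisfies the boundary condition by computing its derivatives from the resolvent decomposition of Theorem~\ref{theo:G_IM:resolvent of X} and invoking the closed form of $U^\bullet_\a f(0)$, and finally apply Lemma~\ref{lem:G_BM:totality on star graph}. The only minor difference is that you verify $u \in \cC^2_0(\cG)$ and $A^\bullet = \tfrac12\D$ by hand, whereas the paper obtains these immediately from Theorem~\ref{theo:G_BM:Fellers theorem on star graph} once $X^\bullet$ is known to be a Brownian motion on $\cG$.
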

\begin{proof}
 Let $\h_0^\bullet$ be the first entry time of $X^\bullet$ in $0$. As the transformation effects of subsection~\ref{subsec:G_IM:general BB on star graph}
 only take effect after the first hitting of $0$, we have by Lemma~\ref{lem:G_IM:X = W for t <= H0}
  \begin{align*}
   \forall t \leq \h_0^\bullet = \h_0 = \h_0^W: \quad X^\bullet_t = X_t = W_t.
  \end{align*}
 Thus the stopped process $(X^\bullet_{t \wedge \h_0^\bullet}, t \geq 0)$ behaves identically to a stopped Walsh process $(W_{t \wedge \h_0^W}, t \geq 0)$,
 which by Theorem~\ref{theo:G_WP:WP path properties} fulfills the defining conditions~\ref{def:G_BM:BM} of a Brownian motion on the metric graph $\cG$. 
 In addition, $X^\bullet$ is right continuous and strongly Markovian by Theorem~\ref{theo:G_IM:X_bullet right process}, therefore it is a Brownian motion on the star graph $\cG$.

 In view of Lemma~\ref{lem:G_BM:totality on star graph}, we only need to show that the domain of the generator lies inside the right-hand set.
 As $X^\bullet$ is Feller, $\sD(A^\bullet) = U^\bullet_\a \big( \cC_0(\cG) \big)$ holds true for any~$\a > 0$, 
 so it is enough to prove that every potential $U^\bullet_\a f$, $f \in \cC_0(\cG)$, satisfies the above-stated boundary condition:
 The derivatives of $U^{W,D}_\a f$ were already computed in Example~\ref{ex:G_WP:dirichlet Walsh BB} (it is $f_e'(0) = f'(e,0+)$ there),
 so the first formula of Theorem~\ref{theo:G_IM:resolvent of X}
 gives for $g = (e,x) \in \cG$, by setting $\psi_\a(g) := e^{-\sqrt{2\a} x}$:
  \begin{align*}
   U^\bullet_\a f_e'(0) 
    & = U^{W,D}_\a f_e'(0) + \psi_\a'(0) \, U^\bullet_\a f(0) \\
    & = 2 \int_0^\infty e^{-\sqrt{2\a} x} \, f(e,x) \, dx - \sqrt{2\a} \, U^\bullet_\a f(0), \\
   U^\bullet_\a f''(0) 
    & = U^{W,D}_\a f''(0) + \psi_\a''(0) \, U^\bullet_\a f(0) \\
    & = - 2 f(0) + 2\a \, U^\bullet_\a f(0), \\    
   U^\bullet_\a f(g) - U^\bullet_\a f(0) 
    & = U^{W,D}_\a f(g) - \big( 1 - e^{-\sqrt{2\a} x} \big) \, U^\bullet_\a f(0). 
  \end{align*}
 By using these relations and then inserting the closed form of $U^\bullet_\a f(0)$ as given in Theorem~\ref{theo:G_IM:resolvent of X}, we obtain
  \begin{align*}
   & p_1 \, U^\bullet_\a f(0) - \sum_{e \in \cE} p^e_2 \, U^\bullet_\a f_e'(0) + \frac{p_3}{2} \, U^\bullet_\a f''(0) - \int \big( U^\bullet_\a f(g) - U^\bullet_\a f(0) \big) \, p_4(dg)  \\
   & \quad = \Big( p_1 + \sqrt{2 \a} \, p_2 + \a \, p_3 + \int \big( 1 - e^{-\sqrt{2\a} x} \big) \, p^\Sigma_4(dx) \Big) \cdot U^\bullet_\a f(0)  \\
   & \quad ~~~ - \Big( 2 \, \sum_{e \in \cE} p^e_2 \int_0^\infty e^{-\sqrt{2\a} x} \, f(e,x) \, dx + \frac{p_3}{2} \, 2 \, f(0) +  \int U^{W,D}_\a f(g) \, p_4(dg) \Big)  \\
   & \quad = 0. \qedhere
  \end{align*}
\end{proof}

\begin{appendix}

\section{Preliminaries on Brownian Motions}\label{app:Preliminaries}
  
As the fundamental process of this article, we define the Brownian motion on~$\R$ in the setting of Markov processes:
  
\begin{definition} \label{def:B_BM:def}
 A continuous, strong Markov process
  \begin{align*}
   B = \big( \O, \sG, (\sG_t)_{t \geq 0}, (B_t)_{t \geq 0}, (\T_t)_{t \geq 0}, (\PV_x)_{x \in \R} \big)
  \end{align*}
 on $\R$ with transition semigroup 
  \begin{align*}
   T^B_t f(x) = \int_{\R} f(y) \, \frac{1}{\sqrt{2\pi t}} \, e^{-\frac{(y-x)^2}{2t}} \, dt, \quad x \in \R, ~ t \geq 0, ~ f \in b\sB(\R),
  \end{align*}
 is called \textdef{(standard) Brownian motion on $\R$}.
\end{definition}

The resolvent of the Brownian motion is well known 
(see, e.g., \cite[Section 2.16]{Dynkin65} and \cite[Exercise III.3.13, Example III.6.9]{RogersWilliams1}),
it is given by
  \begin{equation} \label{eq:B_BM:generator}
  \begin{aligned}
    \forall \a > 0, f \in b\sB(\R), x \in \R: \quad U^B_\a f(x)
    & = \int_{\R} \frac{1}{\sqrt{2 \a}} \, e^{-\sqrt{2 \a} \, \abs{y-x}} \, f(y) \, dy \\
  \end{aligned}  
  \end{equation}
An easy analysis shows the following:
\begin{lemma} \label{lem:B_BM:resolvent preserves C}
  The resolvent of $B$ admits $U^B b\cC(\R) \subseteq b\cC(\R)$ and $U^B \cC_0(\R) \subseteq \cC^2_0(\R)$.
\end{lemma}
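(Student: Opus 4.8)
The plan is to work entirely from the explicit kernel representation
\begin{align*}
 U^B_\a f(x) = \int_\R \frac{1}{\sqrt{2\a}}\, e^{-\sqrt{2\a}\,\abs{y-x}}\, f(y)\, dy, \quad x \in \R, ~ \a > 0,
\end{align*}
recorded in~\eqref{eq:B_BM:generator}. First I would substitute $z = y-x$ to rewrite this as $U^B_\a f(x) = \int_\R \frac{1}{\sqrt{2\a}}\, e^{-\sqrt{2\a}\abs{z}}\, f(x+z)\, dz$. Since the kernel has total mass $\int_\R \frac{1}{\sqrt{2\a}} e^{-\sqrt{2\a}\abs{z}}\, dz = \tfrac1\a$, this immediately gives $\norm{U^B_\a f}_\infty \le \tfrac1\a \norm{f}_\infty$ for $f \in b\sB(\R)$, and for $f \in b\cC(\R)$ it makes continuity of $x \mapsto U^B_\a f(x)$ a one-line application of dominated convergence with dominating function $z \mapsto \frac{1}{\sqrt{2\a}} e^{-\sqrt{2\a}\abs{z}}\norm{f}_\infty$. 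This settles $U^B b\cC(\R) \subseteq b\cC(\R)$.

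Next, for $f \in \cC_0(\R)$, the previous step already yields $U^B_\a f \in b\cC(\R)$, and I would obtain the decay at infinity by the usual split: given $\e > 0$, choose $R$ with $\norm{f}_\infty \int_{\abs{z} > R} \frac{1}{\sqrt{2\a}} e^{-\sqrt{2\a}\abs{z}}\, dz < \e/2$, then $M$ with $\abs{f(w)} < \a\e/2$ for $\abs{w} > M - R$; for $\abs{x} > M$ the near part $\abs{z} \le R$ contributes at most $\e/2$ and the far part at most $\e/2$, so $U^B_\a f \in \cC_0(\R)$. For the differentiability, the key move is to split the kernel at $y = x$ and write
\begin{align*}
 U^B_\a f(x) = \frac{1}{\sqrt{2\a}}\, e^{-\sqrt{2\a} x}\int_{-\infty}^x e^{\sqrt{2\a} y} f(y)\, dy + \frac{1}{\sqrt{2\a}}\, e^{\sqrt{2\a} x}\int_x^\infty e^{-\sqrt{2\a} y} f(y)\, dy,
\end{align*}
each summand being a product of a smooth exponential and a $C^1$ primitive of a continuous bounded function. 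Differentiating, the two boundary terms coming from the variable limits cancel exactly, leaving
\begin{align*}
 (U^B_\a f)'(x) = \int_\R \sgn(y-x)\, e^{-\sqrt{2\a}\,\abs{y-x}}\, f(y)\, dy,
\end{align*}
which is bounded by $\tfrac{2}{\sqrt{2\a}}\norm{f}_\infty$, continuous, and vanishing at infinity by the same tail estimate as above. Differentiating once more in the same fashion, the boundary terms now produce $-2f(x)$ while the surviving integrals reassemble (up to the factor $\sqrt{2\a}\cdot\sqrt{2\a}$) into $U^B_\a f$, giving
\begin{align*}
 (U^B_\a f)''(x) = 2\a\, U^B_\a f(x) - 2f(x),
\end{align*}
i.e.\ the resolvent identity $(\a - \tfrac12\D)U^B_\a f = f$. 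Since $U^B_\a f$ and $f$ both lie in $\cC_0(\R)$, so does $(U^B_\a f)''$; combined with $U^B_\a f, (U^B_\a f)' \in \cC_0(\R)$ this gives $U^B_\a f \in \cC^2_0(\R)$, as claimed.

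There is no real obstacle in this argument — it is elementary real analysis once the explicit kernel is at hand. The only two points deserving a careful line are the justification of the term-by-term differentiation of the split integral (fundamental theorem of calculus, $f$ continuous and bounded) and the exact cancellation of the boundary terms in the first derivative; everything else is bookkeeping. If one prefers, the differentiability claim can alternatively be read off from the resolvent equation $\a U^B_\a f - \tfrac12 (U^B_\a f)'' = f$ together with the already-established fact that $U^B_\a f \in \cC_0(\R)$, but the direct computation above also hands us the precise formulas for the derivatives, which will be convenient later.
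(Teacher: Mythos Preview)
The paper does not actually give a proof of this lemma; it simply prefaces the statement with ``An easy analysis shows the following'' and moves on. Your proposal correctly supplies that easy analysis, working directly from the explicit kernel~\eqref{eq:B_BM:generator}: the boundedness and continuity arguments via dominated convergence, the tail-splitting for $\cC_0$, and the two-piece decomposition at $y=x$ leading to the identity $(U^B_\a f)'' = 2\a\, U^B_\a f - 2f$ are all standard and sound. This is exactly the computation the paper is implicitly deferring to, so there is nothing to compare.
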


\subsection{Brownian Motions on the Half Line}
In the introduction, we already listed some of the Brownian motions on $\R_+$. 
Two of them are going to be useful auxiliary processes for us, so we take a closer look at them:

\begin{example} \label{ex:B_HL:reflecting BB on half line}
 Mapping the Brownian motion $B$ on $\R$ to $\R_+$ by the absolute-value norm~$\abs{\,\cdot\,}$ results in the \textdef{reflecting Brownian motion} $(\abs{B_t}, t \geq 0)$ on $\R_+$,
 which satisfies for all
 $t \geq 0$, $x \in \R_+$, $A \in \sB(\R_+)$:
  \begin{align*}
    \PV_x ( \abs{B_t} \in A ) 
    & = \PV_x ( B_t \in A ) + \PV_x ( B_t \in -A ) \\
    & = \PV_x ( B_t \in A ) + \PV_{-x} ( B_t \in A ).
  \end{align*}
 In particular, the resolvent of $|B|$ reads at the origin:
  \begin{align*}
   \forall \a > 0, f \in b\sB(\R_+): \quad
   U^{\abs{B}}_\a f(0)
   & = 2 \int_0^\infty \frac{1}{\sqrt{2\a}} \, e^{-\sqrt{2\a}y} \, f(y) \, dy.
  \end{align*}
\end{example}

\begin{example} \label{ex:B_HL:dirichlet BB on half line}
 Let ${(\abs{B_t}, t \geq 0)}$ be the reflecting Brownian motion on $\R_+$ with its first hitting time $\h_0$ of the origin,
 and consider the process resulting from killing~$\abs{B}$ at~$\h_0$, resulting in the \textdef{killed Brownian motion} on $\R_+$:
  \begin{align*}
    B^{[0,\infty)}_t :=
      \begin{cases}
        \abs{B_t}, & t < \h_0, \\
        \D, & t \geq \h_0.
      \end{cases}
  \end{align*}
  
 This process is not normal at $0$ (as $\PV_0(B^{[0,\infty)}_t = \D) = 1$).
 However, it is a right process on $\R_{> 0} = (0, \infty)$ by \cite[Corollary~12.24]{Sharpe88}.

Using the convention $f(\D) = 0$ for all functions $f$, the resolvent of $B^{[0,\infty)}$ can be computed
 with the help of Dynkin's formula \eqref{eq:Dynkins formula (resolvent)}. The decomposition of the one-dimensional Brownian motion $B$
 at the stopping time $\h_0$ gives for $x \geq 0$
 \begin{equation} \label{eq:B_HL:dirichlet resolvent}
 \begin{aligned} 
  U^{[0,\infty)}_\a f(x)
  & = \EV_x \Big( \int_0^{\h_0} e^{-\a t} f(\abs{B_t}) \, dt \Big) \\
  & = U^B_\a f(x) - \EV_x \big( e^{-\a \h_0} \big) \, U^B_\a f(0),
 \end{aligned}  
 \end{equation}
 where we interpret the function $f$ inserted into $U^B_\a f$ as an arbitrary continuation of~$f \in b\sB( [0,\infty) )$ to $b\sB(\R)$. 
 With the passage time formula (cf.~\cite[Section~1.7]{ItoMcKean74})
  \begin{align}\label{eq:BM passage time}
   \EV_x \big( e^{-\a \h_0} \big) = e^{-\sqrt{2 \a} x}
  \end{align}
 and Lemma~\ref{lem:B_BM:resolvent preserves C}, we get 
  \begin{align*}
   U^{[0,\infty)} b\cC(\R_+) \subseteq b\cC(\R_+) \quad \text{and} \quad U^{[0,\infty)} \cC_0(\R_+) \subseteq \cC^2_0(\R_+).
  \end{align*}
  
 More results on killed Brownian motions can be found in~\cite[Chapter~2]{ChungZhao95}).
\end{example}

These examples depict the easiest boundary behaviors at the origin, a complete examination can be found in~\cite{ItoMcKean63}. 
Closed forms for the resolvents and semigroups are known for any possible local boundary condition in the half-line case, 
see \cite[Section~4]{KPS10} and \cite[Section~9.1]{Taira14}.

\subsection{Walsh Processes and Walsh Brownian Motions on the Star Graph}\label{app:Walsh}

The first non-trivial examples of Brownian motions on star graphs are processes  which only feature 
``skew'' effects at the origin in the following sense: 
Take the excursions from the origin of a reflecting Brownian motion, and for each excursion choose an edge independently with respect to
a distribution $\mu := \sum_{e \in \cE} p^e_2 \, \e_e$ on the edges $\cE$ (see figure~\ref{fig:G_WP:Walsh process}).
Such processes are a extension of ``skew Brownian motions'' on $\R$, constructed by It\^{o} and McKean in~\cite[Section~17]{ItoMcKean63}, to the graph setting,
and restrict Walsh Brownian motions~\cite{Walsh78} on~$\R^2$ from a general angular distribution~$\mu$ on~$[0, 2\pi)$ to the edge space~$\cE$.
As we will only consider processes on graphs, there will be no confusion when we use the term ``Walsh Brownian motions'' for the restriction
of the ``general Walsh processes'' on~$\R^2$ to the star graph case.

\begin{figure}[tb]
   \includegraphics[width=0.9\textwidth,keepaspectratio]{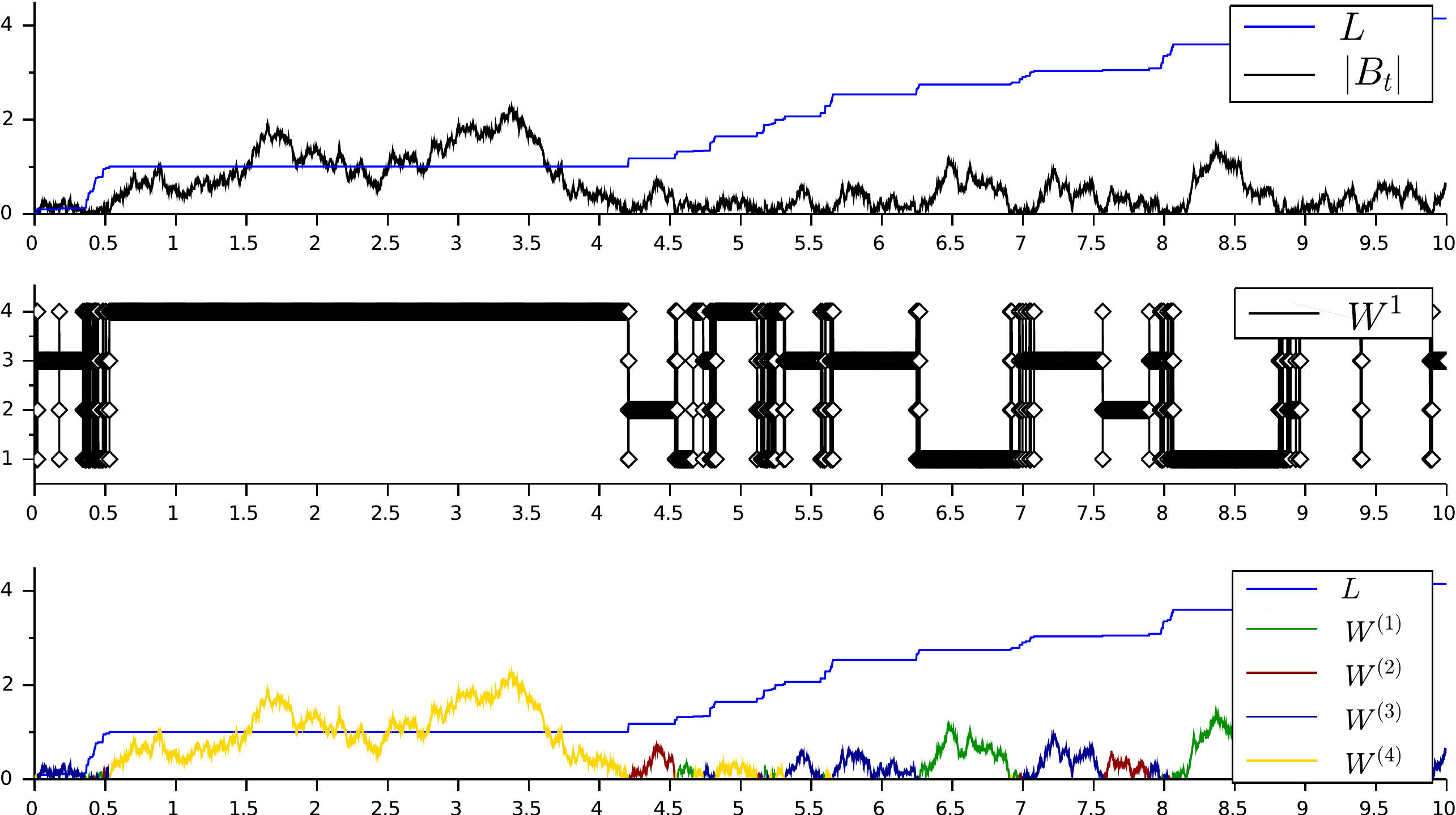}
   \caption[Construction of a Walsh Brownian motion on a star graph]
           {Construction of a Walsh Brownian motion on a star graph
            by choosing for each excursion of $\abs{B}$ an edge independently with respect to some distribution $\mu$, resulting in the edge process~$W^1$.
            Then $\big((W^1_t, \abs{B_t}), t \geq 0 \big)$ is a Walsh Brownian motion with local time $L$ at the star vertex.
            The parts $W^{(i)}$ in the above graph indicate on which of the edges $e_i \in \cE$, $i \in \{1,2,3,4\}$,
            the Walsh Brownian motion is currently running.} \label{fig:G_WP:Walsh process}
\end{figure}

Let $\cG = \{0\} \cup \bigcup_{e \in \cE} \big( \{e\} \times (0, \infty) \big)$ be a star graph with vertex $0 \equiv \{ (e,0), e \in \cE \}$.
As the Walsh Brownian motion is only defined illustratively or with the help of excursion theory in most of the older works, we follow
\cite[Definition~2.1]{FitzsimmonsKuterWalsh} for a rigorous context:

\begin{definition} \label{def:B_WP:Walsh BM}
 A strong Markov process $W = (W^1, W^2)$ on $\cG$ is a \textdef{Walsh Brownian motion} (or \textdef{Walsh process}) on $\cG$ with \textdef{weights} $(p^e_2, e \in \cE)$,
 if $p^e_2 \geq 0$ for all $e \in \cE$ and $\sum_{e \in \cE} p^e_2 = 1$, and with  $\mu := \sum_{e \in \cE} p^e_2 \, \e_e$, the process $W$ satisfies:
  \begin{enumerate}
   \item $W^2$ is a reflecting Brownian motion on $\R_+$;
   \item if $W_0 = 0$, then for $t > 0$, the distribution of $W^1_t$ is given by $\mu$;
   \item if $W_0 = (e,x)$ with $x > 0$, then $W^1_t = e$ holds for all $t < \h_0$, and on $t > \h_0$, the distribution of $W^1_t$ is equal to $\mu$
         and independent of $(W^2_t, t \geq 0)$.
  \end{enumerate}
\end{definition}

\cite{BarlowPitmanYor89} contains a list of various existence proofs, such as an approach via the implied semigroup.
Others may proceed via the application of It\^{o} excursion theory \cite{Ito72},
generalized from the skew Brownian motion \cite[Example 5.7]{Salisbury86} to the star graph case. 
\cite{Lejay06} gives a comprehensive survey on construction methods for skew Brownian motions. 
Details on the construction in the context of star graphs can also be found in \cite[Section 2]{FitzsimmonsKuterWalsh}.

For a Walsh process $W = (W^1, W^2)$, we will denote the ``radial process'' by
 \begin{align*}
  \abs{W_t} := W^2_t, \quad t \geq 0.
 \end{align*}

The semigroup of the Walsh Brownian motion can be obtained using its strong Markov property at the first hitting time of the vertex.
The process then decomposes into a one-dimensional Brownian motion on the starting edge killed on hitting the origin,
followed by a reflecting Brownian motion on the edges chosen by the weight distribution $\mu = \sum_{e \in \cE} p^e_2 \, \e_e$. 
The closed form of the semigroup is given in \cite[Equations (2.1)--(2.2)]{Walsh78} in a more general context. By inserting a discrete distribution~$\mu$ on~$\cE$,
we get:

\begin{lemma} \label{lem:G_WP:semigroup}
 The semigroup $(T^W_t, t \geq 0)$ of the Walsh process reads
 for all $f \in b\sB(\cG)$, $t \geq 0$, $(l,x) \in \cG$:
  \begin{align*}
    T^W_t f(l,x)
    & = \sum_{e \in \cE} \, p^e_2 \Big( T^{\abs{B}}_t f(e, \,\cdot\,) 
         +  T^{[0,\infty)}_t \big( f(l,\,\cdot\,) - f(e, \,\cdot\,) \big) \Big) (x),
  \end{align*}
 with $\big( T^{\abs{B}}_t, t \geq 0 \big)$, $\big( T^{[0,\infty)}_t, t \geq 0 \big)$ being the semigroups of the reflecting Brownian motion,
 the standard Brownian motion killed when hitting the origin respectively,  
 as introduced in Examples~\ref{ex:B_HL:reflecting BB on half line} and~\ref{ex:B_HL:dirichlet BB on half line}.
\end{lemma}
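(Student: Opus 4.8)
The plan is to compute $T^W_t f(l,x) = \EV_{(l,x)}\big(f(W_t)\big)$ directly from the defining properties (i)--(iii) of Definition~\ref{def:B_WP:Walsh BM}, by conditioning on whether the first hitting time $\h_0 = \inf\{s\ge 0 : W^2_s = 0\}$ of the vertex has occurred by time $t$. Fix $f\in b\sB(\cG)$, $t\ge 0$ and an interior base point $(l,x)\in\cG$ with $x>0$; the vertex case $x=0$ is handled at the end.

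First I would dispose of the event $\{t<\h_0\}$: on it, property (iii) forces $W^1_t=l$ and property (i) identifies $W^2$ with a reflecting Brownian motion $\abs{B}$ started at $x$, whose hitting time of $0$ has the law of $\h_0$ under $\PV_{(l,x)}$. Hence, recalling the convention $f(\D)=0$, one gets $\EV_{(l,x)}\big(f(W_t)\1_{\{t<\h_0\}}\big) = \EV_x\big(f(l,\abs{B_t})\1_{\{t<\h_0\}}\big) = T^{[0,\infty)}_t f(l,\cdot)(x)$. On the complementary event, property (iii) says that for $t>\h_0$ the mark $W^1_t$ is $\mu$-distributed, $\mu=\sum_{e\in\cE} p^e_2\,\e_e$, and independent of the whole radial path $(W^2_s,s\ge 0)$ --- in particular of $W^2_t$ and of the $\sigma(W^2)$-measurable event $\{t\ge\h_0\}$ (the sliver $\{t=\h_0\}$ is immaterial, since there $W_t=0$ and $\abs{B_t}=0$). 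Therefore $\EV_{(l,x)}\big(f(W_t)\1_{\{t\ge\h_0\}}\big) = \sum_{e\in\cE} p^e_2\,\EV_x\big(f(e,\abs{B_t})\1_{\{t\ge\h_0\}}\big)$, and the elementary identity $T^{\abs{B}}_t g(x)-T^{[0,\infty)}_t g(x)=\EV_x\big(g(\abs{B_t})\1_{\{t\ge\h_0\}}\big)$, applied with $g=f(e,\cdot)$, rewrites this as $\sum_{e\in\cE} p^e_2\big(T^{\abs{B}}_t-T^{[0,\infty)}_t\big)f(e,\cdot)(x)$. Adding the two contributions and using $\sum_{e\in\cE} p^e_2=1$ to fold the lone term $T^{[0,\infty)}_t f(l,\cdot)(x)$ into the sum gives precisely the asserted formula.

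For the base point $x=0$ the same bookkeeping works with property (ii) replacing (iii): here $\h_0=0$, so every $T^{[0,\infty)}_t(\cdot)(0)$ term vanishes and one is left with $T^W_t f(0)=\sum_{e\in\cE} p^e_2\,T^{\abs{B}}_t f(e,\cdot)(0)$, which is the formula evaluated at $0$ (it is consistent at $t=0$ because $f(e,0)=f(0)$ and the weights sum to $1$). As an alternative to this self-contained computation, the statement can simply be read off by specializing the closed form of the Walsh semigroup in \cite[Equations~(2.1)--(2.2)]{Walsh78} to the discrete angular distribution $\mu$ on $\cE$.

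I expect the only real subtlety to be the treatment of $\{t\ge\h_0\}$: one must be sure that property (iii) genuinely permits replacing $W^1_t$ inside the expectation by an independent $\mu$-distributed mark, i.e.\ that the post-vertex angular choice is independent not merely of $W^2_t$ but of the timing information in $\{t\ge\h_0\}$; and, for $x=0$, that the analogous independence of the angular and radial parts holds from time $0$ on, which one derives from the definition via the Markov property at a small time $s\downarrow 0$. Everything else is routine algebra with the reflecting- and killed-Brownian-motion resolvents recorded in Examples~\ref{ex:B_HL:reflecting BB on half line} and~\ref{ex:B_HL:dirichlet BB on half line}.
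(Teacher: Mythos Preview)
Your argument is correct and matches the paper's approach: the paper does not give a detailed proof of this lemma but merely remarks that the semigroup is obtained by decomposing at the first hitting time of the vertex and then cites \cite[Equations~(2.1)--(2.2)]{Walsh78} for the closed form, which is exactly the self-contained computation you carry out (and the alternative you mention). Your treatment of the subtlety on $\{t\ge\h_0\}$ and at $x=0$ is appropriate and nothing further is needed.
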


In particular, we have $T^W_t f(0) = \sum_{e \in \cE} \, p^e_2 \, T^{\abs{B}}_t f(e, \,\cdot\,) (0)$, so 
the resolvent of the Walsh process at the star vertex $0$ is obtained with the help of Example~\ref{ex:B_HL:reflecting BB on half line}:
\begin{equation} \label{eq:G_WP:resolvent of Walsh BB}
\begin{aligned} 
 U^W_\a f(0)
 & = \sum_{e \in \cE} \, p^e_2 \, \frac{2}{\sqrt{2\a}} \int_0^\infty e^{-\sqrt{2\a} x} f(e,x) \, dx.
\end{aligned}
\end{equation}

As the semigroups of reflected and killed Brownian motion are Feller semigroups, 
the Feller property of the Walsh Brownian motion is follows (cf.~\cite[Theorem~2.1]{BarlowPitmanYor89}). 
By the closed form \eqref{eq:G_WP:resolvent of Walsh BB} of the resolvent, the following is immediate:

\begin{theorem}
 Every Walsh process on a star graph is a Feller process.
 Its generator reads $A = \frac{1}{2} \D$, with domain
 \begin{align*}
  \sD(A) = \big\{ f \in \cC_0^2(\cG): \sum_{e \in \cE} p^e_2 f_e'(0) = 0 \big\}.
 \end{align*}
\end{theorem}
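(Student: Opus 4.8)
The plan is to read off both assertions from the resolvent, reducing everything to the one-dimensional Brownian motions of Examples \ref{ex:B_HL:reflecting BB on half line} and \ref{ex:B_HL:dirichlet BB on half line} via the semigroup formula of Lemma \ref{lem:G_WP:semigroup}. First I would record that a Walsh process $W=(W^1,W^2)$ on $\cG$ is a Brownian motion on $\cG$ in the sense of Definition \ref{def:G_BM:BM}: by Definition \ref{def:B_WP:Walsh BM}(iii), started at $(e,x)$ with $x>0$ the process keeps $W^1=e$ with continuous radial part $W^2$ until the first hit of $0$, so the random time $H_W$ of leaving $e^0$ coincides with the first hitting time of $0$ by the reflecting Brownian motion $W^2$, hence is a stopping time, and the stopped process agrees in law with the one-dimensional Brownian motion stopped on leaving $(0,\infty)$. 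Consequently, once the generator has been identified, Lemma \ref{lem:G_BM:totality on star graph} and Feller's Theorem \ref{theo:G_BM:Fellers theorem on star graph} will be available, with data $p_1=p_3=0$, $p_4=0$ and reflection weights $p^e_2$ summing to $1>0$.

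Next I would integrate the semigroup identity of Lemma \ref{lem:G_WP:semigroup} in time. Writing $U^{\abs B}_\a$, $U^{[0,\infty)}_\a$ for the resolvents of the reflecting and the killed Brownian motion on $\R_+$, and using $\sum_e p^e_2=1$, this gives for $f\in b\cC(\cG)$, $(l,x)\in\cG$,
\[
 U^W_\a f(l,x) = U^{[0,\infty)}_\a f(l,\cdot)(x) + \sum_{e\in\cE} p^e_2\,\bigl( U^{\abs B}_\a f(e,\cdot)(x) - U^{[0,\infty)}_\a f(e,\cdot)(x)\bigr).
\]
Decomposing $U^{\abs B}_\a$ at the first hitting time of $0$ yields $U^{\abs B}_\a g(x)-U^{[0,\infty)}_\a g(x)=e^{-\sqrt{2\a}x}\,U^{\abs B}_\a g(0)$, so with \eqref{eq:G_WP:resolvent of Walsh BB} and Example \ref{ex:B_HL:reflecting BB on half line} the formula collapses to
\[
 U^W_\a f(l,x) = U^{[0,\infty)}_\a f(l,\cdot)(x) + e^{-\sqrt{2\a}x}\,U^W_\a f(0),
\]
the trivial-weights special case of Theorem \ref{theo:G_IM:resolvent of X}. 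Since $U^{[0,\infty)}_\a$ maps $\cC_0(\R_+)$ into $\cC^2_0(\R_+)$ with $U^{[0,\infty)}_\a g(0)=0$ (Example \ref{ex:B_HL:dirichlet BB on half line}) and $x\mapsto e^{-\sqrt{2\a}x}$ is smooth and vanishes at infinity, this shows $U^W_\a(\cC_0(\cG))\subseteq\cC_0(\cG)$, i.e.\ the resolvent condition \eqref{eq:Feller (resolvent)}, so $W$ is Feller (alternatively one invokes that the component semigroups are Feller, as in Lemma \ref{lem:G_WP:semigroup}); moreover every potential $u:=U^W_\a f$ lies in $\cC^2_0(\cG)$, because on each edge it solves $\tfrac12 u''=\a u-f(e,\cdot)$, and the right-hand side extends continuously to $\cG$.

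It then remains to verify the boundary condition $\sum_{e\in\cE}p^e_2\,u_e'(0)=0$ for $u=U^W_\a f$, after which $\sD(A)=U^W_\a(\cC_0(\cG))\subseteq\sD$, the generator is identified with $\tfrac12\D$ on these potentials, and Lemma \ref{lem:G_BM:totality on star graph} upgrades the inclusion to equality. Differentiating the displayed resolvent formula along $e$ gives $u_e'(0+)=\bigl(U^{[0,\infty)}_\a f(e,\cdot)\bigr)'(0+)-\sqrt{2\a}\,U^W_\a f(0)$, and from \eqref{eq:B_HL:dirichlet resolvent} together with the explicit kernel \eqref{eq:B_BM:generator} one computes $\bigl(U^{[0,\infty)}_\a g\bigr)'(0+)=2\int_0^\infty e^{-\sqrt{2\a}x}g(x)\,dx$; summing against the weights and using $\sum_e p^e_2=1$ with \eqref{eq:G_WP:resolvent of Walsh BB} makes the two contributions cancel. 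The only mildly delicate point is this last derivative computation: one must note that the one-sided derivative of $U^{[0,\infty)}_\a g$ at $0$ does not depend on the chosen extension of $g$ to $\R$ (since $U^{[0,\infty)}_\a g$ depends only on $g|_{(0,\infty)}$), and then the even extension, for which $(U^B_\a g)'(0)=0$ and $U^B_\a g(0)=\tfrac{2}{\sqrt{2\a}}\int_0^\infty e^{-\sqrt{2\a}x}g(x)\,dx$, makes the calculation transparent.
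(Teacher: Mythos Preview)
Your proposal is correct and follows essentially the same route as the paper, which in fact gives almost no proof at all: the paper deduces the Feller property from Lemma~\ref{lem:G_WP:semigroup} by noting that the semigroups of the reflected and killed Brownian motion are Feller (citing \cite[Theorem~2.1]{BarlowPitmanYor89}), and then declares the generator identification ``immediate'' from the closed resolvent form~\eqref{eq:G_WP:resolvent of Walsh BB}. Your argument is precisely the spelled-out version of this sketch: you integrate the semigroup identity to obtain the resolvent decomposition $U^W_\a f(l,x)=U^{[0,\infty)}_\a f(l,\cdot)(x)+e^{-\sqrt{2\a}x}U^W_\a f(0)$, read off the Feller property from~\eqref{eq:Feller (resolvent)}, differentiate to verify the boundary condition on the range of $U^W_\a$, and then close the domain via Lemma~\ref{lem:G_BM:totality on star graph}. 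The only additions on your side are the explicit check that $W$ is a Brownian motion on~$\cG$ (so that Lemma~\ref{lem:G_BM:totality on star graph} applies) and the care taken with the one-sided derivative of $U^{[0,\infty)}_\a$ at~$0$; both are appropriate and do not depart from the paper's intended line. The parenthetical reference to Theorem~\ref{theo:G_IM:resolvent of X} is harmless, since you derive the resolvent formula independently and only note the coincidence.
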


We will always work with a continuous version of the Walsh Brownian motion, which exists 
by~\cite[Lemmas~2.2, 2.3, Theorem~2.4]{BarlowPitmanYor89}:

\begin{theorem} \label{theo:G_WP:WP path properties}
 There exists a version $(W_t, t \geq 0)$ of the Walsh Brownian motion on the star graph~$\cG$ which
 is continuous, and for which $(\abs{W_t}, t \geq 0)$ is a reflecting Brownian motion on $\R_+$.
\end{theorem}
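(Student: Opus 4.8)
The plan is to construct the continuous realization by hand from a reflecting Brownian motion and an independent family of edge labels, and then to identify its law with that of the Walsh process through the semigroup formula of Lemma~\ref{lem:G_WP:semigroup}. First I would fix, on a suitable probability space, a continuous reflecting Brownian motion $\big(\abs{B_t}, t \geq 0\big)$ on $\R_+$ together with its local time $(L_t, t \geq 0)$ at $0$, and invoke It\^o excursion theory: the zero set $\{t \geq 0 : \abs{B_t} = 0\}$ is a.s.\ closed and Lebesgue-null, and its complement is a countable union of maximal open ``excursion intervals'' $(g_i, d_i)$ on which $\abs{B} > 0$; on the local-time scale the corresponding excursion paths form a Poisson point process with the It\^o excursion measure as intensity. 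Independently of $\abs{B}$, I would attach to each excursion interval a random edge, the labels being i.i.d.\ with distribution $\mu = \sum_{e \in \cE} p^e_2 \, \e_e$, and, when the process is started at $(l,x)$ with $x > 0$, assign to the initial incomplete piece $[0, \h_0)$ the edge $l$. Setting $W^2 := \abs{B}$ and letting $W^1_t$ be the label of the excursion interval straddling $t$ whenever $\abs{B_t} > 0$, and $W^1_t$ an arbitrary fixed default whenever $\abs{B_t} = 0$, I obtain a process $W = (W^1, W^2)$ with values in $\cG$.

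Continuity of $W$ I would then verify directly. At any time $t_0$ with $\abs{W_{t_0}} = 0$ continuity is automatic, because in the graph topology $(e_n, x_n) \to 0$ as soon as $x_n \to 0$, whatever the edge labels, and $\abs{B}$ is continuous; at a time $t_0$ with $\abs{W_{t_0}} > 0$, the whole excursion interval $(g_i, d_i) \ni t_0$ carries one and the same label, so $W^1$ is constant on a neighbourhood of $t_0$ while $W^2 = \abs{B}$ is continuous. Hence $W$ is continuous on all of $[0, \infty)$ a.s. To identify the law I would check the three conditions of Definition~\ref{def:B_WP:Walsh BM}: (i) holds by construction; (iii) is immediate on $[0, \h_0)$, and on $t > \h_0$ it follows from the independence of the labels from $\abs{B}$ together with the strong Markov property of $\abs{B}$ at $\h_0$; and (ii) is the case $x = 0$. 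The Markov and strong Markov properties of $W$ then follow from those of $\abs{B}$ and the fact that, at any stopping time $\tau$, the label of the excursion straddling $\tau$ (if any) is already determined, while the labels of all excursions begun after $\tau$ are fresh i.i.d.\ copies independent of the past. Computing $\EV_{(l,x)}\big(f(W_t)\big)$ by splitting at $\h_0$ — a killed one-dimensional Brownian motion on edge $l$ before $\h_0$, and, after conditioning on $\abs{B_{\h_0}} = 0$ and using the fresh labels, $\sum_{e \in \cE} p^e_2$ times a reflecting-Brownian contribution on edge $e$ afterwards — reproduces exactly the semigroup of Lemma~\ref{lem:G_WP:semigroup}. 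Since a strong Markov process is determined by its semigroup, $W$ is a version of the Walsh process, and by construction $\big(\abs{W_t}, t \geq 0\big) = \big(\abs{B_t}, t \geq 0\big)$ is a continuous reflecting Brownian motion.

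The main obstacle is the measurable bookkeeping in the excursion step: making precise the assignment of i.i.d.\ labels to the countably many but temporally unordered excursion intervals, checking that the resulting $W^1$ is jointly measurable and adapted to the usual augmentation of the filtration generated by $\abs{B}$ and the labels, and dealing with the initial incomplete excursion and with the possibility that no ``last'' excursion exists. Once these points are settled, the continuity assertion and the semigroup computation are routine; alternatively, one may simply invoke the construction carried out in \cite[Lemmas~2.2, 2.3, Theorem~2.4]{BarlowPitmanYor89}.
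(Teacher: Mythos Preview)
Your proposal is correct and in fact goes further than the paper, which does not give an argument at all but simply cites \cite[Lemmas~2.2, 2.3, Theorem~2.4]{BarlowPitmanYor89} for this result. The excursion-labelling construction you sketch is precisely the one carried out in that reference, and you end by invoking the same citation; so your approach coincides with the paper's, with the added benefit that you have spelled out the idea behind the cited construction rather than treating it as a black box.
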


Therefore, properties which only depend on $\abs{W}$ or on the behavior of $W$ on one edge can be derived from the respective properties
of a Brownian motion on $\R$ or on $\R_+$. For instance, passage time formulas like~\eqref{eq:BM passage time} can be used in appropriate
cases for the Walsh Brownian motion $W$ as well.


As, conditional on $\PV_0$, the edge process $(W^1_t, t \geq 0)$
is independent of the radial process $(W^2_t = \abs{W_t}, t \geq 0)$ (and thus of its local time), the 
following result is a direct consequence of 
the well-known joint distribution of a reflecting Brownian motion and its local time at zero
(cf.~\cite[Proposition~2.8.15]{KaratzasShreve91}):

\begin{lemma} \label{lem:G_WP:joint dist W LT}
 The joint distribution of $(W, L)$ with respect to $\PV_0$ given by
  \begin{align*}
    \EV^W_0 \big( f(W_t, L_t) \big)
    & = \sum_{e \in \cE} p^e_2 \int_0^\infty \int_0^\infty f \big( (e,x),y \big) \, \frac{2(x+y)}{\sqrt{2\pi t^3}} \, e^{-\frac{(x+y)^2}{2t}} \, dx \, dy,
  \end{align*}
 for $f \in b\big(\sB(\cG) \otimes \sB(\R_+)\big)$, $t > 0$.
\end{lemma}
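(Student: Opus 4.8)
The plan is to reduce the joint law of $(W_t, L_t)$ under $\PV_0$ to the classical joint law of a reflecting Brownian motion and its local time at the origin, exploiting the conditional independence of the edge and radial coordinates of the Walsh process that is built into Definition~\ref{def:B_WP:Walsh BM}.

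First I would fix $t > 0$ and $f \in b\big( \sB(\cG) \otimes \sB(\R_+) \big)$. Since $\abs{W_t}$ has the law of $\abs{B_t}$, which is absolutely continuous, the event $\{W_t = 0\}$ is $\PV_0$-null, so on a set of full $\PV_0$-measure we may identify $W_t$ with the pair $\big( W^1_t, \abs{W_t} \big)$ with $\abs{W_t} > 0$. By Definition~\ref{def:B_WP:Walsh BM}\,(i)--(ii) and Theorem~\ref{theo:G_WP:WP path properties}, under $\PV_0$ the radial process $\big( \abs{W_t}, t \ge 0 \big)$ is a reflecting Brownian motion on $\R_+$ started at $0$, $L$ is its local time at $0$, and the $\cE$-valued variable $W^1_t$ has law $\mu = \sum_{e \in \cE} p^e_2 \, \e_e$ and is independent of $\sigma\big( \abs{W_s}, s \ge 0 \big)$, hence of the pair $\big( \abs{W_t}, L_t \big)$. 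Conditioning on the radial $\sigma$-algebra --- and, via a monotone class argument, reducing to product functions $f\big( (l,x), y \big) = g(l)\, h(x,y)$ --- therefore gives
\begin{align*}
 \EV^W_0\big( f(W_t, L_t) \big) = \sum_{e \in \cE} p^e_2 \, \EV^W_0\big( f\big( (e, \abs{W_t}), L_t \big) \big).
\end{align*}

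Next I would invoke the well-known joint density of a reflecting Brownian motion and its local time at the origin, \cite[Proposition~2.8.15]{KaratzasShreve91} --- equivalently, L\'evy's identity $\big( \abs{B_t}, L_t \big) \overset{d}{=} \big( M_t - B_t, M_t \big)$ with $M_t := \max_{s \le t} B_s$, combined with the classical joint law of $(M_t, B_t)$ from the reflection principle: under $\PV_0$,
\begin{align*}
 \PV_0\big( \abs{W_t} \in dx, \, L_t \in dy \big) = \frac{2(x+y)}{\sqrt{2\pi t^3}} \, e^{-\frac{(x+y)^2}{2t}} \, dx \, dy, \qquad x, y > 0.
\end{align*}
Substituting this into the previous display and writing out the double integral over $(0,\infty)^2$ yields exactly the claimed formula.

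I expect the only genuinely delicate point to be checking that the normalization of the local time $L$ fixed in Definition~\ref{def:B_WP:Walsh BM} / Theorem~\ref{theo:G_WP:WP path properties} coincides with the one underlying the cited Karatzas--Shreve result (the occupation-density / semimartingale local time at $0$ of $\abs{W}$); with the standard convention the two agree, but under a different scaling the kernel $\tfrac{2(x+y)}{\sqrt{2\pi t^3}}$ would have to be rescaled accordingly. The conditioning step itself is routine once the independence assertion of Definition~\ref{def:B_WP:Walsh BM}\,(ii) is in hand.
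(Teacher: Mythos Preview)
Your proposal is correct and matches the paper's own justification essentially line for line: the paper also derives the lemma as a direct consequence of the independence (under $\PV_0$) of the edge process $W^1$ from the radial process $\abs{W}$ and its local time, combined with the joint density of $(\abs{B_t}, L_t)$ from \cite[Proposition~2.8.15]{KaratzasShreve91}. Your additional care about the null set $\{W_t = 0\}$, the monotone class reduction, and the local-time normalization is welcome but not raised in the paper, which treats the result as immediate.
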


\begin{example} \label{ex:G_WP:dirichlet Walsh BB}
 Consider the ``Dirichlet Walsh process'' $W^D$, that is the Walsh process~$W$ killed at the first hitting time $\h_0 := \inf \{ t \geq 0: W_t = 0 \}$: 
  \begin{align*}
    W^D_t :=
      \begin{cases}
        W_t, & t < \h_0, \\
        \D,  & t \geq \h_0.
      \end{cases}
  \end{align*}
 As the Walsh process $W$ just behaves like a standard (reflecting) Brownian motion on the starting edge until hitting the star vertex,
 the Dirichlet Walsh process~$W^D$, with fixed starting edge, equals the Dirichlet process $B^{[0, \infty)}$ on the half line. 
 Therefore, 
 we get $\PV_{(e,x)}$-a.s.\ for any $(e,x) \in \cG$:
  \begin{align*}
   \forall t \geq 0: W^D_t = \big(e, B^{[0, \infty)}_t \big).
  \end{align*}
 Thus, the resolvent of $W^D$ reads, for $\a > 0$, $f \in b\sB(\cG)$, $(e,x) \in \cG$,
  \begin{align*}
   U^{W,D}_\a f(e,x) = U^{[0,\infty)}_\a \big(f(e, \,\cdot\,)\big) (x).
  \end{align*}
 Our findings of Example~\ref{ex:B_HL:dirichlet BB on half line} imply that $\big( U^{W,D}_\a, \a > 0 \big)$ preserves $\cC_0(\cG)$.
 Furthermore, they give 
 \begin{align*}
  U^{W,D}_\a f'(e, 0+) & = 2 \int_0^\infty e^{-\sqrt{2\a} x} \, f(e,x) \, dx, \\
  U^{W,D}_\a f''(0) & = - 2 f(0).
 \end{align*}
%
 For later use, we also remark that for all $(e,x) \in \cG$,
  \begin{align*}
   U^{W,D}_\a 1 (e,x) 
   & = \EV^B_x \Big( \int_0^{\h_0} e^{-\a t} \, dt \Big)
     = \frac{1}{\a} \, \EV^B_x \big( 1 - e^{-\a \h_0} \big)
     = \frac{1}{\a} \, \big( 1 - e^{-\sqrt{2\a} x} \big). \qedhere
  \end{align*}
\end{example}

\subsection{On the Local Time of Brownian Motion}
  
An essential tool in the study of the Brownian sample paths
is the Brownian local time, or ``\textit{mesure du voisinage}'', as it was coined when first introduced by L\'evy in~\cite{Levy65}.
Brownian local time is the source of many deep and outstanding results, such as the Ray\textendash{}Knight theorems.
However, we will ``only'' resort to one main result by L\'evy in our work, which we need to extend to initial measures other than $\PV_0$.

Let $(L_t, t \geq 0)$ be the local time of the standard Brownian motion $B$ on $\R$ (see~\cite{Trotter58} or~\cite[Section~3.6]{KaratzasShreve91}),
that is, a perfect continuous additive functional, adapted to the Brownian filtration, such that 
   \begin{align*}
    \forall t \geq 0: \quad L_t = \lim_{\e \downarrow 0} \frac{1}{2\e} \, \lambda \big( \{s \leq t: \abs{B_s} \leq \e \} \big).
   \end{align*}
%
%

 We will mainly use a part of the celebrated characterization by L\'evy, as given in \cite[Theorem~3.6.17]{KaratzasShreve91}:
 
 \begin{theorem} \label{theo:B_LT:levys local time}
  Let $B$ be a Brownian motion with local time $L$ at the origin.
  Then, started at the origin, the process $\tB_t := -\int_0^t \sgn(B_s) \, dB_s$, $t \geq 0$, is a Brownian motion.
  Define its running maximum process $\tM_t := \max_{s \leq t} \tB_s$, $t \geq 0$.
  Then,
   \begin{align*}
     \PV_0 \big( \forall t \geq 0: \abs{B_t} = \tM_t - \tB_t,  L_t = \tM_t \big) = 1.
   \end{align*}
  In particular, for a Brownian motion $B$ with local time $L$ at the origin and running maximum process $M_t := \max_{s \leq t} B_t$, $t \geq 0$,
  the processes ${\big( (M_t - B_t, M_t), t \geq 0 \big)}$ and $\big( ( \abs{B_t}, L_t), t \geq 0 \big)$ have the same law under $\PV_0$.
 \end{theorem}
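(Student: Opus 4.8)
The plan is to derive the whole statement from Tanaka's formula for the semimartingale $\abs{B}$, combined with Lévy's characterization of Brownian motion and the uniqueness part of Skorokhod's reflection lemma; all three ingredients are available in \cite{KaratzasShreve91}. First I would recall Tanaka's formula for the local time at $0$ of the one-dimensional Brownian motion,
\begin{align*}
 L_t = \abs{B_t} - \int_0^t \sgn(B_s) \, dB_s, \quad t \geq 0, \quad \PV_0\text{-a.s.},
\end{align*}
so that with $\tB_t = -\int_0^t \sgn(B_s) \, dB_s$ we obtain $\abs{B_t} = (-\tB_t) + L_t$ for all $t \geq 0$.

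Next I would check that $\tB$ is a standard Brownian motion started at the origin: it is a continuous local martingale (an It\^o integral of the bounded predictable integrand $-\sgn(B_s)$ against $B$) with $\tB_0 = 0$, and its quadratic variation is
\begin{align*}
 \langle \tB \rangle_t = \int_0^t \sgn(B_s)^2 \, ds = \int_0^t \1_{\{B_s \neq 0\}} \, ds = t, \quad \PV_0\text{-a.s.},
\end{align*}
since the occupation time $\int_0^t \1_{\{B_s = 0\}} \, ds$ vanishes a.s.\ (Fubini, using $\PV_0(B_s = 0) = 0$ for $s > 0$). L\'evy's characterization theorem then yields that $\tB$ is a standard Brownian motion.

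Then I would identify $L$ with the running maximum of $\tB$ by applying Skorokhod's reflection lemma to the continuous path $t \mapsto -\tB_t$, which starts at $0$: there is a unique pair $(z,a)$ of continuous processes with $z = -\tB + a$, $z \geq 0$, $a$ nondecreasing with $a_0 = 0$, and $a$ growing only on $\{t : z_t = 0\}$, and this pair satisfies $a_t = \max_{s \leq t} \tB_s$. The pair $(z,a) = (\abs{B}, L)$ fulfills all these requirements --- $\abs{B} \geq 0$ and $\abs{B} = -\tB + L$ by Tanaka, $L$ is a continuous nondecreasing additive functional with $L_0 = 0$, and $L$ increases only when $\abs{B} = 0$ --- so by uniqueness $L_t = \tM_t = \max_{s \leq t} \tB_s$ and hence $\abs{B_t} = \tM_t - \tB_t$ for all $t \geq 0$, $\PV_0$-a.s.\,. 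This is the first assertion, and the ``in particular'' statement follows at once: since $\tB$ is a standard Brownian motion started at $0$, the pair $\big( (\tM_t - \tB_t, \tM_t), t \geq 0 \big)$ has by definition the law of $\big( (M_t - B_t, M_t), t \geq 0 \big)$ for a generic Brownian motion $B$ with running maximum $M$, and we have just shown it equals $\big( (\abs{B_t}, L_t), t \geq 0 \big)$ $\PV_0$-a.s.\,.

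I expect the only genuine subtlety to be the sign bookkeeping --- matching the $\sgn$ convention used in Tanaka's formula and remembering to feed $-\tB$ rather than $\tB$ into Skorokhod's lemma --- together with the routine justification that the one-point occupation time vanishes, which is exactly what gives $\langle \tB \rangle_t = t$.
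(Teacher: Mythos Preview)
Your proof is correct and follows the standard route via Tanaka's formula, L\'evy's characterization of Brownian motion, and the uniqueness in Skorokhod's reflection lemma. The paper, however, does not prove this theorem at all: it is stated in Appendix~\ref{app:Preliminaries} as a classical result and simply attributed to \cite[Theorem~3.6.17]{KaratzasShreve91}. So there is nothing to compare --- your argument is essentially the proof one finds in that reference, and it is complete as written.
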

 
An immediate consequence is that,
because 
 \begin{align*}
  \abs{B_t} - L_t = \tM_t - \tB_t - \tM_t = -\tB_t, \quad t \geq 0, \quad \text{$\PV_0$-a.s.},
 \end{align*}
the process $(\abs{B_t} - L_t, t \geq 0)$ is a Brownian motion under $\PV_0$. We will extend this result
to initial laws other than $\PV_0$.

We start by examining the pseudo-inverses of the local time $(L_t, t \geq 0)$:
 \begin{align*}
  L^{-1}(a)   & = \inf \{ t \geq 0: L_t > a \}, \quad a \geq 0, \\
  L_-^{-1}(a) & = \inf \{ t \geq 0: L_t \geq a \}, \quad a \geq 0.
 \end{align*}
The following basic properties will be very helpful later:
\begin{lemma}  \label{lem:B_LT:inverse local time is stopping time}
 For every $a \in \R_+$, $L^{-1}(a)$ is an $(\sF_{t+}, t \geq 0)$-stopping time and $L_-^{-1}(a)$ is an $(\sF_{t}, t \geq 0)$-stopping time.
\end{lemma}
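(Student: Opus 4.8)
The plan is to read off both pseudo-inverses from the monotonicity, continuity and adaptedness of the additive functional $(L_t, t \geq 0)$, reducing the defining events to countable unions of events of the form $\{ L_q > a \}$ with rational $q$.

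First I would treat $L_-^{-1}$. The key claim is that $\{ L_-^{-1}(a) \leq t \} = \{ L_t \geq a \}$ for every $t \geq 0$. The inclusion ``$\supseteq$'' is immediate from the definition of the infimum. For ``$\subseteq$'', I would use that the set $\{ s \geq 0 : L_s \geq a \}$ is closed, by continuity of $L$, so its infimum $L_-^{-1}(a)$ belongs to it whenever it is finite; hence $L_{L_-^{-1}(a)} \geq a$, and monotonicity of $L$ gives $L_t \geq L_{L_-^{-1}(a)} \geq a$ for $t \geq L_-^{-1}(a)$. Since $L$ is adapted, $\{ L_t \geq a \} \in \sF_t$, so $L_-^{-1}(a)$ is an $(\sF_t, t \geq 0)$-stopping time.

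Next I would treat $L^{-1}$, for which I only aim at the criterion $\{ L^{-1}(a) < t \} \in \sF_t$ for all $t > 0$, which characterizes $(\sF_{t+}, t \geq 0)$-stopping times. Here I would show $\{ L^{-1}(a) < t \} = \bigcup_{q \in \Q \cap [0,t)} \{ L_q > a \}$: if $L^{-1}(a) < t$, then by definition of the infimum there is a real $s < t$ with $L_s > a$, and by continuity of $L$ there is a rational $q \in [0,t)$ (one may take $q$ slightly larger than $s$) with $L_q > a$; the reverse inclusion is clear. Each $\{ L_q > a \}$ lies in $\sF_q \subseteq \sF_t$ for $q < t$, so the union lies in $\sF_t$. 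Finally $\{ L^{-1}(a) \leq t \} = \bigcap_{n \in \N} \{ L^{-1}(a) < t + \tfrac1n \} \in \bigcap_{n \in \N} \sF_{t + 1/n} = \sF_{t+}$, giving the claim.

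The only delicate point — and the reason the statement is asymmetric, with $L_-^{-1}$ adapted to $(\sF_t)$ but $L^{-1}$ only to $(\sF_{t+})$ — is the inclusion ``$\subseteq$'' in $\{ L_-^{-1}(a) \leq t \} = \{ L_t \geq a \}$, which uses that the infimum defining $L_-^{-1}(a)$ is attained; this is exactly where continuity of $L$ (not merely right continuity) is needed. Everything else is routine measurability bookkeeping.
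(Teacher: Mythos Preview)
Your proof is correct and follows essentially the same approach as the paper: both identify $\{L_-^{-1}(a)\leq t\}=\{L_t\geq a\}$ and $\{L^{-1}(a)<t\}\in\sF_t$ via the monotonicity and continuity of $L$. The paper's treatment of $L^{-1}$ is slightly more direct, writing $\{L^{-1}(a)<t\}=\{L_t>a\}$ outright (continuity gives the nontrivial inclusion) rather than passing through the countable union over rationals, but the content is the same.
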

\begin{proof}
 By \ref{itm:G_IM:props pseudoinverse (cont. case):inversion} and \ref{itm:G_IM:props lc pseudoinverse (cont. case):inversion} of Lemma~\ref{lem:G_IM:props pseudoinverse (cont. case)},
 we have for all $t \geq 0$,
  \begin{align*}
    \{ L^{-1}(a) < t \} 
    & = \{ a < L_t \} \in \sF_t, \\
    \{ L_-^{-1}(a) \leq t \} 
    & = \{ a \leq L_t \} \in \sF_t. \qedhere
  \end{align*}
\end{proof}

\begin{lemma} \label{lem:B_LT:shift of inverse local time}
  For $a \in \R_+$
  and any random time $\t \leq L^{-1}(a)$ with $L(\t) = 0$ a.s.,
  \begin{align*}
   L^{-1}(a) \circ \T_\t & = L^{-1}(a) - \t, \\
   L_-^{-1}(a) \circ \T_\t & = L_-^{-1}(a) - \t
  \end{align*}  
  hold a.s.\ true.
\end{lemma}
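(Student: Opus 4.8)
The plan is to reduce both identities to the additive‑functional property of Brownian local time together with the elementary inversion formulas for its pseudo‑inverses. The crucial preliminary step is to record that, since $L$ is a perfect continuous additive functional, the identity $L_{s+u} = L_u + L_s \circ \T_u$ holds simultaneously for all $s,u \geq 0$ outside a fixed null set; evaluating this at $u = \t$ and invoking the hypothesis $L_\t = 0$ then yields
\[
 L_s \circ \T_\t = L_{s+\t} - L_\t = L_{s+\t}, \qquad s \geq 0,
\]
a.s.\ simultaneously in $s$. After this reduction everything takes place on the a.s.\ event on which this holds and on which $L^{-1}(a) < \infty$ (recall $L_\infty = \infty$ a.s.).

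For the first identity I would write $L^{-1}(a) \circ \T_\t = \inf\{s \geq 0 : L_s \circ \T_\t > a\} = \inf\{s \geq 0 : L_{s+\t} > a\}$ and then apply the inversion relation $\{L_u > a\} = \{L^{-1}(a) < u\}$ of Lemma~\ref{lem:G_IM:props pseudoinverse (cont. case)} (part~\ref{itm:G_IM:props pseudoinverse (cont. case):inversion}, already used in the proof of Lemma~\ref{lem:B_LT:inverse local time is stopping time}) with $u = s+\t$. This turns the condition into $s > L^{-1}(a) - \t$, so the infimum equals $\big(L^{-1}(a) - \t\big) \vee 0$, and the standing hypothesis $\t \leq L^{-1}(a)$ removes the positive part. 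The second identity is handled in exactly the same way, using instead the inversion relation $\{L_u \geq a\} = \{L_-^{-1}(a) \leq u\}$ (part~\ref{itm:G_IM:props lc pseudoinverse (cont. case):inversion} of the same lemma), so that $L_-^{-1}(a) \circ \T_\t = \inf\{s \geq 0 : s \geq L_-^{-1}(a) - \t\} = \big(L_-^{-1}(a) - \t\big) \vee 0$; here one notes that $L_\t = 0$ forces $\t \leq L_-^{-1}(a)$ (strictly, when $a > 0$, since then $L_\t = 0 < a$), which again eliminates the positive part.

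I expect the only point of genuine care — rather than an obstacle — to be the passage from the deterministic additive‑functional identity to its evaluation at the random time $\t$, which is precisely where perfectness of $L$ is used ($\t$ is not assumed to be a stopping time). The secondary bit of bookkeeping is turning the two occurrences of $(\,\cdot\,)\vee 0$ into genuine equalities, which is exactly what the hypotheses $\t \leq L^{-1}(a)$ and $L_\t = 0$ are there to provide. No continuity properties of $L$ beyond those already packaged into the cited inversion formulas are needed.
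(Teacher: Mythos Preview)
Your argument is correct and follows essentially the same route as the paper's own proof: both reduce to the additive-functional identity $L_s \circ \T_\t = L_{s+\t} - L_\t = L_{s+\t}$ and then manipulate the infimum defining the pseudo-inverse. The paper does this by a direct index substitution (rewriting $\inf\{u \geq 0 : L_u > a\}$ as $\inf\{u \geq \t : L_u > a\}$ and then shifting), whereas you route the same step through the inversion formulas of Lemma~\ref{lem:G_IM:props pseudoinverse (cont. case)}; these are two ways of saying the same thing. Your extra remarks on perfectness (to justify evaluating the additive-functional identity at a non-stopping random time) and on the case $a>0$ for the left-continuous inverse are more careful than the paper's terse ``completely analogously'', but do not change the substance.
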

\begin{proof}
  Let $\t$ be as above. Then, a.s.,
  \begin{align*}
    L^{-1}(a) - \t
    & = \inf \{ u \geq \t: L(u) > a \} - \t \\
    & = \inf \{ u \geq 0: L(u + \t) - L(\t) > a \} \\
    & = L^{-1}(a) \circ \T_\t,
  \end{align*}
 where we used $L_t \leq a$ for all $t \leq L^{-1}(a)$ for the first identity.
 
 The computation for $L_-^{-1}(a)$ proceeds completely analogously.\sqed
\end{proof}

\begin{lemma} \label{lem:B_LT:lc rc inverse}
 For any $a > 0$, $L_-^{-1}(a) = L^{-1}(a)$ a.s.\ holds true.
\end{lemma}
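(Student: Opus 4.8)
\emph{Proof idea.} The inequality $L_-^{-1}(a)\le L^{-1}(a)$ is immediate: if $L_t\ge a$ then $L_t>b$ for every $b<a$, so $\{t\ge 0: L_t\ge a\}\subseteq\{t\ge 0: L_t>b\}$ and passing to infima gives $L^{-1}(b)\le L_-^{-1}(a)\le L^{-1}(a)$ for all $0<b<a$. It therefore suffices to establish the reverse inequality, and it is enough to do so $\PV_0$-a.s.: for $x\ne 0$ and $a>0$ one has $L\equiv 0$ on $[0,\h_0]$ with $\h_0=\inf\{t\ge 0: B_t=0\}$, hence, $L$ being an additive functional, $L^{-1}(a)=\h_0+L^{-1}(a)\circ\T_{\h_0}$ and $L_-^{-1}(a)=\h_0+L_-^{-1}(a)\circ\T_{\h_0}$, and the claim transfers from $\PV_0$ to $\PV_x$ via the strong Markov property of $B$ at $\h_0$ together with $B_{\h_0}=0$.

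To prove the $\PV_0$-statement I would compare Laplace transforms. From $L^{-1}(b)\le L_-^{-1}(a)\le L^{-1}(a)$ and monotonicity of $x\mapsto e^{-\a x}$ we obtain, for every $\a>0$ and $0<b<a$,
\begin{align*}
 e^{-\a L^{-1}(a)}\ \le\ e^{-\a L_-^{-1}(a)}\ \le\ e^{-\a L^{-1}(b)}.
\end{align*}
Taking $\PV_0$-expectations and inserting the Laplace transform of the inverse local time, $\EV^B_0\big(e^{-\a L^{-1}(c)}\big)=e^{-\sqrt{2\a}\,c}$ (Lemma~\ref{lem:B_LT:laplace of local time inverse}), this becomes
\begin{align*}
 e^{-\sqrt{2\a}\,a}\ \le\ \EV^B_0\big(e^{-\a L_-^{-1}(a)}\big)\ \le\ e^{-\sqrt{2\a}\,b},
\end{align*}
and letting $b\uparrow a$ squeezes this to $\EV^B_0\big(e^{-\a L_-^{-1}(a)}\big)=e^{-\sqrt{2\a}\,a}=\EV^B_0\big(e^{-\a L^{-1}(a)}\big)$. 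Consequently the non-negative random variable $e^{-\a L_-^{-1}(a)}-e^{-\a L^{-1}(a)}$ has $\PV_0$-expectation zero, so it vanishes $\PV_0$-a.s., and since $x\mapsto e^{-\a x}$ is strictly decreasing on $[0,\infty]$ this yields $L_-^{-1}(a)=L^{-1}(a)$ $\PV_0$-a.s.

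There is no substantial obstacle here: the argument is elementary once the Laplace transform of $L^{-1}$ is at hand, and the only thing to watch is keeping the left- and right-continuous pseudo-inverses apart and checking the trivial inclusions of level sets. If one prefers to avoid invoking the Laplace transform, one can argue probabilistically instead: $\sigma:=L_-^{-1}(a)$ is a stopping time (Lemma~\ref{lem:B_LT:inverse local time is stopping time}) with $B_\sigma=0$ and $L_\sigma=a$ by continuity of $L$; by the strong Markov property of $B$ at $\sigma$ the process $(B_{\sigma+t})_{t\ge 0}$ is a Brownian motion started at $0$ whose local time at $0$ is $L_{\sigma+\cdot}-a$, and since the local time of a Brownian motion started at the origin increases immediately, $L^{-1}(a)-\sigma=\inf\{t\ge 0: L_{\sigma+t}-a>0\}=0$ a.s.
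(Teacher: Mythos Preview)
Your proof is correct. The reduction from a general starting point to $\PV_0$ via the strong Markov property at $\h_0$ is exactly what the paper does (it invokes Lemma~\ref{lem:B_LT:shift of inverse local time} for the shift identities). The difference lies in the $\PV_0$ case: the paper simply cites an external reference (Marcus--Rosen, Lemma~3.6.18), whereas you supply two self-contained arguments. The Laplace-transform squeeze is clean and elementary; just note that it forward-references Lemma~\ref{lem:B_LT:laplace of local time inverse}, which in the paper's ordering comes \emph{after} the present lemma. There is no logical circularity, since the proof of that lemma does not use this one, so at worst a small reordering would be needed. Your alternative probabilistic argument via the strong Markov property at $\sigma=L_-^{-1}(a)$ is also valid and is closer in spirit to how this kind of statement is typically proved directly.
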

\begin{proof}
 \cite[Lemma 3.6.18]{MarcusRosen06} shows that $L_-^{-1}(a) = L^{-1}(a)$ holds $\PV_0$-a.s.\ for any $a > 0$.
 For a general initial law $\PV$, we compute by using Lemma~\ref{lem:B_LT:shift of inverse local time} (as $\h_0 < L^{-1}(a)$ for any $a > 0$)
 and the strong Markov property of $B$:
  \begin{align*}
   \PV \big( L_-^{-1}(a) = L^{-1}(a) \big)
   & = \PV \big( L_-^{-1}(a) \circ \T_{\h_0} = L^{-1}(a) \circ \T_{\h_0} \big) \\
   & = \PV \big( \PV_{\h_0} \big( L_-^{-1}(a) = L^{-1}(a) \big) \big) \\
   & = \PV_0 \big( L_-^{-1}(a) = L^{-1}(a) \big) = 1. \qedhere
  \end{align*}
\end{proof}

The inverses of the local time have a close relation to the first hitting times $\h_a$ of points $a \geq 0$ 
(see, e.g., \cite[Theorem 5.9]{Cinlar11}), which appears natural in view of L\'evy's characterization.
We will only note the following formula for later use:

\begin{lemma} \label{lem:B_LT:laplace of local time inverse}
 For all $x, a \in \R_+$,
 \begin{align*}
  \EV_x \big( e^{-\a L^{-1}(a)} \big) = e^{-\sqrt{2 \a} (x+a)}.
 \end{align*}
\end{lemma}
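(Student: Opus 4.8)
The statement to prove is Lemma~\ref{lem:B_LT:laplace of local time inverse}: for all $x, a \in \R_+$, $\EV_x\big(e^{-\a L^{-1}(a)}\big) = e^{-\sqrt{2\a}(x+a)}$. The natural route is to split the path of the reflecting Brownian motion $\abs{B}$ (started at $x$) into the part before the first hit of the origin and the part afterwards, using the strong Markov property at $\h_0$ together with the passage-time formula~\eqref{eq:BM passage time}, and then to identify $L^{-1}(a)$, started at the origin, via L\'evy's characterization (Theorem~\ref{theo:B_LT:levys local time}) as a first hitting time of a genuine Brownian motion.

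The first step is to observe that, since the local time $L$ increases only at the zero set of $B$, we have $L_t = 0$ for all $t \le \h_0$ and hence $L^{-1}(a) > \h_0$ for every $a > 0$. Lemma~\ref{lem:B_LT:shift of inverse local time} (applied with $\t = \h_0$, which satisfies $L(\h_0)=0$ and $\h_0 < L^{-1}(a)$) gives $L^{-1}(a)\circ\T_{\h_0} = L^{-1}(a) - \h_0$, so that $L^{-1}(a) = \h_0 + L^{-1}(a)\circ\T_{\h_0}$. Splitting the exponential, using the strong Markov property of $B$ at $\h_0$ (where $B_{\h_0}=0$), and the passage-time formula $\EV_x(e^{-\a\h_0}) = e^{-\sqrt{2\a}\,x}$, we reduce to the case $x = 0$:
\begin{align*}
 \EV_x\big(e^{-\a L^{-1}(a)}\big)
 = \EV_x\big(e^{-\a\h_0}\big)\,\EV_0\big(e^{-\a L^{-1}(a)}\big)
 = e^{-\sqrt{2\a}\,x}\,\EV_0\big(e^{-\a L^{-1}(a)}\big).
\end{align*}
The boundary case $a = 0$ is immediate: $L^{-1}(0) = \h_0$ under $\PV_x$, and the formula collapses to $\EV_x(e^{-\a\h_0}) = e^{-\sqrt{2\a}\,x}$, consistent with the target.

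It remains to show $\EV_0\big(e^{-\a L^{-1}(a)}\big) = e^{-\sqrt{2\a}\,a}$. Here I invoke L\'evy's theorem~\ref{theo:B_LT:levys local time}: under $\PV_0$, the pair $\big((\abs{B_t}, L_t), t\ge 0\big)$ has the same law as $\big((M_t - \b_t, M_t), t\ge 0\big)$ for a standard Brownian motion $\b$ started at $0$ with running maximum $M$. Consequently $L^{-1}(a) = \inf\{t\ge 0: L_t > a\}$ has, under $\PV_0$, the same law as $\inf\{t\ge 0: M_t > a\}$. Now $\{M_t > a\}$ occurs exactly when $\b$ has exceeded level $a$, so $\inf\{t: M_t > a\} = \inf\{t: \b_t > a\} = \h_a^{\b}$, the first passage time of $\b$ to level $a$ (a minor point: $M_t > a$ versus $M_t \ge a$ differs only at a single time, irrelevant for the $\inf$ since $\b$ immediately exceeds any level it attains, by the law of the iterated logarithm / regularity of Brownian paths). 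Applying the passage-time formula~\eqref{eq:BM passage time} to $\b$ gives $\EV_0(e^{-\a\h_a^{\b}}) = e^{-\sqrt{2\a}\,a}$, which completes the proof once combined with the reduction above.

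The only mild subtlety — and the step to handle carefully — is the passage between $\{L_t > a\}$ and $\{M_t > a\}$ and the identification of $\inf\{t: M_t > a\}$ with a genuine first hitting time of $\b$; this rests on the zero–one law / point recurrence behavior of Brownian motion at its running maximum, ensuring that the pseudo-inverse of $M$ agrees a.s.\ with $\h_a^\b$. Everything else is a direct assembly of results already available in the excerpt.
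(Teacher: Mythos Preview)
Your proof is correct and follows essentially the same approach as the paper: the reduction from general $x$ to $x=0$ via Lemma~\ref{lem:B_LT:shift of inverse local time} and the strong Markov property at $\h_0$ is identical. The only difference is that for the base case $x=0$ the paper simply cites external references (\cite[Lemma~B.1]{KPS10}, \cite[Theorem~6.2.1]{KaratzasShreve91}), whereas you supply a self-contained argument via L\'evy's characterization (Theorem~\ref{theo:B_LT:levys local time}) identifying $L^{-1}(a)$ under $\PV_0$ with a Brownian first-passage time; this is a perfectly valid and arguably cleaner way to close the argument using only tools already present in the paper.
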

\begin{proof}
 For $x = 0$, this is proved in \cite[Lemma B.1]{KPS10} or found in the collection of results of \cite[Theorem 6.2.1]{KaratzasShreve91}.
 For $x \neq 0$, by using $L^{-1}(a) = \h_0 + L^{-1}(a) \circ \T_{\h_0}$ of Lemma~\ref{lem:B_LT:shift of inverse local time}, we get
 \begin{align*}
  \EV_x \big( e^{-\a L^{-1}(a)} \big) 
  & = \EV_x \big( e^{-\a \h_0} \, \EV_x \big( e^{-\a L^{-1}(a)} \circ \T_{\h_0} \big| \sF_{\h_0} \big) \big)  \\
  & = \EV_x \big( e^{-\a \h_0} \big) \, \EV_0 \big( e^{-\a L^{-1}(a)} \big),
 \end{align*}
 and insertion of the values for both expectations completes the proof.\sqed
\end{proof}

\begin{lemma} \label{lem:B_LT:extension to levys char}
  For all $x, a \in \R_+$, $\a > 0$, $f \in b\sB(\R)$,
 \begin{align*}
   \EV_0 \Big( \int_0^{L^{-1}(a)} e^{-\a t} \, f \big( \abs{B_t} - L_t + a \big) \, dt \Big)
   = \EV_a \Big( \int_0^{\h_0} e^{-\a t} \, f \big( \abs{B_t} \big) \, dt \Big).
 \end{align*}
\end{lemma}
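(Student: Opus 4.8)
The plan is to reduce both sides to the expectation of a single path functional of one Brownian motion, using L\'evy's identity (Theorem~\ref{theo:B_LT:levys local time}). Fix $a \in \R_+$. First I would invoke Theorem~\ref{theo:B_LT:levys local time}: under $\PV_0$ the process $\tB_t := -\int_0^t \sgn(B_s)\,dB_s$ is a Brownian motion started at $0$ with running maximum $\tM_t := \max_{s \le t}\tB_s$, and $\PV_0$-a.s.\ one has $\abs{B_t} = \tM_t - \tB_t$ and $L_t = \tM_t$ for all $t \ge 0$. Since $-\tB$ is again a Brownian motion started at $0$, the process $\hB_t := a - \tB_t$ is a Brownian motion started at $a$ under $\PV_0$; let $\h_0^{\hB} := \inf\{t \ge 0: \hB_t = 0\}$ be its first hitting time of $0$.

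The two identifications I need, both $\PV_0$-a.s., are: (i) $L^{-1}(a) = \h_0^{\hB}$, and (ii) on the interval $[0, L^{-1}(a)]$ one has $a + \abs{B_t} - L_t = \hB_t \ge 0$, hence $a + \abs{B_t} - L_t = \abs{\hB_t}$ there. For the algebraic part of (ii): since $L_t = \tM_t$, we get $a + \abs{B_t} - L_t = a + (\tM_t - \tB_t) - \tM_t = a - \tB_t = \hB_t$ for all $t$, so only the sign claim remains. For (i) I would use $L_t = \tM_t$ to write $L^{-1}(a) = \inf\{t: \tM_t > a\}$ and then show this equals $T_a := \inf\{t: \tB_t = a\}$: for $t < T_a$ the continuous path $\tB$ is strictly below $a$ on the compact interval $[0,t]$, so $\tM_t < a$; conversely, by the strong Markov property at $T_a$ together with the fact that a Brownian motion started at level $a$ immediately exceeds $a$, we get $\tM_t > a$ for every $t > T_a$, a.s. Hence $L^{-1}(a) = T_a = \h_0^{\hB}$, and since $\tB_t \le a$ on $[0,T_a]$ by continuity, also $\hB_t \ge 0$ there, which completes (ii).

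Combining (i) and (ii), the left-hand side of the lemma becomes $\EV_0\big(\int_0^{\h_0^{\hB}} e^{-\a t}\, f(\abs{\hB_t})\,dt\big)$. Since $(\hB_t)_{t \ge 0}$ under $\PV_0$ has the law of $(B_t)_{t \ge 0}$ under $\PV_a$, and the map $\o \mapsto \int_0^{H_0(\o)} e^{-\a t}\, f(\abs{\o(t)})\,dt$, with $H_0(\o)$ the first hitting time of $0$, is a measurable functional of the Brownian path, this expectation equals $\EV_a\big(\int_0^{\h_0} e^{-\a t}\, f(\abs{B_t})\,dt\big)$, which is the right-hand side; for $a = 0$ both sides are trivially $0$ and the argument degenerates correctly.

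The step I expect to be the main obstacle is (i), precisely because the running maximum $\tM$ may have a flat stretch at level $a$: one must rule out $\tM_t = a$ holding on a nondegenerate time interval before $\tB$ genuinely crosses $a$, which is exactly what the two inclusions above handle, the ``$\le$'' direction resting on the strong Markov property and the instantaneous-excursion property of Brownian motion at a level it has just reached.
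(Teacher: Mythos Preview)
Your proof is correct and takes a genuinely different route from the paper's.

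The paper proceeds indirectly via Dynkin's formula: it decomposes the infinite-horizon integral $\EV_0\big(\int_0^\infty e^{-\a t}\,f(\abs{B_t}-L_t+a)\,dt\big)$ at the stopping time $L^{-1}(a)$, uses the additive-functional property of $L$ and the strong Markov property to peel off the tail, then invokes L\'evy's characterization together with the Laplace transform $\EV_0\big(e^{-\a L^{-1}(a)}\big) = e^{-\sqrt{2\a}\,a}$ (Lemma~\ref{lem:B_LT:laplace of local time inverse}) to rewrite the full integral as $\EV_a\big(\int_0^\infty e^{-\a t}\,f(B_t)\,dt\big)$; a second Dynkin decomposition at $\h_0$ then matches the pieces. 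Your argument is instead a direct pathwise identification: using L\'evy's identity $(\abs{B},L)=(\tM-\tB,\tM)$ under $\PV_0$, you recognise $a+\abs{B_t}-L_t$ as the Brownian motion $\hB_t = a-\tB_t$ itself, and $L^{-1}(a)$ as its hitting time of $0$. This avoids both applications of Dynkin's formula and the auxiliary Laplace-transform computation; the only subtle point is the equality $\inf\{t:\tM_t>a\}=\inf\{t:\tB_t=a\}$, which you handle correctly with the instantaneous-excursion property at level $a$. What the paper's route buys is that it stays entirely within the resolvent/stopping-time machinery used throughout the article, making it reusable for the more involved Theorem~\ref{theo:B_LT:extension to levys char}; what your route buys is a shorter, more transparent argument for this particular lemma.
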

\begin{proof}
 We have $B_{L^{-1}(a)} = 0$, as $L$ only grows when $B$ is at the origin,
 and using the additive functional property and the continuity of $L$, we get
  \begin{align*}
   L_{t + L^{-1}(a)} = L_t \circ \T_{L^{-1}(a)} + L \big( L^{-1}(a) \big) \quad \text{with} \quad L \big( L^{-1}(a) \big) = a.
  \end{align*}
 Thus, Dynkin's formula \eqref{eq:Dynkins formula (resolvent)} applied for the stopping time $L^{-1}(a)$ yields
  \begin{align*}
   & \EV_0 \Big( \int_0^\infty e^{-\a t} \, f \big( \abs{B_t} - L_t + a \big) \, dt \Big) & \\
   & = \EV_0 \Big( \int_0^{L^{-1}(a)} e^{-\a t} \, f \big( \abs{B_t} - L_t + a \big) \, dt \Big) & \\
   & \quad + \EV_0 \Big( e^{-\a L^{-1}(a)} \, \EV_0 \Big( \int_0^\infty e^{-\a t} \, f \big( \abs{B_{t + L^{-1}(a)}} - L_{t + L^{-1}(a)} + a \big) \, dt \, \Big| \, \sF_{L^{-1}(a)} \Big) \Big) & \\
   & = \EV_0 \Big( \int_0^{L^{-1}(a)} e^{-\a t} \, f \big( \abs{B_t} - L_t + a \big) \, dt \Big) & \\
   & \quad + \EV_0 \Big( e^{-\a L^{-1}(a)} \, \EV_0 \Big( \int_0^\infty e^{-\a t} \, f \big( \abs{B_{t}} - L_{t} \big) \, dt \Big) \Big).
  \end{align*}
 With Theorem~\ref{theo:B_LT:levys local time}, the translation formula~\eqref{eq:centering translation for Levy} and Lemma~\ref{lem:B_LT:laplace of local time inverse},
 we obtain 
   \begin{align*}
   & \EV_0 \Big( \int_0^{L^{-1}(a)} e^{-\a t} \, f \big( \abs{B_t} - L_t + a \big) \, dt \Big) & \\
   & = \EV_a \Big( \int_0^\infty e^{-\a t} \, f \big( B_t \big) \, dt \Big) - \EV_a \Big( e^{-\a \h_0} \, \EV_0 \Big( \int_0^\infty e^{-\a t} \, f \big( B_t \big) \, dt \Big) \Big).
  \end{align*}
 Another application of Dynkin's formula~\eqref{eq:Dynkins formula (resolvent)} for $\h_0$ yields the result,
 as $B_{\h_0} = 0$ by the continuity of $B$, and $B_t = \abs{B_t}$ $\PV_a$-a.s.\ for all $t \leq H_0$.\sqed
\end{proof}

\begin{theorem} \label{theo:B_LT:extension to levys char}
 For all $x, a \in \R_+$, $\a > 0$, $f \in b\sB(\R^2)$,
 \begin{align*}
   & \EV_x \Big( \int_0^\infty e^{-\a t} \, f \big( \abs{B_t} - L_t + a, (L_t - a)^+ \big) \, dt \Big) \\
   & = \EV_{x+a} \Big( \int_0^\infty e^{-\a t} \, f \big( \abs{B_t} - L_t, L_t \big) \, dt \Big).
 \end{align*}
\end{theorem}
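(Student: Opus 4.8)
The plan is to deduce the identity from the already-established one-variable relation of Lemma~\ref{lem:B_LT:extension to levys char}, in two steps: first I would settle the initial point $x = 0$, then bootstrap to arbitrary $x \geq 0$ by the strong Markov property at a suitable first hitting time. Throughout one may assume $a > 0$, since for $a = 0$ the two sides are literally the same expression.

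For $x = 0$, I would split the time integral on the left at the stopping time $L^{-1}(a)$, which is an $(\sF_{t+}, t \geq 0)$-stopping time by Lemma~\ref{lem:B_LT:inverse local time is stopping time} and satisfies $B_{L^{-1}(a)} = 0$ and $L_{L^{-1}(a)} = a$ by the continuity of $L$. On $[0, L^{-1}(a))$ one has $(L_t - a)^+ = 0$, so this part is $\EV_0\big(\int_0^{L^{-1}(a)} e^{-\a t} f(\abs{B_t} - L_t + a, 0)\, dt\big)$, which Lemma~\ref{lem:B_LT:extension to levys char}, applied to the bounded measurable function $y \mapsto f(y, 0)$, identifies with $\EV_a\big(\int_0^{\h_0} e^{-\a t} f(\abs{B_t}, 0)\, dt\big) = \EV_a\big(\int_0^{\h_0} e^{-\a t} f(\abs{B_t} - L_t, L_t)\, dt\big)$, the last equality holding because $\abs{B} = B$ and $L \equiv 0$ before $\h_0$ under $\PV_a$. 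On $[L^{-1}(a), \infty)$, the additive-functional identity $L_{s + L^{-1}(a)} = L_s \circ \T_{L^{-1}(a)} + a$ together with $\abs{B_{s + L^{-1}(a)}} = \abs{B_s} \circ \T_{L^{-1}(a)}$ turn the integrand into $e^{-\a L^{-1}(a)} \big(e^{-\a s} f(\abs{B_s} - L_s, L_s)\big) \circ \T_{L^{-1}(a)}$, so the strong Markov property at $L^{-1}(a)$ (with stopping point $0$) and $\EV_0(e^{-\a L^{-1}(a)}) = e^{-\sqrt{2\a} a}$ from Lemma~\ref{lem:B_LT:laplace of local time inverse} reduce this part to $e^{-\sqrt{2\a} a}\,\EV_0\big(\int_0^\infty e^{-\a s} f(\abs{B_s} - L_s, L_s)\, ds\big)$. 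On the other hand, decomposing the right-hand side $\EV_a\big(\int_0^\infty \ldots\big)$ at $\h_0$ via Dynkin's formula~\eqref{eq:Dynkins formula (resolvent)} (using $B_{\h_0} = 0$, $L_{\h_0} = 0$) yields exactly the same two terms, the scalar factor now being $\EV_a(e^{-\a \h_0}) = e^{-\sqrt{2\a} a}$ by the passage-time formula~\eqref{eq:BM passage time}. This proves the case $x = 0$.

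For the general case, with $x = 0$ already handled I may assume $x > 0$, and would split the left-hand side at $\h_0$. Before $\h_0$ one has $\abs{B_t} = B_t$ and $L_t = 0$, so that part is $\EV_x\big(\int_0^{\h_0} e^{-\a t} f(B_t + a, 0)\, dt\big)$; writing it as $(\int_0^{\h_a} e^{-\a t} f(B_t, 0)\, dt) \circ \g_a$ and using the translation formula~\eqref{eq:centering translation for Levy}, this equals $\EV_{x+a}\big(\int_0^{\h_a} e^{-\a t} f(B_t, 0)\, dt\big) = \EV_{x+a}\big(\int_0^{\h_a} e^{-\a t} f(\abs{B_t} - L_t, L_t)\, dt\big)$, since $B$ started at $x + a$ reaches $a$ before $0$ (as $0 < a < x+a$) and hence stays above $a$ with $L \equiv 0$ up to $\h_a$. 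After $\h_0$, the same shift bookkeeping as in the first step (now with $B_{\h_0} = 0$, $L_{\h_0} = 0$) and the strong Markov property reduce that part to $\EV_x(e^{-\a \h_0})$ times the left-hand side evaluated at $x = 0$, hence to $e^{-\sqrt{2\a} x}\,\EV_a\big(\int_0^\infty e^{-\a t} f(\abs{B_t} - L_t, L_t)\, dt\big)$ by the case already proved. Finally, decomposing the right-hand side $\EV_{x+a}\big(\int_0^\infty \ldots\big)$ at $\h_a$ (with $B_{\h_a} = a$, $L_{\h_a} = 0$, and $\EV_{x+a}(e^{-\a \h_a}) = \EV_x(e^{-\a \h_0}) = e^{-\sqrt{2\a} x}$) produces precisely the sum of these two expressions, completing the proof.

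The argument carries no conceptual difficulty, but it is bookkeeping-heavy, and the main obstacle is getting the path-transformation identities exactly right: verifying the behaviour of $\abs{B_t} - L_t + a$ and $(L_t - a)^+$ under the shifts $\T_{L^{-1}(a)}$, $\T_{\h_0}$, $\T_{\h_a}$ via the additive-functional property and the continuity of $L$; observing that the strong Markov property is invoked at $L^{-1}(a)$, which is a stopping time only for the right-continuous augmented Brownian filtration; and justifying the elementary reductions ``$L \equiv 0$ and $\abs{B} = B$ up to the first hit'', i.e.\ that a Brownian motion started at $x + a$ cannot reach $0$ before $a$.
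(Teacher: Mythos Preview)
Your proof is correct and uses essentially the same ingredients as the paper: the same stopping times $\h_0$, $L^{-1}(a)$, $\h_a$, the key Lemma~\ref{lem:B_LT:extension to levys char}, the passage-time and local-time Laplace formulas, and the translation identity~\eqref{eq:centering translation for Levy}. The only difference is organizational: the paper performs a single three-term Dynkin decomposition of each side (left at $\h_0$ then $L^{-1}(a)$, right at $\h_a$ then $\h_0$) and matches the three pairs of summands directly for general~$x$, whereas you first settle $x=0$ with a two-term split on each side and then bootstrap to $x>0$ by one further split at $\h_0$ (left) and $\h_a$ (right). The content and the individual term-matchings are identical; your two-stage layout is arguably a bit cleaner to read, while the paper's version is slightly more compact.
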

\begin{proof}
 We decompose both sides of the claimed identity separately via Dynkin's formula~\eqref{eq:Dynkins formula (resolvent)}
 with respect to the stopping times $L^{-1}(a)$ and $\h_0$,
 using the same techniques as in the proof of Lemma~\ref{lem:B_LT:extension to levys char}. 
 By splitting at $\h_0$ and $L^{-1}(a)$, using the additive functional relation
 $L(t + \h_0) = L_t \circ \T_{\h_0} + L(\h_0)$ with $L(\h_0) = 0$,
 and $L^{-1}(a) - \h_0 = L^{-1}(a) \circ \T_{\h_0}$ by Lemma~\ref{lem:B_LT:shift of inverse local time},
 the left-hand side of the above claim reads
 \begin{align*}
   & \EV_x \Big( \int_0^\infty e^{-\a t} \, f \big( \abs{B_t} - L_t + a, (L_t - a)^+ \big) \, dt \Big) & \\
   & = \EV_x \Big( \int_0^{\h_0} e^{-\a t} \, f \big( \abs{B_t} + a, 0 \big) \, dt \Big) & \\
   & \quad + \EV_x \Big( e^{-\a \h_0} \, \EV_0 \Big( \int_0^{L^{-1}(a)} e^{-\a t} \, f \big( \abs{B_t} - L_t + a, 0 \big) \, dt \Big) \Big) & \\
   & \quad + \EV_x \big( e^{-\a L^{-1}(a)} \, \EV_0 \Big( \int_0^\infty e^{-\a t} \, f \big( \abs{B_t} - L_t, L_t \big) \, dt \Big) \Big). &
 \end{align*} 
 Decomposition at $\h_a$ and $\h_0$, employing the terminal time property $\h_0 - \h_a = \h_0 \circ \T_{\h_a}$ 
 $\PV_{x+a}$-a.s.\ by the continuity of $B$, transforms the right-hand side to
 \begin{align*}
   & \EV_{x+a} \Big( \int_0^\infty e^{-\a t} \, f \big( \abs{B_t} - L_t, L_t \big) \, dt \Big) & \\
   & = \EV_{x+a} \Big( \int_0^{\h_a} e^{-\a t} \, f \big( \abs{B_t}, 0 \big) \, dt \Big) & \\
   & \quad + \EV_{x+a} \Big( e^{-\a \h_a} \, \EV_a \Big( \int_0^{\h_0} e^{-\a t} \, f \big( \abs{B_t}, 0 \big) \, dt \Big) \Big) & \\
   & \quad + \EV_{x+a} \big( e^{-\a \h_0} \, \EV_0 \Big( \int_0^\infty e^{-\a t} \, f \big( \abs{B_t} - L_t, L_t \big) \, dt \Big) \Big). &
 \end{align*}  
 A comparison of the particular summands with the help of \eqref{eq:BM passage time} and Lemmas~\ref{lem:B_LT:laplace of local time inverse}
 and~\ref{lem:B_LT:extension to levys char} yields the result.\sqed
\end{proof}

\section{Revival and Killing Effects on the Generator of a Brownian Motion}\label{app:killing and revival}

In section~\ref{sec:KPS extension}, we construct Brownian motions which admit a finite jump measure 
by applying the revival technique explained in subsection~\ref{subsec:C_CO:identical iterations} 
and subsequent killing by mapping an absorbing set to the cemetery point $\D$.
We examine the effects of these transformations on the generator of a Brownian motion:

\begin{proof}[Proof of Lemma~\ref{lem:G_IM:Feller resolvent of revived X}:]
 We decompose the resolvent at the first revival time $R^1$ with the help of Dynkin's formula~\eqref{eq:Dynkins formula (resolvent)}:
 As the process $X^q$ up to the time $R^1$ equals the original process $X^\bullet$ up to its lifetime $\z$, we have by Theorem~\ref{theo:concatenation countable},
 for any $f \in \cC_0(\cG)$:
  \begin{align*}
   \forall g \in \cG: \quad 
   U^q_\a f(g)
   & = \EV_g \Big( \int_0^{R^1} e^{-\a t} \, f(X^q_t) \, dt \Big)
       + \EV_g \big( e^{-\a R^1} \, U_\a f(X^q_{R^1}) \big) \\
   & = \EV_g \Big( \int_0^{\z} e^{-\a t} \, f(X^\bullet_t) \, dt \Big)
       + \EV_g \big( e^{-\a R^1} \, K(U^q_\a f) \big) \\
   & = U^\bullet_\a f(g)
       + \varphi_\a(g) \, q(U^q_\a f),      
  \end{align*}
 with $(U^\bullet_\a, \a > 0)$ being the resolvent of $X^\bullet$, and $\varphi_\a := \EV_{\, \cdot} \big( e^{-\a \z} \big)$.

 As $X^\bullet$ is Feller and $\varphi_\a \in \cC_0(\cG)$ by assumption, 
 $(U^q_\a, \a > 0)$ preserves $\cC_0(\cG)$ as well.
 Furthermore, $X^q$ is right continuous and normal by definition, so $X^q$ is Feller by~\eqref{eq:Feller (resolvent)}.
 As $X^q_t = X^\bullet_t$ holds for all $t \leq \h_0$ and $X^\bullet$ is a Brownian motion on $\cG$, $X^q$ is also a Brownian motion on $\cG$.
 
 We are ready to compute the boundary conditions for~$X^q$:
 Let $h \in \sD(A^q)$. As~$X^q$ is Feller, there exists an $f \in \cC_0(\cG)$ with $h = U^q_\a f$.
 As $U^\bullet f$ and (by assumption) $1 - \varphi_\a$ fulfill the boundary conditions for $X^\bullet$,
 the above decomposition yields
  \begin{align*}
   & \frac{p_3}{2} \, U^q_\a f''(0)  \\
   & = \frac{p_3}{2} \big( U^\bullet_\a f + \varphi_\a \, q(U^q_\a f) \big)''(0) \\
   & = - p_1 \, U^\bullet_\a f(0) + \sum_{e \in \cE} p^{e}_2 \, U^\bullet_\a f_e'(0) + \int \big( U^\bullet_\a f(g) - U^\bullet_\a f(0) \big) \, p_4(dg) \\
   & \quad - \Big( - p_1 \big( 1 - \varphi_\a(0) \big) - \sum_{e \in \cE} p^{e}_2 \, ({\varphi_\a})_e'(0) 
    - \int \big( \varphi_\a(g) - \varphi_\a(0) \big) \, p_4(dg) \Big) q(U^q_\a f).
  \end{align*}
 Applying the decomposition of $U^q$ again gives
 \begin{align*}
   & \frac{p_3}{2} \, U^q_\a f''(0)  \\   
   & = - p_1 \, U^q_\a f(0) + \sum_{e \in \cE} p^{e}_2 \, U^q_\a f_e'(0) + \int \big( U^q_\a f(g) - U^q_\a f(0) \big) \, p_4(dg) 
    +  p_1 \, q(U^q_\a f),
  \end{align*}
 and as $q$ is a probability measure, we have
  \begin{align*}
   q(U^q_\a f) = \int \big( U^q_\a f(g) - U^q_\a f(0) \big) \, q(dg) + U_\a f(0),
  \end{align*}
 so it follows that
  \begin{align*}
   \frac{p_3}{2} \, U^q_\a f''(0)
   & = \sum_{e \in \cE} p^{e}_2 \, U^q_\a f_e'(0) + \int \big( U^q_\a f(g) - U^q_\a f(0) \big) \, (p_4 + p_1 \, q)(dg).
  \end{align*}
 Lemma~\ref{lem:G_BM:totality on star graph} completes the proof.\sqed
\end{proof}

\begin{proof}[Proof of Lemma~\ref{lem:G_GC:killing on absorbing set, generator data}:]
 For all $f \in \sD(A^Y)$,
 we have for $g \in \cG \bs F$
  \begin{align*}
   A^X (f \circ \psi) (g)
   & = \lim_{t \downarrow 0} \frac{\EV_g \big( f \circ \psi(X_t) \big) - f \circ \psi(g) }{t} \\
   & = \lim_{t \downarrow 0} \frac{\EV_g \big( f(Y_t) \big) - f(g)}{t},
  \end{align*}
 which exists and is equal to $A^Y f(g)$. 
 On the other hand, if $g \in F$, then $X_t \in F$ holds for all $t \geq 0$, $\PV_g$-a.s., because $F$ is absorbing for $X$, and it follows that
  \begin{align*}
   A^X (f \circ \psi) (g)
   & = \lim_{t \downarrow 0} \frac{\EV_g \big( f \circ \psi(X_t) \big) - f \circ \psi(g) }{t}
     = \lim_{t \downarrow 0} \frac{\EV_g \big( f (\D) \big) - f (\D) }{t}
     = 0.
  \end{align*}
 Thus, we have $f \circ \psi \in \sD(A^X)$ for all $f \in \sD(A^Y)$, and $A^X (f \circ \psi) = A^Y f \, \1_{\comp F}$ in this case.
 
 So, if $f \in \sD(A^Y)$, then $f \circ \psi$ fulfills the boundary condition for $X$, that is
  \begin{align*}
    & p_1 f \big( \psi (0) \big)- \sum_{e \in \cE} p^{e}_2 f_e' \big( \psi (0) \big) + \frac{p_3}{2} f'' \big( \psi (0) \big) - \int_{\cG \bs \{0\}} \big( f \big( \psi (g) \big) - f \big( \psi (0) \big) \big) \, p_4(dg) \\
    & = p_1 f(0) - \sum_{e \in \cE} p^{e}_2 f_e'(0) + \frac{p_3}{2} f''(0) - \int_{\cG \bs (F \cup \{0\})} \big( f(g) - f(0) \big) \, p_4(dg) + f(0) \, p_4(F),
  \end{align*}
 vanishes, where we used $f \big( \psi (g) \big) = f(\D) = 0$ for all $g \in F$.\sqed
\end{proof}

\section{Technical Proofs of Section~\ref{sec:G_IM}}
 
\subsection{On the Path-Behavior of \texorpdfstring{$X$}{X}} \label{app:G_IM:remarks}
In order to ensure that the process $(X_t, t \geq 0)$ of subsection~\ref{subsec:G_IM:definitions} is well-defined, it is necessary that
at any time $t \geq 0$, 
there is at most one $e \in \cE$ with $\eta^e_t > 0$. This will be shown below in Lemma~\ref{lem:G_IM:properties eta}.
To this end, we need to analyze the defining functions
$P_e P^{-1}$, $e \in \cE \cup \{0\}$. The difference between the functions $P_e$, $e \in \cE$, are rather subtle:
If we define the set of all jumps of the subordinator $Q^e$ by $J_e := \{ t > 0: \D Q^e(t) \neq 0 \}$, $e \in \cE$, then 
the set of all jumps reads $J := \biguplus_{e \in \cE} J_e$, as there are no simultaneous jumps.
By definition, $P_e(t) = P(t)$ holds true for all $e \in \cE$ if $t \in \comp J$, whereas for $t \in J$, we have
 \begin{align*}
  P_e(t) = 
   \begin{cases}
    P(t),  & t \in J_e, \\
    P(t-), & t \notin J_e, 
   \end{cases}
 \end{align*}
that is, the function $P_e$ is right continuous at the jumps of $Q^e$, and left continuous with a positive jump discontinuity at the jumps of all other subordinators $Q^f$, $f \neq e$.
We collect these first findings:
\begin{lemma} \label{lem:G_IM:behavior process Pe}
 For every $e \in \cE$, let $J_e = \{ t > 0: \D Q^e(t) \neq 0 \}$ be the set of all jumps of the subordinator $Q^e$, and set $J = \biguplus_{e \in \cE} J_e$.
 Then, for all $e \in \cE$, $t \geq 0$,
 \begin{align*} 
  P_e(t) = 
   \begin{cases}
    P(t),  & t \in J_e \cup \comp J, \\
    P(t-), & t \in J \cap \comp J_e. 
   \end{cases}
 \end{align*}
\end{lemma}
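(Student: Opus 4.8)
The plan is to unwind the definitions of $P_e$ and $P$ and compare them directly, exploiting the fact established in subsection~\ref{subsec:G_IM:definitions} that the independent subordinators $\hQ^{e_1},\ldots,\hQ^{e_n}$ — hence $Q^{e_1},\ldots,Q^{e_n}$ — have no simultaneous jumps, so that $J=\biguplus_{e\in\cE}J_e$ is a union of \emph{pairwise disjoint} sets. Writing out
\[
  P_e(t) = p_2\,t + Q^e(t) + \sum_{f\in\cE,\, f\neq e} Q^f(t-),
  \qquad
  P(t) = p_2\,t + \sum_{f\in\cE} Q^f(t),
\]
the statement reduces to deciding, for each $f\in\cE$, whether the relevant evaluation of $Q^f$ at $t$ is the value $Q^f(t)$ or the left limit $Q^f(t-)$, i.e.\ whether $Q^f$ has a jump at $t$; the drift term $p_2\,t$ is continuous and plays no role.

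First I would handle the case $t\in J_e\cup\comp J$. If $t=0$ or $t\notin J$, then no $Q^f$ jumps at $t$ — at $t=0$ by the convention $Q^f(0-)=Q^f(0)$, and for $t\notin J$ because $t$ lies in none of the $J_f$ — so $Q^f(t-)=Q^f(t)$ for every $f\in\cE$ and the two displays above coincide, giving $P_e(t)=P(t)$. If instead $t\in J_e$, disjointness of the sets $J_f$ forces $t\notin J_f$ for all $f\neq e$, so again $Q^f(t-)=Q^f(t)$ for every $f\neq e$, and the same computation yields $P_e(t)=P(t)$. For the remaining case $t\in J\cap\comp J_e$, the point $t$ is a jump time of exactly one $Q^g$ with $g\neq e$; in particular $t\notin J_e$, so $Q^e(t-)=Q^e(t)$, and substituting this into the formula for $P_e$ gives
\[
  P_e(t) = p_2\,t + Q^e(t-) + \sum_{f\neq e} Q^f(t-)
         = p_2\,t + \sum_{f\in\cE} Q^f(t-)
         = P(t-).
\]
Since $\R_+$ is exhausted by $J_e\cup\comp J$ and $J\cap\comp J_e$, this finishes the argument.

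There is no genuinely hard step; the proof is pure bookkeeping of left limits versus values. The only points requiring care are the boundary convention at $t=0$, which must be folded into the case $t\in\comp J$ (recall $J\subseteq(0,\infty)$), and the explicit appeal to the absence of simultaneous jumps, which is precisely what makes $J=\biguplus_e J_e$ disjoint and lets the "cross" contributions $\sum_{f\neq e}\D Q^f(t)$ vanish in the first case and collapse to a single term in the second. I would state this dependence at the outset so the reader sees where it enters.
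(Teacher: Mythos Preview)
Your proposal is correct and follows essentially the same approach as the paper: the lemma is stated there merely as a summary of the preceding discussion, which unwinds the definitions of $P_e$ and $P$ and uses the absence of simultaneous jumps exactly as you do. If anything, your write-up is more explicit about the $t=0$ convention and the case distinction than the paper's informal paragraph.
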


Before we proceed with the analysis of $P_e P^{-1}$, we collect some properties of \textdef{pseudo-inverses} (or \textdef{generalized inverses}).
These results can mostly be found scattered in the literature, for instance in \cite{Embrechts13} and \cite{Feng12}.
We will need results on both the left and right continuous pseudo-inverses for strictly increasing functions, or for increasing and continuous functions.

\begin{definition}
 An increasing function $f \colon [0, \infty) \rightarrow [0,\infty]$ has a \textdef{level of constancy at $t_0 \geq 0$ of length $h > 0$}, if 
 $f(t) = f(t_0+)$ for all $t \in (t_0, t_0 + h)$, $f(t) < f(t_0)$ for all $t < t_0$, and $f(t) > f(t_0+)$ for all $t > t_0 + h$.
\end{definition}

\begin{lemma} \label{lem:G_IM:props pseudoinverse}
 Let $P \colon [0, \infty) \rightarrow [0, \infty)$ be a right continuous, strictly increasing function. Then, the generalized inverse 
  \begin{align*}
   P^{-1} \colon [0, \infty) \rightarrow [0, \infty], \quad t \mapsto P^{-1}(t) := \inf \{ s \geq 0: P(s) > t \}
  \end{align*}
 admits:
 \begin{enumerate}
  \item $P^{-1}$ is right continuous and increasing;                 \label{itm:G_IM:props pseudoinverse:incr,right cont}
  \item for all $t \geq 0$: $P^{-1}P(t) = P^{-1} \big( P(t-) \big) = t$;        \label{itm:G_IM:props pseudoinverse:P-1 P}
  \item for all $t \geq 0$ with $P^{-1}(t) < +\infty$: $P P^{-1} (t) \geq t$; \label{itm:G_IM:props pseudoinverse: P P-1 >= id}
  \item for all $t \in \ran(P)$: $P P^{-1}(t) = t$;                  \label{itm:G_IM:props pseudoinverse:P P-1 = id}
  \item for all $t, u \geq 0$: $P^{-1}(t) \leq u$, if and only if $t \leq P(u)$; \label{itm:G_IM:props pseudoinverse:inversion}
  \item $P^{-1}$ is continuous,                                      \label{itm:G_IM:props pseudoinverse:P-1 cont}
  \item $P$ has a jump at $t > 0$ of height $h$, if and only if $P^{-1}$ has a level of constancy at $P(t-)$ of length $h$. \label{itm:G_IM:props pseudoinverse:jumps-const}
 \end{enumerate}
\end{lemma}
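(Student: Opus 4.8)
The plan is to reduce everything to the single ``Galois-type'' inversion identity (v), namely $P^{-1}(t) \le u \iff t \le P(u)$ for $t,u \ge 0$, and then to read off the remaining items as consequences; so I would prove the seven properties not in the listed order but roughly in the order (v), (iii), (i), (ii), (iv), (vi), (vii). To prove (v): if $P^{-1}(t) \le u$, then every $s > u$ exceeds the infimum defining $P^{-1}(t)$, so there is $r < s$ with $P(r) > t$, whence $P(s) \ge P(r) > t$; letting $s \downarrow u$ and invoking right continuity of $P$ gives $P(u) \ge t$. Conversely, if $t \le P(u)$, then strict monotonicity gives $P(s) > P(u) \ge t$ for every $s > u$, so $P^{-1}(t) \le u$. (When $P$ is bounded, $P^{-1}(t) = +\infty$ may occur; the identity remains consistent in that case, and this is why (iii) and (iv) carry the proviso $P^{-1}(t) < \infty$.) Taking $u = P^{-1}(t)$, when finite, immediately yields $PP^{-1}(t) \ge t$, which is (iii).

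Monotonicity of $P^{-1}$ in (i) is clear from the inclusion of the defining sets. For right continuity: if $s > P^{-1}(t)$, choose $r < s$ with $P(r) > t$; then (v) gives $P^{-1}(u) \le r < s$ for every $u \in (t, P(r))$, so $P^{-1}(t+) \le s$, and letting $s \downarrow P^{-1}(t)$ finishes it. For left continuity in (vi): if $P^{-1}(t-) = a < b = P^{-1}(t)$, then $P^{-1}(u) \le a$ for all $u < t$, so by (v) $u \le P(a)$ for all $u < t$, hence $t \le P(a)$, and a second application of (v) gives $P^{-1}(t) \le a$, contradicting $b > a$. Hence $P^{-1}$ is continuous.

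For (ii): strict monotonicity gives $\{s : P(s) > P(t)\} = (t, \infty)$, so $P^{-1}(P(t)) = t$; and $P^{-1}(P(t-)) = t$ follows from (v), using $P(t-) \le P(t)$ for the bound $\le t$ and, for $\ge t$, the fact that $P(t') < P(t-)$ whenever $t' < t$ (again strict monotonicity, since $P$ increases strictly on $[t', t)$). Then for $t = P(v) \in \ran(P)$, item (ii) gives $P^{-1}(t) = v$, hence $PP^{-1}(t) = P(v) = t$, which is (iv).

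The last item, (vii), is the step I expect to require the most care, because of the bookkeeping with open versus half-open intervals and one-sided limits. Given a jump of $P$ at $t$ of height $h = P(t) - P(t-) > 0$: for every $u \in (P(t-), P(t))$ one has $P(s) \le P(t-) < u$ for $s < t$ and $P(s) \ge P(t) > u$ for $s \ge t$, so $\{s : P(s) > u\} = [t, \infty)$ and thus $P^{-1}(u) = t$; combining this with $P^{-1}(s) < t$ for $s < P(t-)$ (because $\lim_{r \uparrow t} P(r) = P(t-) > s$) and $P^{-1}(s) > t$ for $s > P(t)$ (by right continuity of $P$ at $t$), and recalling that $P^{-1}$ is right continuous, one sees that $P^{-1}$ has a level of constancy at $P(t-)$ of length $h$. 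Conversely, if $P^{-1}$ has a level of constancy at $P(t-)$ of length $h$, then by (ii) together with right continuity of $P^{-1}$ the constant value on $(P(t-), P(t-) + h)$ equals $P^{-1}(P(t-)) = t$; applying (v) in the form $P^{-1}(s) \le t \iff s \le P(t)$ to the two endpoints of the constancy interval then forces $P(t) = P(t-) + h$, i.e.\ a jump of height $h$ at $t$. Throughout, the only ingredients are right continuity and strict monotonicity of $P$ --- the latter being what rules out jumps of $P^{-1}$ and being used repeatedly to sharpen ``$\le$'' to ``$<$'' --- and no further topological input is needed; these facts are also scattered across the literature, e.g.\ in \cite{Embrechts13} and \cite{Feng12}, but the route above is self-contained.
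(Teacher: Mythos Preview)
Your proof is correct and self-contained. The paper does not actually supply a proof of this lemma; it merely states the result and remarks that ``these results can mostly be found scattered in the literature, for instance in \cite{Embrechts13} and \cite{Feng12}.'' Your approach of first establishing the Galois-type equivalence (v) and then deriving the remaining items from it is clean and efficient, and in fact goes beyond what the paper provides by giving a complete argument rather than deferring to external references.
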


\begin{lemma} \label{lem:G_IM:props pseudoinverse (cont. case)}
 Let $L \colon [0, \infty) \rightarrow [0, \infty)$ be a continuous, increasing function. Then, the generalized inverses 
  \begin{align*}
   L^{-1} \colon [0, \infty) \rightarrow [0, \infty], \quad & t \mapsto L^{-1}(t) := \inf \{ s \geq 0: L(s) > t \}, \\
   L_-^{-1} \colon [0, \infty) \rightarrow [0, \infty], \quad & t \mapsto L_-^{-1}(t) := \inf \{ s \geq 0: L(s) \geq t \}
  \end{align*} 
 admit:
 \begin{enumerate}
  \item $L^{-1}$ is right continuous and increasing;                 \label{itm:G_IM:props pseudoinverse (cont. case):incr,right cont}
  \item for all $t, u \geq 0$: $L^{-1}(t) < u$, if and only if $t < L(u)$; \label{itm:G_IM:props pseudoinverse (cont. case):inversion}
  \item $L_-^{-1}$ is left continuous and increasing;                \label{itm:G_IM:props lc pseudoinverse (cont. case):incr,left cont}
  \item for all $t, u \geq 0$: $L_-^{-1}(t) \leq u$, if and only if $t \leq L(u)$. \label{itm:G_IM:props lc pseudoinverse (cont. case):inversion}
 \end{enumerate}
\end{lemma}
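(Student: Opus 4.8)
The plan is to prove the two ``inversion'' equivalences \ref{itm:G_IM:props pseudoinverse (cont. case):inversion} and \ref{itm:G_IM:props lc pseudoinverse (cont. case):inversion} first, and then to read off the continuity statements \ref{itm:G_IM:props pseudoinverse (cont. case):incr,right cont} and \ref{itm:G_IM:props lc pseudoinverse (cont. case):incr,left cont} from them by a short sandwich argument. The monotonicity of $L^{-1}$ and of $L_-^{-1}$ is immediate from the definitions: if $t_1 \le t_2$, then $\{s \ge 0 : L(s) > t_2\} \subseteq \{s \ge 0 : L(s) > t_1\}$, and likewise for the sets formed with ``$\ge$'', so the corresponding infima can only increase.

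For \ref{itm:G_IM:props pseudoinverse (cont. case):inversion}, assume first $L^{-1}(t) < u$; by definition of the infimum there is $s < u$ with $L(s) > t$, and then $L(u) \ge L(s) > t$ since $L$ is increasing. Conversely, assume $t < L(u)$; continuity of $L$ at $u$ provides a $\delta \in (0, u]$ with $L(u-\delta) > t$, so $u - \delta$ lies in $\{s \ge 0 : L(s) > t\}$ and hence $L^{-1}(t) \le u - \delta < u$. This backward direction is precisely where continuity of $L$ enters --- for a merely right continuous $L$ one would only obtain the weaker equivalence $L^{-1}(t) \le u \Leftrightarrow t \le L(u)$ as in Lemma~\ref{lem:G_IM:props pseudoinverse}. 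For \ref{itm:G_IM:props lc pseudoinverse (cont. case):inversion} I use that, by continuity, $\{s \ge 0 : L(s) \ge t\}$ is closed, so as soon as it is non-empty its infimum $L_-^{-1}(t)$ belongs to it, i.e.\ $L\big(L_-^{-1}(t)\big) \ge t$; consequently $L_-^{-1}(t) \le u$ forces $L(u) \ge L\big(L_-^{-1}(t)\big) \ge t$, while the reverse implication is trivial because $t \le L(u)$ puts $u$ itself into that set.

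With the equivalences in hand, right continuity of $L^{-1}$ follows: fix $t$ and $t_n \downarrow t$; monotonicity yields a limit $\ell := \lim_n L^{-1}(t_n) \ge L^{-1}(t)$ (the claim being trivial if $L^{-1}(t) = \infty$), and for any $\varepsilon > 0$, applying \ref{itm:G_IM:props pseudoinverse (cont. case):inversion} with $u := L^{-1}(t) + \varepsilon$ gives $t < L(u)$, hence $t_n < L(u)$ for large $n$, hence $L^{-1}(t_n) < u$, so $\ell \le L^{-1}(t) + \varepsilon$; letting $\varepsilon \downarrow 0$ gives $\ell = L^{-1}(t)$. Symmetrically, for left continuity of $L_-^{-1}$, fix $t$ and $t_n \uparrow t$; the limit $\ell := \lim_n L_-^{-1}(t_n) \le L_-^{-1}(t)$ exists, and if it were strictly smaller one could pick $u$ with $\ell < u < L_-^{-1}(t)$; negating \ref{itm:G_IM:props lc pseudoinverse (cont. case):inversion} turns $u < L_-^{-1}(t)$ into $L(u) < t$, so $t_n > L(u)$ for large $n$, whence $L_-^{-1}(t_n) > u > \ell$, contradicting $L_-^{-1}(t_n) \le \ell$; hence $\ell = L_-^{-1}(t)$.

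I do not expect a genuine obstacle --- this is an ``easy, but non-standard'' book-keeping lemma in the spirit of its strictly-increasing counterpart Lemma~\ref{lem:G_IM:props pseudoinverse}. The only point requiring care is to invoke continuity of $L$ in exactly the two right spots (the backward direction of \ref{itm:G_IM:props pseudoinverse (cont. case):inversion} and the closedness of $\{L \ge t\}$), so that the asymmetry between $L^{-1}$ being right continuous and $L_-^{-1}$ being left continuous comes out correctly, together with a one-line remark dispatching the degenerate cases where $L^{-1}$ or $L_-^{-1}$ takes the value $\infty$, which are immediate from monotonicity.
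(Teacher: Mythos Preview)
Your proof is correct. The paper does not actually supply a proof of this lemma: it is stated in the appendix alongside Lemma~\ref{lem:G_IM:props pseudoinverse} with the remark that ``these results can mostly be found scattered in the literature'' (citing Embrechts--Hofert and Feng), so there is no in-paper argument to compare against. Your approach --- establishing the two inversion equivalences first and then deducing one-sided continuity by a monotone sandwich --- is the standard and cleanest route, and your identification of where continuity of $L$ is genuinely used (the backward direction of \ref{itm:G_IM:props pseudoinverse (cont. case):inversion} and closedness of $\{L \ge t\}$) is exactly right.

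One cosmetic point: in the backward direction of \ref{itm:G_IM:props pseudoinverse (cont. case):inversion} you write $\delta \in (0,u]$, which tacitly assumes $u>0$. If $u=0$ and $L(0)>0$, the equivalence as literally stated would fail (the left side $L^{-1}(t)<0$ is always false while $t<L(0)$ can hold). In the paper's application $L$ is Brownian local time with $L(0)=0$, so this edge case is vacuous; you could simply add the hypothesis $L(0)=0$, or note that for $u=0$ both sides of \ref{itm:G_IM:props pseudoinverse (cont. case):inversion} are false under that convention.
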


These results give us enough structural properties of generalized inverses to analyze the functions $t \mapsto P_e P^{-1}(t)$, $e \in \cE \cup \{0\}$
in Remark~\ref{rem:G_IM:behavior process X} (recall the definition of~$\sT$ given there).
We are now able to deduce the following:
 
\begin{lemma} \label{lem:G_IM:properties eta}
 For all $t \geq 0$, the following holds true:
 \begin{enumerate}
  \item There is at most one $e \in \cE$ with $\eta^e_t > 0$.
  \item $\eta_t > 0$, if and only if $\eta^e_t > 0$ for exactly one $e \in \cE$.
 \end{enumerate}
\end{lemma}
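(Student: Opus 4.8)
The plan is to localize the whole statement at the single value $s:=P^{-1}(L_t)=\varrho_t$ and to read off the signs of the numbers $\eta^e_t$ from the left-/right-continuity dichotomy of $P_e$ at a jump recorded in Lemma~\ref{lem:G_IM:behavior process Pe}. Fix $t\geq 0$ and write $v:=L_t$. Since $P$ is strictly increasing and unbounded (Remark~\ref{rem:G_IM:behavior process X}), $s=P^{-1}(v)=\inf\{u\geq 0:P(u)>v\}$ is finite; every $u<s$ satisfies $P(u)\leq v$, hence $P(s-)\leq v$, and since $P(u)>v$ for $u$ arbitrarily close to $s$ from above while $P$ is right continuous, also $v\leq P(s)$. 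Thus $P(s-)\leq v\leq P(s)$. Recall moreover that $\eta_t$ is $\eta^0_t=(P_0P^{-1}-\id)(L_t)=(PP^{-1}-\id)(L_t)$, as $P_0=P$ by definition.

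Next I would split according to whether $s$ is a jump time of one of the subordinators. Write $J_e=\{u>0:\Delta Q^e(u)\neq 0\}$ and $J=\biguplus_{e\in\cE}J_e$ (a disjoint union, since $\O^Q$ was restricted to paths without simultaneous jumps). If $s\notin J$, then $\Delta P(s)=\sum_{e\in\cE}\Delta Q^e(s)=0$, so $P(s-)=P(s)$, and the estimate above forces $P(s)=v$; moreover Lemma~\ref{lem:G_IM:behavior process Pe} gives $P_e(s)=P(s)$ for every $e\in\cE$, and trivially $P_0(s)=P(s)$, so $\eta^e_t=P_e(s)-v=0$ for all $e\in\cE\cup\{0\}$; in particular no edge carries a positive $\eta^e_t$ and $\eta_t=0$. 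If instead $s\in J_f$ for the unique $f\in\cE$ with $\Delta Q^f(s)>0$, then Lemma~\ref{lem:G_IM:behavior process Pe} gives $P_f(s)=P(s)$ (and $P_0(s)=P(s)$) while $P_e(s)=P(s-)$ for every $e\in\cE\setminus\{f\}$; combined with $P(s-)\leq v\leq P(s)$ this yields
\begin{align*}
 \eta^f_t=P(s)-v\geq 0,\qquad \eta^e_t=P(s-)-v\leq 0\ \text{ for }e\in\cE\setminus\{f\},\qquad \eta_t=\eta^0_t=P(s)-v\geq 0.
\end{align*}

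From here both assertions drop out. For~(i): in either case the only edge whose $\eta^e_t$ can be strictly positive is $e=f$ (and if $s\notin J$, none is), so at most one $e\in\cE$ has $\eta^e_t>0$. For~(ii): $\eta_t>0$ occurs precisely when $s\in J_f$ and $P(s)>v$, which is precisely when $\eta^f_t=P(s)-v>0$ while $\eta^e_t\leq 0<\eta^f_t$ for every other $e\in\cE$; conversely, if exactly one $e\in\cE$ has $\eta^e_t>0$ we cannot be in the case $s\notin J$ (where all $\eta^e_t$ vanish), hence $s\in J_f$ and that unique edge must be $f$, so $\eta_t=\eta^f_t>0$. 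I do not expect a genuine obstacle here: once the sign estimate $P(s-)\leq L_t\leq P(s)$ is in place, the statement reduces entirely to Lemma~\ref{lem:G_IM:behavior process Pe}. The one point deserving a moment's care is the boundary value $L_t=P(s-)$ (respectively $L_t=P(s)$), where one must check that the inequalities used are the non-strict ones claimed — which they are, by the way the estimate was derived.
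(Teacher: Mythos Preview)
Your proof is correct and follows essentially the same route as the paper: both arguments reduce to analyzing $P_e P^{-1}(L_t) - L_t$ by locating $s=P^{-1}(L_t)$, distinguishing whether $s$ is a jump time of some $Q^f$, and then invoking Lemma~\ref{lem:G_IM:behavior process Pe} to read off the signs. The paper merely phrases the case split via the interval decomposition $\R_+ = \sT \cup \bigcup_n [l_n^-, l_n^+)$ from Remark~\ref{rem:G_IM:behavior process X}, which is the same dichotomy you set up directly.
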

\begin{proof}
 There are no simultaneous jumps by construction, so all jump times $t_n$, $n \in \N$, are pairwise distinct.
 Thus, the intervals $\big( P(t_n-), P(t_n) \big)$, $n \in \N$,
 are pairwise disjoint and for each $n \in \N$, there is exactly one $e \in \cE$ with $t_n \in J_e$.
 In summary, we have for all $e \in \cE$, $n \in \N$, $t \in \big[l_n^-, l_n^+\big) = \big[ P(t_n-), P(t_n) \big)$,
  \begin{align*}
    P_e P^{-1}(t) - t =
    \begin{cases}
     P(t_n) - t  > 0,  & t_n \in J_e, \\
     P(t_n-) - t \leq 0,  & t_n \notin J_e,
    \end{cases}
 \end{align*}
 and $P_e P^{-1}(t) - t = 0$ for all $t \in \sT$. Therefore, for any $t \geq 0$, there is at most one~$e \in \cE$ with
 $P_e P^{-1}(t) - t > 0$, and in this case $t \in [l_n^-, l_n^+)$ for some $n \in \N$, which is equivalent to $P P^{-1}(t) - t > 0$ 
 by~\eqref{eq:G_IM:behavior process P P-1} and~\eqref{eq:G_IM:behavior process Pe P-1}.\sqed
\end{proof}

The path behavior is now clear: For $L_t \in \sT$, we have ${\eta_t = P P^{-1} (L_t) - L_t = 0}$, 
and $\eta^e_t = 0$ for all $e \in \cE$ by Lemma~\ref{lem:G_IM:properties eta}, so for these times, it is $X_t = W_t$ by definition.
Otherwise, if $L_t \in [l_n^-, l_n^+)$ with $[l_n^-, l_n^+)$ corresponding to a jump $\big(t_n, (e_n, l_n) \big)$, 
we have $\eta^{e_n}_t = \eta_t > 0$, which yields $E(\eta^e_t, e \in \cE) \circ W_t = e_n$ and
 \begin{align*}
  \abs{X_t} = \eta_t + \abs{W_t} = P(t_n) - L_t + \abs{W_t} = l_n^+ - L_t + \abs{W_t}.
 \end{align*}
Therefore, $X_t$ behaves for $t \in L_-^{-1} \big( [l_n^-, l_n^+) \big)$ like a Brownian motion started at ${l_n^+ - l_n^- = l_n}$.
In total, we get equation~\eqref{eq:G_IM:behavior process X}.
 
We complete the study of the paths of~$X$:
\begin{proof}[Proof of Theorem~\ref{theo:G_IM:continuity process X}:]
  As $(W_t, t \geq 0)$ and $(L_t, t \geq 0)$ are continuous, and $L_t$ only grows if $W_t$ is at $0$,
  the edge of $X_t$ only changes at some time $t \geq 0$, if either the edge of $W_t$ changes
  or $L_t$ grows over some $l_n^+$, in which case $l_n^+ - L_t + \abs{W_t} = 0$ holds true.
  Thus, as the second coordinate $(\eta_t + \abs{W_t}, t \geq 0)$ is right continuous,
  and the first coordinate only changes if the radial part is at the origin, the resulting process $(X_t, t \geq 0)$ is right continuous.
  
  $X$ is away from $0$ if either $W$ is or if $L \in  [l_n^-, l_n^+)$ for some $n \in \N$. 
  In both cases the process behaves continuously in the open interior of these times,
  which follows from the representation~\eqref{eq:G_IM:behavior process X} and the continuity of $W$ and $L$.
  For $t \in L_-^{-1} \big( [l_n^-, l_n^+) \big)$, equation~\eqref{eq:G_IM:behavior process X} gives $X_t =  \big( e, l_n^+ - L_t + \abs{W_t} \big)$, 
  thus we have 
   \begin{align*}
    t_0 := \inf \{ s \geq t: X_s = 0 \} =  \inf\{ s \geq 0: L_s \geq l_n^+ \},
   \end{align*}
  and for every sequence $(t_n, n \in \N)$ in $\R_+$ which strictly increases to $t_0$, 
  $(X_{t_n}, n \in \N)$ converges to $(e, 0) = 0$. But as $X$ is right continuous and $\{0\}$ is closed, we have
  $X_{t_0} = 0$, so $X$ is also continuous at $t_0$.\sqed
\end{proof}

\subsection{Shift Operators for \texorpdfstring{$X$}{X}}\label{app:G_IM:shift operators}
  
We are checking that the operators $(\T^X_t, t \geq 0)$, as defined in subsection~\ref{subsec:G_IM:shift operators}, indeed constitute a family of shift operators for $X$:
\begin{proof}[Proof of Lemma~\ref{lem:G_IM:shift operators for X}:]
 Fix $s, t \geq 0$. It is clear that $\T^X_t \colon \O \rightarrow \O$, as $\T^W_t \colon \O^W \rightarrow \O^W$, $\T^Q_t \colon \O^Q \rightarrow \O^Q$, 
 $\varrho_t(\o) \geq 0$ for all $\o \in \O$ and $\g^P_x \colon \O^Q \rightarrow \O^Q$ for all $x \in \R$.
 
 We begin by calculating the shift on the subordinator: For all $u \geq 0$, we have
 \begin{align*}
  P(\g^P_{-L_t} \circ \T^Q_{\varrho_t})^{-1} (u)
  & = \inf \{ s \geq 0: P(\g^P_{-L_t} \circ \T^Q_{\varrho_t})(s) > u \} \\
  & = \inf \{ s \geq 0: P(s + \varrho_t) - L_t > u \} \\
  & = P^{-1}(u + L_t) - \varrho_t.
 \end{align*}
 $(L_t, t \geq 0)$ is an additive functional and $P^{-1}(L_{s+t}) \geq P^{-1}(L_t)$, so
 \begin{equation} \label{eq:G_IM:shifts:shift on P-1(Ls)}
 \begin{aligned}
  P^{-1}(L_s) \circ \T^X_t 
  & = P(\g_{-L_t} \circ \T^Q_{\varrho_t})^{-1} (L_{s+t} - L_t) \\
  & = P^{-1}(L_{s+t}) - \varrho_t.
 \end{aligned} 
 \end{equation}
 Let $e \in \cE \cup \{0\}$. Then, by applying the shift $\T^X_t$ and the above findings, we obtain
 \begin{align*}
  \big( P_e P^{-1}(L_s) - L_s \big) \circ \T^X_t 
  & = P_e(\g^P_{-L_t} \circ \T^Q_{\varrho_t}) \big( P^{-1}(L_s) \circ \T^X_t \big) - L_s \circ \T^W_t \\
  & = P_e \big( P^{-1}(L_s) \circ \T^X_t + \varrho_t \big) - L_t - (L_{s+t} - L_t) \\
  & = P_e P^{-1}(L_{s+t}) - L_{s+t}. 
 \end{align*}
 By inserting the last two formulas into the definition of $X$ and additionally using 
 \begin{align*}
  W_s \circ \T^X_t = W_s \circ \T^W_t = W_{s+t}, 
 \end{align*}
 we get $X_s \circ \T^X_t = X_{s+t}$.
 
 It remains to prove $\T^X_s \circ \T^X_t = \T^X_{s+t}$. We calculate for $\o = (\o^W, \o^Q)$
  \begin{align*}
   \T^X_s \big( \T^X_t(\o) \big)
   & = \big( \T^W_s \big( \T^W_t(\o^W) \big), \g^P_{-L_s ( \T_t(\o) )} \big( \T^Q_{\varrho_s(\T_t(\o))} \big( \g^P_{-L_t(\o)} \big( \T^Q_{\varrho_t(\o)}(\o^Q) \big) \big) \big) \\
   & = \big( \T^W_{s+t} (\o^W) , \g^P_{-L_{s+t}(\o) + L_t(\o)} \big( \T^Q_{\varrho_{s+t}(\o) - \varrho_t(\o)} \big( \g^P_{-L_t(\o)} \big( \T^Q_{\varrho_t(\o)}(\o^Q) \big) \big) \big),
  \end{align*}
 where we used the shift property of $(\T^W_t, t \geq 0)$ on themselves and on the additive functional $(L_t, t \geq 0)$, 
 as well as $\varrho_s \circ \T^X_t = P^{-1}(L_{s+t}) - \varrho_t$ by~\eqref{eq:G_IM:shifts:shift on P-1(Ls)}.
 Observing that $(\T^Q_t, t \geq 0)$ and $(\g^P_x, x \in \R)$ commute
 (because the natural shift operators
  $(\hT^{Q,e}_t, t \geq 0)$ and translation operators $(\hg^{Q, e}_q, q \in \R)$ of the Cartesian parts commute, cf.~definitions~\eqref{eq:natural centering and translation}), we get
  \begin{align*}
   & \T^X_s \big( \T^X_t(\o) \big) \\
   & = \big( \T^W_{s+t} (\o^W) , \g^P_{-L_{s+t}(\o) + L_t(\o)} \circ \g^P_{-L_t(\o)} \circ \T^Q_{\varrho_{s+t}(\o) - \varrho_t(\o)} \circ \T^Q_{\varrho_t(\o)}(\o^Q) \big) \\
   & = \big( \T^W_{s+t} (\o^W) , \g^P_{-L_{s+t}(\o)} \big( \T^Q_{\varrho_{s+t}(\o)} (\o^Q) \big) \big) \\
   & = \T^X_{s+t} (\o). \qedhere
  \end{align*}
\end{proof}
  
\subsection{Strong Markov Properties of the Underlying Processes}\label{app:G_IM:strong Markov of (W,Q)}
We give some rigorous context for the results of subsection~\ref{subsec:G_IM:strong Markov of (W,Q)}.
As noted there, the following results are not the canonical Markov properties,
as we will only consider and shift the second part of the combined process $(W,Q)$ here, so everything is still ``independent'' of the first coordinate.
These ``partial'' time shifts are not commonly treated, because joint Markov processes $\big((X_t, Y_t), t \geq 0 \big)$ typically run with a shared 
time parameter $t$ and thus are translated collectively by the same time shift. Therefore we will need to lift the following ``Markov properties'' manually.

$(Q_s, s \geq 0)$ is ``Markovian'' with respect to $(\bsF^Q_s, s \geq 0)$ in the following sense:
\begin{lemma} \label{lem:G_IM:Q Markovian barFQ}
 For all $g \in \cG$, $q \in \R^n$, $f \in b\sB(\R)^{\otimes n}$, $s,t \geq 0$,
  \begin{align*}
    \EV_{g,q} \big( f(Q_{s+t}) \,\big|\, \bsF^Q_s \big) = \EV_{g, Q_s} \big( f(Q_t) \big).
  \end{align*}
\end{lemma}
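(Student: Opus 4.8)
The plan is to reduce the statement to the elementary Markov property of the subordinators $\hQ^e$ by exploiting the two product structures in play: the probability space factorizes as $\PV_{g,q} = \PV^W_g \otimes \PV^Q_q$, and the conditioning $\sigma$-algebra $\bsF^Q_s = \sF^W_\infty \otimes \sF^Q_s$ differs from $\sF^Q_s$ only by the independent factor $\sF^W_\infty$. First I would observe that the random variable $f(Q_{s+t})$ is, by the definition of $Q$, of the form $\big(f(\hQ_{s+t})\big)\circ\pi^Q$, i.e.\ it depends only on the $\O^Q$-coordinate. Conditioning such a variable on $\sF^W_\infty \otimes \sF^Q_s$ under the product measure agrees with conditioning on $\sF^Q_s$ under the marginal $\PV^Q_q$; this is the usual product-measure fact, established by a monotone-class argument starting from measurable rectangles $A^W \times A^Q$ (cf.\ the reasoning behind \cite[Exercise~6.12]{Sharpe88}), and it yields
\begin{align*}
 \EV_{g,q}\big(f(Q_{s+t}) \,\big|\, \bsF^Q_s\big) = \EV^Q_q\big(f(\hQ_{s+t}) \,\big|\, \sF^Q_s\big) \circ \pi^Q, \quad \PV_{g,q}\text{-a.s.}
\end{align*}

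Next I would invoke the Markov property of $\hQ$. Each $\hQ^e$ is a subordinator, hence a time-homogeneous Markov process with respect to its natural filtration $(\sF^{Q,e}_s, s \geq 0)$, realized as the canonical coordinate process; removing the $\PV^Q_q$-null set $N$ of simultaneous jumps leaves this untouched. Since $\hQ = (\hQ^{e_1},\ldots,\hQ^{e_n})$ is the Cartesian product of the $\hQ^e$ with the product initial law $\PV^Q_q$, and a finite product of independent time-homogeneous Markov processes is again one (with transition semigroup the product of the factors), $\hQ$ has the Markov property with respect to the product filtration $\bigotimes_{e \in \cE} \sF^{Q,e}_s$; as $\cE$ is finite, this product filtration coincides with $\sF^Q_s = \sigma(\hQ_r : r \leq s)$. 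Hence $\EV^Q_q\big(f(\hQ_{s+t}) \mid \sF^Q_s\big) = \EV^Q_{\hQ_s}\big(f(\hQ_t)\big)$ $\PV^Q_q$-a.s.

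Composing with $\pi^Q$ and using once more that $f(Q_t)$ depends only on the $\O^Q$-coordinate --- so that $\EV^Q_{\hQ_s(\pi^Q(\o))}\big(f(\hQ_t)\big) = \EV_{g, Q_s(\o)}\big(f(Q_t)\big)$ for every $\o \in \O$ --- gives the claimed identity. The only point that needs care is the first step, i.e.\ checking that enlarging the conditioning $\sigma$-algebra by the independent, fully observed factor $\sF^W_\infty$ does not change the conditional expectation; this relies crucially on the factorization $\PV_{g,q} = \PV^W_g \otimes \PV^Q_q$ built into the construction and on a routine monotone-class argument. Everything after that is a direct application of the Markov property of L\'evy processes together with its stability under finite products.
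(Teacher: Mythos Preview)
Your proof is correct and follows essentially the same approach as the paper: both reduce to checking the defining identity on rectangles $A \times B$ with $A \in \sF^W_\infty$, $B \in \sF^Q_s$, separate the two factors via the product structure $\PV_{g,q} = \PV^W_g \otimes \PV^Q_q$, and then apply the Markov property of $\hQ$. The paper states this in one line (``separating both components in the product space with the help of Fubini's theorem and applying the Markov property of $(\hQ_s, s \geq 0)$''), while you spell out the intermediate identity $\EV_{g,q}\big(f(Q_{s+t}) \mid \bsF^Q_s\big) = \EV^Q_q\big(f(\hQ_{s+t}) \mid \sF^Q_s\big)\circ\pi^Q$ and the reason $\hQ$ is Markov as a product of independent L\'evy processes---but the argument is the same.
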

\begin{proof}
 As $(Q_s, s \geq 0)$ is adapted to $(\bsF^Q_s, s \geq 0)$, it suffices to check that 
 for all $A \in \sF^W_\infty$, $B \in \sF^Q_s$, $g \in \sG$, $q \in \R^n$, $f \in b\sB(\R)^{\otimes n}$, $s,t \geq 0$,
 \begin{align*}
  \EV_{g,q} \big( f(Q_{s+t}) ~ \1_{A \times B} \big) 
  & = \EV_{g,q} \big( \EV_{g, Q_s} \big( f(Q_t) \big)  \, \1_{A \times B} \big),
 \end{align*}
 which follows by separating both components in the product space with the help of Fubini's theorem and applying the Markov property of $(\hQ_s, s \geq 0)$.\sqed
\end{proof}
%

As $(\hQ_s, s \geq 0)$ is a Feller process, the ``Markov property'' of lemma~\ref{lem:G_IM:Q Markovian barFQ}
yields the ``strong Markov property'' of $(Q_s, s \geq 0)$ with respect to $(\bsF^Q_s, s \geq 0)$ in the following sense:
\begin{lemma} \label{lem:G_IM:Q strongly Markovian barFQ}
  For all $g \in \cG$, $q \in \R^n$, $f \in b\sB(\R)^{\otimes n}$, $s \geq 0$, and every stopping time~$\t$ over~$(\bsF^Q_s, s \geq 0)$,
  \begin{align*}
    \EV_{g,q} \big( f(Q_{s+\t}) \,\big|\, \bsF^Q_\t \big) = \EV_{g, Q_\t} \big( f(Q_s) \big).
  \end{align*}
\end{lemma}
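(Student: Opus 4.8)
The plan is to deduce the strong Markov property from the simple Markov property of Lemma~\ref{lem:G_IM:Q Markovian barFQ} by the standard dyadic approximation of the stopping time, the only essential extra input being the Feller property of the underlying subordinator. Fix $g \in \cG$, $q \in \R^n$, $s \geq 0$ and $f \in b\sB(\R)^{\otimes n}$, and (as is implicit) assume $\t < \infty$ $\PV_{g,q}$-a.s. First I would reduce to bounded continuous test functions: the collection of $f$ for which the asserted equality holds is a vector space containing the constants and closed under bounded pointwise monotone limits (both sides are dominated by $\norm{f}_\infty$, so bounded convergence applies on each), and it contains the multiplicatively closed set $b\cC(\R^n)$, which generates $\sB(\R)^{\otimes n}$; hence by the functional monotone class theorem it suffices to treat $f \in b\cC(\R^n)$.

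Next I would approximate $\t$ from above by $\t_k := 2^{-k}\lceil 2^k \t \rceil$, $k \in \N$. Since $\{\t_k = j 2^{-k}\} = \{\t \le j2^{-k}\} \cap \{\t \le (j-1)2^{-k}\}^c \in \bsF^Q_{j2^{-k}}$, each $\t_k$ is an $(\bsF^Q_s, s \geq 0)$-stopping time with values in the countable set $\{ j 2^{-k} : j \geq 0\}$, one has $\t_k \downarrow \t$, and, as $\t \le \t_k$, also $\bsF^Q_\t \subseteq \bsF^Q_{\t_k}$. For a discrete stopping time the claim follows directly from Lemma~\ref{lem:G_IM:Q Markovian barFQ}: given $A \in \bsF^Q_\t \subseteq \bsF^Q_{\t_k}$, one has $A \cap \{\t_k = j 2^{-k}\} \in \bsF^Q_{j 2^{-k}}$ by definition of the stopped $\sigma$-algebra, and summing the simple Markov identity over $j$ yields
\begin{align*}
 \EV_{g,q}\big( f(Q_{s+\t_k}) \, \1_A \big) = \EV_{g,q}\big( \EV_{g, Q_{\t_k}}\big( f(Q_s) \big) \, \1_A \big).
\end{align*}

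Finally I would let $k \to \infty$. Since $s + \t_k \downarrow s + \t$ and $\t_k \downarrow \t$, right continuity of the càdlàg process $Q$ gives $f(Q_{s+\t_k}) \to f(Q_{s+\t})$ and $Q_{\t_k} \to Q_\t$ pointwise; and $x \mapsto \EV_{g,x}(f(Q_s))$ is bounded and continuous, being the transition semigroup of $\hQ$ applied to $f$, because $\hQ$ — a Cartesian product of subordinators, hence of Lévy processes — is a Feller process, so $\EV_{g, Q_{\t_k}}(f(Q_s)) \to \EV_{g, Q_\t}(f(Q_s))$. Bounded convergence on both sides of the displayed equality then gives $\EV_{g,q}( f(Q_{s+\t}) \, \1_A ) = \EV_{g,q}( \EV_{g, Q_\t}( f(Q_s) ) \, \1_A )$ for all $A \in \bsF^Q_\t$; since $Q_\t$ is $\bsF^Q_\t$-measurable and $x \mapsto \EV_{g,x}(f(Q_s))$ is Borel, the integrand on the right is $\bsF^Q_\t$-measurable, which is precisely the asserted identity for $f \in b\cC(\R^n)$, and the monotone class step of the first paragraph upgrades it to all $f \in b\sB(\R)^{\otimes n}$. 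I do not anticipate a serious obstacle — the argument is routine once the Feller property of $\hQ$ is invoked — the only points requiring a little care being that $\t_k$ is genuinely a stopping time for the enlarged filtration $\bsF^Q$ (immediate from the definition of $\bsF^Q_s$) and that $Q_\t$ is $\bsF^Q_\t$-measurable (which uses the path regularity of $Q$).
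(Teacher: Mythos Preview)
Your proposal is correct and follows precisely the approach the paper recommends: the paper's ``proof'' is only a note saying to reiterate the standard Feller-to-strong-Markov argument (dyadic approximation of the stopping time, pass to the limit via right continuity and Feller continuity of the semigroup) adapted to the enlarged product filtration $\bsF^Q_s = \sF^W_\infty \otimes \sF^Q_s$, and that is exactly what you do. You have supplied the details the paper left implicit, including the monotone-class reduction to $f \in b\cC(\R^n)$ and the observation that the semigroup of a product of subordinators preserves bounded continuity.
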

\begin{proof}[Note on the proof]
 It seems difficult to directly transfer the strong Markov property of $\hQ$ to $Q$,
 as an $(\bsF_s, s \geq 0)$-stopping time also randomizes the first coordinate of $(W, Q)$ and, even if the processes are independent,
 it does not appear easy to separate both parts in the random time.
 We recommend to reiterate the standard argument which shows that every Feller process is strongly Markovian (see, e.g., \cite[Section III.8]{RogersWilliams1}),
 and adjust it to the product space setting for $Q$.
\end{proof}

We are now ready to infer the Markov property of the combined process $(W,Q)$ with respect to the shifts $\T^W_t$ and $\T^Q_{\varrho_t}$ 
and to the actual filtration $(\sF_t, t \geq 0)$:
The combined shift operators $\T_t := \T^W_t \otimes \T^Q_{\varrho_t}$, $t \geq 0$, on $\O$ are defined in the intuitive way,
that is, for all $\o = (\o^W, \o^Q) \in \O$, we consider
 \begin{align} \label{eq:G_IM:def shift Theta}
   \T_t(\o) = \T^W_t \otimes \T^Q_{\varrho_t} (\o) = \big( \hT^W_t(\o^W), \hT^Q_{\varrho_t \left((\o^W, \o^Q)\right) }(\o^Q) \big).
 \end{align}

The basic version of the ``Markov property'' for $(W,Q)$ with respect to $(\sF_t, t \geq 0)$
via the just defined combined shift operators $(\T_t, t \geq 0)$ is as follows:
 
\begin{lemma} \label{lem:G_IM:Markov of (W,Q)}
 For all $g \in \cG$, $q \in \R^n$, $f \in b\sB(\R)$, $h \in b\sB(\R)^{\otimes n}$, $r, s, t \geq 0$,
 \begin{align*} 
   \EV_{g,q} \big( f(W_r) \, h(Q_s) \circ \T^W_t \otimes \T^Q_{\varrho_t} \, \big| \, \sF_t \big) = \EV_{W_t, Q_{\varrho_t}} \big( f(W_r) \, h(Q_s) \big).
 \end{align*}
\end{lemma}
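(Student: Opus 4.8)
The plan is to reduce the combined statement to the two separate Markov properties that have already been established: the strong Markov property of $Q$ with respect to $(\bsF^Q_s, s\geq 0)$ (Lemma~\ref{lem:G_IM:Q strongly Markovian barFQ}) for the $\varrho_t$-shift of the $Q$-coordinate, and the ordinary Markov property of $W$ with respect to $(\sF^W_t, t\geq 0)$ for the deterministic $t$-shift of the $W$-coordinate. The key structural facts I would exploit are: (i) $\varrho_t = P^{-1}(L_t)$ is an $(\bsF^Q_s, s\geq 0)$-stopping time by Lemma~\ref{lem:G_IM:rho is barFQ stopping time}; (ii) $\sF_t \subseteq \bsF^Q_{\varrho_t}$ by Lemma~\ref{lem:G_IM:F subset barFQ_rho}; and (iii) $W$ is $\sF_t$-measurable and $\sF^W_t\times\O^Q \subseteq \sF_t$ (from \eqref{eq:G_IM:F_W subset of F}), while the $Q$-part is untouched by the $W$-shift. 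Writing $\EV_{g,q} = \EV^W_g \otimes \EV^Q_q$ on the product space, the idea is to peel off the two shifts one at a time.

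First I would condition on the larger $\sigma$-algebra $\bsF^Q_{\varrho_t} = \sF^W_\infty \otimes \sF^Q_{\varrho_t}$. Since $h(Q_s)\circ\T^Q_{\varrho_t} = h(Q_{s+\varrho_t})$ depends only on the $Q$-coordinate, and $\varrho_t$ is an $(\bsF^Q_s,s\geq 0)$-stopping time, Lemma~\ref{lem:G_IM:Q strongly Markovian barFQ} gives
\begin{align*}
 \EV_{g,q}\big( h(Q_s)\circ\T^Q_{\varrho_t} \,\big|\, \bsF^Q_{\varrho_t}\big) = \EV_{g,Q_{\varrho_t}}\big(h(Q_s)\big),
\end{align*}
and because $W_r \circ \T^W_t = W_{r+t}$ is $\sF^W_\infty$-measurable, hence $\bsF^Q_{\varrho_t}$-measurable, it can be pulled out of this conditional expectation. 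Then I would use the tower property together with the inclusion $\sF_t \subseteq \bsF^Q_{\varrho_t}$ to descend to $\sF_t$: it remains to compute $\EV_{g,q}\big( f(W_{r+t})\cdot \EV_{\,\cdot\,,Q_{\varrho_t}}(h(Q_s)) \,\big|\, \sF_t\big)$. Here the second factor, as a function of $Q_{\varrho_t}$, is $\sF_t$-measurable (this is exactly \eqref{eq:Pe is adapted to F}-type measurability: $Q_{\varrho_t}$ is $\sF_t$-measurable by the argument in the proof of Theorem~\ref{lem:G_IM:X is adapted to filtration F}), so it too comes out of the conditional expectation.

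What is left is $\EV_{g,q}\big( f(W_{r+t}) \,\big|\, \sF_t\big)$. Using that $\sF^W_t\times\O^Q\subseteq\sF_t$ one would like to replace this by $\EV^W_{g}(f(W_{r+t})\mid\sF^W_t) = \EV^W_{W_t}(f(W_r))$ via the ordinary Markov property of the Walsh process; the mild subtlety is that $\sF_t$ is genuinely larger than $\sF^W_t\times\O^Q$ (it contains $Q$-information up to the random time $\varrho_t$), so I would justify this by the Fubini/independence structure of the product measure — the extra $Q$-information is independent of the future of $W$ given $\sF^W_t$ — exactly as in the proof of Lemma~\ref{lem:G_IM:Q Markovian barFQ}, arguing on product sets $A\times B$ with $A\in\sF^W_t$-generated events and $B\in\sF^Q_\infty$ and then extending by a monotone class argument. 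Assembling the pieces yields $\EV_{g,q}\big( f(W_r)\,h(Q_s)\circ\T^W_t\otimes\T^Q_{\varrho_t}\,\big|\,\sF_t\big) = \EV^W_{W_t}(f(W_r))\cdot \EV^Q_{Q_{\varrho_t}}(h(Q_s)) = \EV_{W_t,Q_{\varrho_t}}(f(W_r)\,h(Q_s))$, using independence of $W$ and $Q$ under every $\PV_{g,q}$ in the last step.

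\textbf{Main obstacle.} The delicate point is the bookkeeping of $\sigma$-algebras in the product space: verifying that $Q_{\varrho_t}$ (and more generally $\EV_{\,\cdot\,,Q_{\varrho_t}}(h(Q_s))$) is $\sF_t$-measurable while $\varrho_t$ itself is only an $(\bsF^Q_s)$-stopping time, and handling the gap $\sF^W_t\times\O^Q \subsetneq \sF_t \subsetneq \bsF^Q_{\varrho_t}$ cleanly so that the ordinary Markov property of $W$ can still be applied despite the enlarged conditioning. This is where the auxiliary Lemmas~\ref{lem:G_IM:rho is barFQ stopping time}, \ref{lem:G_IM:FW subset barFQ_tau} and \ref{lem:G_IM:F subset barFQ_rho} do the real work, and the argument should be organized so that each shift is removed against a filtration tailored to it.
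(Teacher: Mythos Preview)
Your proposal is correct and uses the same two ingredients as the paper --- the strong Markov property of $Q$ at $\varrho_t$ relative to $(\bsF^Q_s)_{s\geq 0}$ and the ordinary Markov property of $W$ --- but you peel off the two shifts in the \emph{reverse} order. The paper first towers through $\sF^W_t\otimes\sF^Q_\infty$ (using $\sF_t\subseteq\sF^W_t\otimes\sF^Q_\infty$, which is immediate from the definition of $\sF_t$) and applies the Markov property of $W$ relative to this enlarged filtration, which holds automatically from the product-space construction; only then does it tower through $\bsF^Q_{\varrho_t}$ to remove the $Q$-shift via Lemma~\ref{lem:G_IM:Q strongly Markovian barFQ}. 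By contrast, you remove the $Q$-shift first and are then left with $\EV_{g,q}(f(W_{r+t})\mid\sF_t)$, which you correctly flag as the delicate point since $\sF_t$ sits strictly between $\sF^W_t\times\O^Q$ and $\sF^W_t\otimes\sF^Q_\infty$.

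What the paper's order buys is precisely the dissolution of your ``main obstacle'': the Markov property of $W$ relative to $\sF^W_t\otimes\sF^Q_\infty$ is a one-line consequence of independence, so no product-set / monotone-class argument tailored to $\sF_t$ is needed. Your order works too (and your proposed fix --- arguing on product sets and extending --- is exactly what justifies the Markov property of $W$ in the enlarged filtration anyway), but it forces you to redo that justification at the level of $\sF_t$ rather than invoking it once at the level of $\sF^W_t\otimes\sF^Q_\infty$. Note also that the very same tower through $\sF^W_t\otimes\sF^Q_\infty$ would resolve your step~4 cleanly, so the two arguments converge at that point.
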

\begin{proof}
 By using $\sF_t \subseteq \sF^W_t \otimes \sF^Q_\infty$ together
 with the Markov property of $W$ with respect to $(\sF^W_t \otimes \sF^Q_\infty, t \geq 0)$ (which follows from the product space construction), we obtain
 \begin{align*} 
   & \EV_{g,q} \big( f(W_r) \circ \T^W_t \, h(Q_s) \circ \T^Q_{\varrho_t} \, \big| \, \sF_t \big) \\
   & = \EV_{g,q} \big( \EV_{g,q} \big( f(W_r) \circ \T^W_t \, \big| \,  \sF^W_t \otimes \sF^Q_\infty \big) ~ h(Q_s) \circ \T^Q_{\varrho_t} \, \big| \, \sF_t \big) \\
   & = \EV_{g,q} \big( \EV^W_{W_t} \big( f(\hW_r) \big) ~  h(Q_s) \circ \T^Q_{\varrho_t} \, \big| \, \sF_t \big).
 \end{align*} 
 Employing $\sF_t \subseteq \bsF^Q_{\varrho_t}$ (by Lemma~\ref{lem:G_IM:F subset barFQ_rho}),
 the adaptedness of $W$ to $(\bsF^Q_{\varrho_t}, t \geq 0)$ (by Lemmas~\ref{lem:G_IM:FW subset barFQ_tau} and~\ref{lem:G_IM:rho is barFQ stopping time}),
 as well as the ``strong Markov property'' of $Q$ with respect to $\varrho_t$ (as given in Lemma~\ref{lem:G_IM:Q strongly Markovian barFQ}), we get
 \begin{align*} 
   & \EV_{g,q} \big( f(W_r) \circ \T^W_t \, h(Q_s) \circ \T^Q_{\varrho_t} \, \big| \, \sF_t \big) \\
   & = \EV_{g,q} \big( \EV^W_{W_t} \big( f(\hW_r) \big) ~ \EV_{g,q} \big(h(Q_s) \circ \T^Q_{\varrho_t} \, \big| \, \bsF^Q_{\varrho_t} \big) \, \big| \, \sF_t \big) \\
   & = \EV_{g,q} \big( \EV^W_{W_t} \big( f(\hW_r) \big) ~ \EV^Q_{g,Q_{\varrho_t}} \big( h(Q_s) \big) \, \big| \, \sF_t \big) \\
   & = \EV^W_{W_t} \big( f(\hW_r) \big) ~ \EV^Q_{Q_{\varrho_t}} \big( h(\hQ_s) \big) \\
   & = \EV_{W_t, Q_{\varrho_t}} \big( f(W_r) \, h(Q_s) \big),
 \end{align*} 
 where we also used that $(W,Q)$ is adapted to $(\sF_t, t \geq 0)$ (see~\eqref{eq:G_IM:F_W subset of F} and~\eqref{eq:G_IM:Q rho- is adapted to F}).\sqed
\end{proof}

Complying with the usual generalization of Markovian shifts, we lift the above lemma with the help of the monotone class theorem
to $\sF^0_\infty$-measurable functions by slightly adjusting the routine proof (see, e.g., \cite[Proposition I.8.4]{BlumenthalGetoor69}),
and obtain Theorem~\ref{theo:G_IM:Markov of (W,Q)}.

\subsection{On the Markov Property of \texorpdfstring{$X$}{X}}\label{app:G_IM:shift of excursion times}

The main non-trivial parts of the process~$X$, as defined in section~\ref{sec:G_IM}, are the excursion times $(\eta^e_t, t \geq 0)$, $e \in \cE$. 
In preparation of the proof of the Markov property of $X$, we examine on 
how a shift of these components by the time $t$
relates to the basic shifts of the underlying processes $Q$ and $W$. 
Fix $t \geq 0$, and recall the definitions of $\pP_e$, $\pQ^e$, $\nQ^e$, $\nP$, as given at the beginning of subsection~\ref{subsec:G_IM:Markov property of X}.

\begin{lemma} \label{lem:G_IM:shift for eta I}
 For all $\o \in \O$, $s \geq 0$, $e \in \cE \cup \{0\}$,
 \begin{align*}
  \eta^e_{t+s}(\o)
  & = P_e \big( \big( \pP^{-1} (L_s \circ \T^W_t - \eta_t) \big) (\o) \big) \circ \T^Q_{\varrho_t} (\o) - ( L_s \circ \T^W_t + L_t ) (\o),
 \end{align*}
 and
 \begin{align*}
  \eta^e_{t+s}(\o)
  & = P_e \big( \, \cdot \, , \pP^{-1} \big( \o, (L_s \circ \T^W_t - \eta_t) (\o) \big) \big) \circ \T^Q_{\varrho_t} (\o) - ( L_s \circ \T^W_t + L_t ) (\o).
 \end{align*}
\end{lemma}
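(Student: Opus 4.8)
The statement is entirely pathwise in $\o$, so no probabilistic input is needed beyond structural facts already at hand; the plan is to start from the definition $\eta^e_{t+s} = P_e P^{-1}(L_{t+s}) - L_{t+s}$ and strip the time $t$ off each of the three ingredients $L_{t+s}$, $P^{-1}(L_{t+s})$ and $P_e$, in that order. First I would record the elementary reductions. Since $(L_r)_{r\ge 0}$ is an additive functional for $W$ with shift operators $(\T^W_r)_{r\ge 0}$, one has $L_{t+s} = L_t + L_s \circ \T^W_t$. By Remark~\ref{rem:G_IM:behavior process X} the function $P = P_0$ is strictly increasing, so Lemma~\ref{lem:G_IM:props pseudoinverse} applies to $P^{-1}$; in particular $\varrho_t = P^{-1}(L_t)$ and $P(\varrho_t) = P P^{-1}(L_t) = L_t + \eta_t$, the last identity being just the definition of $\eta_t = \eta^0_t$.

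The heart of the argument is the pseudo-inverse identity
\[
 P^{-1}(L_{t+s}) = \varrho_t + \pP^{-1}\bigl( L_s \circ \T^W_t - \eta_t \bigr).
\]
To prove it, I would unfold $\pP(u) = P(u + \varrho_t) - P(\varrho_t)$, so that for every real $w$
\[
 \pP^{-1}(w) = \inf\{\, u \ge 0 : P(u+\varrho_t) > w + L_t + \eta_t \,\}.
\]
Taking $w = L_s \circ \T^W_t - \eta_t$ makes the threshold equal to $L_t + L_s\circ\T^W_t = L_{t+s}$; after the substitution $v = u + \varrho_t$ and using strict monotonicity of $P$ together with the inversion property~\ref{itm:G_IM:props pseudoinverse:inversion} of Lemma~\ref{lem:G_IM:props pseudoinverse}, the right-hand side collapses to $P^{-1}(L_{t+s}) - \varrho_t$. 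This last step has to be run through two regimes: if $L_s\circ\T^W_t \ge \eta_t$ the infimum is attained past $\varrho_t$ and equals $P^{-1}(L_{t+s}) - \varrho_t$, whereas if $L_s\circ\T^W_t < \eta_t$ then $L_{t+s} \in [P(\varrho_t-), P(\varrho_t))$ sits on the level of constancy of $P^{-1}$ at value $\varrho_t$ (cf.~\ref{itm:G_IM:props pseudoinverse:jumps-const} of Lemma~\ref{lem:G_IM:props pseudoinverse}), so both sides vanish. I expect this regime split --- in particular controlling the jump of $P$ that may straddle $L_t$ when $L_t \notin \ran(P)$ --- to be the main obstacle.

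Finally I would substitute the identity into $\eta^e_{t+s} = P_e\bigl(P^{-1}(L_{t+s})\bigr) - L_{t+s}$, obtaining
\[
 \eta^e_{t+s} = P_e\bigl( \varrho_t + \pP^{-1}( L_s \circ \T^W_t - \eta_t ) \bigr) - L_t - L_s \circ \T^W_t,
\]
and then re-express the first term through $\T^Q_{\varrho_t}$. The number $u := \pP^{-1}(L_s\circ\T^W_t - \eta_t)(\o)$ is to be computed from the unshifted $\o$, and evaluating $P_e$ at the time $u+\varrho_t$ amounts to evaluating the function $P_e(\,\cdot\,, u)$ of the $Q$-path along $\T^Q_{\varrho_t}(\o)$; here one has to keep track of the deterministic drift summand $p_2\,\cdot$ and of the left-limit summands $Q^f\bigl((\cdot)-\bigr)$, $f \ne e$ (recall Lemma~\ref{lem:G_IM:behavior process Pe}), which is exactly why the second display spells the argument out as $\pP^{-1}\bigl(\o, (L_s\circ\T^W_t - \eta_t)(\o)\bigr)$. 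The case $e = 0$ (with $P_0 = P$) is handled by the same computation, and the whole derivation is uniform in $e \in \cE \cup \{0\}$, so both asserted displays follow at once.
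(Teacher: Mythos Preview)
Your approach is essentially the paper's: both isolate the key identity
\[
 P^{-1}(L_{t+s}) \;=\; \varrho_t + \pP^{-1}\bigl(L_s \circ \T^W_t - \eta_t\bigr)
\]
and then rewrite $P_e(\,\cdot + \varrho_t)$ through the $Q$-shift $\T^Q_{\varrho_t}$. The one noteworthy difference is in how the pseudo-inverse identity is obtained. You split into the two regimes $L_s\circ\T^W_t \ge \eta_t$ and $L_s\circ\T^W_t < \eta_t$ and invoke the level-of-constancy description for the second; the paper instead observes in a single line that, since $P$ and $L$ are increasing, any $u \ge 0$ with $P(u) > L_{t+s}\,(\ge L_t)$ already satisfies $u \ge P^{-1}(L_t) = \varrho_t$, so
\[
 P^{-1}(L_{t+s}) = \inf\{u \ge \varrho_t : P(u) > L_{t+s}\},
\]
after which the substitution $u \mapsto u - \varrho_t$ yields $\pP^{-1}(L_s\circ\T^W_t - \eta_t)$ directly. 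This covers both of your regimes at once (monotonicity of $P^{-1}$ is what makes the constraint $u \ge \varrho_t$ automatic), so the regime split and the appeal to~\ref{itm:G_IM:props pseudoinverse:jumps-const} are unnecessary. Your caution about the drift summand and the left-limit summands in the final $\T^Q_{\varrho_t}$-rewriting is well placed; the paper records that step without comment.
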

\begin{proof}
 As $P$ and $L$ are increasing, $P(u) > L_{t+s}$ and $u \geq 0$ imply $u \geq P^{-1}(L_t)$, so
 \begin{align*}
  P^{-1}(L_{t+s}) - P^{-1}(L_t)
  & = \inf \big\{ u \geq P^{-1}(L_t): P(u) > L_{t+s} \big\} - P^{-1}(L_t) \\
  & = \inf \big\{ u \geq 0: \pP(u) + P P^{-1}(L_t) > L_s \circ \T^W_t + L_t \big\} \\
  & = \pP^{-1} ( L_s \circ \T^W_t - \eta_t ).
 \end{align*}
 Therefore, we obtain
 \begin{align*}
  P_e P^{-1}(L_{t+s}) (\o)
  & = P_e \big( \pP^{-1} \big( L_s \circ \T^W_t - \eta_t \big) (\o) + P^{-1}(L_t)(\o) \big) (\o) \\
  & = P_e \big( \big( \pP^{-1} (L_s \circ \T^W_t - \eta_t) \big) (\o) \big) \circ \T^Q_{\varrho_t} (\o). \qedhere
 \end{align*}
\end{proof}

\begin{lemma} \label{lem:G_IM:shift for eta II}
 For all $\o \in \O$, $s \geq 0$, $e \in \cE \cup \{0\}$,
 \begin{align*}
  \eta^e_{t+s}(\o)
  & = \big( P_e \nP^{-1} \big( L_s - \eta_t(\o) \big) - L_s \big) \circ \T^W_t \otimes \T^Q_{\varrho_t} (\o) - L_t(\o).
 \end{align*}  
\end{lemma}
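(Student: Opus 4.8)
The plan is to obtain Lemma~\ref{lem:G_IM:shift for eta II} purely from Lemma~\ref{lem:G_IM:shift for eta I} by substituting the identity $\pP = \nP \circ \T^Q_{\varrho_t}$ recorded at the beginning of subsection~\ref{subsec:G_IM:Markov property of X}, while carefully tracking on which coordinate of the product space $\O = \O^W \times \O^Q$ each operator acts (and recalling that $P_e$, $\nP$, $\nP^{-1}$ are $\sF^Q$-measurable, i.e.\ depend only on $\o^Q$, whereas $L_s$ depends only on $\o^W$). No new analytic ingredient beyond Lemma~\ref{lem:G_IM:shift for eta I} is needed; the whole argument is a composition chase.

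Concretely, I would first fix $\o \in \O$, $s \geq 0$, $e \in \cE \cup \{0\}$, abbreviate $u(\o) := L_s \circ \T^W_t(\o) - \eta_t(\o)$, and rewrite Lemma~\ref{lem:G_IM:shift for eta I} as
\[
 \eta^e_{t+s}(\o) = P_e\big(\pP^{-1}(u(\o))(\o)\big)\big(\T^Q_{\varrho_t}(\o)\big) - \big(L_s \circ \T^W_t + L_t\big)(\o).
\]
Since $\pP(v) = \nP(v) \circ \T^Q_{\varrho_t}$ for every $v \geq 0$ and $\nP$ depends only on $\o^Q$, taking generalized inverses yields $\pP^{-1}(w) = \nP^{-1}(w) \circ \T^Q_{\varrho_t}$ for every $w$, so with $\o'' := \T^Q_{\varrho_t}(\o)$ one has $\pP^{-1}(u(\o))(\o) = \nP^{-1}(u(\o))(\o'')$. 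The leading term then becomes $P_e\big(\nP^{-1}(u(\o))(\o'')\big)(\o'')$, which by the very definition of the composite process $P_e\nP^{-1}$ equals $(P_e\nP^{-1})(u(\o))(\o'') = \big((P_e\nP^{-1})(u(\o)) \circ \T^Q_{\varrho_t}\big)(\o)$.

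Next I would promote the shift $\T^Q_{\varrho_t}$ to the product shift $\T^W_t \otimes \T^Q_{\varrho_t}$. Because $P_e\nP^{-1}$ is $\sF^Q$-measurable, appending the $\T^W_t$ factor changes nothing; moreover, for the random argument $Y := L_s - \eta_t(\o)$ (with $\eta_t(\o)$ frozen) one computes $Y\big((\T^W_t\otimes\T^Q_{\varrho_t})(\o)\big) = L_s\circ\T^W_t(\o) - \eta_t(\o) = u(\o)$, using that $L_s$ depends only on $\o^W$. Hence the leading term equals $\big(P_e\nP^{-1}(L_s - \eta_t(\o))\big)\circ(\T^W_t\otimes\T^Q_{\varrho_t})(\o)$. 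Finally, the additive-functional identity $L_s\circ\T^W_t = L_{s+t}-L_t$ lets me split $-\big(L_s\circ\T^W_t + L_t\big)(\o) = -L_s\circ(\T^W_t\otimes\T^Q_{\varrho_t})(\o) - L_t(\o)$, and reassembling the two pieces gives exactly the asserted formula.

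The only genuine obstacle is bookkeeping, not ideas: one must be scrupulous about the difference between a process such as $P_e$ or $\nP^{-1}$ evaluated at a (possibly random) time and the resulting function on $\O$; about which of the two factors of $\O^W\times\O^Q$ a given operator sees (and that $\varrho_t$ itself depends on both); and about the fact that the argument $\eta_t(\o)$ appearing inside $\nP^{-1}$ is held fixed at the base point $\o$ rather than being shifted along. Once these conventions are pinned down, each displayed step above is an immediate consequence of the definitions together with Lemma~\ref{lem:G_IM:shift for eta I} and the relations $\pP = \nP\circ\T^Q_{\varrho_t}$, $L_s\circ\T^W_t = L_{s+t}-L_t$.
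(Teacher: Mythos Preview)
Your proposal is correct and follows essentially the same route as the paper: both reduce to Lemma~\ref{lem:G_IM:shift for eta I} via the key identity $\pP^{-1} = \nP^{-1}\circ\T^Q_{\varrho_t}$ (which comes immediately from $\pP = \nP\circ\T^Q_{\varrho_t}$), and then use that $P_e\nP^{-1}$ depends only on $\o^Q$ while $L_s$ depends only on $\o^W$ to promote $\T^Q_{\varrho_t}$ to the full product shift. One small remark: the additive-functional identity $L_s\circ\T^W_t = L_{s+t}-L_t$ is not actually needed for the final splitting you mention; all you use there is that $L_s$ depends only on $\o^W$, so $L_s\circ\T^W_t = L_s\circ(\T^W_t\otimes\T^Q_{\varrho_t})$.
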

\begin{proof}
 It is
  \begin{align*}
   & \big( P_e \nP^{-1} \big( L_s - \eta_t(\o) \big) - L_s \big) \circ \T^W_t \otimes \T^Q_{\varrho_t} (\o) - L_t(\o) & \\
   & = P_e \nP^{-1} \big( (L_s \circ \T^W_t - \eta_t) (\o) \big) \circ \T^Q_{\varrho_t} (\o) - ( L_s \circ \T^W_t + L_t ) (\o), &
 \end{align*}
 so with regard to Lemma~\ref{lem:G_IM:shift for eta I}, it suffices to show that for all $v \in \R$,
  \begin{align*}
    P_e \big( \pP^{-1}(\o, v) \big) \circ \T^Q_{\varrho_t}(\o)
    & = P_e \nP^{-1}(v) \circ \T^Q_{\varrho_t}(\o)
  \end{align*}
 holds true: We have $\pP = \nP \circ \T^Q_{\varrho_t}$ by definition, which results in
  \begin{align*}
   \pP^{-1} = \big( \nP \circ \T^Q_{\varrho_t} \big)^{-1} = \nP^{-1} \circ \T^Q_{\varrho_t},
  \end{align*}
 because for all $\o \in \O$, $v \in \R$,
 \begin{align*}
  \big( \nP \circ \T^Q_{\varrho_t} \big)^{-1}(\o, v)
  & = \inf \big\{ u \geq 0: \nP \big(\T^Q_{\varrho_t} (\o), u \big) > v \big\} \\
  & = \nP^{-1} (\, \cdot \, , v) \circ \T^Q_{\varrho_t}(\o).
 \end{align*}
 This gives us
 \begin{align*}
  P_e \big( \, \cdot \, , \pP^{-1}(\o, v) \big) \circ \T^Q_{\varrho_t}(\o)
  & = P_e \big( \, \cdot \, , \nP^{-1} (\, \cdot \, , v) \circ \T^Q_{\varrho_t}(\o) \big) \circ \T^Q_{\varrho_t}(\o) \\
  & = P_e \nP^{-1}(v) \circ \T^Q_{\varrho_t}(\o),
 \end{align*}
 completing the proof.\sqed
\end{proof}

\begin{proof}[Proof of Theorem~\ref{theo:G_IM:shift for eta}]
 The first identity follows directly from Lemma~\ref{lem:G_IM:shift for eta II}, as insertion of the definitions of $\nP_e$ and $\eta_t$ results in
  \begin{align*}
    \big( \nP_e(\,\cdot\,) + \eta_t(\o) \big) \circ \T^Q_{\varrho_t} (\o)
    & = \big( P_e ( \, \cdot \, + \varrho_t) - P(\varrho_t) + \eta_t \big) (\o) \\
    & = P_e \circ \T^Q_{\varrho_t} (\o) - L_t(\o).
  \end{align*}
  The relation $\nP_e = P_e \circ \G^Q$ implies the second identity of the claim,
  and this expression together with $P^{-1} \circ \g^P_{\eta_t(\o)}(v) = P^{-1} \big(v - \eta_t(\o) \big)$ for all $v \in \R$ yields the last identity, as
  \begin{align*}
  & \big( P_e P^{-1} \big( L_s - \eta_t(\o) \big) - \big( L_s - \eta_t(\o) \big) \big) \circ \big( \id^W \otimes \G^Q \big) \circ \big( \T^W_t \otimes \T^Q_{\varrho_t} \big) (\o)  \\
  & \quad = \big( P_e P^{-1}(L_s) - L_s \big) \circ \big( \id^W \otimes \g^P_{\eta_t(\o)} \big) \circ \big( \id^W \otimes \G^Q \big) \circ \big( \T^W_t \otimes \T^Q_{\varrho_t} \big) (\o).  \qedhere
 \end{align*}
\end{proof}

\begin{proof}[Proof of Corollary~\ref{cor:G_IM:shift on eta}:]
 As $\nP_e$ is strictly increasing and $\nP_e(0) = P_e(0) - P(0) = 0$ holds $\PV_g = \PV_{g,0}$-a.s., 
 we have $P^{-1}(v) = 0$ a.s.\ for every non-positive number $v \leq 0$.
 Thus, if $L_s \circ \T^W_t < \eta_t$, we get from the first identity of Theorem~\ref{theo:G_IM:shift for eta}:
 \begin{align*}
  \eta^e_{t+s}(\o)
  & = \big( \nP_e(0) - \big( L_s - \eta_t(\o) \big) \big) \circ \T^W_t \otimes \T^Q_{\varrho_t} (\o) \\  
  & = \big( P_e(\varrho_t) - P(\varrho_t) - \big( L_s \circ \T^W_t  - \eta_t \big) \big) (\o) \\
  & = \big( \eta^e_t - \eta_t - L_s \circ \T^W_t  + \eta_t \big) (\o),
 \end{align*}
 where we just inserted the definitions of $\nP$ and $\eta_t$ for the last two identities.\sqed
\end{proof}

\end{appendix}

\section*{Acknowledgements}
The main parts of this paper were developed during the author's Ph.D.\ thesis~\cite{Werner16} supervised by Prof.~J\"urgen~Potthoff, whose constant support the 
author gratefully acknowledges.

\bibliographystyle{amsplain}
\bibliography{diss}

\end{document}